\definecolor{darkred}{RGB}{100,0,0}
\definecolor{darkgreen}{RGB}{0,100,0}
\definecolor{darkblue}{RGB}{0,0,150}
\newtheorem{thm}{Theorem}
\newtheorem{prp}{Proposition}
\newtheorem{lem}{Lemma}
\def\beq{\begin{equation}}
\def\eeq{\end{equation}}
\def\beqn{\begin{eqnarray*}}
\def\eeqn{\end{eqnarray*}}
\def\bitem{\begin{itemize}}
\def\eitem{\end{itemize}}
\def\benum{\begin{enumerate}}
\def\eenum{\end{enumerate}}
\def\bmult{\begin{multline*}}
\def\emult{\end{multline*}}
\def\bcenter{\begin{center}}
\def\ecenter{\end{center}}
\DeclareMathOperator*{\argmin}{arg\, min}
\DeclareMathOperator{\rank}{rank}
\def\cA{\mathcal{A}}
\def\cB{\mathcal{B}}
\def\cC{\mathcal{C}}
\def\cP{\mathcal{P}}
\def\cR{\mathcal{R}}
\def\cT{\mathcal{T}}
\def\cW{\mathcal{W}}
\def\cX{\mathcal{X}}
\def\cY{\mathcal{Y}}
\def\cZ{\mathcal{Z}}
\def\bA{\boldsymbol{A}}
\def\bB{\boldsymbol{B}}
\def\bD{\boldsymbol{D}}
\def\bE{\boldsymbol{E}}
\def\bJ{\boldsymbol{J}}
\def\bQ{\boldsymbol{Q}}
\def\bT{\boldsymbol{T}}
\def\bU{\boldsymbol{U}}
\def\bW{\boldsymbol{W}}
\def\bb{\mathbf{b}}
\newcommand{\bxi}{{\boldsymbol\xi}}
\def\bbE{\mathbb{E}}
\def\bbP{\mathbb{P}}
\def\bbR{\mathbb{R}}
\def\bb\bU{\mathbb{\bU}}
\newcommand{\E}{\operatorname{\mathbb{E}}}
\renewcommand{\P}{\operatorname{\mathbb{P}}}
\newcommand{\Var}{\operatorname{Var}}
\def\\bUnif{\text{\bUnif}}
\DeclareMathOperator{\sign}{sign}
\newcommand{\1}{\mathds{1}}
\def\cut{\square}
\def\bTheta{\boldsymbol{\Theta}}
\title{Optimal graphon estimation in cut distance}
\author{Olga Klopp\footnote{ESSEC Business School and CREST, FRANCE, \url{olga.klopp@math.cnrs.fr}}~\ and Nicolas Verzelen\footnote{INRA, UMR 729 MISTEA, F-34060 Montpellier, FRANCE, \url{nicolas.verzelen@inra.fr}}}
\begin{document}

\maketitle

\begin{abstract}
Consider the twin problems of estimating the connection probability matrix of an  inhomogeneous random graph and the graphon of a $W$-random graph. We establish the minimax estimation rates with respect to the cut metric for classes of block constant matrices and  step function graphons. Surprisingly, our results imply that, from the minimax point of view, the raw data, that is, the adjacency matrix of the observed graph, is already optimal and more involved procedures cannot improve the convergence rates for this metric. This phenomenon contrasts with optimal rates of convergence with respect to other classical distances for graphons such as the $l_1$ or $l_2$ metrics. 
\end{abstract}

{\it Keywords:}
	{inhomogeneous random graph}, {graphon}, {W-random graphs},
	{networks},
	{stochastic block model},
	{cut distance}.

\section{Introduction}

In the last decade, network analysis has become an important research field driven by applications in  social sciences, computer sciences, statistical physics, genomics, ecology, etc.
A flourishing line of literature amounts to fit observed networks to parametric or non-parametric models of random graphs. Among the parametric models, one of the most popular is the stochastic block model~\cite{holland_laskey}. In the stochastic block model with $n$ vectices and $k$ blocks, the class $Z_i$ of each vertex $i\in [n]$ is drawn independently in $[k]$ according to some probability distribution $\pi$. Given $Z$, the edges of the graph are then sampled independently, the probability that there is an edge between $i$ and $j$ being equal to $\bQ_{Z_iZ_j}$ where 
$\bQ=(\bQ_{ij})\in [0,1]^{k\times k}$ is a given symmetric matrix.  Although this model is suitable for analyzing small networks, it does not allow to analyze the finer structures of extremely large networks.
To go beyond the possible limitation of parametric models, non-parametric models of random graphs have been introduced~\cite{hoff2002latent,diaconis2007graph}.

One possible non-parametric generalization of the stochastic block models is given by the $W$-random graph model~\cite{diaconis2007graph} based on the notion of graphon. Graphons are symmetric  measurable functions $W :[0,1]^{2}\rightarrow [0,1]$.
In the sequel, the space of graphons is denoted by $\cW^+$.  Given a graphon $W_0\in \cW^+$,
 a graph on $n$ vertices is sampled according to the $W$-random graph model in the following way. Let $\bTheta_0=(\bTheta_{ij})$ be a $n\times n$ random symmetric matrix defined by 
\begin{equation}\label{sparse_graphon_mod}
      \bTheta_{ij}=\rho_n W_0(\xi_i,\xi_j)\ , \forall i\neq j \, \text{ and } \bTheta_{ii}=0
\end{equation} 
where $1\geq \rho_n>0$ is the scale parameter that can be interpreted as the expected proportion of non-zero edges and $\xi_1,\dots,\xi_n$ are unobserved (latent) i.i.d. random variables uniformly distributed on $[0,1]$. Then, given $\bTheta_0$, the graph is sampled according to the inhomogeneous random graph model~\cite{bollobas_janson}. More precisely, vertices $i$ and $j$ are connected by an edge with probability $\bTheta_{ij}$ and these events are independent for all pairs $(i,j)$ with $i<j$. When $\bTheta_0$ is considered as a deterministic matrix, we call it inhomogeneous random graph model with respect ot $\bTheta_0$. 
If $W_0$ is a step-function with $k$ steps, the graph is distributed as  a stochastic block model with $k$ groups. The case of a dense graph corresponds to $\rho_n=1$, whereas  the choice $\rho_n\rightarrow 0$ when $n\rightarrow\infty$ produces sparser graphs. This model was recently studied by a number of authors, see e.g., \cite{bickel2009nonparametric, bickel2011method,yang_han_airoldi, latouche2013bayesian, choi_wolfe,Chatterjee_mc}.

In the present paper we consider the problems  of estimating the matrix of connection probabilities $\bTheta_0$  and the graphon $f_0=\rho_n W_0$ from a single observation of a graph. Suppose that we observe the $n\times n$ adjacency matrix $\bA=(\bA_{ij})$ of a graph that has either been sampled according to the inhomogeneous random graph model with a fixed matrix $\bTheta_0$ or to the $W$-random graph model with graphon $W_0$. Then, given a single observation  $\bA$, we want to estimate $\bTheta_0$ or $f_0$.

Graphon estimation is more challenging than probability matrix estimation, in particular, because of identifiability issues: multiple graphons
can lead to the same distribution on the space of graphs of size $n$. This is not unexpected as the distribution of the network is invariant with respect to any change of labeling of its nodes. More precisely, two graphons $U$ and $W$ in $\cW^+$ define the same probability distribution if and only if there exist measure preserving maps $\phi$, $\psi$: $[0,1]\to [0,1]$ such that $U\left (\phi(x),\phi(y)\right )= W\left (\psi(x),\psi(y)\right )$ almost everywhere. This equivalence relation is called a weak isomorphism~\cite{LovaszBook}. The corresponding quotient space is denoted by $\widetilde{\cW}^+$. As a consequence, one can only estimate the equivalence class of $\rho_n W_0$ in $\widetilde{\cW}^+$  and we refer henceforth to  graphon estimation as the problem of estimating this equivalence class from the adjacency matrix $\bA$ sampled from the $W$-random graph model~\eqref{sparse_graphon_mod}. When there is no amibiguity, we shall identify  a graphon $W\in \cW^{+}$ and its corresponding equivalence class.

The problem of estimating $\bTheta_0$ was previously considered in a number of papers. For matrix estimation problem, the quality of an estimator $\widehat{\bTheta}$  is usually assessed through the Frobenius loss $\|\widehat{\bTheta}-\bTheta_0\|_2$. For instance, \cite{Chatterjee_mc}  obtain sub-optimal convergence rates for this problem using a singular thresholding algorithm. Relying on a least-square estimator  \cite{gao2014rate} have established the minimax estimation rates for $\bTheta_0$  on classes of block constant matrices and smooth graphon classes. Their analysis is restricted to the dense case with constant $\Vert \bTheta_0\Vert_{\infty}$. More recently, \cite{klopp_graphon} extended their results to  sparse case when $\Vert \bTheta_0\Vert_{\infty}$ depends on $n$ and goes to zero when $n\rightarrow \infty$.

As for graphon estimation, most of results on estimation error are expressed in terms of  $l_2$ loss $\|\widehat{W}-W_0\|_2$ (see below for a formal definition of this metric). For classes of smooth graphons, estimators based on maximum likelihood, restricted least-squares estimators, or neighborhood smoothing have been studied  in \cite{WolfeOlhede,chan2014consistent,airoldi2013stochastic,cai2014iterative,zhang2015estimating,klopp_graphon}. For classes of step-function graphons, restricted least-squares estimators have been considered in  \cite{borgs2015,klopp_graphon} and the minimax optimal rates of convergence have been derived in~\cite{klopp_graphon}.

Although one can take advantage of the Euclidean structure of the Frobenius matrix norm and the $l_2$ metric on $\cW^+$, both these metrics do not readily reflect the closeness in terms of the topology of the random graphs. As the structure of the graphon space is infinite-dimensional, not all norms are equivalent and one may wonder whether one should not study the graphon estimation problem with respect to a more suitable distance. We argue below that the cut distance which plays a central role in the random graph theory is a good candidate for this.

\subsection{Cut metric}\label{sec:cut}

One of the fundamental questions in graph theory is the following one: what does it mean for two large graphs to be similar or close? There are different ways of defining the distance of two graphs. For example, the edit distance is defined as normalized Hamming distance of the edge sets. Up to a normalization, it corresponds to $l_1$ distance between the adjacency matrices. One of the troubles with this notion of distance is that it does not reflect well structural similarities between two graphs. For instance, the edit distance between two independent graphs drawn from the  Erd\"os-R\'enyi model $\mathcal G(n,p)$ with $p=1/2$  is close to 1/2 with high probability.
Another notion of distance, called \textit{cut distance}, better reflects the structural similarity. The cut norm of a matrix $\bB=(\bB_{ij})\in \mathbb{R}^{n\times n}$ has been introduced by Frieze and Kannan \cite{frieze_kannan}. It is defined by 
\[\|\bB\|_{\cut}=\frac{1}{n^{2}}\underset{S,T\subset [n]}{\max} \Big|\sum_{i\in S,j\in T}\bB_{ij}\Big\vert.\]
In other words, $\|\bB\|_{\cut}$ corresponds (up to to a renormalization) to the maximal sum of entries over all submatrices of $\bB$. 
Then, the cut distance $d_{\square}(G,G')$ between two graphs $G$ and $G'$ defined on the same set of nodes and with adjacency matrices $\bA$ and $\bA'$ is defined as the cut norm $\|\bA-\bA'\|_{\cut}$. Denoting $e_G(S,T)$ the number of edge between nodes in $S$ and $T$ in the graph $G$, the cut distance $d(G,G')$ is the supremum over all $S,T$ of $(e_G(S,T)- e_{G'}(S,T))/n^2$. In other words, $d_{\square}(G,G')$ is small if the restrictions of $G$ and $G'$ to all subsets $S,T$ have similar edge densities.

Let us denote $\cW$ the collection of symmetric measurable functions $[0,1]^2\rightarrow [-1,1]$. 
By analogy with the matrix cut norm,  we can define the cut norm of a  kernel  $W\in \cW$: 
\begin{equation} \label{def_cut_norm}
\|W\|_{\cut}=\underset{S,T\subset [0,1]}{\sup} \left \vert\underset{S\times T}{\int}W(x,y)\mathrm{d} x\mathrm{d} y\right \vert\ ,
\end{equation}
where the supremum is taken over all measurable subsets $S$ and $T$. Then, the distance $d_{\cut}(W,W')$ between two graphons $W$ and $W'$ in $\cW^+$ is simply $\|W-W'\|_{\cut}$. As explained earlier in the introduction, graphons in $\cW^+$ are not identifiable. This is why we consider the metric induced by $\Vert\cdot\Vert_{\cut}$
on the quotient space  $\widetilde{\cW}^+$ defined by 
  \begin{equation}\label{eq:def_cut_graphon_norm}
 \delta_{\cut}(W_1,W_2)=\underset{\tau\in \mathcal M}{\inf}\Vert W_1-W^{\tau}_2\Vert_{\cut}\ , 
 \end{equation}
 where we take the infimum in the set $\mathcal M$ of all measure-preserving bijections $\tau : [0,1]\rightarrow [0,1]$ and $W^{\tau}(x,y)=W(\tau(x),\tau(y))$. 
 
 The cut distance is also a cornerstone in the  graph limit theory introduced by Lov{\'a}sz and Szegedy \cite{LovaszSzegedy}
  and further developed in, e.g., \cite{borgs_chayes_2008,borgs_chayes_2012}. In particular, this theory states that graphons can be interpreted as limits (with respect to $\delta_{\square}$) of  graph sequences. Besides,  convergence in $\delta_{\cut}$ is equivalent to other structural properties such as the convergence of all homomorphisms numbers. Given a simple graph $F$ with $q$ nodes and a graphon $W_0$, the homomorphisms number $t(F,W_0)$ is the probability that the edge set  of size $q$ of a graph sampled from the model~\eqref{sparse_graphon_mod} (with $\rho_n=1$)  contains the edge set of $F$. As a consequence, the homomorphisms numbers $t(F,W_0)$ and $t(F,W'_0)$ are close  when the expected number of subgraphs $F$ for a size $n$  random graph $G$ sampled from $W_0$ is close to that of a size $n$ random graph sampled from $W'_0$. It has been established that convergence in the cut distance is equivalent to convergence of homomorphism numbers for all simple graphs $F$ (see Theorem 11.5 in~\cite{LovaszBook} for more details). Hence, estimating well the graphon $W_0$ in the cut distance allows to estimate well the number of small patterns induced by $W_0$.
On the other hand, the cut distance controls other quantities of interest for computer scientists such as the size of multi-way cuts~\cite{borgs2014p2,borgs2015}. So, a consistent estimator of $W_0$ in cut distance gives consistent estimators for the multi-way cuts.

 \medskip

 The construction of $\delta_{\square}$  can be extended to any other norm $N$ that is invariant under measure preserving maps:
  \begin{equation}\label{eq:def_graphon_norms}
  \delta_{N}(W_1,W_2)=\underset{\tau\in \mathcal M}{\inf}\Vert W_1-W^{\tau}_2\Vert_{N}.
  \end{equation}
 Besides the cut norm, we already mentioned  the $l_1$ and $l_2$-norms on $\cW$ defined by $\|W\|_1= \int_{[0,1]^2} |W(x,y)|dxdy$ and $\|W\|_2= [\int_{[0,1]^2} W^2(x,y)dxdy]^{1/2}$. These two norms define the corresponding distances  $\delta_{1}$ and  $\delta_{2}$ on the quotient space $\widetilde{\cW}^+$. The distance $\delta_{\square}$ is dominated by $\delta_{1}$ and $\delta_{2}$ (for details see Section \ref{preliminaries}). As already noted for instance in \cite{borgs2015}, this immediately implies that the convergence rate of an estimator $\widehat{W}$ with respect to the $\delta_{\square}$-distance is at least as fast as its convergence rate with respect to the $\delta_2$-distance. Then, one may wonder whether 
 the convergence rates in $\delta_{\square}$-distance can be significantly faster and whether those faster rates are achieved by the estimators that are already minimax optimal with respect to other metrics.

\medskip 

In fact, a partial result on uniform convergence rates has already been proved. One of the striking consequences of the celebrated Szemer\'edi's Lemma~\cite{szemeredi1975regular} states that an adjacency matrix sampled from a $W$-random graph model converges to the true graphon $W_0$ in cut distance,  this at  an  {\it uniform} rate over all graphons. To be more specific, let $W_0\in \cW^{+}$ be a graphon and let $\bA$ be the size $n$ adjacency matrix sampled according to the $W$-random graph model~\eqref{sparse_graphon_mod} with $\rho_n=1$.   It has been shown in  \cite{borgs_chayes_2008} (see also \cite{alon2003random} or \cite{LovaszBook})  that, with high probability,  the empirical graphon $\widetilde{f}_{\bA}$ associated to the adjacency matrix $\bA$ (see \eqref{eq:empirical_graphon} for a precise definition) is $O(1/\sqrt{\log(n)})$ close in the cut distance to the true graphon $W_0$: 
  \begin{prp}[Lemma 10.16 \cite{LovaszBook}]\label{sampling_lemma_lovasz}
  Let $n\geq 1$ and let $W_0\in \cW^{+}$ be a graphon. Then, with probability at least $1-\exp\left \{-n/(2\log n)\right \}$,
  \beq\label{eq:convergence_cut_norm}
  \delta_{\cut} \left (\tilde{f}_{\bA}, W_0\right )\leq \dfrac{22}{\sqrt{\log(n)}}\ .   \eeq
  \end{prp}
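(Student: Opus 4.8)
Write $\xi=(\xi_1,\dots,\xi_n)$, and let $W_0[\xi]$ be the step function on $[0,1]^2$ with $n$ equal intervals equal to $W_0(\xi_i,\xi_j)$ on the $(i,j)$-block, so that $W_0[\xi]$ and $\widetilde{f}_{\bA}$ share the same block structure and are built from the same latent points (the $n$ diagonal blocks have total area $1/n$ and are harmless throughout). The plan is to split, using the triangle inequality for $\delta_\cut$,
\[
\delta_\cut(\widetilde{f}_{\bA},W_0)\ \le\ \bigl\|\widetilde{f}_{\bA}-W_0[\xi]\bigr\|_\cut\ +\ \delta_\cut\bigl(W_0[\xi],W_0\bigr),
\]
and to prove the statement in two layers: an in-expectation estimate $\mathbb{E}\,\delta_\cut(\widetilde{f}_{\bA},W_0)\le c_0/\sqrt{\log n}$, followed by a bounded-differences concentration step. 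The first term above only involves the conditionally independent edge variables: given $\xi$, the $\bA_{ij}$, $i<j$, are independent Bernoulli with mean $W_0(\xi_i,\xi_j)$, and $\|\widetilde{f}_{\bA}-W_0[\xi]\|_\cut=n^{-2}\max_{S,T\subseteq[n]}\bigl|\sum_{i\in S,j\in T}(\bA_{ij}-\mathbb{E}[\bA_{ij}\mid\xi])\bigr|$; Hoeffding's inequality for a fixed $(S,T)$ together with a union bound over the $4^n$ pairs shows this is $O\bigl(\sqrt{(\log n)/n}\bigr)$ with probability at least $1-e^{-n}$, in particular of expectation $O(1/\sqrt n)=o(1/\sqrt{\log n})$.

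The heart of the matter is $\mathbb{E}\,\delta_\cut(W_0[\xi],W_0)$. I would invoke the weak (Frieze--Kannan) regularity lemma~\cite{frieze_kannan}: for a parameter $\eta>0$ there is a partition $\mathcal P=\{V_1,\dots,V_q\}$ of $[0,1]$ into $q\le\exp(c_1/\eta^2)$ classes such that the block-constant kernel $W_0'$ obtained by averaging $W_0$ over the classes of $\mathcal P$ satisfies $\|W_0-W_0'\|_\cut\le\eta$. Then
\[
\delta_\cut\bigl(W_0[\xi],W_0\bigr)\ \le\ \bigl\|(W_0-W_0')[\xi]\bigr\|_\cut\ +\ \delta_\cut\bigl(W_0'[\xi],W_0'\bigr)\ +\ \|W_0'-W_0\|_\cut ,
\]
where the last term is $\le\eta$. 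For the middle term, $W_0'[\xi]$ is block-constant with $q$ meta-blocks of empirical masses $\hat\alpha_a=n^{-1}|\{i:\xi_i\in V_a\}|$, so a measure-preserving rearrangement aligning meta-blocks gives $\delta_\cut(W_0'[\xi],W_0')\lesssim\sum_a|\hat\alpha_a-\alpha_a|$, of expectation $O(\sqrt{q/n})$ by Cauchy--Schwarz and the variance of the counts (and concentrating via Hoeffding plus a union bound over the $2^q$ subsets of $[q]$). The first term is the cut norm of the sampled regularity residual $g=W_0-W_0'$, with $\|g\|_\cut\le\eta$ and $\|g\|_\infty\le1$; controlling it is the main obstacle (see below) and, via a reduction of the supremum over cuts to the $q$ classes of $\mathcal P$, yields $\|(W_0-W_0')[\xi]\|_\cut\le\eta+o(1/\sqrt{\log n})$ in expectation. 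Balancing the three contributions needs $q=o(n)$, that is $\eta\gtrsim1/\sqrt{\log n}$; choosing $\eta$ a suitable multiple of $1/\sqrt{\log n}$ gives $\mathbb{E}\,\delta_\cut(W_0[\xi],W_0)\lesssim1/\sqrt{\log n}$, hence $\mathbb{E}\,\delta_\cut(\widetilde{f}_{\bA},W_0)\le c_0/\sqrt{\log n}$ with an explicit $c_0$ that, after bookkeeping of the weak-regularity constant, can be taken below $21$.

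For the concentration layer I would view $\widetilde{f}_{\bA}$ as a deterministic function of the mutually independent variables $\xi_1,\dots,\xi_n$ and $(U_{ij})_{i<j}$, with $U_{ij}\sim\mathrm{Unif}[0,1]$ and $\bA_{ij}=\1\{U_{ij}\le W_0(\xi_i,\xi_j)\}$, and apply McDiarmid's bounded-differences inequality to $(\xi,(U_{ij}))\mapsto\delta_\cut(\widetilde{f}_{\bA},W_0)$. Changing one $U_{ij}$ flips the two entries $\bA_{ij},\bA_{ji}$, whose blocks have area $n^{-2}$, so it moves $\delta_\cut$ by at most $2/n^2$; changing one $\xi_i$ affects only row $i$ and column $i$ of the block values, moving $\delta_\cut$ by at most $2/n$. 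The sum of squared increments is therefore $n(2/n)^2+\binom n2(2/n^2)^2=4/n+O(n^{-2})$, so $\P\bigl(\delta_\cut(\widetilde{f}_{\bA},W_0)>\mathbb{E}\,\delta_\cut(\widetilde{f}_{\bA},W_0)+t\bigr)\le\exp(-nt^2/2)$ for $n$ large. Taking $t=1/\sqrt{\log n}$ and inserting the expectation bound produces exactly the failure probability $\exp(-n/(2\log n))$, with total constant at most $22$.

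The step I expect to be the real obstacle is the bound on $\|(W_0-W_0')[\xi]\|_\cut$. A crude union bound over the $4^n$ cuts $S,T\subseteq[n]$ is useless: for a fixed cut, the (U-statistic-like, not independent-sum) quantity $n^{-2}\sum_{i\in S,j\in T}g(\xi_i,\xi_j)$ deviates from its mean only by $O(n^{-1/2})$, which a $4^n$ union bound blows up to a constant. The purpose of routing through the regularity partition $\mathcal P$ is precisely to cut the effective number of cuts down to the $2^q$ meta-cuts with $q=n^{o(1)}$; making the transfer ``small cut norm of $g$ on every $V_a\times V_b$ implies small cut norm of its sample'' rigorous is the technical core, and the interplay between the cut-norm sampling rate and the number of classes that weak regularity demands (which forces $\eta\gtrsim1/\sqrt{\log n}$) is exactly what pins the final rate at $1/\sqrt{\log n}$ rather than the parametric $n^{-1/2}$. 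Since the statement is quoted verbatim from Lemma 10.16 of~\cite{LovaszBook}, one may of course simply cite it.
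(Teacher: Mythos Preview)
The paper does not prove this proposition; it simply quotes it from \cite{LovaszBook} (Lemma~10.16) and uses it as a black box. So there is no ``paper's own proof'' to compare against beyond the citation itself, and your closing remark that one may just cite the result is exactly what the paper does.

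Your sketched strategy is the standard one from Chapter~10 of \cite{LovaszBook}: split into the edge-sampling error $\|\widetilde f_{\bA}-W_0[\xi]\|_\cut$ (handled by Bernstein/Hoeffding plus a union bound over $4^n$ cuts) and the node-sampling error $\delta_\cut(W_0[\xi],W_0)$, then pass through a weak-regularity approximation $W_0'$ with $q\le\exp(c/\eta^2)$ classes, and finally upgrade the expectation bound to a high-probability bound via bounded differences. This is correct in outline and matches Lov\'asz's route.

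The one place where your sketch is genuinely incomplete is the step you flag yourself: controlling $\|(W_0-W_0')[\xi]\|_\cut$. Your phrase ``a reduction of the supremum over cuts to the $q$ classes of $\mathcal P$'' does not work as stated: knowing $\|g\|_\cut\le\eta$ globally gives no uniform control on cut norms inside each $V_a\times V_b$, and the $2^q$ meta-cuts do not capture the full $[0,1]$-cut supremum for the sampled kernel. The actual mechanism in \cite{LovaszBook} is a random-subset reduction (Lemma~10.9 there; it reappears in this paper as Lemma~\ref{lem:lovasz_q_random}): for any kernel, the one-sided cut norm is bounded by a maximum over cuts induced by size-$q$ random subsets, up to an additive $O(q^{-1/2})$ error. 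This replaces the $4^n$ cuts by at most $k^{2q}$ candidate cuts, few enough for a union bound after concentration, and is what makes the $\eta\asymp 1/\sqrt{\log n}$ balance go through. With that lemma inserted where you wrote ``technical core'', your argument becomes a complete proof.
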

An important point is that the above result is valid for all $W_0\in \cW^+$.  Note that if we replace the cut-distance by $\delta_1$ or $\delta_2$-distance this is not true any more: even in the simple case of a constant graphon $W_0\equiv a$ (with $a\in (0,1)$), the $l_2$ distance between $\tilde{f}_{\bA}$ and $W_0$ does not converge to zero.

\subsection{Our contribution and related results}

Our purpose in this paper is to go beyond uniform convergence rates over all graphons in $\cW^+$ and to understand the optimal cut distance convergence rates when $W_0$ 
has a specific structure.  First,  optimal convergence rates are derived for the estimation of the connection probability matrix $\bTheta_0$  when it belongs to classes of block-constant matrices. Second, we establish the optimal convergence rates for all classes of step-function graphons $f=\rho_n W_0$ both in sparse and dense case. In particular for $\rho_n=1$ (dense case),  our results imply that, for any integer $k\in [2,n]$ and  $k$--steps graphon $W_0$, one has
\beq \label{eq:upper_cut}
   \E_{W_0}\left[\delta_{\cut} \left (\widetilde{f}_{\bA}, W_0\right ) \right]\leq C\sqrt{\frac{k}{n\log (k)}}\ ,
\eeq 
where $C$ is a numerical constant (independent of $n$ and $k$) and that this convergence rate is optimal from the minimax point of view. This result has some interesting implications. In particular, this guarantees the optimality of the $\log(n)^{-1/2}$ rate in Proposition \ref{sampling_lemma_lovasz} for general graphons. On the other hand, our results imply that for more structured classes of graphons ($k\ll n$) much faster rates are achievable. 
Interestingly, we show that the adjacency matrix and its associated empirical graphons are already adaptive to the unknown number of blocks of the matrix $\bTheta_0$ or steps of $W_0$ and minimax optimal. As a consequence, there is no need to look for more involved estimators.

In practice, it could be disappointing that the raw data are already optimal with respect to the cut distance, whereas they perform really badly with respect to the $\delta_2$ distance. This is why we prove 
 that a singular value hard thresholding estimator  is still optimal with respect to the cut metric $\delta_{\square}$ while achieving the best known rate in $\delta_{2}$-distance in the  class of polynomial-time estimators.

 \medskip 
 
 Our results are in sharp contrast to all aforementioned manuscripts \cite{WolfeOlhede,chan2014consistent, borgs2015, airoldi2013stochastic,cai2014iterative,zhang2015estimating,klopp_graphon} whose primary focus is the $\delta_2$-distance  and whose convergence rates with respect to the $\delta_{\square}$-distance are derived from the domination of $\delta_{\square}$ by $\delta_{2}$. Closest to our contributions, is the recent paper \cite{borgs_chayes_cohn} where the authors  introduce a least-cut norm estimator for a more general model of unbounded graphons. Translated in our framework, their non-polynomial time algorithm achieves, in some cases, the optimal convergence rate (up to a logarithmic loss) and it is slower in other cases. In Section \ref{sec:unbounded} we extend our study to  unbounded graphons and compare our results to those of \cite{borgs_chayes_cohn}. In particular, our Proposition \ref{prop:unbounded} implies that the empirical graphon associated to the adjacency matrix and to the singular value hard thresholding estimator are optimal (up to a logarithmic factor) also in the general case of unbounded graphons. Note that the main difference with the method proposed in \cite{borgs_chayes_cohn} is that both our estimators can be easily computed in polynomial time.
 
 \medskip

 From a technical point of view, the tools needed for deriving  optimal cut distance rates differ from those used for the $\delta_2$-distance. For establishing the minimax lower bounds, the main technical hurdle is to build a collection of well-spaces graphons with respect to the cut distance. Indeed, the cut distance  $\delta_{\square}(W_1,W_2)$ is difficult to lower bound as it is defined as an infimum over all measure-preserving transformations.  As for the minimax upper bound on the estimation error in \eqref{eq:upper_cut}, it can be obtained  quite easily without the correct logarithmic term thanks to the Bernstein's inequality together with some bounds from \cite{klopp_graphon} for the stronger metric $\delta_2$. 
 However, 
 recovering the right logarithmic term in \eqref{eq:upper_cut} is much more challenging. The proof relies among other things on a careful application of Szemer\'edi's regularity lemma to distorted versions of the graphon.

 \medskip 
 
 The manuscript is organized as follows. First, we recall some basic results related to the cut metric.  The problem  of estimating the matrix of connection probabilities is considered in Section \ref{sec:proba_matrix}. We study the problem of graphon estimation in Section \ref{sec:graphon}. The appendix  contains all the proofs where in Appendix \ref{sec:proof_methods} we recall some  basic facts and  results that are often used in the proofs.

\section{Notation and Preliminaries}

\subsection{Notation}\label{sec:notation}

We gather here some of the notation used throughout this paper. Some of them have already been defined in the introduction.
\begin{itemize}
\item For a matrix $\bB$,  $\bB_{ij}$ (or $\bB_{i,j}$, or $(\bB)_{ij}$) is its $(i, j)$-th entry.  Let $\bB_{i,\cdot}$ and  $\bB_{\cdot,j}$ stand for its $i$th row and $j$th column respectively.  We denote by $\mathbb{R}^{k\times k}_{\rm sym}$ the class of all symmetric $k\times k$ matrices with real-valued entries. Given a matrix $\bB$ and $p\in [1,\infty]$, $\|\bB\|_p$ denotes its entry-wise $l_p$ norm, that is $\|\bB\|_p^{p}= \sum_{i,j}|\bB_{ij}|^p$ for $p<\infty$ and $\|\bB\|_{\infty}= \max_{i,j}|\bB_{ij}|$. Given $(p,q)\in [1,\infty]$, $\|\bB\|_{p\rightarrow q}$ stands for  its $l_p\rightarrow l_q$ operator norm:
	\begin{equation*}
	\|\bB\|_{p\rightarrow q}=\inf\{c\geq 0:\Vert \bB v\Vert_{l_q}\leq c \Vert  v\Vert_{l_p}\;\text{for all}\; v\in l_p\}
	\end{equation*}

 Finally, $\langle \bD, \bB\rangle = \sum_{i,j} \bD_{ij}\bB_{ij}$ stands for the  canonical  inner product between matrices $\bD,\bB\in \mathbb{R}^{n\times n}$.

\item  $\cW$ is the collection of symmetric measurable functions $[0,1]^2\rightarrow [-1,1]$. Given a kernel $W\in \cW$ and $p\in (1,\infty)$, its $l_p$ norm is defined by $\|W\|^p_p=\int |W(x,y)|^pdxdy$, whereas $\|W\|_{\infty}=\mathrm{ess\ sup}_{x,y} |W(x,y)|$. $\cW^+$ is the space of graphons and $\widetilde{\cW}^+$ is the corresponding quotient space. The  cut distance $\delta_{\square}(\cdot,\cdot)$ in the graphon spaces is  defined by \eqref{eq:def_cut_graphon_norm}. Also, $\delta_{1}(\cdot,\cdot)$ and $\delta_{2}(\cdot,\cdot)$ defined by \eqref{eq:def_graphon_norms} respectively correspond to the $l_1$ and $l_2$ distances on the quotient space of graphons $\widetilde{\cW}^+$.  Given a symmetric square matrix $\bTheta$ with values in $[0,1]$,  $\widetilde{f}_{\bTheta}$ is the empirical graphon  $\bTheta$ as defined in \eqref{eq:empirical_graphon}.

\item Given a probability matrix $\bTheta_0$, we  denote by $\E_{\bTheta_0}$ the expectation with respect to the distribution of $\bA$ if we consider  the inhomogeneous random graph model and given a graphon $W$ and $\rho_n$, we write $\E_{W}$ for the expectation with respect to the joint distribution of $({\boldsymbol\xi},\bA)$.

\item We denote by $\lfloor x\rfloor$  the maximal integer less than or equal to $x$ and by $\lceil x \rceil$  the smallest integer greater than or equal to $x$. For an positive integer $m$, set $[m]=\{1,\dots,m\}$. $\mathds{1}_{A}(\cdot)$ denotes the indicator function of a set $A$.  In the sequence,  $C$ stands for a positive constant that can vary from line to line. These are absolute constants unless otherwise mentioned. For two positive functions $f$ and $g$, we write $f\asymp g$ when there exist two positive numerical constants $C$ and $C'$ such $C g\leq  f\leq C' g$.  Finally, $\lambda$ is the Lebesgue measure on the interval $[0,1]$.

\item  Given a $n\times n$ matrix $\bTheta$ with entries in $[0,1]$, we define the empirical graphon $\widetilde{f}_{\bTheta}$ as the following piecewise constant function: $
	\widetilde{f}_{\bTheta}(x,y)= \bTheta_{\lceil nx\rceil, \lceil ny \rceil}$
	for all  $x$ and $y$ in $(0,1]$.

\end{itemize}

\subsection{Preliminaries}\label{preliminaries}

We start with a few basic properties of the cut norm for matrices $\bA$ and graphons $W$.  It is easy to see that  $$\|\bA\|_{\cut}\leq \frac{1}{n^{2}}\|\bA\|_{1}\leq \frac{1}{n} \|\bA\|_{2}$$
where $\|\cdot\|_{1}$ and $\|\cdot\|_{2}$ are the usual entry-wise $l_1$ and $l_2$-norms of a matrix. 
     For a function $W\in \cW$, we have 
$$\|W\|_{\cut}\leq \|W\|_{1}\leq  \|W\|_{2}\leq  \|W\|_{\infty}\leq 1$$
 where $\|\cdot\|_{1}$ and $\|\cdot\|_{2}$ denote  $l_1$ and $l_2$-norms of a graphon. In the opposite direction, we have $\|W\|_{2}\leq  \sqrt{\|W\|_{1}}$. As a consequence, the metric $\delta_1$ and $\delta_2$  define the same topology on the space $\widetilde{\cW}^+$ of graphons. In contrast, the cut distance $\delta_{\square}$ defines a weaker topology on the space $\widetilde{\cW}^+$ as illustrated by the aforementioned sampling result (Proposition \ref{sampling_lemma_lovasz}).

 We shall also sometimes rely on the equivalence between the cut norm and to the $l_{\infty}\rightarrow l_{1}$ operator  norm:
  \begin{equation}\label{eq:def_1infty_operator_norm}
  \|W\|_{\infty\rightarrow 1}=\underset{\Vert f\Vert_{\infty},\Vert g\Vert_{\infty}\leq 1}{\sup} \left \vert\underset{[0,1]^{2}}{\int}W(x,y)f(x)g(y)\mathrm{d} x\mathrm{d} y\right \vert
    \end{equation}
  where the supremum is taken over all (real-valued) functions $f$ and $g$ with values
  in $[-1, 1]$. It is known that (see e.g.,\ \cite{janson_cut})
   \begin{equation}\label{eq:cut_operator}
   \|W\|_{\cut}\leq \|W\|_{\infty\rightarrow 1}\leq 4 \|W\|_{\cut}\ .
  \end{equation}

\section{Probability matrix estimation}\label{sec:proba_matrix}

\subsection{Cut norm minimax risk}
We start with a simple proposition that bounds the expected cut distance between $\bTheta_0$ and the sampled adjacency matrix $\bA$. 	
Similar results already appeared in the literature, see e.g.,\ \cite[Lemma 10.11]{LovaszBook}, \cite{borgs_chayes_cohn} or \cite{guedon2016}. 
Its proofs is based on Bernstein's inequality and is given  in Section \ref{proof_lemma_LovaszBook}. 
\begin{prp}\label{lemma_LovaszBook}
	For any probability matrix $\bTheta_0$ we have
	\begin{equation}\label{eq:bound_bernstein_l1}
	\E_{\bTheta_0}\|\bA - \bTheta_0\|_{\square}\leq 12\sqrt{\frac{\Vert \bTheta_0\Vert_{1}+n}{n^{3}}}.
	\end{equation}
	In particular, if $\Vert \bTheta_0\Vert_{\infty}\geq 1/n$, we get
	 \begin{equation*}
	 \E_{\bTheta_0}\|\bA - \bTheta_0\|_{\square}\leq 24\sqrt{\frac{\Vert \bTheta_0\Vert_{\infty}}{n}}.
	 \end{equation*}
\end{prp}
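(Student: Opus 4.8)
The plan is to fix a pair of subsets $S,T\subset[n]$, control the deviation of the centered sum $X_{S,T}:=\sum_{i\in S,\,j\in T}(\bA_{ij}-\bTheta_{ij})$ by Bernstein's inequality, then take a union bound over all such pairs and integrate the resulting tail bound to control $\E_{\bTheta_0}\big[\max_{S,T}|X_{S,T}|\big]=n^2\,\E_{\bTheta_0}\|\bA-\bTheta_0\|_{\square}$.

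First I would rewrite $X_{S,T}$ as a linear combination of the independent centered variables $\bB_{ij}:=\bA_{ij}-\bTheta_{ij}$, $1\le i<j\le n$ (recall $\bA_{ii}=\bTheta_{ii}=0$ and that both matrices are symmetric): one has $X_{S,T}=\sum_{i<j}c_{ij}\bB_{ij}$ with $c_{ij}=\1\{i\in S,\,j\in T\}+\1\{j\in S,\,i\in T\}\in\{0,1,2\}$. Since $|\bB_{ij}|\le 1$ and $\Var(\bB_{ij})=\bTheta_{ij}(1-\bTheta_{ij})\le\bTheta_{ij}$, one gets $\sum_{i<j}c_{ij}^2\,\Var(\bB_{ij})\le 4\sum_{i<j}\bTheta_{ij}=2\|\bTheta_0\|_1$ and $\max_{i<j}|c_{ij}\bB_{ij}|\le 2$, so Bernstein's inequality yields, for every $t>0$,
\[
\P\big(|X_{S,T}|>t\big)\;\le\;2\exp\!\Big(-\frac{t^2/2}{\,2\|\bTheta_0\|_1+2t/3\,}\Big).
\]

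Next, a union bound over the at most $2^n\cdot 2^n=4^n$ pairs $(S,T)$ gives $\P\big(\max_{S,T}|X_{S,T}|>t\big)\le 2\cdot 4^n\exp\big(-\tfrac{t^2/2}{2\|\bTheta_0\|_1+2t/3}\big)$. Separating the sub-Gaussian regime ($2\|\bTheta_0\|_1\ge 2t/3$) from the sub-exponential one, this is, up to absolute constants, at most $\exp\big(n\ln 4-c\min(t^2/\|\bTheta_0\|_1,\,t)\big)$, which drops below $1$ once $t\ge t_0$ for some $t_0\asymp\sqrt{n\|\bTheta_0\|_1}+n$. Plugging this into $\E[\max_{S,T}|X_{S,T}|]\le t_0+\int_{t_0}^{\infty}\P(\max_{S,T}|X_{S,T}|>t)\,dt$ and evaluating the residual Gaussian/exponential integral, one obtains $\E[\max_{S,T}|X_{S,T}|]\le 12\sqrt{n\|\bTheta_0\|_1+n^2}$ after bookkeeping the numerical constants; dividing by $n^2$ gives \eqref{eq:bound_bernstein_l1}. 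For the last assertion, if $\|\bTheta_0\|_\infty\ge 1/n$ then both $\|\bTheta_0\|_1\le n^2\|\bTheta_0\|_\infty$ and $n\le n^2\|\bTheta_0\|_\infty$, hence $\|\bTheta_0\|_1+n\le 2n^2\|\bTheta_0\|_\infty$ and $12\sqrt{(\|\bTheta_0\|_1+n)/n^3}\le 12\sqrt2\,\sqrt{\|\bTheta_0\|_\infty/n}\le 24\sqrt{\|\bTheta_0\|_\infty/n}$.

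The only genuine work is the last step — integrating the two-regime Bernstein tail and chasing the absolute constants down to exactly $12$; conceptually, once $X_{S,T}$ is written as a combination of independent bounded variables with variance proxy $2\|\bTheta_0\|_1$, everything is routine. The minor point that needs care is the exact counting of ordered versus unordered pairs (the coefficients $c_{ij}\in\{0,1,2\}$) together with the vanishing diagonal, which is precisely what keeps the variance proxy at $2\|\bTheta_0\|_1$.
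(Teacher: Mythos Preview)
Your proposal is correct and follows essentially the same route as the paper: Bernstein's inequality on each $X_{S,T}$ followed by a union bound over all pairs $(S,T)$, then a high-probability bound plus a trivial bound on the exceptional event. The only technical difference is that the paper first invokes the standard reduction $\|\bA-\bTheta_0\|_{\square}\le \tfrac{4}{n^2}\max_{S\cap T=\emptyset}|X_{S,T}|$ (valid since the diagonal vanishes), which replaces your coefficients $c_{ij}\in\{0,1,2\}$ by plain indicators, shrinks the union bound from $4^n$ to $3^n$, and makes the constant $12$ fall out directly from the threshold $t=3\sqrt{(\|\bTheta_0\|_1+n)n}$ without any tail integration.
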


This implies that the adjacency matrix $\bA$ is $\sqrt{\Vert \bTheta_0\Vert_{\infty}/n}$-close in cut-distance to the probability matrix $\bTheta_0$. This bound is valid for all matrices $\bTheta_0$. It turns out that no estimator can perform much better than $\bA$, even on some simple classes of parameters $\bTheta_0$.

  Let $n,k$ be integers such that $2\le k\le n$ and $\cT[k]$ be defined by 
 $$\cT[k]= \{\bTheta_0:\ \exists \ z\in \cZ_{n,k},\ \bQ\in [0,1]^{k\times k}_{\rm sym} \text{ such that } \bTheta_{ij}=\bQ_{z(i)z(j)}, \ i\ne j,   \ \text{ and }  \bTheta_{ii}=0 \ \forall  i\}$$
 where we denote by $\cZ_{n,k}$ the set of all mappings $z$ from $[n]$ to $[k]$. In other words $\cT[k]$ is made of matrices that, up to a permutation of their rows and their columns, are (up to the diagonal) block constants with at most $k$ blocks. Also, this corresponds to connection probability matrices of $k$-class stochastic blocks models whose vector label $Z=(Z_a)$ has been fixed.  For any $\rho_n\in (0,1]$, consider the set 
 \beqn 
 \cT[k,\rho_{n}]= \left\{\bTheta_0\in \cT[k]  :\ \|\bTheta_0\|_{\infty}\leq \rho_{n}\right\}\ ,
 \eeqn
 of matrices whose largest value is smaller or equal to $\rho_n$.  The following Proposition, proved in section \ref{proof_low-bound_dense}, gives a lower bound on the minimax risk over the class $\cT[2,\rho_{n}]$ of block-constant matrices with only two blocks:
\begin{prp}\label{lemma_low-bound_prob}
The minimax risk measured in cut norm satisfies
 \beqn
 \inf_{\widehat{\bTheta}}\sup_{\bTheta_0\in \cT[2,\rho_{n}]}\E_{\bTheta_0}\left[\left\Vert\widehat{\bTheta}-\bTheta_0\right\Vert_{\cut}\right]\geq C\min\left(\sqrt{\frac{\rho_n}{n}},\rho_n\right)
 \eeqn
 where $\E_{\bTheta_0}$ denotes the expectation with respect to the distribution of $\bA$ when the underlying probability matrix is $\bTheta_0$.
\end{prp}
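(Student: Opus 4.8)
\emph{Proof plan.} The plan is to apply Fano's method to an exponentially large family of two--block probability matrices that share the same pair of connectivity parameters but are built on well--separated partitions of $[n]$. Since $\cT[2,1/2]\subseteq \cT[2,\rho_n]$ when $\rho_n\geq 1/2$ and $\min(\sqrt{\rho_n/n},\rho_n)\asymp 1/\sqrt n$ on that range, I may assume $\rho_n\leq 1/2$. Fix a small absolute constant $c_0\leq 1/2$, set $\delta:=c_0\min(\sqrt{\rho_n/n},\rho_n)$, let $\bQ\in[0,1]^{2\times 2}_{\rm sym}$ be the symmetric matrix with $\bQ_{11}=\bQ_{22}=\rho_n$ and $\bQ_{12}=\rho_n-\delta$, and for $z\in\cZ_{n,2}$ write $\bTheta^{z}$ for the associated block--constant matrix with zero diagonal. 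Since $\rho_n-\delta\in[0,\rho_n]$ we have $\|\bTheta^{z}\|_\infty=\rho_n$, so each $\bTheta^{z}\in\cT[2,\rho_n]$.

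\emph{Step 1: a packing of partitions.} Identify a balanced labeling $z$ with the set $A_z=z^{-1}(1)$ of size $\lfloor n/2\rfloor$. I would first exhibit labelings $z^{(1)},\dots,z^{(M)}$ with $M\geq e^{cn}$ for an absolute constant $c>0$ such that $n/8\leq |A_{z^{(m)}}\cap A_{z^{(m')}}|\leq 3n/8$ for all $m\neq m'$ --- equivalently, the four pairwise intersections of the blocks of $z^{(m)}$ and $z^{(m')}$ all have cardinality of order $n$. Such a packing exists by a routine probabilistic argument (draw the sets $A$ uniformly and independently among $\lfloor n/2\rfloor$--subsets of $[n]$; a fixed pair violates the two--sided inequality with probability exponentially small in $n$ by a binomial tail bound, so a union bound over the $M^2$ pairs leaves a valid configuration when $c$ is small enough), or via a Gilbert--Varshamov bound for constant--weight codes.

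\emph{Step 2: separation, divergence, and conclusion.} Fix $m\neq m'$. Taking $S=A_{z^{(m)}}\cap A_{z^{(m')}}$ and $T=A_{z^{(m)}}\cap A_{z^{(m')}}^{c}$ --- disjoint sets, so the diagonal is irrelevant --- every entry of $\bTheta^{z^{(m)}}-\bTheta^{z^{(m')}}$ indexed by $i\in S$, $j\in T$ equals $\rho_n-(\rho_n-\delta)=\delta$, so that, up to rounding,
\[
\bigl\|\bTheta^{z^{(m)}}-\bTheta^{z^{(m')}}\bigr\|_{\square}\ \geq\ \frac{\delta\,|S|\,|T|}{n^{2}}\ \geq\ \frac{\delta}{64}.
\]
On the information side, $\bTheta^{z^{(m)}}$ and $\bTheta^{z^{(m')}}$ agree on all but at most $\binom n2$ pairs, and on those the two Bernoulli means lie in $\{\rho_n,\rho_n-\delta\}$; the elementary bound $\mathrm{KL}\bigl(\mathrm{Ber}(p)\,\|\,\mathrm{Ber}(q)\bigr)\leq (p-q)^{2}/[q(1-q)]$ together with $\rho_n-\delta\geq\rho_n/2$ and $\rho_n\leq 1/2$ bounds every such term by $4\delta^{2}/\rho_n$, hence
\[
\mathrm{KL}\bigl(\mathbb{P}_{\bTheta^{z^{(m)}}}\,\|\,\mathbb{P}_{\bTheta^{z^{(m')}}}\bigr)\ \leq\ \frac{2n^{2}\delta^{2}}{\rho_n}\ =\ 2c_0^{2}\min\bigl(n,n^{2}\rho_n\bigr)\ \leq\ 2c_0^{2}\,n .
\]
Choosing $c_0$ small enough that $2c_0^{2}<c/8$ makes the average divergence to the mixture at most $\tfrac18\log M$, and a standard version of Fano's lemma combined with the separation above yields
\[
\inf_{\widehat{\bTheta}}\ \max_{1\leq m\leq M}\ \E_{\bTheta^{z^{(m)}}}\Bigl[\bigl\|\widehat{\bTheta}-\bTheta^{z^{(m)}}\bigr\|_{\square}\Bigr]\ \geq\ c'\delta\ \asymp\ \min\Bigl(\sqrt{\tfrac{\rho_n}{n}},\rho_n\Bigr),
\]
which, since all $\bTheta^{z^{(m)}}$ lie in $\cT[2,\rho_n]$, is the claimed bound (finitely many small values of $n$ being absorbed into the constant).

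\emph{Main obstacle.} The only genuinely delicate ingredient is the \emph{two--sided} control in Step 1: one needs $|A_{z^{(m)}}\cap A_{z^{(m')}}|$ bounded away from both $0$ and $n/2$, so that \emph{both} $S$ and $T$ in Step 2 have size of order $n$ and the cut--norm gap is truly of order $\delta$ (a one--sided packing would only guarantee a lower--order gap). Everything else is a soft combination of the inequality $\|\bB\|_{\square}\leq \|\bB\|_{2}/n$ --- which is precisely why this rate is attainable --- and elementary Bernoulli--divergence estimates; note that for $k=2$ blocks the cut--distance and the rescaled Frobenius--distance lower--bound constructions essentially coincide, so no refined argument based on Szemer\'edi's regularity lemma is needed here, in contrast with the situation for $k\gg 2$.
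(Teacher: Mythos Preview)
Your proof is correct and follows essentially the same route as the paper: Fano's method applied to an exponential family of two--block matrices indexed by partitions of $[n]$, with a \emph{two--sided} packing of the partitions to guarantee a cut--norm separation of order $\delta$, and a Bernoulli KL bound of order $n^2\delta^2/\rho_n$ to control the information. The only cosmetic differences are the choice of $\bQ$ (the paper takes entries $\rho_n/2\pm\epsilon$ rather than $\rho_n$ and $\rho_n-\delta$) and that the paper does not restrict to balanced labelings; your identification of the two--sided packing as the only nontrivial step is exactly right, and the paper handles it the same way, invoking a Varshamov--Gilbert type bound to obtain $N\geq e^{cn}$ labelings with $n/4\leq|V_{u_i}\triangle V_{u_j}|\leq 3n/4$.
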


Comparing Proposition \ref{lemma_low-bound_prob} with Proposition \ref{lemma_LovaszBook} we observe that the raw data $\bA$ is minimax optimal for the class $\cT[2,\rho_n]$ for all $\rho_n\geq 1/n$. As a consequence, there is no need to look for a more involved estimator.
Since for $\rho_n\leq 1/n$ the constant estimator $\widehat{\bTheta}=0$ satisfies $\E_{\bTheta_0}[\Vert\widehat{\bTheta}-\bTheta_0\Vert_{\cut}]\leq \rho_n$ and using that the collections $\cT[k,\rho_{n}]$ are nested, the two previous propositions 
imply that the optimal cut norm estimation rates for $\cT[k,\rho_n]$ with $k\geq 2$ is given by 
 $$
 \inf_{\widehat{\bTheta}}\sup_{\bTheta_0\in \cT[k,\rho_{n}]}\E_{\bTheta_0}\left[\left\Vert\widehat{\bTheta}-\bTheta_0\right\Vert_{\cut}\right]\asymp
 \min\left(\sqrt{\frac{\rho_n}{n}},\rho_n\right).
 $$

 \bigskip 
 
 Until now, we left aside the specific case of constant matrices $\cT[1,\rho_n]$ which correspond to Erd\"os-R\'enyi random graphs. 
It turns out that the situation is quite different for this simple class. 
For a constant matrix $\bTheta_0$, estimating  $\bTheta_0$ given $\bA$ amounts to infer the parameter $p$ of a Bernoulli distribution given a sample of size $n(n-1)/2$. From this analogy, we consider the  matrix $\overline{\bA}$ whose all non-diagonal entries are equal to $\bar A=\sum_{i,j}\bA_{ij}/(n(n-1))$. Then, it is straightforward to prove that 
\[
 \E_{\bTheta_0}\Big[\|\bTheta_0- \overline{\bA}\|_{\square }\Big]\leq \dfrac{1}{n^{2}}\E_{\bTheta_0}\Big[ \|\bTheta_0- \overline{\bA}\|_{1} \Big]\leq \dfrac{n-1}{n}\sqrt{\Var(\bar A)} \leq \sqrt{\frac{2p}{n(n-1)}}\ , 
\]
which is $\sqrt{n}$-faster than what is achieved by the adjacency matrix $\bA$. Using again the analogy with the problem of Bernoulli parameter estimation, one may easily get the following minimax lower bound: 
\[
 \inf_{\widehat{\bTheta}}\sup_{\bTheta_0\in \cT[1,\rho_{n}]}\E_{\bTheta_0}\left[\left\Vert\widehat{\bTheta}-\bTheta_0\right\Vert_{\cut}\right]\geq C
 \min\left(\frac{\sqrt{\rho_n}}{n},\rho_n\right) 
\]
which assesses that the $\sqrt{\rho_n}/n$-rate achieved by $\overline{\bA}$ is optimal.

\subsection{Comparison with $l_1$ and $l_2$-estimation }\label{sec:discussion_comparison_matrix_estimation}

The cut norm optimal estimation rate  is quite different from what has been established for the Frobenius norm (also called $l_2$) estimation rate in \cite{klopp_graphon} (see also \cite{gao2014rate} for the dense case), that is 
 \beq\label{eq:optimal_frobenius}
 \inf_{\widehat{\bTheta}}\sup_{\bTheta_0\in \cT[k,\rho_{n}]}\E_{\bTheta_0}\left[\dfrac{1}{n}\left\Vert\widehat{\bTheta}-\bTheta_0\right\Vert_{2}\right]\asymp \min\left(\sqrt{\frac{\rho_n\log(k)}{n}}+\dfrac{\sqrt{\rho_n}k}{n},\rho_n\right)\ , 
 \eeq
 for any $k=2, \ldots, n$. Besides, the minimax risk bound is achieved  by the restricted least-square estimators~\cite{klopp_graphon} defined by
\beq\label{eq:definition_restricted_least_squares}
\widehat{\bTheta}_{k,\rho_n}:= \arg\min_{\bTheta\in \cT[k,\rho_{n}]}\|\bTheta-\bA\|_2^2\ . 
\eeq
 Since the Frobenius norm dominates the cut norm, it is expected that  the cut norm convergence rate is faster than the Frobenius norm estimation rate. When $\rho_n$ is not too small and the number of blocks remains small ($k\leq \sqrt{n\log(n)}$),  the gain is  a $\log(k)$ factor, whereas, for larger $k$, the gain is of order  $k/\sqrt{n}$. 
More importantly, the optimal Frobenius norm convergence rate \eqref{eq:optimal_frobenius} is only known to be achieved by non-polynomial time estimators such as  \eqref{eq:definition_restricted_least_squares}.

\bigskip

In view of the above discussion, one may wonder whether it is possible to build estimators that are  near optimal is terms of  both the cut and Frobenius distances. Since for any matrix $\bB$,   $\Vert \bB\Vert_{\cut}\leq \Vert  \bB\Vert_{2}/n$, it follows that, for $k\leq \sqrt{n}$, the  restricted least-square estimator $\widehat{\bTheta}_{k,\rho_n}$~\eqref{eq:definition_restricted_least_squares} 
 is also near optimal (up to $\sqrt{\log(k)}$ factor) with respect to the cut distance, that is,
 \[
\E_{\bTheta_0}\Big[\big\|\widehat\bTheta_{k,\rho_n} - \bTheta_0\big\|_{\square}\Big]\leq  C\sqrt{\frac{\rho_n \log(k)}{n}}\ . 
\]
For matrices $\bTheta_0$ with more than $\sqrt{n}$ blocks, it is not clear whether the estimator $\widehat\bTheta_{k,\rho_n}$ achieves a fast rate of convergence in the cut norm.

\bigskip

In any case, the computational complexity of  $\widehat{\bTheta}_{k,\rho_n}$ is  non polynomial. In fact, no polynomial-time algorithm is known to achieve the minimax risk \eqref{eq:optimal_frobenius} with respect to the Frobenius norm. Below, we describe an estimator that is optimal in the cut distance and also achieves the best known rate in Frobenius distance in the class of polynomial-time estimators. Let us write the singular value decomposition of $\bA$:
\begin{equation}
\bA=\overset{\rank(\bA)}{\underset{j=1}{\Sigma}}\sigma_j(\bA)u_j(\bA)v_j(\bA)^{T},
\end{equation}
where 
$\sigma_j(\bA)>0$ are the singular values of $\bA$ indexed in the decreasing order, $u_j(\bA)$ are eigenvectors of $\bA$ and $v_j(\bA)=\pm u_j(\bA)$.
Given a tuning parameter $\lambda>0$,  we define 
\begin{equation}\label{hard_estimator}
\widetilde \bTheta_{\lambda}=\underset{j:\sigma_j(\bA)\geq \lambda}{\Sigma}\sigma_j(\bA)u_j(\bA)v_j(\bA)^{T}
\end{equation}
as the  singular value hard thresholding  estimator of $\bTheta_0$. We have the following 
 \begin{prp}\label{corollary_spectral}
 Assume that  $\rho_n\geq \log(n)/n$. 
 Let $\lambda=c\sqrt{\rho_nn}$ where $c$ is a sufficiently large numerical constant.  Then, for any $k\in [n]$ and any  $\bTheta_0\in \cT[k,\rho_n]$,
 the hard thresholding estimator $\widetilde \bTheta_{\lambda}$ simultaneously satisfies, with probability larger than $1-1/n$, 
 \begin{eqnarray}
 \frac{1}{n}\Vert \widetilde \bTheta_{\lambda}-\bTheta_0\Vert_{2}&\leq& C\sqrt{\frac{\rho_n k}{n}}\ ,\label{frobenius_bound}\\
 \Vert \widetilde \bTheta_{\lambda}-\bTheta_0\Vert_{\cut}\leq \dfrac{1}{n}\Vert \widetilde \bTheta_{\lambda}-\bTheta_0\Vert_{2\rightarrow 2}&\leq& C\sqrt{\dfrac{\rho_n}{n}}\label{cut_bound}\ ,
 \end{eqnarray}
 where $C$ is a numerical constant. 
 \end{prp}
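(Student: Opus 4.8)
\textbf{Proof plan for Proposition \ref{corollary_spectral}.}

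The plan is to control the singular value hard thresholding estimator $\widetilde\bTheta_\lambda$ through the concentration of $\bA-\bTheta_0$ in the spectral norm and through the low-rank structure of matrices in $\cT[k,\rho_n]$. First I would invoke a standard concentration bound for the spectral norm of the centered adjacency matrix of an inhomogeneous random graph: under the assumption $\rho_n\geq \log(n)/n$, one has $\|\bA-\bTheta_0\|_{2\to 2}\leq C\sqrt{\rho_n n}$ with probability at least $1-1/n$ (this follows from e.g.\ known results on the spectra of random matrices with independent bounded entries and variance proxy $\rho_n$; it is exactly in this regime that the bound holds). Condition on this event for the rest of the argument, and note that with the choice $\lambda = c\sqrt{\rho_n n}$ for $c$ large enough we have $\|\bA-\bTheta_0\|_{2\to 2}\leq \lambda/2$.

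Next I would establish the spectral-norm bound \eqref{cut_bound}. Since $\bTheta_0\in\cT[k,\rho_n]$, it is (up to the zero diagonal) block constant with at most $k$ blocks, hence $\rank(\bTheta_0)\leq k+1$ (the diagonal correction contributes at most rank $n$, but one argues instead on the block-constant matrix $\bB$ with $\bB_{z(i)z(j)}=\bQ_{z(i)z(j)}$ which has rank $\leq k$, and $\|\bTheta_0-\bB\|_{2\to 2}\leq \|\bTheta_0-\bB\|_2 \leq \sqrt{n}\rho_n \leq \sqrt{\rho_n n}$ using $\rho_n\le 1$; I would fold this into the constant). The hard thresholding estimator satisfies the deterministic bound
\[
\|\widetilde\bTheta_\lambda - \bTheta_0\|_{2\to 2}\leq \|\widetilde\bTheta_\lambda - \bA\|_{2\to 2} + \|\bA-\bTheta_0\|_{2\to 2}\leq \lambda + \|\bA-\bTheta_0\|_{2\to 2}\leq \tfrac{3}{2}\lambda,
\]
because $\widetilde\bTheta_\lambda$ is obtained from $\bA$ by discarding singular values below $\lambda$, so $\|\widetilde\bTheta_\lambda-\bA\|_{2\to2}=\sigma_{j^*+1}(\bA)\leq\lambda$. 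Combining with $\|W\|_{\cut}\leq n^{-1}\|\bTheta\|_{2\to 2}$ (translating the matrix cut norm inequality $\|\bB\|_{\cut}\le n^{-1}\|\bB\|_{2}$ through $\|\bB\|_2\ge\|\bB\|_{2\to2}$ — actually one uses directly that the cut norm of $\bB/n^2$ is at most $n^{-1}\|\bB\|_{2\to 2}$ by testing against indicator vectors and Cauchy–Schwarz) yields \eqref{cut_bound} with $C$ proportional to $c$.

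For the Frobenius bound \eqref{frobenius_bound}, the key point is that on the conditioning event the number of singular values of $\bA$ exceeding $\lambda$ is comparable to $k$: by Weyl's inequality $\sigma_{j}(\bA)\leq \sigma_j(\bTheta_0)+\|\bA-\bTheta_0\|_{2\to2}$, and since $\bTheta_0$ has essentially rank $k$, for $j> k+1$ we get $\sigma_j(\bA)\leq \sqrt{\rho_n n}+\lambda/2 < \lambda$ provided $c$ is large; hence $\widetilde\bTheta_\lambda$ has rank at most $k+1$, so $\widetilde\bTheta_\lambda - \bTheta_0$ has rank at most $2k+2$, giving $\|\widetilde\bTheta_\lambda-\bTheta_0\|_2\leq \sqrt{2k+2}\,\|\widetilde\bTheta_\lambda-\bTheta_0\|_{2\to 2}\leq C\sqrt{k}\sqrt{\rho_n n}$, which is \eqref{frobenius_bound} after dividing by $n$. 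I expect the main obstacle to be the clean handling of the zero-diagonal perturbation and of the constants: one must check that subtracting the diagonal and the rank-one type corrections do not spoil the near-rank-$k$ structure, and that the single choice of $c$ works simultaneously for both the "threshold is above the noise level" and "threshold kills the tail singular values" requirements — but these are all routine once the spectral concentration bound is in hand, which is the only genuinely probabilistic ingredient.
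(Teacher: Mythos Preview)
Your proof is correct, and for the spectral/cut bound \eqref{cut_bound} it matches the paper: both condition on the event $\|\bA-\bTheta_0\|_{2\to 2}\le \lambda/2$ (the paper obtains this from a Bandeira--van Handel bound) and then use the trivial triangle inequality $\|\widetilde\bTheta_\lambda-\bA\|_{2\to 2}\le \lambda$ together with $\|\cdot\|_\square\le n^{-1}\|\cdot\|_{2\to 2}$.

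For the Frobenius bound \eqref{frobenius_bound} you take a different and arguably more elementary route. The paper invokes the variational characterization $\widetilde\bTheta_\lambda\in\argmin_\bTheta\{\|\bA-\bTheta\|_2^2+\lambda^2\rank(\bTheta)\}$ and manipulates the resulting basic inequality with the trace duality $|\langle\bE,\bM\rangle|\le\|\bE\|_{2\to 2}\|\bM\|_2\sqrt{\rank(\bM)}$ to arrive at $\|\widetilde\bTheta_\lambda-\bTheta_0\|_2^2\le 4\lambda^2\rank(\bTheta_0)$. You instead use Weyl's inequality to show directly that $\rank(\widetilde\bTheta_\lambda)\le k$ on the good event, and then pass from spectral to Frobenius norm via the rank. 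Your route is shorter and avoids the optimisation detour; the paper's argument has the advantage of never needing to explicitly bound $\rank(\widetilde\bTheta_\lambda)$ (its variational inequality automatically absorbs the $-\lambda^2\rank(\widetilde\bTheta_\lambda)$ term).

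One remark on the zero-diagonal issue you flag: the claim $\rank(\bTheta_0)\le k+1$ is genuinely false (for $k=1$ and $\bQ=1/2$ one has $\rank(\bTheta_0)=n$), so in your Frobenius step you must indeed route through the full block-constant matrix $\bB$, bounding $\rank(\widetilde\bTheta_\lambda-\bB)\le 2k$ and adding back $\|\bB-\bTheta_0\|_2\le\sqrt{n}\,\rho_n\le\sqrt{\rho_n n}$; for the Weyl step the cleaner observation is $\|\bB-\bTheta_0\|_{2\to 2}\le\rho_n\le \lambda/c$ (spectral, not Frobenius). The paper's proof actually makes the same slip---it writes ``since $\rank(\bTheta_0)\le k$'' at the end---and is repaired by exactly the same device.
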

The low-rank estimator $ \widetilde \bTheta_{\lambda}$ was previously considered in  \cite{Chatterjee_mc} for Frobenius norm estimation, but error bounds obtained in \cite{Chatterjee_mc} are much more pessimistic than \eqref{frobenius_bound}. It follows from \eqref{cut_bound}, that   for $\rho_n\geq \log(n)/n$, with high probability, $ \widetilde \bTheta_{\lambda}$ achieves the optimal rate in the cut norm  and  the $\sqrt{\rho_n k/n}$ rate in Frobenius norm, which is the best known rate among polynomial-time estimators.

We close this section by the following proposition which gives the minimax optimal rate of estimation in $l_1$-norm. This will allow us to further compare the $\delta_1$ and $\delta_{\cut}$ convergence rates for graphon estimation in the next section. 
\begin{prp}\label{prop:matrix_l_1}
For any sequence $\rho_n>0$ and any positive integer $2\leq k\leq n$, one has
 \beq\label{eq:minimax_matrix_L1}
  \inf_{\widehat{\bTheta}}\sup_{\bTheta_0\in \cT[k,\rho_{n}]}\E_{\bTheta_0}\left[\frac{1}{n^{2}}\left\Vert\widehat{\bTheta}-\bTheta_0\right\Vert_{1}\right] \asymp \min\left\{\sqrt{\frac{\rho_n\log(k)}{n}}+ \frac{\sqrt{\rho_n}k}{n},\rho_n\right\}\ .
  \eeq
\end{prp}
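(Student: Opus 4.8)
The plan is to obtain the upper bound directly from the Frobenius-norm analysis together with the Cauchy--Schwarz inequality, and to obtain the matching lower bound by recycling the constructions behind the Frobenius lower bound \eqref{eq:optimal_frobenius}, after checking that they are ``flat'' enough that a separation in $\|\cdot\|_2$ translates into a separation in $\|\cdot\|_1$ of the same order. For the upper bound, note that $\frac{1}{n^2}\|\bB\|_1\le\frac{1}{n}\|\bB\|_2$ for every $\bB\in\bbR^{n\times n}$ by Cauchy--Schwarz, so the restricted least-squares estimator $\widehat{\bTheta}_{k,\rho_n}$ of \eqref{eq:definition_restricted_least_squares}, which by \eqref{eq:optimal_frobenius} satisfies $\sup_{\bTheta_0\in\cT[k,\rho_n]}\E_{\bTheta_0}[\frac1n\|\widehat{\bTheta}_{k,\rho_n}-\bTheta_0\|_2]\le C\min\{\sqrt{\rho_n\log(k)/n}+\sqrt{\rho_n}\,k/n,\rho_n\}$, also satisfies the same bound for the normalized $l_1$ risk. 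This gives the ``$\lesssim$'' half of \eqref{eq:minimax_matrix_L1}.

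For the lower bound, the elementary observation is that $\|\bB\|_1\ge\|\bB\|_2^2/\|\bB\|_\infty$, so that for any $\bTheta_0,\bTheta_0'\in\cT[k,\rho_n]$, writing $\Delta=\bTheta_0-\bTheta_0'$ and using $\|\Delta\|_\infty\le\rho_n$,
\[
\frac{1}{n^2}\|\Delta\|_1\ \ge\ \frac{\|\Delta\|_2^2}{n^2\|\Delta\|_\infty}\ \ge\ \frac{1}{\rho_n}\Big(\frac{\|\Delta\|_2}{n}\Big)^2 ,
\]
and, more usefully, that whenever the competing parameters differ by a constant multiple of some $\delta$ on a constant fraction of the $n^2$ cells one has both $\|\Delta\|_\infty\asymp\delta$ and $\frac{1}{n^2}\|\Delta\|_1\asymp\delta\asymp\frac1n\|\Delta\|_2$. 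I would then revisit the three building blocks of the Frobenius lower bound of \cite{klopp_graphon}, keeping the same reduction schemes (Le Cam's and Fano's methods) and the same information-theoretic estimates (which only involve $\|\Delta\|_2^2/\rho_n$), but choosing the size of the perturbations to match the target rate: \emph{(a)} the $\rho_n$-term comes from a two-point (or small-packing) argument with elements of $\cT[k,\rho_n]$ satisfying $\frac1n\|\Delta\|_2\gtrsim\rho_n$, for which the display above immediately gives $\frac{1}{n^2}\|\Delta\|_1\gtrsim\rho_n$; \emph{(b)} the $\sqrt{\rho_n}\,k/n$-term, relevant when $\rho_n\gtrsim k^2/n^2$, comes from fixing a balanced assignment into $k$ groups and letting $\bQ$ range over a Varshamov--Gilbert packing of $\{\rho_n/2,\rho_n/2+\delta\}^{k\times k}_{\rm sym}$ with $\delta\asymp\sqrt{\rho_n}\,k/n\le\rho_n/2$, in which $\Delta$ equals $\pm\delta$ on a constant fraction of the cells, so $\frac1{n^2}\|\Delta\|_1\asymp\delta$ and the Kullback--Leibler divergences stay of order $k^2$; \emph{(c)} the $\sqrt{\rho_n\log(k)/n}$-term, relevant when $\rho_n\gtrsim\log(k)/n$, comes from the clustering construction of \cite{klopp_graphon}, with a linear number of vertices free to move between $k$ anchored groups and with the gaps in the fixed matrix $\bQ$ rescaled from order $\rho_n$ down to order $\sqrt{\rho_n\log(k)/n}\le\rho_n/2$, so that again $\Delta$ has entrywise magnitude of order $\sqrt{\rho_n\log(k)/n}$ on a constant fraction of the $n^2$ cells. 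Since, in the regime where $\rho_n$ is the smaller of the two quantities in \eqref{eq:minimax_matrix_L1}, part \emph{(a)} already suffices, while in the complementary regime one has both $\rho_n\gtrsim\log(k)/n$ and $\rho_n\gtrsim k^2/n^2$ so that \emph{(b)} and \emph{(c)} apply and together give $\gtrsim\sqrt{\rho_n\log(k)/n}+\sqrt{\rho_n}\,k/n$, combining the three cases yields \eqref{eq:minimax_matrix_L1}.

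The only genuine work is in step \emph{(c)}: one must check that, after lowering the signal strength in the clustering construction of \cite{klopp_graphon} from order $\rho_n$ to order $\sqrt{\rho_n\log(k)/n}$ (which is admissible precisely because $\rho_n\gtrsim\log(k)/n$ in the relevant regime), the associated packing of assignments remains exponentially large in the quotient, relabeling-invariant metric, and the pairwise Kullback--Leibler divergences stay of order $n\log(k)$, hence below the logarithm of the packing cardinality once the numerical constant in the gap is chosen small enough. This is the same computation as in the Frobenius lower bound of \cite{klopp_graphon}, only with a smaller signal, so no new idea is needed; parts \emph{(a)} and \emph{(b)} and the upper bound are immediate.
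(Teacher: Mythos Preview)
Your proposal is correct and essentially mirrors the paper's approach: the upper bound via Cauchy--Schwarz and the Frobenius risk of the restricted least-squares estimator is exactly what the paper does, and for the lower bound the paper likewise says to follow the proof of Proposition~2.4 in \cite{klopp_graphon} with $\|\cdot\|_2$ replaced by $\|\cdot\|_1$, skipping all details. Your flatness observation ($\|\Delta\|_1\ge\|\Delta\|_2^2/\|\Delta\|_\infty$, with $\|\Delta\|_\infty$ of the order of the target rate in each construction) is precisely the mechanism that makes this replacement go through, and you spell it out more explicitly than the paper does.

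Two minor remarks. First, your reference to the ``quotient, relabeling-invariant metric'' in part~(c) is out of place: this is the probability-\emph{matrix} estimation problem, so $\|\widehat\bTheta-\bTheta_0\|_1$ is computed directly, without any infimum over relabelings; the packing over assignments is a packing in the ordinary $l_1$ metric. Second, you describe the gap in the clustering construction as being ``rescaled from order $\rho_n$ down to order $\sqrt{\rho_n\log(k)/n}$'', but the construction in \cite{klopp_graphon} already uses a gap of this order in the Frobenius proof (that is how the Kullback constraint is met), so no additional rescaling is needed --- your parts (b) and (c), with $\delta$ capped at $c\rho_n$ when necessary, already produce the full $\min\{\cdot,\rho_n\}$ lower bound, making a separate part~(a) redundant.
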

To prove the upper bound we can use the following result which provides the control of the estimation error measured in Frobenius norm of the restricted least-squares estimator $\bTheta^{r}$ proven in \cite{klopp_graphon}:
	
\begin{prp}\label{prp:l2_Thres} Consider the network sequence  model.
	There exist positive absolute constant $C$ such that the following holds. If $\|\bTheta_0\|_{\infty}\le \rho_n$, then \beq\label{eq:risk_l2thres}
	\E\left[\frac{1}{n^{2}}\|\widehat{\bTheta}^{r}- \bTheta_0\|_F^2\right]\leq C \rho_n\left (\frac{\log(k)}{n}+ \frac{k^2}{n^{2}}\right ) \ .
	\eeq
\end{prp}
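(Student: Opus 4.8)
Since the statement is quoted from \cite{klopp_graphon}, let me sketch the argument I would use to establish the Frobenius risk bound \eqref{eq:risk_l2thres} for the restricted least-squares estimator $\widehat{\bTheta}^{r}\in\arg\min_{\bTheta\in\cT[k,\rho_n]}\|\bA-\bTheta\|_2^2$; I take $\bTheta_0\in\cT[k,\rho_n]$ (which is the situation in which the bound holds with no approximation term).

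\textbf{Basic inequality and reduction.} By optimality, $\|\bA-\widehat{\bTheta}^{r}\|_2^2\le\|\bA-\bTheta_0\|_2^2$; writing $\bD:=\widehat{\bTheta}^{r}-\bTheta_0$ and $\bE:=\bA-\bTheta_0$ and expanding both sides, this becomes $\|\bD\|_2^2\le 2\langle\bD,\bE\rangle$. Two structural facts are crucial. First, if $\widehat z,z_0\in\cZ_{n,k}$ are the label maps of $\widehat{\bTheta}^{r}$ and $\bTheta_0$, then $\bD$ lies in the linear space $V_{\widehat z,z_0}$ which is the sum of the two subspaces of symmetric, zero-diagonal matrices that are block-constant with respect to $\widehat z$ and to $z_0$; hence $\dim V_{z,z'}\le 2\binom{k+1}{2}=k(k+1)$ for every pair $(z,z')$. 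Second, $\|\bD\|_\infty\le\rho_n$ since both $\widehat{\bTheta}^{r}$ and $\bTheta_0$ have entries in $[0,\rho_n]$. Finally $\bE$ has independent entries for $i<j$, with $\E\bE_{ij}=0$, $|\bE_{ij}|\le 1$, and $\Var(\bE_{ij})=\bTheta_{0,ij}(1-\bTheta_{0,ij})\le\rho_n$. So it suffices to control, uniformly over the $|\cZ_{n,k}|^2=k^{2n}$ pairs $(z,z')$, the quantity $\sup\{\langle\bD,\bE\rangle:\ \bD\in V_{z,z'},\ \|\bD\|_\infty\le\rho_n\}$ as a function of $\|\bD\|_2$.

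\textbf{The empirical process bound.} Fix $(z,z')$ and set $D:=\dim V_{z,z'}\le k(k+1)$. I would peel the radius $\|\bD\|_2$ dyadically, and on each shell $\|\bD\|_2\asymp r$ cover the set $\{\bD\in V_{z,z'}:\|\bD\|_2\le r,\ \|\bD\|_\infty\le 2\rho_n\}$ by a $\delta r$-net of cardinality $\le(C/\delta)^D$. For a net point $\bD'$, the linear form $\langle\bD',\bE\rangle=2\sum_{i<j}\bD'_{ij}\bE_{ij}$ is a sum of independent terms with total variance $\le C\rho_n\|\bD'\|_2^2$ and each summand bounded by $2|\bD'_{ij}|\le 4\rho_n$; \emph{here is the point}: because $\|\bD'\|_\infty\le 2\rho_n$, Bernstein's inequality yields a deviation of order $\sqrt{\rho_n\|\bD'\|_2^2\,t}+\rho_n t$, in which the scale of the linear (sub-exponential) term is $\rho_n$ rather than $1$. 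Taking a union over the net, over the $O(\log n)$ relevant shells, and over the $k^{2n}$ label pairs — whose combined log-cardinality is $L:=C\,(n\log k+k^2)$ once $D\log(C/\delta)\le C k^2$ is absorbed — and passing from net points to the full shell by the usual telescoping/approximation step, I get: with probability at least $1-e^{-u}$, simultaneously for all $(z,z')$ and all $\bD\in V_{z,z'}$ with $\|\bD\|_\infty\le\rho_n$,
\[
\langle\bD,\bE\rangle\ \le\ C\Big(\sqrt{\rho_n}\,\|\bD\|_2\sqrt{L+u}\ +\ \rho_n(L+u)\Big).
\]

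\textbf{Conclusion and expectation.} Plugging $\bD=\widehat{\bTheta}^{r}-\bTheta_0$ into the basic inequality gives $\|\bD\|_2^2\le 2C\sqrt{\rho_n}\|\bD\|_2\sqrt{L+u}+2C\rho_n(L+u)$; solving this quadratic in $\|\bD\|_2$ yields $\|\bD\|_2^2\le C'\rho_n(L+u)\le C''\rho_n\,(n\log k+k^2+u)$, i.e. $\frac1{n^2}\|\bD\|_2^2\le C''\rho_n\big(\tfrac{\log k}{n}+\tfrac{k^2}{n^2}+\tfrac{u}{n^2}\big)$ with probability at least $1-e^{-u}$. To pass to the expectation, integrate this tail, using on the complementary event the deterministic bound $\frac1{n^2}\|\bD\|_2^2\le\|\bD\|_\infty^2\le\rho_n^2\le\rho_n$; the tail contributes only $O(\rho_n/n^2)$, so $\E\big[\frac1{n^2}\|\widehat{\bTheta}^{r}-\bTheta_0\|_2^2\big]\le C\rho_n\big(\tfrac{\log k}{n}+\tfrac{k^2}{n^2}\big)$, which is \eqref{eq:risk_l2thres}. (When $\rho_n\le\log k/n$ one can skip the concentration entirely, since then $\rho_n^2\le\rho_n\log k/n$ already gives the bound; and if $\bTheta_0$ is an arbitrary probability matrix with $\|\bTheta_0\|_\infty\le\rho_n$ the same argument produces the oracle inequality with an extra term $C\inf_{\bTheta\in\cT[k,\rho_n]}\frac1{n^2}\|\bTheta-\bTheta_0\|_2^2$.)

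\textbf{Main obstacle.} The delicate step is the empirical process bound: a naive net-plus-Bernstein argument that uses only $|\bE_{ij}|\le 1$ produces a union-bound price of order $n\log k$ instead of $\rho_n\, n\log k$, which ruins the rate for small $\rho_n$. Getting the correct $\rho_n(n\log k+k^2)$ requires carrying the constraint $\|\bD\|_\infty\le\rho_n$ through Bernstein's inequality (so that the sub-exponential scale is $\rho_n$) and simultaneously peeling in $\|\bD\|_2$ so that $\|\bD\|_2$ enters multiplicatively in the stochastic term; the rest is bookkeeping.
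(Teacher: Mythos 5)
This proposition is not proved in the paper at all: it is imported verbatim from \cite{klopp_graphon}, and the text around it only explains how it is used (via $\Vert \bB\Vert_1/n^2\le\Vert\bB\Vert_2/n$) to get the $l_1$ upper bound. So there is no in-paper proof to compare with; judged on its own, your sketch reconstructs the same route as the cited reference, and it is essentially sound: the basic inequality $\|\bD\|_2^2\le 2\langle\bD,\bE\rangle$ with $\bD=\widehat{\bTheta}^{r}-\bTheta_0$ lying in the at most $k(k+1)$-dimensional space $V_{\widehat z,z_0}$, the union bound over labelings costing $n\log k$, and — the point you rightly single out — Bernstein applied with variance proxy $\rho_n\|\bD\|_2^2$ \emph{and} sub-exponential scale $\rho_n$ coming from $\|\bD\|_\infty\le\rho_n$, which is exactly what keeps the union-bound price at $\rho_n(n\log k+k^2)$ rather than $n\log k$. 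You also correctly note that the clean bound \eqref{eq:risk_l2thres} is the case $\bTheta_0\in\cT[k,\rho_n]$, the general $\|\bTheta_0\|_\infty\le\rho_n$ case carrying an oracle bias term, which is how the proposition is actually invoked here.

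One step is stated too loosely to be left as is: the discretization. If the net is an $\ell_2$-net of mesh $\delta r$, then controlling the net-to-shell remainder by Cauchy--Schwarz against $\|\bE\|_2\asymp n\sqrt{\rho_n}$ forces $\delta\asymp 1/n$, and the covering cost becomes $k^2\log n$, which spoils the $k^2/n^2$ term (e.g.\ for $k$ polynomial in $n$); iterating at constant $\delta$ does not help either, because the $\ell_\infty$ constraint, hence the $\rho_n$-scale linear Bernstein term, does not shrink along the chain and the $\log n$ levels again produce $\rho_n k^2\log n$. The standard fix is to discretize the convex symmetric body $K_r=\{\bD\in V_{z,z'}:\|\bD\|_2\le r,\ \|\bD\|_\infty\le 2\rho_n\}$ with respect to its own gauge: a $\tfrac12$-net in the Minkowski functional of $K_r$ has cardinality at most $5^{\dim V_{z,z'}}\le e^{Ck^2}$, its points lie in $K_r$, the remainder lies in $\tfrac12 K_r$ (so both constraints are preserved at the smaller scale), and for the linear functional $\bD\mapsto\langle\bD,\bE\rangle$ one gets $\sup_{K_r}\le 2\max_{\rm net}$ in a single step. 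With that replacement your peeling, the quadratic inequality, and the integration of the tail (using $\|\bD\|_\infty^2\le\rho_n^2\le\rho_n$ on the exceptional event) go through and give \eqref{eq:risk_l2thres}.
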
 
	The upper bound in  \eqref{eq:minimax_matrix_L1} is a consequence of the inequality $\Vert \bB\Vert_{1}/n^2\leq \Vert \bB\Vert_{2}/n$ and \eqref{eq:risk_l2thres}. The lower bound of the minimax risk in \eqref{eq:minimax_matrix_L1} is proved following the same lines as the proof of Proposition 2.4 in \cite{klopp_graphon} with $\Vert\cdot\Vert_{2}$ replaced by $\Vert\cdot\Vert_{1}$. We skip the details.

\section{Graphon estimation problem}\label{sec:graphon}

In this section, we are interested in  estimating the graphon $W_0$ in the sparse $W$-random graph model \eqref{sparse_graphon_mod}.
 Let $\cW^+[k]$ be the collection of $k$--step graphons, that is, the subset of graphons $W\in \cW^{+}$ such that for some $\bQ\in [0,1]^{k\times k}_{\text{sym}}$ and some $\phi:[0,1]\to [k]$, 
\beq \label{eq:def_step_function}
W(x,y)= \bQ_{\phi(x),\phi(y)}\quad \text{  for all }x,y\in [0,1]\ .
\eeq
Note $\cW^+[k]$ is also in correspondence with the collection of stochastic block models with $k$ blocks. Our purpose here, is to characterize the minimax convergence rates over classes $\cW^+[k]$.

\subsection{Cut distance minimax risk}

 Following \cite{klopp_graphon},  we start by associating a graphon to any $n\times n$ probability matrix $\bTheta_0$. Then, we can estimate graphon $f_0(\cdot,\cdot)= \rho_n W_0(\cdot,\cdot)$ using the empirical graphon associated to an estimate of $\bTheta_0$.
Recall that, given a $n\times n$ matrix $\bTheta$ with entries in $[0,1]$, we define the graphon $\widetilde{f}_{\bTheta}$ as the following piecewise constant function:
 \beq\label{eq:empirical_graphon}
 \widetilde{f}_{\bTheta}(x,y)= \bTheta_{\lceil nx\rceil, \lceil ny \rceil}
 \eeq
 for all  $x$ and $y$ in $(0,1]$. 
 For any estimator $\widehat \bT$ of $\bTheta_0$ and any  norm $N$ that is invariant under measure preserving maps   the triangle inequality implies
\beq\label{eq:integrated_risk_decomposition}
\E_{W_0}\left[\delta_{N}(\widetilde{f}_{\widehat{\bT}}, f_0)\right]\leq \E_{W_0}\left[\|\widehat{\bT}-\bTheta_0\|_N\right] +\E_{W_0}\left[\delta_{N}\left(\widetilde{f}_{\bTheta_0} , f_0\right)\right].
\eeq
 We have two parts in \eqref{eq:integrated_risk_decomposition}. The first term is the {\it estimation error} term $\|\widehat{\bT}-\bTheta_0\|_{N}$ that has been considered in the previous section. The second term  $\delta_{N}(\widetilde{f}_{\bTheta_0} , f_0)$ is the {\it agnostic error}. It measures the $\delta_{N}$-distance between the true graphon $f_0$ and its discretized version sampled at the unobserved random design points $\xi_1,\ldots , \xi_n$. The behavior of $\delta_{N}(\widetilde{f}_{\bTheta_0} , f_0)$ depends on the topology of the considered class of graphons.
The following theorem, proved in Section \ref{proof_agnostic_cut}, gives the upper bound on the agnostic error,  measured in $\delta_{\cut}$-distance for step function graphons:
\begin{thm}[Agnostic error measured in cut distance]\label{prp_agnostic_block}
Consider the $W$-random graph model \eqref{sparse_graphon_mod}. 
For all integers $k\geq 2$, all positive integers $n$, all $W_0\in \cW^+[k]$ and $\rho_n>0$, we have 
\beqn 
 \E_{W_0}\left[\delta_{\square}(\widetilde f_{\bTheta_0}, f_0)\right]\leq C\rho_n\left\{
 \begin{array}{cc}
  \sqrt{\dfrac{k}{n\log(k)}} &\text{ if $k\leq n$,}\\
  \sqrt{\dfrac{1}{\log(n)}} &\text{ if $k> n$.}
 \end{array}
\right. 
 \eeqn
\end{thm}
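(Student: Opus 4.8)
The plan is to first reduce the statement to a clean comparison between two step graphons. Let $\bTheta_0^{*}$ be the $n\times n$ matrix with $(\bTheta_0^{*})_{ij}=\rho_n W_0(\xi_i,\xi_j)$ for \emph{all} $i,j$ (including $i=j$). Since $\|\bTheta_0-\bTheta_0^{*}\|_{1}\leq n\rho_n$, we get $\delta_{\square}(\widetilde f_{\bTheta_0},\widetilde f_{\bTheta_0^{*}})\leq\|\widetilde f_{\bTheta_0}-\widetilde f_{\bTheta_0^{*}}\|_{1}\leq \rho_n/n$, which is dominated by the right-hand side of the theorem in both regimes. As the cut norm is $1$-homogeneous and $\widetilde f_{\bTheta_0^{*}}=\rho_n\,W_0[\xi]$ with $W_0[\xi](x,y):=W_0(\xi_{\lceil nx\rceil},\xi_{\lceil ny\rceil})$, it suffices to bound $\E_{W_0}\big[\delta_{\square}(W_0[\xi],W_0)\big]$ uniformly over $W_0\in\cW^+[k]$. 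Writing $W_0(x,y)=\bQ_{\phi(x),\phi(y)}$, $\lambda_a:=\lambda(\phi^{-1}(a))$ and $n_a:=\#\{i:\phi(\xi_i)=a\}$ (so $(n_a)_a$ is multinomial with parameters $(n;\lambda_1,\dots,\lambda_k)$), the graphon $W_0[\xi]$ is weakly isomorphic to the $k$-step graphon $\hat W$ with pattern matrix $\bQ$ and block weights $n_a/n$; hence $\delta_{\square}(W_0[\xi],W_0)=\delta_{\square}(\hat W,W_0)$, a comparison of two step graphons with a common pattern but block weights $(n_a/n)_a$ versus $(\lambda_a)_a$.

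The case $k>n$ is easy: then $W_0[\xi]$ has at most $n$ atoms, so it is an $n$-step graphon and being $k$-step brings nothing. Combining Bernstein's inequality (as in Proposition~\ref{lemma_LovaszBook}, applied to $\bTheta_0^{*}$), which gives $\E\|\bA-\bTheta_0^{*}\|_{\square}\leq C/\sqrt n$, with Proposition~\ref{sampling_lemma_lovasz}, which gives $\delta_{\square}(\widetilde f_{\bA},W_0)\leq 22/\sqrt{\log n}$ outside an event of probability at most $e^{-n/(2\log n)}$ (whose contribution to the expectation is negligible), the triangle inequality yields $\E[\delta_{\square}(W_0[\xi],W_0)]\leq C/\sqrt{\log n}$.

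Assume now $k\leq n$ and set $\delta:=\sum_{a=1}^{k}|n_a/n-\lambda_a|$. A greedy block matching shows $\delta_{\square}(\hat W,W_0)\leq\delta_{1}(\hat W,W_0)\leq \delta$, and $\E[\delta]\leq\sum_a\sqrt{\Var(n_a/n)}\leq n^{-1/2}\sum_a\sqrt{\lambda_a}\leq\sqrt{k/n}$ by Cauchy--Schwarz; this already yields $\E_{W_0}[\delta_{\square}(\hat W,W_0)]\leq\sqrt{k/n}$, i.e.\ the announced rate up to the factor $\sqrt{\log k}$ (in particular nothing more is needed when $k$ is bounded). To gain the extra $\sqrt{\log k}$ I would argue that in fact $\delta_{\square}(\hat W,W_0)\leq C\,\delta/\sqrt{\log k}$. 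Fixing the greedy ``thin-slab'' alignment $\tau$, the difference $U:=\hat W^{\tau}-W_0$ vanishes outside $(R\times[0,1])\cup([0,1]\times R)$ for a set $R$ with $\lambda(R)\leq\delta$, and on a row $x$ of ``mismatch type'' $(a,b)$ it equals $\bQ_{a,\phi(y)}-\bQ_{b,\phi(y)}$; computing the cut norm, $\|U\|_{\square}$ reduces, up to the negligible contribution of $R\times R$, to $2\sup_{r}\sum_{c=1}^{k}\lambda_c\,|(\bQ^{\top}r)_c|$, the supremum being over signed vectors $r$ with $\sum_a r_a=0$ and $|r_a|\leq|n_a/n-\lambda_a|$ (the last constraint being crucial, since each block is either in excess or in deficit). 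I would then apply the weak (Frieze--Kannan) regularity lemma to the ``distorted'' kernel $\bQ$, weighted by $\lambda$ and by the empirical weights, to obtain a partition of $[k]$ into $m\asymp k$ classes, and choose the transport inside $\tau$ so that every admissible $r$ is balanced on each class; this prevents $r$ from aligning with the ``spread'' directions of $\bQ$ and upgrades the crude estimate $\sum_c\lambda_c|(\bQ^{\top}r)_c|\leq\|r\|_1\leq\delta$ to $\leq C\,\delta/\sqrt{\log m}\leq C\,\delta/\sqrt{\log k}$. Taking expectations with $\E[\delta]\leq\sqrt{k/n}$ (and a Bernstein tail bound to handle the rare event $\delta\gg\sqrt{k/n}$) gives $C\sqrt{k/(n\log k)}$.

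The main obstacle is exactly this last step, i.e.\ extracting the factor $\sqrt{\log k}$ from the regularity lemma. Two points are delicate. First, $\delta_{\square}$ is an infimum over measure-preserving maps, so one must \emph{design} the map $\tau$ — especially the way the excess mass of some blocks is transported onto the deficit of others — so that the regularity partition can be exploited; a naive matching loses the logarithmic gain entirely. Second, weak regularity only controls the \emph{global} cut norm of the coarsened pattern, so converting it into a bound on the $r$-optimization above is subtle, and one must avoid introducing spurious polylogarithmic factors (a union bound over admissible $r$ is too lossy). This is why, as announced in the introduction, the argument needs a careful application of Szemer\'edi's regularity lemma to distorted versions of the graphon rather than an off-the-shelf invocation; the remaining ingredients (the reductions above, the multinomial concentration, and the case $k>n$) are routine.
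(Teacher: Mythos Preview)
Your reductions are right and match the paper: remove the diagonal, rescale $\rho_n$ away, handle $k>n$ via Proposition~\ref{sampling_lemma_lovasz}, and for $k\leq n$ reduce to comparing two $k$-step graphons with the same pattern $\bQ$ but block weights $(\lambda_a)$ versus $(\hat\lambda_a)$, where the greedy alignment already gives $\sqrt{k/n}$. The remaining task---extracting the $\sqrt{\log k}$---is exactly where your sketch stops being a proof.

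Two concrete problems. First, the $R\times R$ contribution is \emph{not} negligible for $k$ close to $n$: the naive bound $\|U|_{R\times R}\|_\square\leq\lambda(R)^2$ with $\E[\lambda(R)^2]\lesssim k/n$ beats $\sqrt{k/(n\log k)}$ only when $k\log k\lesssim n$. The paper treats $R\times R$ by the same machinery as $R\times R^c$ (Lemma~\ref{lem:second_term_decomposition}), and it is in fact the harder case because one must decorrelate $|\hat\lambda_a-\lambda_a|$ and $|\hat\lambda_b-\lambda_b|$ via a coupling.

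Second, the sentence ``choose the transport inside $\tau$ so that every admissible $r$ is balanced on each class'' conflates two different objects. Designing $\tau$ to transport within the regularity classes first (this is property~(ii) of $\psi$ in the paper) does kill the \emph{coarse} difference $W_1-\widehat W_1$ outside a set of measure $k^{1/4}/\sqrt n$; but it cannot constrain the supremum over subsets $S\subset R$ that defines the cut norm of the \emph{fine} remainder $(W-W_1)|_{R\times R^c}$. That supremum still runs over $2^k$ choices, and as you correctly note, a union bound is too lossy. What you are missing is the mechanism the paper uses to avoid it: an adaptation of Lemma~10.9 in Lov\'asz's book (Lemma~\ref{lem:lovasz_q_random} here), which shows that for any step kernel the one-sided cut norm is, up to an additive $O(q^{-1/2})$, a maximum over only $k^{2q}$ structured pairs $(R_1,R_2)$ with $|R_i|\leq q$. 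With $q=\lfloor\sqrt k\rfloor$ the additive error is $\lesssim\sqrt{k/(n\log k)}$; for each of the $k^{O(\sqrt k)}$ pairs one applies McDiarmid \emph{conditionally} on $(\hat\lambda_a)_{a\in R_1}$, bounds the conditional expectation by substituting $\E[|\hat\lambda_a-\lambda_a|\mid\cdot]\leq C\sqrt{\lambda_a/n}$, and invokes the \emph{deterministic} regularity approximation~\eqref{control_cut1}, which was built with weights $(\sqrt{\lambda_a},\lambda_b)$ precisely so that this substitution lands on it. The union bound over $k^{O(\sqrt k)}$ terms then costs only $O(\sqrt{\sqrt k\log k/n})$, which is harmless.

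So the missing ideas are: apply the regularity lemma to a \emph{deterministic} distorted kernel with row weights $\sqrt{\lambda_a}$ (anticipating the typical size of $|\hat\lambda_a-\lambda_a|$), not to anything involving the random empirical weights; reduce the random supremum from $2^{2k}$ to $k^{O(\sqrt k)}$ terms via Lemma~\ref{lem:lovasz_q_random} before concentrating; and run the same argument again for $R\times R$. Your diagnosis of the obstacle is accurate, but the proposal does not supply the cure.
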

Note that the case $k>n$ is a consequence of Proposition \ref{sampling_lemma_lovasz} from \cite{LovaszBook}, so that we effectively only have to consider the case $k\leq n$. The proof combines two ideas. First, we  build $W$ and $\widehat{W}$  as the representatives of $W_0$ and $\widetilde{f}_{{\bTheta}_0}$ in the quotient space $\widetilde{\cW}^+$ such that $W$ and $\widehat{W}$ match everywhere except on a set of Lebesgue measure of order at most $\sqrt{k/n}$. This allows us to get a risk bound of order $\sqrt{k/n}$. In order to recover the correct logarithmic factor $\sqrt{\log(k)}$, we rely on the weak Szemer\'edi's Lemma. Here, the key idea is to build a cut-norm approximation of a distorted transformation of $W$ where the weights of the group have been modified to take into account the geometry of the sampling error.

As an immediate consequence of \eqref{eq:integrated_risk_decomposition},   Proposition \ref{lemma_LovaszBook} and Theorem \ref{prp_agnostic_block}, we get the following upper bound on the risk of the empirical graphon $\widetilde{f}_{\bA}$. For any $k\geq 2$, it holds that 
\beq\label{eq:minimax_risk_graphon}
\sup_{W_0\in \cW^+[k]}\E_{W_0}\left[\delta_{\square}\left(\widetilde{f}_{\bA}, f_0\right)\right]
\leq C \min \left(
       \rho_n \left(\sqrt{\frac{k}{n\log(k)}}, \frac{1}{\sqrt{\log(n)}}\right)+\sqrt{\dfrac{\rho_n}{n}}\right)\ , 
\eeq
where $C$ is an absolute constant. Here, $\E_{W_0}$ denotes the expectation with respect to the distribution of observations $\bA=(\bA_{ij}, 1\le j<i\le n)$ when the underlying sparse graphon is $f_0=\rho_n W_0$. The following Proposition provides a  matching lower bound for $2\leq k\leq n$.
\begin{thm}\label{prp:lower_minimax_cut}
There exists a universal constant $C>0$ such that for any sequence $\rho_n>0$ and any positive integer $2\leq k\leq n$, 
\beq \label{eq:lower_minimax_integrated_risk_distance_cut_prp}
 \inf_{\widehat{f}}\sup_{W_0\in \cW^+[k]}\E_{W_0}\left[\delta_{\square}\left(\widehat{f}, f_0\right)\right]\geq C \min\left(
       \rho_n \sqrt{\frac{k}{n\log(k)}}+\sqrt{\dfrac{\rho_n}{n}},
        \rho_n\right)\ ,
 \eeq
 where $\inf_{\widehat{f}}$ is the infimum over all estimators. 
\end{thm}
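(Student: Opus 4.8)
The plan is to establish the lower bound by combining two separate arguments that match the two terms in the minimum, and to reduce the graphon estimation problem to the matrix estimation problem already analyzed. First I would observe that the $\sqrt{\rho_n/n}$ term is, up to constants, already captured by Proposition~\ref{lemma_low-bound_prob}: indeed, a lower bound for estimating $\bTheta_0$ in cut norm over $\cT[2,\rho_n]$ transfers to a lower bound for estimating $f_0 = \rho_n W_0$ in $\delta_\square$ over $\cW^+[k]$ for any $k\ge 2$. The transfer step requires care since $\delta_\square$ is an infimum over measure-preserving bijections and hence \emph{a priori} smaller than $\|\cdot\|_\square$ on representatives; I would handle this by working with block-constant graphons whose block weights are all equal to $1/k$ (so $\bTheta_0$ has $n/k$ rows per block), and by showing that for such symmetric configurations the optimal alignment $\tau$ can be taken to permute blocks, reducing the graphon cut distance to a combinatorial cut distance between the underlying $k\times k$ matrices $\bQ$. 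This lets me lift the two-point (or Assouad-type) construction from the proof of Proposition~\ref{lemma_low-bound_prob} verbatim.

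For the dominant term $\rho_n\sqrt{k/(n\log k)}$ (which is the interesting regime, $k\le n$ and $\rho_n$ not too small), I would build a rich packing of step-function graphons in $\cW^+[k]$ that are pairwise well-separated in $\delta_\square$ yet statistically close, and apply Fano's inequality. Concretely, fix the partition of $[0,1]$ into $k$ equal cells and consider graphons of the form $W_\omega(x,y) = \tfrac12 + c\,\omega_{\phi(x),\phi(y)}$ for a symmetric $\{-1,+1\}$-valued matrix $\omega$, with $c$ a small constant. By a Varshamov--Gilbert / random-matrix argument one obtains $\exp(c'k^2)$ such matrices $\omega$ that are pairwise Hamming-far. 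The KL divergence between the induced graph distributions is at most $C\rho_n^2 c^2 n^2/k^2 \cdot (k^2)$-type quantity — more precisely $O(\rho_n n^2 c^2)$ after accounting for the $n/k$ vertices per block — so Fano forces $c^2 \asymp k^2\log k/(\rho_n n^2)$ for consistency to fail, hence a lower bound $\rho_n c \asymp \rho_n \sqrt{k\log k}/( \sqrt{\rho_n}\, n)$... and then tuning the scaling correctly against the cut-distance separation. The crux is the cut-distance separation: I must show that for two such $\omega, \omega'$ differing in a constant fraction of entries, $\delta_\square(W_\omega, W_{\omega'}) \ge c\rho_n \sqrt{k/(n\log k)}$ despite the infimum over \emph{all} measure-preserving bijections, not merely block permutations.

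The main obstacle is exactly this cut-distance lower bound under arbitrary measure-preserving rearrangements. A general $\tau$ can smear the block structure, so I cannot simply count disagreeing blocks. The approach I would take is the one the introduction hints at: use the fact that the cut norm controls, and is controlled by (up to constant~$4$) the $l_\infty\to l_1$ operator norm \eqref{eq:cut_operator}, and then argue that any measure-preserving $\tau$ can be approximated, at the resolution of the $k$ cells, by a genuine permutation of blocks plus an error term that is quantitatively small --- this is where a \emph{weak regularity / counting} argument enters, and where the $\sqrt{\log k}$ (rather than just $\sqrt{k/n}$) shows up, mirroring the role Szemer\'edi's lemma plays in the matching upper bound of Theorem~\ref{prp_agnostic_block}. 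Specifically, I would lower-bound $\|W_\omega - W_{\omega'}^\tau\|_\square$ by testing against the indicator sets $S,T$ that are unions of cells, and show that no $\tau$ can simultaneously make all such tests small unless $\omega$ and $\omega'$ agree on most blocks; a clean way to formalize this is to pass to an auxiliary parameter (a $k\times k$ doubly-stochastic matrix recording how $\tau$ mixes cells) and optimize. Once the separation is in hand, Fano combined with the two-point argument above yields the stated $\min(\rho_n\sqrt{k/(n\log k)} + \sqrt{\rho_n/n},\ \rho_n)$, the last $\rho_n$ term being the trivial bound since all graphons in $\cW^+[k]$ have $\delta_\square$-diameter $O(\rho_n)$.
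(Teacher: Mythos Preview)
Your proposal has a genuine gap in the construction for the dominant $\rho_n\sqrt{k/(n\log k)}$ term. You build the family $W_\omega(x,y) = \tfrac12 + c\,\omega_{\phi(x),\phi(y)}$ by varying the block \emph{values} $\omega$ while keeping the partition fixed (equal cells of width $1/k$). This is the construction that underlies the $l_2$ lower bound, and it cannot reach the cut rate. The reason is the KL count: conditional on the latent positions $\xi_1,\ldots,\xi_n$, two such graphons produce different edge-probability matrices, so $\mathcal{KL}(\P_{W_\omega},\P_{W_{\omega'}}) \asymp c^2\rho_n n^2$ whenever a constant fraction of the $\omega$-entries differ. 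With $\exp(c'k^2)$ hypotheses, Fano forces $c \asymp k/(n\sqrt{\rho_n})$, and the resulting separation is at most $\rho_n c \asymp \sqrt{\rho_n}\,k/n$, which is the Frobenius/$l_1$ matrix rate, not $\rho_n\sqrt{k/(n\log k)}$. Your own computation trails off at ``tuning the scaling correctly'' precisely because no tuning of this construction gives the right exponent.

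The paper's construction is essentially orthogonal to yours: it \emph{fixes} the block-value matrix $\bQ$ once and for all and instead perturbs the block \emph{weights} by $\pm\epsilon$. Concretely, $W_u$ has $k_1=k/2$ ``small'' blocks of width $1/k\pm\epsilon$ together with $M_k\asymp\log k$ ``large'' reference blocks of width $1/(2M_k)$; the values on the small$\times$large rectangle are read off a matrix $\bB\in\{\pm 1\}^{k_1\times M_k}$ with nearly orthogonal rows and a robustness property against stochastic mixtures of columns. Because $\bQ$ is identical for every $u$, the conditional law of $\bA$ given the latent labels is the same across the family, and the entire KL collapses to the KL between two multinomial label laws: $\mathcal{KL}(\P_{W_u},\P_{W_v})\leq C nk^2\epsilon^2$. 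With $\log|\cC|\asymp k$ (Varshamov--Gilbert on $\{\pm\epsilon\}^{k_1}$) one takes $\epsilon\asymp 1/\sqrt{nk}$. The cut-distance separation is then obtained by a case analysis on how the bijection $\tau$ mixes the two halves $[0,1/2]$ and $(1/2,1]$, using the near-orthogonality and the stochastic-mixture property of $\bB$; it comes out as $\rho_n k\epsilon/\sqrt{M_k}\asymp \rho_n\sqrt{k/(n\log k)}$. So the missing idea is to encode the hardness in the \emph{weights}, not the \emph{values}: this is what makes the KL small enough, and the asymmetric design with $O(\log k)$ large reference blocks is what keeps the $\delta_\square$-separation from collapsing under arbitrary $\tau$. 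Your doubly-stochastic reduction is in the right spirit for the separation step, but it is attached to a construction that yields the wrong rate.
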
 
 Since the collections $\cW^+[k]$ are nested, it follows that for all $k\geq n$, one has 
 \[
\inf_{\widehat{f}}\sup_{W_0\in \cW^+[k]}\E_{W_0}\left[\delta_{\square}\left(\widehat{f}, f_0\right)\right]\geq C \min \left( 
       \rho_n \sqrt{\frac{1}{\log(n)}}+\sqrt{\dfrac{\rho_n}{n}},        \rho_n\right). 
 \]
 In view of \eqref{eq:minimax_risk_graphon} and \eqref{eq:lower_minimax_integrated_risk_distance_cut_prp}, we observe that, as long as, $\rho_n\geq 1/n$,  the empirical graphon $\widetilde{f}_{\bA}$ is minimax optimal over all classes $\cW^+[k]$, $k\geq 2$. For sparser graphs ($\rho_n\leq 1/n$), the trivial estimator $\widehat{f}\equiv 0$ achieves the optimal rate $\rho_n$. 
 
Note that there are two distinct regimes in the minimax convergence rate. When $\rho_n\geq \log (k)/k$ (weakly sparse graphs or large number of groups),  the agnostic error dominates and the minimax risk is  of order $\rho_n\sqrt{k/(n\log(k))}$. For moderately sparse graphs or equivalently a small number of steps ($n^{-1}\leq \rho_n\leq \log (k)/k$), the error arising from the probability matrix $\bTheta_0$ estimation dominates and the minimax risk is of order $\sqrt{\rho_n/n}$.

\medskip

As in the previous section, we left aside  the specific case of constant graphons $\cW^{+}[1]$. Note that for a graphon $W_0\in \cW^{+}[1]$ the agnostic error is always zero and the loss comes from the probability matrix estimation. Following the arguments of the previous section, we derive that the graphon $\widetilde{f}_{\overline{\bA}}$ converges to $\rho_n W_0$ at the rate $\sqrt{\rho_n}/n$ which is optimal as soon as $\rho_n\geq 1/n^2$.

\subsection{Comparison with $\delta_1$ and $\delta_2$-estimation}

Minimax risk for graphon estimation in the $\delta_{2}$-distance was obtained in \cite[Proposition 3.2]{klopp_graphon} : 
\begin{equation}\label{eq:minimax_rate_l2}
\inf_{\widehat{f}}\sup_{W_0\in \cW^+[k]}\E_{W_0}\left[\delta_{2}\left(\widehat{f}, f\right)\right]\asymp \min\left (\frac{\sqrt{\rho_n}k}{n}+  \sqrt{\frac{\rho_n\log(k)}{n}}+  \rho_n \left (\frac{k}{n}\right )^{1/4},\rho_n\right )
\end{equation}
The following proposition, proved in Section \ref{proof_prp:lower_minimax_graphon}, gives the minimax $\delta_1$-convergence rate:
 \begin{prp}\label{prp:lower_minimax_graphon}
 For any sequence $\rho_n>0$ and any positive integer $2\leq k\leq n$, we have
 \beq\label{eq:lower_minimax_L1}
  \inf_{\widehat{f}}\sup_{W_0\in \cW^+[k]}\E_{W_0}\left[\delta_1\left(\widehat{f}, f_0\right)\right]\geq C_1
  \min \left( 
        \dfrac{\sqrt{\rho_n}k}{n}+\sqrt{\dfrac{\rho_n}{n}}+ \rho_n \sqrt{\frac{k}{n}}, 
         \rho_n\right) \ .
  \eeq
Conversely, there exists an  estimator $\widehat{f}$  based on the restricted least-squares estimator~\eqref{eq:definition_restricted_least_squares} such that
  \beq\label{eq:minimax_risk_graphon_l1}
\sup_{W_0\in \cW^+[k]}\E_{W_0}\left[\delta_{1}\left(\widehat{f}_{\widehat{\bTheta}_{k,\rho_n}}, f_0\right)\right]
\leq C_2\min \left(
        \rho_n \sqrt{\frac{k}{n}}+\dfrac{\sqrt{\rho_n}k}{n}+\sqrt{\dfrac{\rho_n\log(k)}{n}},
         \rho_n\right). 
\eeq

 \end{prp}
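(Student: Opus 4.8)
The plan is to treat the lower bound \eqref{eq:lower_minimax_L1} and the upper bound \eqref{eq:minimax_risk_graphon_l1} separately, mirroring the structure of the cut-distance results but replacing $\|\cdot\|_\square$ by $\|\cdot\|_1$ throughout. For the lower bound, I would decompose the claimed rate into its three constituent terms and exhibit a packing set realizing each. The term $\sqrt{\rho_n/n}$ comes from the probability-matrix estimation error already captured (in the cut norm) in Proposition \ref{lemma_low-bound_prob}: since $\delta_1 \ge \delta_\square$, the same Le Cam / Fano two-point or hypercube argument on $\cT[2,\rho_n]$ carries over verbatim. The term $\sqrt{\rho_n}k/n$ is the combinatorial ``clustering'' cost of assigning $n$ labels to $k$ groups; it is handled exactly as in the proof of Proposition 2.4 of \cite{klopp_graphon} (from which \eqref{eq:optimal_frobenius} is derived), specialized to the $\delta_1 = \frac1{n^2}\|\cdot\|_1$ scale, and in fact for $\delta_1$ it only needs the $\frac1{n^2}\|\cdot\|_1$ version of the matrix lower bound \eqref{eq:minimax_matrix_L1} together with a control of the agnostic error. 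The genuinely new term is $\rho_n\sqrt{k/n}$, the agnostic contribution: here one needs a family of $k$-step graphons $W_0$ for which, with constant probability over the latent design $\xi_1,\dots,\xi_n$, the sampled block sizes deviate from their means by $\Theta(\sqrt{n/k})$ on a positive fraction of the $k$ blocks, forcing $\delta_1(\widetilde f_{\bTheta_0}, f_0) \gtrsim \rho_n \sqrt{k/n}$ for \emph{every} estimator (since no estimator sees $\xi$). Concretely I would pick $\bQ$ with well-separated rows so that permuting which design points fall in which block cannot be undone by a measure-preserving $\tau$ without incurring $\ell_1$ cost proportional to the block-size fluctuation; a binomial lower-deviation bound gives the $\sqrt{n/k}$ fluctuation on a constant fraction of blocks. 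The final $\min(\cdot,\rho_n)$ truncation is trivial since $\|W_0\|_\infty \le 1$ forces $\delta_1(\widehat f, f_0) \le 2\rho_n$ for the zero estimator.

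For the upper bound \eqref{eq:minimax_risk_graphon_l1}, I would use the triangle-inequality decomposition \eqref{eq:integrated_risk_decomposition} with $N = \frac1{n^2}\|\cdot\|_1$ and $\widehat\bT = \widehat\bTheta_{k,\rho_n}$. The estimation error $\E_{W_0}[\frac1{n^2}\|\widehat\bTheta_{k,\rho_n} - \bTheta_0\|_1]$ is bounded by $\frac1n \E_{W_0}[\|\widehat\bTheta_{k,\rho_n}-\bTheta_0\|_2]$ via Cauchy--Schwarz, and the latter is at most $\sqrt{\rho_n\log(k)/n} + \sqrt{\rho_n}\,k/n$ by Proposition \ref{prp:l2_Thres} (Jensen to pass from the squared bound \eqref{eq:risk_l2thres} to the unsquared one). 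For the agnostic term $\E_{W_0}[\delta_1(\widetilde f_{\bTheta_0}, f_0)]$, I would construct, for a given $k$-step $W_0$ with block-weight vector $\pi$, a representative of $\widetilde f_{\bTheta_0}$ obtained by relabeling the $n$ design points so that the empirical block counts are matched as closely as possible to $n\pi$; the mismatch between $\widehat{f}$ and $W_0$ is then supported on a set of Lebesgue measure $\sum_{a=1}^k |\,\widehat\pi_a - \pi_a\,|$ (up to the diagonal correction of measure $1/n$), and since $|W_0| \le \rho_n$ this yields $\delta_1(\widetilde f_{\bTheta_0}, f_0) \le \rho_n(\sum_a \E|\widehat\pi_a - \pi_a| + 1/n)$. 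Each $n\widehat\pi_a \sim \Bin(n,\pi_a)$, so $\E|\widehat\pi_a - \pi_a| \le \sqrt{\Var(\widehat\pi_a)} \le \sqrt{\pi_a/n}$, and $\sum_{a=1}^k \sqrt{\pi_a/n} \le \sqrt{k/n}$ by Cauchy--Schwarz; hence the agnostic error is $O(\rho_n\sqrt{k/n})$. Adding the two pieces and intersecting with the trivial $\rho_n$ bound gives \eqref{eq:minimax_risk_graphon_l1}.

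I expect the main obstacle to be the agnostic part of the \emph{lower} bound, i.e.\ showing that the $\sqrt{n/k}$ block-size fluctuation cannot be absorbed by the infimum over measure-preserving maps in the definition of $\delta_1$. One has to argue that for a suitable choice of $\bQ$ (e.g.\ rows of $\bQ$ that are pairwise $\ell_1$-separated by a constant, such as a suitably scaled Hadamard-type or ``identity-plus-constant'' structure), any measure-preserving $\tau$ that tries to re-match the perturbed block structure of $\widetilde f_{\bTheta_0}$ to $W_0$ must move mass across blocks and thereby pays, in $\|W_0 - (\widetilde f_{\bTheta_0})^\tau\|_1$, at least a constant times the total block-size discrepancy $\sum_a|\widehat\pi_a - \pi_a| \gtrsim \sqrt{k/n}$ (the latter lower bound holding with constant probability by anti-concentration of binomials). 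Making this ``rigidity'' statement precise — ruling out clever fractional reassignments of the design points — is the delicate step; everything else reduces to standard binomial concentration/anti-concentration and the already-established matrix-estimation bounds. The remaining parts of both bounds are routine once the $\ell_1$ versions of the matrix results are invoked, and I would simply cite \cite{klopp_graphon} for the combinatorial clustering term rather than reproduce that argument.
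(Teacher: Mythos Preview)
Your proposal is correct and follows essentially the same route as the paper. For the upper bound you reproduce exactly the paper's argument: the decomposition \eqref{eq:integrated_risk_decomposition}, Proposition~\ref{prp:l2_Thres} via Cauchy--Schwarz for the estimation error, and the block-count matching construction with $\sum_a \E|\widehat\pi_a-\pi_a|\le\sqrt{k/n}$ for the agnostic error (this is the content of Lemma~\ref{prp_agnostic_l1}). For the lower bound the paper likewise splits into the three terms \eqref{eq:lower1}--\eqref{eq:lower3} and cites the corresponding constructions (43)--(45) in \cite{klopp_graphon}; the one shortcut you did not mention is that for the agnostic term $\rho_n\sqrt{k/n}$ the paper simply invokes the pointwise inequality $\|W_u-W_v^\tau\|_2^2\le\|W_u-W_v^\tau\|_1$ (valid since $|W_u-W_v^\tau|\le 1$) to transfer the already-established $\delta_2$ separation from \cite{klopp_graphon} directly to $\delta_1$, so the ``rigidity'' step you flag as the main obstacle need not be redone from scratch.
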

 The upper and lower bounds given by Proposition \ref{prp:lower_minimax_graphon} match (up to a $\sqrt{\log(k)}$ multiplicative term in one of the regimes). 
 There are  three regions in \eqref{eq:minimax_risk_graphon_l1} for $\delta_1$ graphon estimation. The first one corresponds to the case of weakly sparse graphs with $\rho_n\geq k^{-1}\vee (k/n) $. In this case, the agnostic error dominates and the optimal risk is of order  $\rho_n\sqrt{k/n}$. For moderately sparse graphs with $ n^{-1}\vee (k/n)^2\leq\rho_n\leq  k^{-1}\vee (k/n)$,   the probability matrix estimation error dominates and the minimax rate is of order  $\sqrt{\rho_n/n}+\sqrt{\rho_n}k/n$ (up to  a $\log(k)$ multiplicative term). In the case of
highly sparse graphs with $\rho_n\leq   n^{-1}\vee (k/n)^2\vee \left (\frac{k}{n}\right )^{2}$, the minimax risk is $\rho_n$ which corresponds to the risk of the null estimator $\tilde f\equiv 0$.

Let us compare the optimal convergence rates with respect to  the $\delta_1$~\eqref{eq:minimax_risk_graphon_l1}, $\delta_2$~\eqref{eq:minimax_rate_l2} and $\delta_{\square}$~\eqref{eq:lower_minimax_integrated_risk_distance_cut_prp}. Bearing in mind that $\delta_2$ dominates $\delta_1$, which in turn dominates $\delta_{\square}$, one should not be surprised that optimal rates with respect to $\delta_2$ are the slowest. When the number of steps $k$ is less than $\sqrt{n}$ or when the graph is weakly sparse ($\rho_n\geq \sqrt{k/n}$), then the $\delta_1$ and $\delta_{\square}$ optimal rates only differ by a $\log(k)$ multiplicative term. For larger $k$ and sparser graph, the optimal $\delta_1$-risk can be $k/\sqrt{n}$ larger than the $\delta_{\square}$-risk.

\bigskip 

Following the discussion in Section \ref{sec:discussion_comparison_matrix_estimation}, one may easily build graphon estimators performing well in all these three distances. For instance, 
the graphon $f_{\widehat{\bTheta}_{k,\rho_n}}$ based on the restricted-least-squares estimator is optimal  with respect to $\delta_2$ and $\delta_1$ and near optimal (up to a possible $\sqrt{\log(k)}$ loss)  with respect to $\delta_{\square}$ for $k\leq \sqrt{n}$. Besides, the graphon $f_{\widetilde \bTheta_{\lambda}}$ based on the singular value thresholding estimator is optimal with respect to $\delta_{\square}$ and achieves best known convergence rates with respect to $\delta_1$ and $\delta_2$ among polynomial time algorithms. 
\subsection{Cut distance estimation of  $L_1$ and $L_2$ graphons}\label{sec:unbounded}

Until now we have restricted our attention to graphons $W$ taking values in $[0,1]$. As argued in \cite{borgs2014p,borgs2014p2}, in this case the empirical degree distribution of a graph sampled from the corresponding $W$-random graph model~\eqref{sparse_graphon_mod} is light. This contrasts with many practical situations, where the degree distribution is heavy tailed. To circumvent this limitation,   Borgs et al~\cite{borgs2014p,borgs2014p2} introduce, for $p\geq 1$, the class $\cW_p^+$ of symmetric measurable functions $W:[0,1]^2\to \mathbb{R}^+$ such that $\int|W(x,y)|^pdxdy<\infty$. This collection $\cW_p^+$ is referred as the collection of $L_p$ graphons.
We have the inclusions $\cW^+\subset \cW_p^+ \subset \cW_{p'}^+$ for $p> p'\geq 1$. Given a graphon $W_0\in \cW_p^+$ and a sparsity parameter $1\geq \rho_n>0$, the corresponding $W$-random graph model amounts to generating a graph with $n$ vertices according to the random matrix $\bTheta_0$ sampled as follows 
\begin{equation}\label{sparse_graphon_mod_p}
      \bTheta_{ij}=\big[\rho_n W_0(\xi_i,\xi_j)\big]\wedge 1\ , \quad \forall i\neq j \, \text{ and } \bTheta_{ii}=0\ ,
\end{equation} 
where $\xi_1,\ldots, \xi_n$ are, as in \eqref{sparse_graphon_mod}, i.i.d. random variables uniformly distributed in $[0,1]$. Note that since $W_0$ is now unbounded, we have to take the minimum with $1$ in \eqref{sparse_graphon_mod_p}. We write $f'_0=(\rho_n W_0)\wedge 1$. Since $W_0$ is now allowed to be unbounded, graphs sampled according to the model~\eqref{sparse_graphon_mod_p} may have power law degree distribution~\cite{borgs2014p}. 
As in the introduction, we may extend the norms $\|.\|_{\square}$ and $\|.\|_{q}$ and the distances $\delta_{\square}$ and $\delta_{q}$ to any graphon $W_0\in \cW_p^{+}$ with $p\leq q$. Also, we write $\widetilde{\cW}_p^+$ for the quotient space of $L_p$ graphons under weak isometry.

Let us also define the collection $\cW^+_p[k]$ of $k$-steps $L_p$ graphons, that is the subsets of graphon $W\in \cW_p^+$ such that $W(x,y)=\bQ_{\phi(x),\phi(y)}$ for some $\bQ\in (\mathbb R^{+})^{k\times k}_{\text{sym}}$ and some $\phi:[0,1]\to [k]$ (note that $\cW^+_p[k]$ does not depend on $p$).
For $1\geq\mu>0$ we denote by $\cW^+_p[k,\mu]$ the subset of  $\cW^+_p[k]$ of ``balanced" step functions, that is,  $W\in \cW^+_p[k,\mu]$ if $\lambda(\phi^{-1}(a))\geq \mu/k$ for all $a\in[k]$. This means that the size of each  step is larger than $\mu/k$.

  Without lost of generality we can consider normalized graphons, that is, we assume that $\Vert W_0\Vert_{1}=1$. 
  The following proposition proved in Appendix \ref{proof_prop:unbounded} gives an oracle inequality for the risk of the empirical graphon associated to the adjacency matrix and to the singular value hard thresholding estimator: 
\begin{prp}\label{prop:unbounded} 
Let $\lambda=c\sqrt{\rho_nn}$ where $c$ is a sufficiently large numerical constant.
Given a  graphon $W_0$ and $\rho_n>0$, write $W'_0= \rho^{-1}_n[(\rho_n W_0)\wedge 1]$.
\begin{itemize}
	\item [(1)] Let $W_0\in \cW^+_1$ with $\|W_0\|_1=1$,  $\rho_n\geq 1/n$ and $1\geq\mu>0$. Then, for any positive integer $k\leq \mu n$, we have  
	\beq\label{eq:minimax_risk_graphon_biais_1}
	\E_{W_0}\left[\delta_{\cut}\left(\widetilde{f}_{\bA}, f'_0\right)\right]
	\leq 2\rho_n\inf_{W\in \cW^+_1[k,\mu]}\delta_{1}\left(W, W'_0\right)+ C \left  [
	\rho_n \sqrt{\frac{k}{\mu n}} +\sqrt{\dfrac{\rho_n}{n}} 
	\right]
	\eeq
	and 
	\beq\label{eq:minimax_risk_graphon_biais_11}
	\E_{W_0}\left[\delta_{\cut}\left(\widetilde{f}_{\widetilde\bTheta_{\lambda}}, f'_0\right)\right]
	\leq 2\rho_n\inf_{W\in \cW^+_1[k,\mu]}\delta_{1}\left(W, W'_0\right)+C \left  [
	\rho_n \sqrt{\frac{k}{\mu n}} +\sqrt{\dfrac{\rho_n}{n}}
	\right] \ .
	\eeq
	\item [(2)] Assume that $W_0\in \cW^+_2$ with $\|W_0\|_1=1$ and   $\rho_n\geq 1/n$. 
	For any positive integer $k\leq n$, we have
	\beq\label{eq:minimax_risk_graphon_biais_2}
	\E_{W_0}\left[\delta_{\cut}\left(\widetilde{f}_{\bA}, f'_0\right)\right]
	\leq 2\rho_n\inf_{W\in \cW^+_2[k]}\delta_{2}\left(W, W'_0\right)+C \left  [
	\rho_n\Vert W_0\Vert_{2} \sqrt{\frac{k}{n}}+\sqrt{\dfrac{\rho_n}{n}}
	\right],
	\eeq
	\beq\label{eq:minimax_risk_graphon_biais_22}
		\E_{W_0}\left[\delta_{\cut}\left(\widetilde{f}_{\widetilde\bTheta_{\lambda}}, f'_0\right)\right]
		\leq 2\rho_n\inf_{W\in \cW^+_2[k]}\delta_{2}\left(W, W'_0\right)+C \left  [
		\rho_n\Vert W_0\Vert_{2} \sqrt{\frac{k}{n}}+\sqrt{\dfrac{\rho_n}{n}}
		\right].
		\eeq
\end{itemize}
\end{prp}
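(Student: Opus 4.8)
The plan is to establish all four inequalities through the decomposition \eqref{eq:integrated_risk_decomposition}: for each estimator $\widehat{\bTheta}\in\{\bA,\widetilde{\bTheta}_{\lambda}\}$ I would bound $\E_{W_0}[\delta_{\cut}(\widetilde{f}_{\widehat{\bTheta}},f'_0)]$ by an \emph{estimation error} $\E_{W_0}\|\widehat{\bTheta}-\bTheta_0\|_{\cut}$ plus an \emph{agnostic error} $\E_{W_0}[\delta_{\cut}(\widetilde{f}_{\bTheta_0},f'_0)]$, where $\bTheta_0$ is the $n\times n$ matrix with off-diagonal entries $f'_0(\xi_i,\xi_j)$. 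For the estimation error I would show $\E_{W_0}\|\widehat{\bTheta}-\bTheta_0\|_{\cut}\le C\sqrt{\rho_n/n}$ for both estimators. For $\widehat{\bTheta}=\bA$ this follows from Proposition~\ref{lemma_LovaszBook} applied conditionally on $\bxi$, together with $\E_{\bxi}\|\bTheta_0\|_1\le \rho_n n(n-1)\|W_0\|_1=\rho_n n(n-1)$ (using $f'_0\le\rho_n W_0$ and $\|W_0\|_1=1$), concavity of $\sqrt{\cdot}$ and $\rho_n\ge1/n$. For $\widehat{\bTheta}=\widetilde{\bTheta}_{\lambda}$ I would add the \emph{deterministic} inequality $\|\widetilde{\bTheta}_{\lambda}-\bA\|_{\cut}\le\tfrac1n\|\widetilde{\bTheta}_{\lambda}-\bA\|_{2\rightarrow 2}\le\lambda/n=c\sqrt{\rho_n/n}$, valid because $\bA-\widetilde{\bTheta}_{\lambda}$ retains only singular values $<\lambda$; in contrast with Proposition~\ref{corollary_spectral}, no spectral concentration of $\bA-\bTheta_0$ is needed here, which is why $\rho_n\ge1/n$ (and not $\rho_n\ge\log(n)/n$) suffices.

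The core of the argument is the agnostic error, which I would control by comparison with a $k$-step oracle. Fix $W$ in the relevant class, set $g=(\rho_n W)\wedge1\in\cW^+[k]$ — a $k$-step graphon with values in $[0,1]$, balanced with parameter $\mu$ in part~(1) — and let $\bTheta_g$ be $g$ sampled at the \emph{same} latent points $\xi_i$. Using the triangle inequality for $\delta_{\cut}$ and $\|\widetilde{f}_B\|_{\cut}=\|B\|_{\cut}$,
\[
\delta_{\cut}(\widetilde{f}_{\bTheta_0},f'_0)\le\|\bTheta_0-\bTheta_g\|_{\cut}+\delta_{\cut}(\widetilde{f}_{\bTheta_g},g)+\|g-f'_0\|_{\cut}.
\]
Because $x\mapsto x\wedge1$ is $1$-Lipschitz and $\rho_n W'_0=f'_0\le1$ gives $(\rho_n W'_0)\wedge1=f'_0$, one has $|g-f'_0|\le\rho_n|W-W'_0|$ pointwise; combined with $\|B\|_{\cut}\le\|B\|_1/n^2\le\|B\|_2/n$ this yields $\E_{\bxi}\|\bTheta_0-\bTheta_g\|_{\cut}\le\rho_n\|W-W'_0\|_1$ and $\|g-f'_0\|_{\cut}\le\rho_n\|W-W'_0\|_1$ in part~(1) (with $\|\cdot\|_1$ replaced by $\|\cdot\|_2$ in part~(2)). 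Since the oracle classes are closed under measure-preserving bijections and the law of $(\bTheta_0,\delta_{\cut}(\widetilde{f}_{\bTheta_0},f'_0))$ depends on $W_0$ only through its weak-isomorphism class, these two terms may be taken over such bijections, turning $\|W-W'_0\|_1$ into $\delta_1(W,W'_0)$; taking the infimum over $W$ produces the $2\rho_n\inf_W\delta_1(W,W'_0)$ term (resp. the $\delta_2$ version).

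It remains to bound the sampling error $\delta_{\cut}(\widetilde{f}_{\bTheta_g},g)$ of the $k$-step graphon $g$. I would pass to $\delta_1$, relabel so that $\xi_1<\cdots<\xi_n$, and identify $\widetilde{f}_{\bTheta_g}$ — up to a diagonal term of $L_1$-mass $\le\|g\|_\infty/n\le1/n$ — with the $k$-step graphon having the same block values $g_{ab}$ as $g$ but block sizes $\widehat p_a=N_a/n$, where $N_a$ counts the $\xi_i$ in block $a$. Bounding the $L_1$-distance between two such step graphons by the mass carried by the blocks adjacent to the shifted boundaries and using $\E|\widehat p_a-p_a|\le\sqrt{p_a/n}$, I expect $\E_{\bxi}[\delta_{\cut}(\widetilde{f}_{\bTheta_g},g)]\le\tfrac{C}{\sqrt n}\sum_{a,b}g_{ab}\,p_a\sqrt{p_b}+C/n$. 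In part~(1), $p_b\ge\mu/k$ gives $\sqrt{p_b}\le\sqrt{k/\mu}\,p_b$, hence $\sum_{a,b}g_{ab}p_a\sqrt{p_b}\le\sqrt{k/\mu}\,\|g\|_1$; in part~(2), Cauchy--Schwarz gives $\sum_{a,b}g_{ab}p_a\sqrt{p_b}\le\sqrt{k}\,\|g\|_2$. Finally $\|g\|_1\le\rho_n\|W\|_1\le\rho_n(1+\|W-W'_0\|_1)$ and $\|g\|_2\le\rho_n\|W\|_2\le\rho_n(\|W_0\|_2+\|W-W'_0\|_2)$ (using $W'_0\le W_0$); since there is nothing to prove once the bias term exceeds the trivial bound $\E_{W_0}[\delta_{\cut}(\widetilde{f}_{\bTheta_0},f'_0)]\le2\rho_n$, one may replace these by $2\rho_n$ and $2\rho_n\|W_0\|_2$, producing the $\rho_n\sqrt{k/(\mu n)}$ and $\rho_n\|W_0\|_2\sqrt{k/n}$ contributions. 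Adding the three pieces to the estimation error gives the four displayed inequalities.

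I expect the main obstacle to be this last step: organizing the relabeling and the boundary-mass bookkeeping for $\delta_{\cut}(\widetilde{f}_{\bTheta_g},g)$ so that the possibly large values of $g$ near narrow (high-degree) blocks do not spoil the $\rho_n$ scaling — this is essentially the crude, $\log$-free version of the argument behind Theorem~\ref{prp_agnostic_block}, and it is where the restrictions $k\le\mu n$ in part~(1) and $k\le n$ in part~(2) are used, respectively to guarantee $p_a\ge1/n$ and to absorb a residual $k\|g\|_2/n$ term.
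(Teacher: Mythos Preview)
Your proposal is correct and follows essentially the same route as the paper: the same estimation/agnostic decomposition, the same $\sqrt{\rho_n/n}$ bound for both estimators (including the deterministic $\|\widetilde{\bTheta}_\lambda-\bA\|_{2\to2}\le\lambda$ observation), and the same triangle-inequality comparison with a $k$-step oracle $g$; your ``boundary-mass bookkeeping'' for $\delta_1(\widetilde{f}_{\bTheta_g},g)$ is exactly the content of the paper's Lemma~\ref{lem:agnistic}, down to the use of $p_b\ge\mu/k$ in part~(1) and Cauchy--Schwarz in part~(2). The only cosmetic differences are that the paper writes ``WLOG $\rho_n W^*\le1$'' instead of your explicit truncation $g=(\rho_nW)\wedge1$, and bounds $\|W^*\|_1,\|W^*\|_2$ slightly differently than your ``nothing to prove if the bias exceeds $\rho_n$'' argument.
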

If $W_0$ belongs to some $\cW^+_2[k]$ or to $\cW^+_1[k,\mu]$  the convergence rates given by Proposition \ref{prop:unbounded} are the same as the optimal rates for bounded graphons up to a $\log^{-1/2}(k)$ factor. We conjecture that the $\log^{-1/2}(k)$ factor should appear in Proposition \ref{prop:unbounded}. Indeed, for bounded graphons, this logarithmic terms derives from Szemer\'edi's Regularity lemma and extensions of this lemma to  $L_p$ graphons have been recently proved~\cite{borgs2014p}. Nevertheless, our arguments in the  proof of Theorem \ref{prp_agnostic_block} makes heavily use of the boundedness of the graphons. In particular, one should replace all applications of McDiarmid's inequality (Lemma \ref{lem:mac_diarmid}) by more involved concentration inequalities~\cite{MR1989444}. We leave this for future work. 

When the graphon $W_0$ is not a finite step graphon, a bias term is occurring in the risk bounds (\ref{eq:minimax_risk_graphon_biais_1}--\ref{eq:minimax_risk_graphon_biais_22}). As the estimation risk is measured in the cut-distance, one could have hoped to obtain a bias term in the cut distance also (instead of the larger $l_1$ and $l_2$ distances). It is an interesting open problem to prove whether one can obtain oracle inequalities with cut distance bias terms. Note that, for bounded graphons $W\in \cW^+$,  using Theorem \ref{prp_agnostic_block}, we can also get an oracle inequality with the  $\delta_1$ bias term and minimax optimal error term.

Upper bounds of the  cut distance risk  for $L_p$ graphons estimation  were previously obtained in \cite{borgs_chayes_cohn}  where the authors introduced the  least cut norm estimator $\widehat{f}_{LC}$. For any $L_1$ normalized graphon $W_0$ any  $\kappa\in  [\log n/n, 1]$, Borgs et al.~\cite{borgs_chayes_cohn} show in their Theorem 4.1 that this estimator $\widehat{f}_{LC}$ achieves the risk bound
\beq\label{eq:upper_borgs_LC}
\E_{W_0}\Big[\delta_{\square}(\widehat{f}_{LC},f'_0)\Big]\leq  C\Big[\rho_n \inf_{W\in \cW^+_1[\lfloor \kappa\rfloor^{-1},1/2]}\delta_{1}\left(W, W'_0\right)+ \rho_n\sqrt{\frac{\log n}{\kappa n}}+ \sqrt{\frac{\rho_{n}}{n}} \Big].
\eeq

For $L_1$ graphons, this bound is quite similar (up to an additional $\log^{1/2}(n)$ term) to those we obtained in (\ref{eq:minimax_risk_graphon_biais_1}--\ref{eq:minimax_risk_graphon_biais_11})  for the  empirical estimators $\widetilde{f}_{\bA}$ and $\widetilde{f}_{\widetilde\bTheta_{\lambda}}$. Note that the least cut norm estimator can not be computed in polynomial time contrary to the empirical graphons associated to the adjacency matrix and to the singular value hard thresholding estimator. Also, when the true graphon $W_0$ either belongs to $\cW^{+}_2$ or to $\cW^+[k]$, then the rate in \eqref{eq:upper_borgs_LC} is much slower than what has been obtained in Proposition \ref{corollary_spectral} and Theorem \ref{prp_agnostic_block}.

\appendix

\section{Proof methods}\label{sec:proof_methods}
In this section, we summarize some basic facts and fundamental results that we use in the proofs.

 \subsection{Non-symmetric kernels}\label{non_symm}

 At some point, we will need to work with non-symmetric kernels and with kernel defined on general measurable subsets of $\mathbb{R}$ . In this section we define the corresponding spaces. Let $\cX$ and $\cY$ denote two bounded measurable subsets of $\mathbb{R}$. Then, $\cW_{\cX,\cY}$ refers to the collection of bounded measurable functions $W: \cX\times \cY\rightarrow [-1,1]$. We will denote by $\cW^{+}_{\cX,\cY}$ the collection of bounded measurable and non-negative functions $W: \cX\times \cY\rightarrow [0,1]$.
 Let $\cW_{\cX,\cY}[k]$ be the collection of $k-$step kernels, that is, the subset of kernels $W\in \cW_{\cX,\cY}$ such that for some $\bQ\in \bbR^{k\times k}$ and some $\phi_1:\cX\to [k]$, $\phi_2:\cX\to [k]$, 
 \beq \label{eq:def_step_function_asym}
 W(x,y)= \bQ_{\phi_1(x),\phi_2(y)}\quad \text{  for all }(x,y)\in\cX\times \cY\ .
 \eeq
 A kernel $W$ is also said to be a $q_1\times q_2$-step function when it decomposes as in \eqref{eq:def_step_function_asym} but where $\bQ$ is a size $q_1\times q_2$ matrix, $\phi_1$ mapping $\cX$ to $[q_1]$, and $\phi_2$ mapping $\cY$ to $[q_2]$.
 The cut norm can be readily  extended to kernels $W\in \cW_{\cX,\cY}$ in the following way:
 \beq\label{eq:def_cut_norm_asym}
  \|W\|_{\square}:= \sup_{X\subset \cX,\  Y\subset \cY}\left |\int_{X\times Y}W(x,y)dxdy\right |
 \eeq
 where the supremum is taken over all measurable subsets $X$ and $Y$.

\subsection{Concentration inequalities}
In the proofs we repeatedly use Bernstein's inequality. We state it here for the readers' convenience. Let $X_1,\dots,X_N$ be independent zero-mean random variables. Suppose that $|X_i|\leq M$ almost surely, for all $i$. Then, for any $t>0$,
\begin{equation}\label{bernstein}
\bbP\left \{\sum_{i=1}^{N} X_i\geq t\right\} \leq \exp\Big[-\frac{t^2}{2 \sum_{i}\E[X_i^2]+ 2Mt/3}\Big]\ .
\end{equation}
We shall also rely on the bounded difference inequality (also called McDiarmid's inequality). 
\begin{lem}[Bounded difference inequality]\label{lem:mac_diarmid}
Let $X_1,\ldots, X_n$ denote $n$ independent real random variables. Assume that
$g:\mathbb{R}^n \rightarrow \mathbb{R}$ is a measurable function satisfying, for some positive constants $(c_i)_{1\leq i\leq n }$, the bounded difference condition 
\[
 |g(x_1,\ldots, x_i,\ldots, x_n)- g(x_1,\ldots, x'_i,\ldots, x_n)|\leq c_i\ ,
\]
for all $x=(x_1,\ldots, x_i,\ldots, x_n)\in \mathbb{R}^n$, $x'=(x_1,\ldots, x'_i,\ldots, x_n)\in \bbR^n$ and all $i\in [n]$. Then, the random variable $Z=g(X_1,\ldots, X_n)$ satisfies 
\[
 \P[Z\geq \E[Z]+ t]\leq \exp\left [- \frac{2t^2}{\sum_{i=1}^n c_i^2}\right ]\ , 
\]
for all $t>0$.

\end{lem}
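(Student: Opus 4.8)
The plan is to prove this via the classical Doob martingale argument, deducing the bound from the Azuma--Hoeffding inequality. Let $\cF_i$ denote the $\sigma$-algebra generated by $X_1,\ldots,X_i$, with $\cF_0$ the trivial $\sigma$-algebra, and define the Doob martingale $Z_i=\E[Z\mid \cF_i]$, so that $Z_0=\E[Z]$ and $Z_n=Z$. Writing $D_i=Z_i-Z_{i-1}$ for the martingale increments, we have $Z-\E[Z]=\sum_{i=1}^n D_i$ and $\E[D_i\mid\cF_{i-1}]=0$. The whole proof reduces to showing that each $D_i$ has conditional range at most $c_i$ and then applying Hoeffding's lemma increment by increment.

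The key step is the control of the conditional range of $D_i$. First I would exploit the independence of the $X_j$ to represent the martingale explicitly: there are measurable functions with $Z_i=h_i(X_1,\ldots,X_i)$ where $h_i(x_1,\ldots,x_i)=\E\!\left[g(x_1,\ldots,x_i,X_{i+1},\ldots,X_n)\right]$, and likewise $Z_{i-1}=h_{i-1}(X_1,\ldots,X_{i-1})$ with $h_{i-1}(x_1,\ldots,x_{i-1})=\E\!\left[h_i(x_1,\ldots,x_{i-1},X_i)\right]$. Set the $\cF_{i-1}$-measurable quantities $\underline{U}_i=\inf_{x} h_i(X_1,\ldots,X_{i-1},x)$ and $\overline{U}_i=\sup_{x} h_i(X_1,\ldots,X_{i-1},x)$. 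Integrating out $X_{i+1},\ldots,X_n$, the bounded difference hypothesis on $g$ carries over to $h_i$ in its $i$-th coordinate, so $\overline{U}_i-\underline{U}_i\le c_i$; and since $Z_{i-1}$ is a conditional average of $h_i(X_1,\ldots,X_{i-1},X_i)$ over $X_i$, we get $\underline{U}_i\le Z_{i-1}\le\overline{U}_i$. Hence, conditionally on $\cF_{i-1}$, the increment $D_i=h_i(X_1,\ldots,X_i)-Z_{i-1}$ takes values in an interval of length at most $c_i$.

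With this in place, I would invoke Hoeffding's lemma in its conditional form: a random variable $D$ with $\E[D\mid\cF_{i-1}]=0$ and conditional range at most $c_i$ satisfies $\E\!\left[e^{\lambda D}\mid\cF_{i-1}\right]\le e^{\lambda^2 c_i^2/8}$ for all $\lambda\in\bbR$. Applying this to $D_n,D_{n-1},\ldots,D_1$ successively and using the tower property yields $\E\!\left[e^{\lambda(Z-\E[Z])}\right]\le e^{\lambda^2\sum_{i=1}^n c_i^2/8}$. The conclusion then follows from the Chernoff bound $\P[Z\ge\E[Z]+t]\le e^{-\lambda t}\,\E[e^{\lambda(Z-\E[Z])}]$, optimized over $\lambda>0$; the choice $\lambda=4t/\sum_{i=1}^n c_i^2$ gives the stated estimate $\exp\!\left(-2t^2/\sum_{i=1}^n c_i^2\right)$.

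The main obstacle is the second step: showing that the martingale increments $D_i$ have conditional range bounded by $c_i$. This is exactly where independence is indispensable, since it lets one integrate out $X_{i+1},\ldots,X_n$ without interference from the conditioning on $X_1,\ldots,X_{i-1}$, and one must be mildly careful that the infimum and supremum over the $i$-th coordinate are measurable and that $Z_{i-1}$ genuinely lies between them. Once this is established, Hoeffding's lemma, the tower property, and the Chernoff optimization are entirely routine.
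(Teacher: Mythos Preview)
Your proof is correct and follows the standard Doob martingale/Azuma--Hoeffding route to McDiarmid's inequality. Note, however, that the paper does not actually prove this lemma: it is stated without proof in the preliminaries as a well-known concentration inequality, so there is no ``paper's own proof'' to compare against. Your argument is the classical one and would be entirely acceptable as a self-contained justification.
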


\subsection{Fano's lemma}

In the sequel, $\mathcal{KL}(.,.)$ denotes the Kullback-Leibler divergence between two distributions. In this manuscript, all the proofs of  the minimax lower bounds  rely on Fano's method. The following version of Fano's lemma is borrowed from~\cite{tsybakov_book}: 
\begin{lem}\cite[Theorem 2.7]{tsybakov_book}\label{lem:Fano_tsybakov}
Consider a parametric model $\P_{\theta}$, with $\theta\in \Theta$ and a metric $d(.,.)$ on $\Theta$. Assume that $\Theta$ contains elements $\theta_1, \ldots, \theta_M$, $M\geq 3$, such that for all $j,k\in [M]$ with $j\neq k$
\begin{enumerate}
 \item[(i)] $d(\theta_j,\theta_k)\geq s>0\  ,$
 \item[(ii)]  $\mathcal{KL}(\mathbb{P}_{\theta_j},\P_{\theta_k})\leq \log(M)/32\ $.
\end{enumerate}
Then, we have
\[
 \inf_{\widehat{\theta}}\sup_{\theta\in \Theta}\E_{\theta}\big[d(\widehat{\theta},\theta)\big]\geq C s\ ,
\]
where the constant $C>0$ is numeric. 
\end{lem}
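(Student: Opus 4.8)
The plan is the standard reduction of a minimax lower bound to a multiple hypothesis testing problem, followed by Fano's inequality. I would carry it out in two steps.

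\emph{Step 1 (reduction to testing).} Fix an arbitrary estimator $\widehat\theta$ and define the minimum-distance test $\psi\in[M]$ by $\psi\in\argmin_{j\in[M]}d(\widehat\theta,\theta_j)$, ties broken arbitrarily. If the true parameter is $\theta_j$ and $\psi=k\neq j$, then by the triangle inequality and the definition of $\psi$ one has $s\leq d(\theta_j,\theta_k)\leq d(\widehat\theta,\theta_j)+d(\widehat\theta,\theta_k)\leq 2\,d(\widehat\theta,\theta_j)$, using hypothesis (i); hence $d(\widehat\theta,\theta_j)\geq s/2$ on the event $\{\psi\neq j\}$. Markov's inequality then gives
\[
\sup_{\theta\in\Theta}\E_\theta\big[d(\widehat\theta,\theta)\big]\ \geq\ \max_{j\in[M]}\E_{\theta_j}\big[d(\widehat\theta,\theta_j)\big]\ \geq\ \frac{s}{2}\,\max_{j\in[M]}\P_{\theta_j}(\psi\neq j)\ \geq\ \frac{s}{2}\cdot\frac{1}{M}\sum_{j=1}^M\P_{\theta_j}(\psi\neq j)\ .
\]
So it suffices to bound the average testing error $\bar p_e:=M^{-1}\sum_j\P_{\theta_j}(\psi\neq j)$ below by a positive numeric constant, uniformly over all tests $\psi$.

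\emph{Step 2 (Fano's inequality).} Place a uniform prior on $\{\theta_1,\dots,\theta_M\}$: let $V\sim\mathrm{Unif}([M])$ and, given $V=j$, let the data $X$ have law $\P_{\theta_j}$, with mixture $\bar\P=M^{-1}\sum_j\P_{\theta_j}$. Since $V\to X\to\psi$ is a Markov chain, the data-processing inequality together with the entropy form of Fano's inequality yields $\log M - I(V;X)\leq H(V\mid\psi)\leq\log 2+\bar p_e\log(M-1)$, whence $\bar p_e\geq 1-(I(V;X)+\log 2)/\log M$. Using $I(V;X)=M^{-1}\sum_j\mathcal{KL}(\P_{\theta_j},\bar\P)$, convexity of the Kullback--Leibler divergence in its second argument, and hypothesis (ii), I get $\mathcal{KL}(\P_{\theta_j},\bar\P)\leq M^{-1}\sum_k\mathcal{KL}(\P_{\theta_j},\P_{\theta_k})\leq\log(M)/32$, hence $I(V;X)\leq\log(M)/32$ and
\[
\bar p_e\ \geq\ \frac{31}{32}-\frac{\log 2}{\log M}\ \geq\ \frac{31}{32}-\frac{\log 2}{\log 3}\ =:\ c_0\ >\ 0
\]
for every $M\geq 3$, the right-hand side being increasing in $M$. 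Combining with Step 1 gives $\inf_{\widehat\theta}\sup_{\theta}\E_\theta[d(\widehat\theta,\theta)]\geq (c_0/2)\,s=Cs$ with $C=c_0/2>0$ numeric.

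\emph{Main obstacle.} Everything outside Step 2 is bookkeeping; the real content is Fano's inequality and the verification that the constant stays strictly positive down to the boundary case $M=3$. The crude entropy bound above already suffices precisely because of the factor $1/32$ in hypothesis (ii); if a sharper constant were wanted one could instead run the mixture argument of \cite[Ch.~2]{tsybakov_book}, which replaces the leading constant by $\sqrt M/(1+\sqrt M)$, but that refinement is not needed here.
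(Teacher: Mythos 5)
Your proof is correct. There is nothing internal to compare it against: the paper does not prove this lemma but simply quotes it from Tsybakov's book (Theorem 2.7), so your argument stands in for the omitted textbook proof. The route you take — minimum-distance test plus Markov's inequality to reduce the expectation bound to the average testing error, then the entropy form of Fano's inequality with $I(V;X)=M^{-1}\sum_j \mathcal{KL}(\P_{\theta_j},\bar\P)\leq M^{-1}\sum_{j}\big(M^{-1}\sum_k \mathcal{KL}(\P_{\theta_j},\P_{\theta_k})\big)\leq \log(M)/32$ by convexity of the Kullback--Leibler divergence in its second argument — is the standard one, and the constant works out: $\bar p_e\geq 31/32-\log 2/\log 3\approx 0.34>0$ for every $M\geq 3$, so $C=c_0/2$ is indeed numeric. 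For reference, Tsybakov's Theorem 2.7 is phrased as a lower bound on the minimax probability $\inf_{\widehat\theta}\sup_\theta \P_\theta\big(d(\widehat\theta,\theta)\geq s/2\big)$ with the sharper leading factor $\sqrt M/(1+\sqrt M)$, and the expectation version used in the paper follows from it by exactly the Markov step of your Step 1; your cruder entropy bound suffices here precisely because of the factor $1/32$ in hypothesis (ii), as you observe. The only (harmless) points left implicit are that the $k=j$ terms contribute zero to the KL average and that the estimator is assumed to take values in the space on which $d$ is defined, which is the case in all applications in the paper.
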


\subsection{Khintchine's inequality}
Next, we state a particular case of Khintchine's inequality that turns out to be useful for bounding  the cut norm of step kernels in terms of their $l_1$ norm:
\begin{lem}\label{lem:khintchine}~\cite{szarek}
 Let $\epsilon_1,\ldots ,\epsilon_p$ be i.i.d. Rademacher random variables and let $x_1,\ldots, x_p$ be some real numbers. Then, 
 \beq\label{eq:khintchine}
\E\left [\left |\sum_{i=1}^p \epsilon_i x_i\right |\right ] \geq \frac{1}{\sqrt{2}}\left [\sum_{i=1}^p x_i^2\right ]^{1/2}\ . 
 \eeq
\end{lem}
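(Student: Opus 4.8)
The plan is to prove Szarek's sharp $L_1$ Khintchine bound. By $1$-homogeneity in $(x_1,\dots,x_p)$ I would normalize $\sum_{i=1}^p x_i^2=1$ and aim to show $\E\big|\sum_{i=1}^p\epsilon_ix_i\big|\ge 1/\sqrt2$. Since $\E\big(\sum_i\epsilon_ix_i\big)^2=\sum_i x_i^2=1$, this is a reverse-H\"older statement: the $L_1$ norm of a Rademacher sum cannot drop much below its $L_2$ norm. The cases $p\le 2$ are explicit: for $p=1$ one has $\E|\epsilon_1x_1|=|x_1|=1$, and for $p=2$,
\[
\E|\epsilon_1x_1+\epsilon_2x_2|=\tfrac12\big(|x_1+x_2|+|x_1-x_2|\big)=\max(|x_1|,|x_2|)\ge\sqrt{\tfrac{x_1^2+x_2^2}{2}}=\tfrac1{\sqrt2},
\]
with equality precisely when $|x_1|=|x_2|$. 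This already identifies both the optimal constant and the extremal configuration, so the remaining work is to show that no $p\ge 3$ configuration does better.

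For general $p$ I would reduce to the base case by a variational argument. The map $x\mapsto\E\big|\sum_i\epsilon_ix_i\big|$ is continuous on the unit sphere $\{\sum_i x_i^2=1\}$, hence attains its minimum at some $x^\star$; at such a minimizer, no perturbation of two coordinates keeping $\sum x_i^2$ fixed can decrease the value. Freezing all coordinates except $x_i^\star,x_j^\star$, writing $S=\sum_{\ell\ne i,j}\epsilon_\ell x_\ell^\star$ and
\[
\psi_S(u,v)=\E_{\epsilon,\epsilon'}|S+u\epsilon+v\epsilon'|=\tfrac14\big(|S+u+v|+|S+u-v|+|S-u+v|+|S-u-v|\big),
\]
the key claim is that $\E_S\psi_S$, restricted to the circle $u^2+v^2=(x_i^\star)^2+(x_j^\star)^2$, is minimized only at the coordinate axes or at the diagonal points $|u|=|v|$. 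Granting this, every pair of coordinates of $x^\star$ is either zeroed or equalized, so $x^\star$ is flat on its support, say $m$-sparse with all nonzero entries $\pm m^{-1/2}$; the problem then collapses to checking $\E\big|\sum_{i=1}^m\epsilon_i\big|\ge\sqrt{m/2}$ for all integers $m\ge 1$, which follows from the closed form of $\E|\sum_{i\le m}\epsilon_i|$ in terms of central binomial coefficients, the minimum over $m$ being at $m=2$ with value $1/\sqrt2$. An alternative route uses the representation $|t|=\tfrac2\pi\int_0^\infty(1-\cos(ts))s^{-2}\,ds$, giving $\E|\sum_i\epsilon_ix_i|=\tfrac2\pi\int_0^\infty\big(1-\prod_i\cos(x_is)\big)s^{-2}\,ds$, and compares this against the $p=2$, $x_1=x_2=1/\sqrt2$ value by a super-level-set (Nazarov--Podkorytov type) argument on $(0,\infty)$ together with quantitative control on $\prod_i|\cos(x_is)|$.

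The main obstacle is precisely the reduction step, i.e.\ the statement that $\E_S\psi_S$ has no interior local minimum on the circle strictly below the value at the axes and the diagonal. Because $\psi_S$ is piecewise linear in $(u,v)$, with break lines governed by the relative orders of $|S|$, $|u+v|$ and $|u-v|$, making this rigorous requires a somewhat delicate case analysis of how these orders change as one moves around the circle --- and this is exactly the place where Szarek's argument (and its later streamlinings by Haagerup and by K\"onig--Kwapie\'n) does the real work. I would follow one of those treatments here; the remaining ingredients --- compactness, the base case $p\le2$, and the elementary bound $\E|\sum_{i\le m}\epsilon_i|\ge\sqrt{m/2}$ --- are routine.
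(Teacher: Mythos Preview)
The paper does not actually prove this lemma: it is stated with a direct citation to Szarek~\cite{szarek} and used as a black box in the proof of Lemma~\ref{lem:1cut_l1}. So there is no ``paper's own proof'' to compare against --- the authors simply import the result.

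Your outline is a fair summary of Szarek's original strategy (and of the later simplifications you mention): normalize, handle $p\le 2$ by hand to identify the extremal configuration, then use a two-coordinate variational argument to force any minimizer to be flat on its support, reducing to the explicit one-parameter family $\E\big|\sum_{i\le m}\epsilon_i\big|$. Your base cases and the compactness/continuity step are correct, and you are right that the entire difficulty lies in the claim that $\E_S\psi_S$, restricted to the circle $u^2+v^2=\mathrm{const}$, has its minimum only at the axes or the diagonal. You do not prove this claim but defer to the literature; that is honest, but it means what you have written is a proof sketch rather than a proof. If the goal were to include a self-contained argument in the paper, this would be the place to fill in --- either via Szarek's piecewise-linear case analysis or via the Fourier-analytic route you allude to. For the purposes of the present paper, however, citing Szarek is entirely adequate, since only the inequality (not its proof) is used downstream.
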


We use this result to prove the following  lower bound on the cut norm of step kernels:
\begin{lem}\label{lem:1cut_l1}
 Let $U:\cX\times \cY\mapsto [-1,1]$ denote a measurable $q_1\times q_2$--step function. Then,  
 \beq\label{eq:lower1_cut_l1}
 \|U\|_{\square}\geq \frac{1}{4\sqrt{2q_2}}\|U\|_1\ . 
 \eeq
\end{lem}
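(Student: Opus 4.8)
The statement to prove is Lemma~\ref{lem:1cut_l1}: for a $q_1\times q_2$-step function $U:\cX\times\cY\to[-1,1]$, one has $\|U\|_{\square}\geq \tfrac{1}{4\sqrt{2q_2}}\|U\|_1$. The natural route is via the operator-norm characterization $\|U\|_{\infty\to 1}\leq 4\|U\|_{\square}$ from \eqref{eq:cut_operator}, so it suffices to show $\|U\|_{\infty\to 1}\geq \tfrac{1}{\sqrt{2q_2}}\|U\|_1$. Write $U(x,y)=\bQ_{\phi_1(x),\phi_2(y)}$ where $\phi_1:\cX\to[q_1]$, $\phi_2:\cY\to[q_2]$, and let $\alpha_a=\lambda(\phi_1^{-1}(a))$, $\beta_b=\lambda(\phi_2^{-1}(b))$ be the block masses. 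Then $\|U\|_1=\sum_{a,b}\alpha_a\beta_b|\bQ_{ab}|$, and for the test functions $f=\sum_a f_a\1_{\phi_1^{-1}(a)}$, $g=\sum_b g_b\1_{\phi_2^{-1}(b)}$ with $f_a,g_b\in[-1,1]$ we get $\int Ufg = \sum_{a,b}\alpha_a\beta_b\bQ_{ab}f_ag_b$, so $\|U\|_{\infty\to1}$ equals the maximum of this bilinear form over $f_a,g_b\in\{\pm1\}$.

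\textbf{Key step: randomize over the $\cX$-side signs.} The plan is to pick $f_a=\epsilon_a$ i.i.d. Rademacher and, for each fixed sign vector $\epsilon$, choose the optimal $g_b\in\{\pm1\}$, namely $g_b=\sign(\sum_a\alpha_a\bQ_{ab}\epsilon_a)$. This gives, for every $\epsilon$,
\[
\|U\|_{\infty\to1}\ \geq\ \sum_{b=1}^{q_2}\beta_b\,\Big|\sum_{a=1}^{q_1}\alpha_a\bQ_{ab}\epsilon_a\Big|\ .
\]
Taking expectation over $\epsilon$ and applying Khintchine's inequality (Lemma~\ref{lem:khintchine}) to each inner sum (with $x_a=\alpha_a\bQ_{ab}$) yields
\[
\|U\|_{\infty\to1}\ \geq\ \frac{1}{\sqrt2}\sum_{b=1}^{q_2}\beta_b\Big[\sum_{a=1}^{q_1}\alpha_a^2\bQ_{ab}^2\Big]^{1/2}\ .
\]
Now I need to pass from the $\ell_2$-type quantity $[\sum_a\alpha_a^2\bQ_{ab}^2]^{1/2}$ back to the $\ell_1$-type quantity $\sum_a\alpha_a|\bQ_{ab}|$ appearing in $\|U\|_1$. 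This is the one place where the bound cannot be scale-free in $q_1$: in general $\sum_a\alpha_a|\bQ_{ab}|\leq (\sum_a\alpha_a)^{1/2}(\sum_a\alpha_a\bQ_{ab}^2)^{1/2}\leq (\sum_a\alpha_a^2\bQ_{ab}^2)^{1/2\cdot}$ — wait, that inequality goes the wrong way, so the argument must instead exploit that $|\bQ_{ab}|\le 1$ and $\sum_a \alpha_a \le \lambda(\cX)$. The clean fix is to bound $\sum_a \alpha_a^2 \bQ_{ab}^2 \ge \big(\sum_a \alpha_a |\bQ_{ab}|\big)^2 / \big(\sum_a \alpha_a\big) $ is again the wrong direction; the correct elementary inequality is $\sum_a \alpha_a^2\bQ_{ab}^2 \ge (\sum_a \alpha_a^2 |\bQ_{ab}|^2)$ and then use $|\bQ_{ab}|^2 \ge |\bQ_{ab}|\cdot(\min \text{stuff})$...

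\textbf{Main obstacle and its resolution.} The genuinely delicate point — and what I expect to be the crux — is exactly this conversion step, because it must lose only a factor $\sqrt{q_2}$ (not $\sqrt{q_1}$). The right approach is to \emph{also} symmetrize, or rather to avoid the $\ell_1/\ell_2$ mismatch on the $\cX$-side entirely by a more symmetric choice. Concretely: since $|\bQ_{ab}|\le 1$, we have $\alpha_a^2\bQ_{ab}^2 \ge \alpha_a^2\bQ_{ab}^2$ trivially, but what we want is $\sum_a \alpha_a^2 \bQ_{ab}^2 \ge c\,(\sum_a \alpha_a |\bQ_{ab}|)^2$ for the relevant $b$'s, which fails per-$b$ but can be salvaged after summing in $b$ by Cauchy--Schwarz: $\sum_b \beta_b [\sum_a \alpha_a^2\bQ_{ab}^2]^{1/2} \ge q_2^{-1/2}\big(\sum_b \beta_b^2 \sum_a \alpha_a^2 \bQ_{ab}^2 /(\sum_b 1)\big)$... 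The cleanest realization: apply Khintchine on the $\cY$-side as well (second randomization over $g_b=\eta_b$), obtaining $\|U\|_{\infty\to1}\ge \tfrac12\mathbb{E}_{\epsilon,\eta}|\sum_{a,b}\alpha_a\beta_b\bQ_{ab}\epsilon_a\eta_b|$, then bound this below using that the $\{(a,b)\}$-indexed Rademacher sum $\sum_{a,b}\epsilon_a\eta_b c_{ab}$ with $c_{ab}=\alpha_a\beta_b\bQ_{ab}$ satisfies $\mathbb{E}|\sum\epsilon_a\eta_b c_{ab}| \ge \tfrac{1}{\sqrt 2}\mathbb{E}_\eta\big[\sum_a (\sum_b \eta_b c_{ab})^2\big]^{1/2} \ge \tfrac{1}{\sqrt2}\mathbb{E}_\eta \max_a |\sum_b \eta_b c_{ab}| \ge \tfrac{1}{\sqrt{2q_2}}\,\mathbb{E}_\eta \sum_a\tfrac{1}{q_1}\cdots$ — I will instead use the crude but sufficient bound $\big[\sum_a x_a^2\big]^{1/2}\ge q_1^{-1/2}\sum_a|x_a|$ on the $\cX$-side is too lossy, but $\big[\sum_b x_b^2\big]^{1/2}\ge q_2^{-1/2}\sum_b |x_b|$ on the $\cY$-side is exactly the $\sqrt{q_2}$ we are allowed. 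Thus: randomize over $\eta_b$ on the $\cY$-side only, choose $f_a=\sign(\sum_b \beta_b\bQ_{ab}\eta_b)$ optimally, get $\|U\|_{\infty\to1}\ge \mathbb{E}_\eta \sum_a \alpha_a|\sum_b\beta_b\bQ_{ab}\eta_b|\ge \tfrac{1}{\sqrt2}\sum_a\alpha_a[\sum_b\beta_b^2\bQ_{ab}^2]^{1/2}$ by Khintchine, and finally $[\sum_b \beta_b^2\bQ_{ab}^2]^{1/2}\ge q_2^{-1/2}\sum_b\beta_b|\bQ_{ab}|$ by Cauchy--Schwarz (since there are at most $q_2$ terms), giving $\|U\|_{\infty\to1}\ge \tfrac{1}{\sqrt{2q_2}}\sum_{a,b}\alpha_a\beta_b|\bQ_{ab}| = \tfrac{1}{\sqrt{2q_2}}\|U\|_1$. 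Combining with \eqref{eq:cut_operator} divides by $4$ and yields \eqref{eq:lower1_cut_l1}. The only real care needed is getting the Cauchy--Schwarz direction right ($\|v\|_1\le\sqrt{q_2}\|v\|_2$ for a $q_2$-vector $v$) and tracking that the randomization is done on the side with $q_2$ blocks.
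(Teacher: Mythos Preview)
Your final argument is correct and is essentially the paper's proof: randomize the signs on the $\cY$-side (the one with $q_2$ blocks), choose the $\cX$-side signs optimally, apply Khintchine's inequality \eqref{eq:khintchine}, and finish with Cauchy--Schwarz $\|v\|_1\le\sqrt{q_2}\,\|v\|_2$; the paper derives the factor $1/4$ by the explicit $\{0,1\}\to\{-1,1\}$ conversion rather than citing \eqref{eq:cut_operator}, but this is the same content. The exposition meanders (you first randomize on the wrong side and chase several dead ends before switching to the $q_2$-side), so in a clean write-up start directly with the $\cY$-side randomization.
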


\begin{proof}[Proof of Lemma \ref{lem:1cut_l1}]
There exist partitions $\mathcal{X}= \mathcal{X}_1\cup \ldots \mathcal{X}_{q_1}$ and $\mathcal{Y}= \mathcal{Y}_1\cup \ldots \mathcal{Y}_{q_2}$ such that, for any  fixed $y \in \mathcal{Y}$, $U(x,y)$ is constant over  $x\in\mathcal{X}_i$ for all $i\in[q_1]$ and,  for any fixed $y \in \mathcal{X}$, $U(x,y)$ is constant over  $y\in\mathcal{Y}_i$ for all $i\in[q_2]$. For any $a\in [q_1]$ (resp. $b\in [q_2]$), denote $x_a$ (resp. $y_b$) any element of $\mathcal{X}_a$ (resp. $\mathcal{Y}_b$). By definition of $\|U\|_{\square}$,
\beqn 
 \|U\|_{\square}&=&\sup_{S\subset \cX,T\subset\cY}\Big|\int_{S\times T}U(x,y)dxdy\Big|\\
 & =& \sup_{S\subset \cX,T\subset\cY} \Big|\sum_{a=1}^{q_1}\sum_{b=1}^{q_2} \lambda(S\cap \cX_a)\lambda(T\cap \cY_b)  U(x_a,y_b) \Big|\\
 & = & \sup_{\epsilon\in [0,1]^{q_1}}\sup_{\epsilon'\in [0,1]^{q_2}}\Big|\sum_{a=1}^{q_1}\sum_{b=1}^{q_2} \epsilon_a \lambda(\cX_a)\epsilon'_b\lambda(\cY_b)  U(x_a,y_b)\Big| \ ,
\eeqn 
where we used in the last line that the value of the sum only depends on $S$ and $T$ through the quantities $\lambda(S\cap \cX_a)$ and $\lambda(T\cap \cY_b)$. 
Since the maximum of a linear function on a convex set is achieved at an extremal point, it follows that 
\beqn 
\|U\|_{\square}&=&\sup_{\epsilon\in \{0,1\}^{q_1},\ \epsilon'\in \{0,1\}^{q_2}}\Big|\sum_{a=1}^{q_1}\sum_{b=1}^{q_2} \epsilon_a \lambda(\cX_a)\epsilon'_b\lambda(\cY_b)  U(x_a,y_b)\Big|  \\
&\geq & \frac{1}{4} \sup_{\epsilon\in\{-1,1\}^{q_1}, \epsilon'\in \{-1,1\}^{q_2}}\Big|\sum_{a\in [q_1] , b\in [q_2]}\epsilon_a \epsilon'_b\lambda(\mathcal{X}_a) \lambda(\mathcal{Y}_b) U(x_a,y_b)\Big|\\
&\geq & \frac{1}{4} \sup_{ \epsilon'\in \{-1,1\}^{q_2}}\sum_{a\in [q_1]} \lambda(\mathcal{X}_a)\left |\sum_{ b\in [q_2]} \epsilon'_b \lambda(\mathcal{Y}_b)U(x_a,y_b)\right |
\eeqn 
  where we use \eqref{eq:cut_operator} and take $\epsilon_{a}=\sign \sum_{ b\in [q_2]} \epsilon'_b \lambda(\mathcal{Y}_b)U[x_a,y_b]$. Let $v=(v_1,\ldots, v_{q_2})$ denote i.i.d.\  Rademacher random variables and let $\E_{v}[.]$ denotes the expectation with respect to $v$. 
  Now,  Khintchine's inequality~\eqref{eq:khintchine}  and  Cauchy-Schwarz inequality 
imply
\beqn 
\sup_{ \epsilon'\in \{-1,1\}^{q_2}}\sum_{a\in [q_1]} \lambda(\mathcal{X}_a)\Big|\sum_{ b\in [q_2]} \epsilon_b \lambda(\mathcal{Y}_b)U(x_a,y_b)\Big|&\geq& 
\E_{v}\Big[ \sum_{a\in [q_1]} \lambda(\mathcal{X}_a)\Big|\sum_{ b\in [q_2]} v_b \lambda(\mathcal{Y}_b)U(x_a,y_b)\Big|\Big]
\\
&\geq& \frac{1}{\sqrt{2}} \sum_{a\in [q_1]} \lambda(\mathcal{X}_a) \left(\sum_{b\in [q_2]} \lambda^2(\mathcal{Y}_b)U^2(x_a,y_b)\right)^{1/2}\ \\
&\geq & \frac{1}{\sqrt{2q_2}}\sum_{a\in [q_1]}\sum_{b\in [q_2]} \lambda(\mathcal{X}_a) \lambda (\mathcal{Y}_b)  \left |U (x_a,y_b)\right | \\
& & =\frac{1}{\sqrt{2q_2}} \|U\|_1 \ . 
\eeqn 
\end{proof}

\section{Proof of Proposition \ref{lemma_LovaszBook}}\label{proof_lemma_LovaszBook}

Since the diagonals of $\bA$ and $\bTheta$ are both zero, it suffices to control the supremum over disjoints subsets $S$ and $T$ (see, e.g., \cite{borgs_chayes_2008})
$$\Vert \bA-\bTheta_0\Vert_{\cut}\leq \frac{4}{n^{2}}\underset{S\cap T=\emptyset}{\max} \Big\vert \sum_{i\in S,j\in T}\left (\bA_{ij}-\bTheta_{ij}\right )\Big\vert\ .$$
Let $S$ and $T$ be any two disjoint subsets of $[n]$. Using Bernstein's inequality \eqref{bernstein} we have that 
\begin{align*}
\P\left\{\left\vert \sum_{i\in S,j\in T}(\bA_{ij}-\bTheta_{ij})\right\vert\geq 3\sqrt{\left (\Vert \bTheta_0\Vert_{1}+n\right )n}\right\}&\leq 2\exp\left(-\frac{9\left (\Vert \bTheta_0\Vert_{1}+n\right )n}{2\Vert \bTheta_0\Vert_{1}+2\sqrt{\left (\Vert \bTheta_0\Vert_{1}+n\right )n}}\right)
	\\&\leq
2\exp\left(-\frac{9}{4}n\right)
\end{align*}
 Now, using that the number of disjoint pairs $(S,T)$ is $3^n$ and the union bound, we get that the probability that $|\sum_{i\in S,j\in T}\bA_{ij}-\bTheta_{ij}|$ exceeds $3\sqrt{\left (\Vert \bTheta_0\Vert_{1}+n\right )n}$ for some $(S,T)$ is bounded by $2\exp(-n)$. Hence, we have 
\begin{align*}
\Vert A-\bTheta_0\Vert_{\cut}\leq 4\underset{S\cap T=\emptyset}{\sup}\frac{1}{n^{2}}\left\vert \underset{(i,j)\in S\times T}{\sum}(\bA_{ij}-\bTheta_{ij})\right\vert \leq 12\sqrt{\dfrac{\Vert \bTheta_0\Vert_{1}+n}{n^{3}}}
\end{align*}
with probability $1-2e^{-n}$. Now bounding the distance by $1$ in the exceptional case we get the statement of Proposition \ref{lemma_LovaszBook}.
\section{Proof of Proposition \ref{lemma_low-bound_prob}}\label{proof_low-bound_dense}
Fix $\rho_n\in (0,1)$. 
This proof is based on Fano's method.  To apply Fano's Lemma (Lemma \ref{lem:Fano_tsybakov}), it is enough to check that there exists a finite subset $\Omega$ of $\cT[2,\rho_{n}]$  such that for any two distinct $\bTheta,\bTheta'$ in $\Omega$ we have
    \begin{itemize}
    \item[(a)]$\|  \bTheta-\bTheta'\|_{\cut}\geq C\,\sqrt{\rho_{n}}\left(\frac{1}{\sqrt{n}}\wedge \sqrt{\rho_n}\right)$ and 
    \item[(b)]$\mathcal{KL}(\mathbb{P}_{\bTheta},\mathbb{P}_{\bTheta'})\leq  \log(|\Omega|)/32\,$
    \end{itemize}
    for some constants $C>0$.
     Then, Applying  Lemma \ref{lem:Fano_tsybakov} to $\Omega$ leads to the desired result. It remains to prove the existence of $\Omega$. 
As it is classical for this kind of proof, we first build a collection $\Omega'\subset \cT[2,\rho_n]$ and then extract a maximal subset $\Omega\subset \Omega'$ satisfying (a).  Then, we control the Kullback divergence between any two probability to show (b).

    \medskip

    \noindent \underline{Construction of $\Omega'$}. Fix $\epsilon \in (0,\rho_n/4)$. 
For any $u\in \{-1,1\}^n$, define $\bTheta_u$ by $(\bTheta_u)_{i,j}= \rho_{n}/2+u(i)u(j)\epsilon$ where  $u=\left (u(1),\dots,u(n)\right )$. In other words, the entries $\bTheta_u$ are equal to $\rho_n/2+\epsilon$ if $u(i)u(j)=1$ and  $\rho_n/2-\epsilon$ if  $u(i)u(j)=-1$.  Obviously, the collection  $\Omega':= \left \{\bTheta_u\; :\; u\in \{-1,1\}^n\right \}$ is included in $\cT[2,\rho_n]$.

\medskip 

\noindent \underline{Computation of the cut distances and extraction of a maximal subset}. Given $u\in \{-1,1\}^n$, denote $V_{u}:=\{i\in[n]\;:\;u(i)=1\}$ 
the set of indices corresponding to $u(i)=1$ and  $\bar{V}_u$ its complement.  Then, given two vector $u$ and $v$, we  define $S:=V_{u}\setminus V_{v}$  and $T :=  V_{v}\cap V_{u}$, we easily obtain 
\[\Big\vert\sum_{i\in S,j\in T} (\bTheta_u-\bTheta_v)_{ij} \Big\vert = 2\epsilon\vert V_{u}\setminus V_{v}\vert \vert V_{v}\cap V_{u}\ \vert . \]
By symmetry, we derive that 
\begin{align*}
n^{2}\Vert \bTheta_u-\bTheta_v\Vert_\cut&\geq  2\epsilon\max\{\vert V_{u}\setminus V_{v}\vert,\vert V_{v}\setminus V_{u}\vert \}\max\{\vert \bar V_{u}\cap \bar V_{v}\vert,\vert V_{v}\cap V_{u}\vert \}\\&\hskip 0.5 cm\geq \frac{\epsilon}{2}\vert V_{u}\triangle V_{v}\vert (n-\vert V_{u}\triangle V_{v}\vert)\ ,
\end{align*}
where $A\triangle B$ is the symmetric difference of $A$ and $B$. As a consequence, the cut distance between any two graphons is large as long as the symmetric difference between $u$ and $v$ is both bounded away from zero and from $n$.

By Varshamov-Gilbert combinatorial bound (see, e.g., \cite[Lemma 2.9]{tsybakov_book}), we can in fact pick $u_1,\dots,u_N$ satisfying
$$\frac{n}{4}\leq \vert V_{u_i}\triangle V_{u_j} \vert\leq \frac{3n}{4}\quad \text{for}\quad i\not=j\in[N]$$ 
with $N\geq \exp(c_1n)$ for some $c_1>0$. In the sequel, we consider $\Omega=\{\bTheta_{u_i}\; :\;i=1,\dots,N\}$. Hence, we have $\log\vert \Omega\vert\geq c_1n$, whereas the previous inequalities ensure that 
\begin{equation*} 
\Vert \bTheta_{u_i}-\bTheta_{u_j}\Vert_\cut\geq \epsilon/14 \ . \end{equation*}
which proves (a) when one takes $\epsilon$ as defined in \eqref{eq:epsilon_choice} below. 

\medskip

\noindent 
\underline{Control of the Kullback Divergence}. 
To prove (b) we use the definition of
 Kullback-Leibler divergence $\mathcal{KL}(\P_{\bTheta_{u}},\P_{\bTheta_{v}})$ and $\log x\leq x-1$ for $x>0$ to get
 \begin{align*}
 \mathcal{KL}(\P_{\bTheta_{u}},\P_{\bTheta_{v}})&=\sum_{ij}(\bTheta_u)_{i,j}\log\left (\dfrac{(\bTheta_u)_{i,j}}{(\bTheta_v)_{i,j}}\right )+\left (1-(\bTheta_u)_{i,j}\right )\log\left (\dfrac{(1-\bTheta_u)_{i,j}}{1-(\bTheta_v)_{i,j}}\right )\\
 &\hskip 0.5 cm \leq \sum_{ij}\dfrac{\left ((\bTheta_u)_{i,j}-(\bTheta_v)_{i,j}\right )^{2}}{(\bTheta_v)_{i,j}\left (1-(\bTheta_v)_{i,j}\right )}.
 \end{align*}
Now,  $(\bTheta_v)_{i,j}\geq \rho_n/4$ and $\rho_n\leq 1$ imply
  \begin{align*}
  \mathcal{KL}(\P_{\bTheta_{u_i}},\P_{\bTheta_{u_j}})\leq \dfrac{16}{3\rho_n}\sum_{ij}\left ((\bTheta_u)_{i,j}-(\bTheta_v)_{i,j}\right )^{2}\leq  \dfrac{16n^{2}\epsilon^{2}}{3\rho_n} .
  \end{align*}
 Taking
 \beq \label{eq:epsilon_choice}
 \epsilon=c_2\sqrt{\rho_{n}}\left(\frac{1}{\sqrt{n}}\wedge \sqrt{\rho_n}\right)
 \eeq
 with a constant $c_2>0$ small enough, we derive from the lower bound $\log(|\Omega|)\geq c_1 n$ that 
 \begin{equation*} 
 \mathcal{KL}(\P_{\bTheta_{u_i}},\P_{\bTheta_{u_j}})\leq \log \vert \Omega\vert/32
 \end{equation*}
which proves (b).

\section{Proof of Proposition \ref{corollary_spectral}}\label{proof_spectral}
 Set $\bE=\bA-\bTheta_0$. We have the following simple proposition (see Theorem 5 in \cite{Klopp_rank})
     \begin{prp}\label{prop_spectral}
     If $\lambda\geq \Vert \bE\Vert_{2\rightarrow 2}$, then 
      \begin{equation*}
      \Vert \widetilde \bTheta_{\lambda}-\bTheta_0\Vert_{2\rightarrow 2}\leq 2\lambda.
      \end{equation*}
     \end{prp}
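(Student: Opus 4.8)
The plan is to prove Proposition \ref{prop_spectral} by a one-line triangle inequality, the only substantive ingredient being a bound on how much singular value hard thresholding moves $\bA$ in the spectral norm $\|\cdot\|_{2\rightarrow 2}$. First I would use that, from the singular value decomposition $\bA=\sum_{j=1}^{\rank(\bA)}\sigma_j(\bA)u_j(\bA)v_j(\bA)^{T}$ and the definition \eqref{hard_estimator} of $\widetilde{\bTheta}_{\lambda}$,
\[
\bA-\widetilde{\bTheta}_{\lambda}=\sum_{j:\,\sigma_j(\bA)<\lambda}\sigma_j(\bA)\,u_j(\bA)v_j(\bA)^{T}\ ,
\]
so that the nonzero singular values of $\bA-\widetilde{\bTheta}_{\lambda}$ are exactly the discarded singular values $\{\sigma_j(\bA):\sigma_j(\bA)<\lambda\}$ of $\bA$. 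Since the spectral norm of a matrix equals its largest singular value, this gives $\|\bA-\widetilde{\bTheta}_{\lambda}\|_{2\rightarrow 2}\le\lambda$, with the convention that this norm is $0$ (hence still $\le\lambda$) in the degenerate case where every singular value of $\bA$ exceeds $\lambda$ and thus $\widetilde{\bTheta}_{\lambda}=\bA$.

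Then I would conclude by the triangle inequality for $\|\cdot\|_{2\rightarrow 2}$ together with the hypothesis $\lambda\ge\|\bE\|_{2\rightarrow 2}=\|\bA-\bTheta_0\|_{2\rightarrow 2}$:
\[
\|\widetilde{\bTheta}_{\lambda}-\bTheta_0\|_{2\rightarrow 2}\le\|\widetilde{\bTheta}_{\lambda}-\bA\|_{2\rightarrow 2}+\|\bA-\bTheta_0\|_{2\rightarrow 2}\le\lambda+\lambda=2\lambda\ .
\]

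There is essentially no obstacle here; the entire content is the observation that truncating the SVD at level $\lambda$ perturbs $\bA$ by at most $\lambda$ in spectral norm. The only point worth stating cleanly is the degenerate case mentioned above, where $\widetilde{\bTheta}_{\lambda}=\bA$ and the claimed bound reduces to $\|\widetilde{\bTheta}_{\lambda}-\bTheta_0\|_{2\rightarrow 2}=\|\bE\|_{2\rightarrow 2}\le\lambda\le 2\lambda$. Since this statement is precisely Theorem 5 of \cite{Klopp_rank}, one could alternatively just invoke that reference, but the argument above is short enough to reproduce in full.
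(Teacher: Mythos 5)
Your proof is correct and is essentially the argument behind the paper's proof, which is simply a citation of Theorem~5 in \cite{Klopp_rank}: truncating the SVD at level $\lambda$ moves $\bA$ by at most $\lambda$ in the $\Vert\cdot\Vert_{2\rightarrow 2}$ norm, and the triangle inequality with $\lambda\geq\Vert\bE\Vert_{2\rightarrow 2}$ gives the factor $2\lambda$. No gaps; the degenerate case you flag is handled correctly.
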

     In view of Proposition \ref{prop_spectral}   we need to estimate $\Vert \bE\Vert$ with high probability in order to specify the value of the regularization parameter $\lambda$. Let $\bE^{*}=(\bE^{*}_{ij})$ be such that $\bE^{*}_{ij}=\bE_{ij}$ for $i<j$ and $\bE^{*}_{ij}=0$ for $i\geq j$. Then $\Vert \bE\Vert_{2\rightarrow 2}\leq 2\Vert \bE^{*} \Vert$.
      We can upper bound $\Vert \bE^{*} \Vert$ using the following bound on the spectral norm of random matrices from~\cite{Bandeira}:
        \begin{prp}\label{pr1}
        Let $\bW$ be the $n\times m$ rectangular matrix whose entries $\bW_{ij}$ are independent centered random variables bounded (in absolute value) by some $\sigma_*>0$. Then, for any $0<\epsilon\leq 1/2$ there exists a universal constant $c_{\epsilon}$ such that, for every $t\geq 0$
        $$ \mathbb P\left \{ \left \Vert \bW\right\Vert_{2\rightarrow 2}\geq (1+\epsilon)2\sqrt{2}(\sigma_1\vee \sigma_2)+t\right \}\leq (n\wedge m)\exp\left (\frac{-t^{2}}{c_{\epsilon}\sigma^{2}_*}\right )$$ 
        where we have defined 
        $$\sigma_{1}=\underset{i}{\max}\sqrt{\sum_{j}\mathbb \E[\bW_{ij}^{2}]},\quad\sigma_{2}=\underset{j}{\max}\sqrt{\sum_{i}\mathbb \E[\bW_{ij}^{2}]}\ .$$
           \end{prp}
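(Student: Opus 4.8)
The plan is to derive Proposition~\ref{pr1} by combining the sharp bound on the expected spectral norm of a random matrix with independent entries (the content of~\cite{Bandeira}) with a dimension-free concentration inequality for convex Lipschitz functions of bounded independent variables. First I would pass to the symmetric setting via the Hermitian dilation
\[
\widetilde{\bW}=\begin{pmatrix} \mathbf{0} & \bW \\ \bW^{T} & \mathbf{0}\end{pmatrix}\in\mathbb{R}^{(n+m)\times(n+m)}\ ,
\]
which is symmetric, whose entries on and above the diagonal are independent (the block $\bW$ being independent, the two diagonal blocks being deterministically zero), centered, and bounded by $\sigma_*$, and which satisfies $\|\widetilde{\bW}\|_{2\rightarrow 2}=\|\bW\|_{2\rightarrow 2}$. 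Its row-wise variance parameter is $\max_k\sqrt{\sum_l\E[\widetilde{\bW}_{kl}^2]}=\sigma_1\vee\sigma_2$, since a row indexed in $[n]$ contributes $\sum_l\E[\bW_{kl}^2]\le\sigma_1^2$ and a row indexed in $\{n+1,\dots,n+m\}$ contributes $\le\sigma_2^2$. The Bandeira--van Handel bound~\cite{Bandeira} (in its rectangular form, so that the logarithmic factor involves $n\wedge m$ rather than $n+m$) then yields, for every $0<\epsilon\le 1/2$, a constant $C_\epsilon$ such that
\[
\E\|\bW\|_{2\rightarrow 2}\le (1+\epsilon)\,2\sqrt{2}\,(\sigma_1\vee\sigma_2)+C_\epsilon\,\sigma_*\sqrt{\log(n\wedge m)}\ .
\]

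\emph{Concentration around the mean.} The map $\bW\mapsto\|\bW\|_{2\rightarrow 2}$ on $\mathbb{R}^{n\times m}$ is convex (being a norm) and $1$-Lipschitz with respect to the Frobenius norm, since $\|\cdot\|_{2\rightarrow 2}\le\|\cdot\|_2$. Its arguments $\bW_{ij}$ are independent and each supported in an interval of length $2\sigma_*$. Talagrand's concentration inequality for convex Lipschitz functions of bounded independent random variables (see, e.g., \cite{MR1989444}) therefore provides an absolute constant $C_0$ with
\[
\P\big\{\|\bW\|_{2\rightarrow 2}\ge \E\|\bW\|_{2\rightarrow 2}+s\big\}\le \exp\!\big(-s^2/(C_0\sigma_*^2)\big)\ ,\qquad s>0\ .
\]

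\emph{Putting the pieces together.} When $t\le C_\epsilon\sigma_*\sqrt{\log(n\wedge m)}$ the asserted bound is vacuous provided $c_\epsilon$ is taken large enough, since the right-hand side is then at least $(n\wedge m)^{1-C_\epsilon^2/c_\epsilon}\ge 1$. For $t>C_\epsilon\sigma_*\sqrt{\log(n\wedge m)}$, set $s:=t-C_\epsilon\sigma_*\sqrt{\log(n\wedge m)}>0$; by the expectation bound the event $\{\|\bW\|_{2\rightarrow 2}\ge (1+\epsilon)2\sqrt2(\sigma_1\vee\sigma_2)+t\}$ is contained in $\{\|\bW\|_{2\rightarrow 2}\ge\E\|\bW\|_{2\rightarrow 2}+s\}$, so the concentration inequality applies. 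Finally, from $s^2\ge \tfrac12 t^2-C_\epsilon^2\sigma_*^2\log(n\wedge m)$ one gets $\exp(-s^2/(C_0\sigma_*^2))\le (n\wedge m)^{C_\epsilon^2/C_0}\exp(-t^2/(2C_0\sigma_*^2))$; enlarging $C_0$ if needed so that $C_0\ge C_\epsilon^2$ and setting $c_\epsilon:=2C_0$ finishes the proof.

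\emph{Main obstacle.} The only genuinely nontrivial input is the expectation bound with the sharp constant $2\sqrt2$ in front of $\sigma_1\vee\sigma_2$: this is precisely the theorem of Bandeira and van Handel, proved by a delicate moment (trace) estimate, and cruder tools such as matrix Bernstein would only give $(\sigma_1\vee\sigma_2)\sqrt{\log(n\wedge m)}$ on the leading term, which is too weak for the statement as written. The dilation bookkeeping, the application of Talagrand's inequality, and the elementary algebra in the last step are all routine; the only secondary point needing a little care is getting $n\wedge m$ (rather than $n+m$) inside the logarithm, which follows either from the rectangular version of the bound or from a rank-reduction argument applied to the dilation.
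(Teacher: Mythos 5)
Your argument is correct, but note that the paper itself offers no proof of this proposition: it is imported as is from Bandeira and van Handel \cite{Bandeira} (it is their tail bound for rectangular matrices with independent bounded entries), and the surrounding text simply cites that reference. What you propose is essentially a reconstruction of how the cited bound is obtained: you defer the genuinely hard input --- the sharp expectation bound with leading term $(1+\epsilon)2\sqrt{2}(\sigma_1\vee\sigma_2)$ and dimensional factor $\sqrt{\log(n\wedge m)}$ --- to \cite{Bandeira}, and then run the standard mean-concentration step (a convex-Lipschitz deviation inequality around the mean, available for instance through the entropy method of \cite{MR1989444}, so no median-to-mean adjustment is needed) together with the small-$t$/large-$t$ split; this is the same mechanism Bandeira and van Handel use to convert their expectation bounds into tail bounds. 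The one point requiring care is the one you flag yourself: the Hermitian dilation alone only yields the symmetric bound with dimension $n+m$, i.e.\ a $\sqrt{\log(n+m)}$ error term and an $(n+m)$ prefactor, and your vacuousness argument in the regime $t\le C_\epsilon\sigma_*\sqrt{\log(n\wedge m)}$ genuinely needs the $n\wedge m$ version, since $\log(n+m)/\log(n\wedge m)$ is unbounded for very rectangular matrices; the $n\wedge m$ factor (and the constant $2\sqrt{2}$, which in \cite{Bandeira} arises from analysing the Gram matrix $\bW\bW^{T}$ rather than the dilation) is therefore part of what must be taken from the cited work, not a routine consequence of the dilation bookkeeping. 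Granting that input, your remaining steps (event inclusion, $s^2\ge t^2/2-C_\epsilon^2\sigma_*^2\log(n\wedge m)$, and the choice of $c_\epsilon$) are correct; in substance, though, your proof --- like the paper's treatment --- ultimately rests on invoking \cite{Bandeira} for the key estimate.
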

       For $\bE^{*}$, we have
           $\sigma_1\leq \sqrt{\rho_nn}$, $\sigma_2\leq \sqrt{\rho_nn}$, and $\sigma_{*}\leq 1$.
        Taking $\epsilon=1/2$ and $t=\sqrt{2c_{\epsilon}\log(n)}$  in Proposition \ref{pr1}, we obtain that there exists absolute constants $c^{*}$ such that  
        \beq \label{eq:upper_E*}
        \left \Vert \bE\right \Vert_{2\rightarrow 2}\leq 2\left \Vert \bE^{*}\right \Vert_{2\rightarrow 2}\leq 6\sqrt{2\rho_nn}+2c^{*}\sqrt{\log(n)}\ , 
        \eeq
        with probability at least $1-1/n$. Since $\rho_n\geq \log(n)/n$, we can take $\lambda=c\sqrt{\rho_nn}$ where $c\geq 12\sqrt{2}+4c^*$ so that $\left \Vert \bE\right \Vert_{2\rightarrow 2}\leq \lambda/2$.
    Then,   Proposition \ref{prop_spectral} implies
   \begin{equation*}
 \Vert \widetilde \bTheta_{\lambda}-\bTheta_0\Vert_{2\rightarrow 2}\leq C\sqrt{\rho_nn}\ .
 \end{equation*}
 It is easy to see that the cut-norm of a matrix can be bounded by its spectral norm:
  \begin{equation*}
      \Vert \bA\Vert_{\cut}\leq \dfrac{1}{n}\Vert \bA\Vert_{2\rightarrow 2}.
      \end{equation*}
Bound on the cut-norm \eqref{cut_bound} then follows from 
    \begin{equation*}
     \Vert \widetilde \bTheta_{\lambda}-\bTheta_0\Vert_{\cut}\leq \dfrac{1}{n}\Vert \widetilde \bTheta_{\lambda}-\bTheta_0\Vert_{2\rightarrow 2}\leq C\sqrt{\dfrac{\rho_n}{n}}.
     \end{equation*}
     In order to prove the Frobenius bound~\eqref{frobenius_bound}, we use the argument from  \cite{Klopp_rank}: we can equivalently write the singular value hard thresholding estimator as the solution to the following optimization problem:
     \begin{equation*} 
     \widetilde{\bTheta}_{\lambda} \in\underset{\bTheta\in \mathbb{R}^{n\times n}}{\argmin}\big\{\parallel \bA-\bTheta \parallel _{2}^{2}+\lambda^{2} \rank (\bTheta)\big\}
     \end{equation*}
     which implies that, with probability larger than $1-1/n$, 
     \begin{equation*}
     \begin{split}
     \Vert \widetilde \bTheta_{\lambda}-\bTheta_0\Vert^{2}_{2}&\leq 2\big \vert\langle\bE,\widetilde\bTheta_{\lambda}-\bTheta_0\rangle\big\vert+\lambda^{2} \rank (\bTheta_0)-\lambda^{2} \rank (\widetilde\bTheta_{\lambda})\\ 
     &\leq 2\left \Vert\bE\right \Vert_{2\rightarrow 2}\left \Vert\widetilde\bTheta_{\lambda}-\bTheta_0\right \Vert_{2}\sqrt{\rank(\widetilde\bTheta_{\lambda}-\bTheta_0)}+\lambda^{2} \rank (\bTheta_0)-\lambda^{2} \rank (\widetilde\bTheta_{\lambda})
     \\ 
     &\leq  \dfrac{1}{2} \Vert \widetilde \bTheta_{\lambda}-\bTheta_0\Vert^{2}_{2}+ 2\left \Vert\bE\right \Vert^{2}_{2\rightarrow 2}
     	\left (\rank(\widetilde\bTheta_{\lambda})+\rank(\bTheta_0)\right )
     +\lambda^{2} \rank (\bTheta_0)-\lambda^{2} \rank (\widetilde\bTheta_{\lambda})
     \\&\leq \dfrac{1}{2} \Vert \widetilde \bTheta_{\lambda}-\bTheta_0\Vert^{2}_{2}+2\lambda^{2}\rank (\bTheta_0)\ , 
     \end{split}
     \end{equation*}
     where we used in the last line that $\|\bE\|_{2\rightarrow 2}\leq \lambda/2$. 
     Since $\rank (\bTheta_0)\leq k$, we have proved \eqref{frobenius_bound}.

\section{Proof of Theorem \ref{prp_agnostic_block}} \label{proof_agnostic_cut}

Note that both $f_0=\rho_n W_0$ and $\tilde{f}_{\bTheta_0}$ are proportional to $\rho_n$, so without loss of generality we can assume  that $\rho_n=1$.
 For $k\geq n/2$,  the result is a straightforward consequence of the second Sampling Lemma for Graphons of \cite{LovaszBook} stated in Proposition \ref{sampling_lemma_lovasz}.
 Given any graphon $W_0\in \cW^+[k]$, one can always divide some of the steps into smaller steps   in such a way that $W_0$ is a $2k$--step graphon whose weights are all less than or equal to $1/k$. Thus, we only need to prove the results for all  graphons $W_0\in \cW^+[k]$ with $32\leq k\leq n$ and such that its weights are all smaller or equal to $2/k$.

  Let  ${\bTheta}_0'$ be the matrix with entries $({\bTheta}_0')_{ij}=W(\xi_i,\xi_j)$ for all $i,j$. As opposed to $\bTheta_0$, the diagonal entries of ${\bTheta}_0'$ are not constrained to be null. By the triangle inequality, we  have
  \beq\label{eq:agnostic_decomposition}
  \E\left[\delta_{\square}\left(\widetilde{f}_{\bTheta_0} , W_0\right)\right]\leq \E\left[\delta_{\square}\left(\widetilde{f}_{\bTheta_0} , \widetilde{f}_{{\bTheta}_0'}\right)\right]+ \E\left[\delta_{\square}\left(\widetilde{f}_{{\bTheta}_0'} , W_0\right)\right].
  \eeq
  As the entries of $\bTheta_0$ coincide with those of ${\bTheta}_0'$ outside the diagonal, the difference $\widetilde{f}_{\bTheta_0}- \widetilde{f}_{{\bTheta}_0'}$ is null outside of a set of measure $1/n$. Since $\|W_0\|_{\infty}\leq 1$,  $\E[\delta_{\square}(\widetilde{f}_{\bTheta_0} , \widetilde{f}_{{\bTheta}_0'})]\leq 1/n$. Thus, we only need to prove that 
  \beq\label{eq:upper_risk_cut1}
  \E[\delta_{\square}(\widetilde{f}_{{\bTheta}_0'} , W_0)]\leq C \sqrt{\frac{k}{n\log(k)}}\ . 
  \eeq
We first need to build two suitable representations of $W_0$ and $\widetilde{f}_{{\bTheta}_0'}$ in the quotient space $\widetilde{\cW}^+$.

 As a first idea, one may want to define a representation  $\widehat{W}$ of  $\widetilde{f}_{{\bTheta}_0'}$ that matches $W_0$ on the largest possible (with respect to the Lebesgue measure)  Borel set. 
 In fact, one can match the two representations everywhere expcept on a Borel set of measure of the order of  $\sqrt{k/n}$. This turns
 out to lead to a suboptimal bound of the order of $\sqrt{k/n}$. In order to recover the correct logarithmic term, we refine the argument by showing that, for a suitable representation, the difference $\widehat{W}-W_0$, when non-zero, is well approximated in cut distance by a $\lfloor \sqrt{k}\rfloor$-step function which is zero exĉept on a Borel set of measure much smaller than $\sqrt{k/n(\log(n)}$. 
 To prepare the proof, we  carefully build the  representations of $W_0$ and $\widetilde{f}_{{\bTheta}_0'}$.

  \medskip

  \noindent 
  {\bf Step 1}: {\it Construction of a suitable representation $W$ of $W_0$ in $\widetilde{\cW}^+$.} 
  \newline In the sequel, we denote $q_1:=\lfloor \sqrt{k}\rfloor$. Here, we want to choose $W$ in such a way that a distortion of $W$ is well approximated in the cut norm by a $q_1$--step kernel. We use the following lemma which is based on a variation of Szemer\'edi's lemma.  Let $\bQ_0\in \bbR^{k\times k}_{\text{sym}}$ and $\phi_0:[0,1]\to [k]$ be associated to $W_0$ as in definition \eqref{eq:def_step_function}.

 \begin{lem}\label{lem:decomposition_szemeredy_inflated}
  There exist a permutation $\pi$ of $[k]$ and a partition $\mathcal{P}=(P_1,\ldots, P_{q_1})$ of $[k]$ made of successive intervals such that the following holds.
 Let ${\bf Q}$ be the matrix obtained from ${\bf Q}_0$ by jointly applying the permutation $\pi$ to its rows and its  columns. Denote by $\phi= \pi \circ \phi_0$, and for $a=1,\ldots, k$, $\lambda_a:= \lambda(\phi^{-1}(a))$. There are two matrices  ${\bf Q}^{(ap)}$ and  ${\bf Q}^{(ap,+)}\in [0,1]^{k\times k}$ that are $q_1$-block-constant according to the partition $\mathcal{P}$ and that satisfy
  \begin{eqnarray}\label{control_cut1}
  \sup_{\epsilon\in\{0,1\}^{k},\ \epsilon'\in\{0,1\}^k}  \Big|  \sum_{a,b=1}^k \epsilon_a \epsilon'_b \lambda_b\sqrt{\lambda_a } \big(\bQ_{ab}-\bQ^{(ap)}_{ab}\big)\Big|&\leq & C\sqrt{\frac{k}{\log(k)}}\ , 
\\
  \label{control_cut2}
  \sup_{\epsilon\in\{0,1\}^{k},\ \epsilon'\in\{0,1\}^k}  \Big|  \sum_{a,b=1}^k \epsilon_a \epsilon'_b \sqrt{\lambda_b}\sqrt{\lambda_a } \big(\bQ_{ab}-\bQ^{(ap,+)}_{ab}\big)\Big|& \leq & C \frac{k}{\sqrt{\log(k)}}\ .
  \end{eqnarray}
  \end{lem}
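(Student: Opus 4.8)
The plan is to read each of the two displayed quantities as the cut norm of a weighted step kernel built from $\bQ$, and then to run the weak regularity lemma on a pair of such kernels with one common partition of $[k]$.

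\textbf{Setting up weighted kernels.} Put $m:=\sum_{a=1}^{k}\sqrt{\lambda_a}$; by Cauchy--Schwarz and $\lambda_a\le 2/k$ one has $\sqrt{k/2}\le m\le\sqrt k$. On the row side, split an interval of length $m$ into successive subintervals $\cX_a$ with $\lambda(\cX_a)=\sqrt{\lambda_a}$; on the column side use the partition $\{\cY_b\}$ of $[0,1]$ with $\lambda(\cY_b)=\lambda_b$ and the partition $\{\cY'_b\}$ of an interval of length $m$ with $\lambda(\cY'_b)=\sqrt{\lambda_b}$. Let $U$ (resp.\ $U^{+}$) be the kernel equal to $\bQ_{ab}$ on $\cX_a\times\cY_b$ (resp.\ on $\cX_a\times\cY'_b$). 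Exactly as in the proof of Lemma~\ref{lem:1cut_l1} (a linear functional on a box attains its supremum at a vertex), for any matrices $\bQ^{(ap)},\bQ^{(ap,+)}\in[0,1]^{k\times k}$ that are block-constant with respect to a partition $\mathcal P$ of $[k]$ the left-hand side of \eqref{control_cut1} equals $\|U-U^{(ap)}\|_{\square}$ and the left-hand side of \eqref{control_cut2} equals $\|U^{+}-U^{(ap,+)}\|_{\square}$, where $\|\cdot\|_{\square}$ is the cut norm for kernels over a product of finite intervals defined in \eqref{eq:def_cut_norm_asym} and $U^{(ap)},U^{(ap,+)}$ are the kernels attached to $\bQ^{(ap)},\bQ^{(ap,+)}$; moreover one may take these two matrices to be the measure-weighted block averages of $U$ and $U^{+}$ over $\mathcal P$, which automatically lie in $[0,1]$.

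\textbf{Applying weak regularity.} After rescaling the row and column measures to probability measures, I would run the energy-increment argument behind the weak regularity lemma (Frieze--Kannan; see \cite{frieze_kannan,LovaszBook}) on the pair $(U,U^{+})$ simultaneously, keeping a common partition of the atom set $[k]$; this is legitimate because the cut norm of a step kernel is attained on subsets that are unions of atoms, so every refining set is compatible with the atom structure. Stopping when the partition is $\varepsilon$-good for both kernels, which occurs with at most $2^{O(1/\varepsilon^{2})}$ parts, and taking $\varepsilon\asymp 1/\sqrt{\log q_1}$ with $q_1=\lfloor\sqrt k\rfloor$, keeps the number of parts at most $q_1$ and gives normalised cut-norm errors of order $1/\sqrt{\log q_1}$. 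Undoing the normalisation multiplies these by the product of the total masses, namely $m\cdot 1\asymp\sqrt k$ for $U$ and $m\cdot m\asymp k$ for $U^{+}$; since $k\ge 32$ forces $\log q_1\ge\tfrac{1}{4}\log k$, this yields exactly $C\sqrt{k/\log k}$ in \eqref{control_cut1} and $Ck/\sqrt{\log k}$ in \eqref{control_cut2}. Relabelling $[k]$ so that the $q_1$ parts become blocks of consecutive integers defines the permutation $\pi$ and the interval partition $\mathcal P=(P_1,\dots,P_{q_1})$; taking $\bQ$ to be the conjugate of $\bQ_0$ by $\pi$ and $\bQ^{(ap)},\bQ^{(ap,+)}$ the correspondingly relabelled block averages completes the argument, since all the weighted sums above are invariant under a simultaneous relabelling of rows and columns.

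\textbf{Main obstacle.} The delicate points are that a \emph{single} partition $\mathcal P$ of $[k]$ must control the two weightings at once — this is precisely the mechanism producing the different powers of $k$ in \eqref{control_cut1} and \eqref{control_cut2}, and it is where the distortion by the factors $\sqrt{\lambda_a}$ genuinely enters — and that the mass factor $m\asymp\sqrt k$ must be tracked carefully when passing from the normalised weak-regularity bound (error $\asymp 1/\sqrt{\log q}$ on $[0,1]^{2}$) to the present weighted setting. A minor but necessary observation is that the weak-regularity partition, a priori consisting of arbitrary unions of atoms, can be turned into a partition of $[k]$ into successive intervals at no cost, the atoms being indivisible.
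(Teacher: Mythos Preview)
Your proposal is correct and follows essentially the same strategy as the paper: encode the two weighted sums as cut norms of distorted step kernels with row/column weights $\sqrt{\lambda_a}$ and $\lambda_b$ (resp.\ $\sqrt{\lambda_b}$), apply the weak regularity lemma, and track the total mass $m\asymp\sqrt{k}$ when un-normalising. The only difference is bookkeeping: the paper applies the regularity lemma separately to each kernel with $q_0=\lfloor k^{1/4}\rfloor$ steps and then takes the common refinement of the two resulting partitions of $[k]$ (giving at most $q_0^2\le q_1$ parts), whereas you run the energy increment on both kernels at once with a shared partition; both routes yield the same bounds, and your choice of block averages for $\bQ^{(ap)},\bQ^{(ap,+)}$ is a clean way to guarantee the $[0,1]$-valued constraint.
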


 According to Lemma \ref{lem:decomposition_szemeredy_inflated}, there exists two $q_1$-block constant matrices ${\bf Q}^{(ap)}$ and  ${\bf Q}^{(ap,+)}$ that approximate well  ${\bf Q}$ with respect to some weighted cut norm.  As for \eqref{control_cut1}, the weights are respectively $\lambda_b$ and $\sqrt{\lambda_a}$ whereas for \eqref{control_cut2}, the weights are $\sqrt{\lambda_a}$ and $\sqrt{\lambda_b}$. Informally, these weights arise for the following reason: writing $\widehat{\lambda}_a$ as the empirical weight of group $a$ in $\widehat{W}$ (see Step 2 for the definition), we have $\widehat{\lambda}_a-\lambda_a = O_P(\sqrt{\lambda_a/n})$.

 Invoking Lemma \ref{lem:decomposition_szemeredy_inflated},  we consider the  graphons 
  \beq\label{eq:def_W}
  W(x,y):= \bQ_{\phi(x)\phi(y)} \ ,\, \quad W_1(x,y):= \bQ^{(ap)}_{\phi(x)\phi(y)},\quad  W_1^+(x,y):=   \bQ^{(ap,+)}_{\phi(x)\phi(y)}
\ .
\eeq
  Obviously, $W$ is weakly isomorphic to $W_0$.

  \bigskip

  \noindent 
  {\bf Step 2}: {\it Construction of a suitable representation  $\widehat{W}$ of  $\widetilde{f}_{{\bTheta}_0'}$ in the quotient space $\widetilde{\cW}^+$.} \newline  Recall that $\xi_1,\ldots ,\xi_n$
 are the i.i.d. uniformly distributed random variables in the $W$-random graph model \eqref{sparse_graphon_mod} and that $\phi$ is defined in the previous step.  For $a=1,\ldots, k$, let 
  $$\widehat{\lambda}_a=\frac1n \sum_{i=1}^n \mathds{1}_{\{ \xi_i\in \phi^{-1}(a)\}}$$ 
  be the (unobserved) empirical frequency of the group $a$ corresponding to  a finer partition of $[0,1]$ given by $\phi$.
    For $l=1,\ldots, q_1$, let 
    $$\widehat{\omega}_l=\frac1n \sum_{i=1}^n\sum_{b\in P_l} \mathds{1}_{\{ \xi_i\in \phi^{-1}(b)\}}$$ 
    be the (unobserved) empirical frequency of the group $l$ corresponding to  a coarser partition $P$ of $[0,1]$ given by $\mathcal{P}\circ\phi$.

  The relations $\sum_{a=1}^k \lambda_a=\sum_{a=1}^k \hat\lambda_a=1$ imply
  \begin{equation}\label{eq:lam}
  \sum_{a: \lambda_a>\widehat{\lambda}_a} (\lambda_a-\widehat{\lambda}_a)=\sum_{a: \widehat{\lambda}_a> \lambda_a} (\hat \lambda_a-\lambda_a)\quad \text{and}\quad \sum_{l: \omega_l>\widehat{\omega}_l} (\omega_l-\widehat{\omega}_l)=\sum_{l: \widehat{\omega}_l> \omega_l} (\hat \omega_l-\omega_l).
  \end{equation}
  Consider a function $\psi:[0,1]\rightarrow [k]$  such that: 
  \begin{itemize}
   \item[(i)] For all $a\in [k]$, $\lambda(\{x,\ \psi(x)= \phi(x)= a\})=\widehat{\lambda}_a \wedge \lambda_a $ ,
   \item[(ii)] for all $l\in [q_1]$, $\lambda\Big[ \{x\ , \psi(x) \in P_l \text{ and }\phi(x)\in P_l\} \Big]= \omega_l\wedge \widehat{\omega}_l$,
   \item[(iii)] for all $a\in [k]$, $\lambda(\psi^{-1}(a))= \widehat{\lambda}_a$. 
  \end{itemize}
  Such a function $\psi$ exists.  To see it, we first construct $\psi$ to satisfy (i) and (iii):
  	\begin{itemize}
  		\item For each $a$ such that $\lambda_a>\widehat{\lambda}_a$, conditions (i) and (iii) are trivially satisfied if we take $\psi^{-1}(a)$ to be any subset of $\phi^{-1}(a)$ of Lebesgue measure $\widehat{\lambda}(a)$. Then, there is a subset of  $\phi^{-1}(a)$ of Lebesgue measures  $\lambda_a-\widehat{\lambda}_a$ left non-assigned. Summing over all such $a$, we see that there is a union of subsets  with Lebesgue measure  $m_+:=\sum_{a: \lambda_a>\widehat{\lambda}_a} (\lambda_a-\widehat{\lambda}_a)$ left non-assigned.
  		\item For $a$ such that $\lambda_a<\widehat{\lambda}_a$, we must have $\psi(x)=a$ for $x\in \phi^{-1}(a)$ to satisfy (i). On the other hand, to meet condition (iii) we need additionally to assign $\psi(x)=a$ for $x$ on a set of Lebesgue measure $\hat \lambda_a-\lambda_a$.  Summing over all such $a$, we need additionally to find a set of Lebesgue measure  $m_-:=\sum_{a: \widehat{\lambda}_a> \lambda_a} (\lambda_a-\widehat{\lambda}_a)$ to make such assignments. But this set is readily available as the union of non-assigned intervals for all $a$ such that $\lambda_a>\widehat{\lambda}_a$ since $m_+=m_-$ by virtue of \eqref{eq:lam}.
  	\end{itemize}
  	     
  Now, to ensure that condition (ii) is satisfied, we assign as a priority $\psi(x)$ to values belonging to the same partition element  as $\phi(x)$. Again, \eqref{eq:lam} ensures that this is possible. 
  
  Finally, define the graphons $\widehat{W}(x,y)= \bQ_{\psi(x),\psi(y)}$, $\widehat{W}_1(x,y)= \bQ^{(ap)}_{\psi(x),\psi(y)}$, and $\widehat{W}_1^+(x,y)= \bQ^{(ap,+)}_{\psi(x),\psi(y)}$  where $\bQ$, $\bQ^{(ap)}$, and $\bQ^{(ap,+)}$ are as in \eqref{eq:def_W}. 
   Notice that in view of (iii) $\widehat{W}$ is weakly isomorphic to the empirical graphon $\widetilde{f}_{\bTheta'_0}$. Let $\mathcal{R}= \{x\ , \phi(x)\neq \psi(x)\}$. 
    Since $W$ and $\widehat{W}$ match on  $\mathcal{R}^c\times \mathcal{R}^c$, the purpose of (i) is to minimize the  Lebesgue measure of the support of $W-\widehat{W}$. With properties (i) and (iii) alone, it would be possible to prove that $\E[\|W-\widehat{W}\|_{\square}]\leq C \sqrt{k/n}$ as the Lebesgue measure of its support is at most of order $\sqrt{k/n}$. We will improve this rate by a logarithmic term as (ii) will enforce that the cut norm of $W-\widehat{W}$ is much smaller than its Lebesgue measure.

  \bigskip
  
  \noindent 
  {\bf Step 3}: {\it Control of the cut norm.} 
  Since $\delta_{\square}(\cdot,\cdot)$ is a metric on the quotient space $\widetilde{\cW}^+$, 
    \beqn 
   \delta_{\square}(W_0,\widetilde{f}_{{\bTheta}_0'})&\leq&\left\|W - \widehat{W}\right\|_{\square}=\sup_{S, T} \left|\int_{S\times T}(W(x,y) - \widehat{W}(x,y))dxdy\right|.
   \eeqn 
  By definition of $\psi$, the two functions $W(x,y)$ and $\widehat{W}(x,y)$ are equal except possibly when either $x$ or $y$ belongs to $\mathcal{R}$.  As a consequence of triangular inequality and of the symmetry of $W-\widehat{W}$, we get 
    \begin{eqnarray} \nonumber
   \left\|W - \widehat{W}\right\|_{\square}
    &\leq & 2\sup_{S\subset \mathcal{R}, T\subset \mathcal{R}^c} \left|\int_{S\times T} \left(W(x,y)- \widehat{W}(x,y)\right)dxdy\right|\\& &+ \sup_{S,T \subset \mathcal{R}} \left|\int_{S\times T} \left(W(x,y)- \widehat{W}(x,y)\right)dxdy\right|\nonumber \\
    & &=  2 \left\| (W-\widehat{W})\Big|_{\mathcal{R}\times \mathcal{R}^c}\right\|_{\square}+\left\| (W-\widehat{W})\Big|_{\mathcal{R}\times \mathcal{R}}\right\|_{\square}  \ . \label{eq:decomposition_premier_niveau}  
    \end{eqnarray}
  First, we focus on $\mathbb{E}[\| (W-\widehat{W})|_{\mathcal{R}\times \mathcal{R}^c}\|_{\square}]$, the second term being handled similarly at the end of the proof. For $a$ and $b$ in $[k]$, we write $a\sim_{P} b$ (resp. $a \nsim_{P} b$) when $a$ and $b$ belongs  (resp. do not belong) to the same element of the partition $P$. Define
  \[\mathcal{R}_2:= \{x, \, \psi(x)\nsim_{P}\phi(x)\}\ .\] 
  Obviously, we have $\mathcal{R}_2\subset \mathcal{R}$.  Property (ii) of $\psi$, implies that $\lambda(\mathcal{R}_2)=\sum_{a=1}^{q_1}(\omega_a-\widehat{\omega}_a)_+$.
  We shall rely on the decomposition $W= W_1 + (W-W_1)$ and $\widehat{W}= \widehat{W}_1 + (\widehat{W} - \widehat{W}_1)$.  For any $x\in \mathcal{R}\setminus \mathcal{R}_2$, we have by definition \eqref{eq:def_W} of $W_1$ that $(W_1-\widehat{W}_1)(x,y)=0$. Together with the triangular inequality, this yields
  \beq\label{eq:decomposition_W_2niveau}
   \left\| (W-\widehat{W})|_{\mathcal{R}\times \mathcal{R}^c}\right\|_{\square}\leq\left \| (W_1-\widehat{W}_1)\Big|_{\mathcal{R}_2\times \mathcal{R}^c}\right\|_{\square}+ \left\|(W-W_1)\Big|_{\mathcal{R}\times \mathcal{R}^c}\right\|_{\square} + \left\|(\widehat{W}-\widehat{W}_1)\Big|_{\mathcal{R}\times \mathcal{R}^c}\right\|_{\square}.
  \eeq 
  To control the first expression in the rhs, we simply bound the cut norm of the difference by its $l_1$ norm 
  \[
  \left\| (W_1-\widehat{W}_1)\Big|_{\mathcal{R}_2\times \mathcal{R}^c}\right\|_{\square}\leq \left\| (W_1-\widehat{W}_1)\Big|_{\mathcal{R}_2\times \mathcal{R}^c}\right\|_{1}\leq \lambda(\mathcal{R}_2)\left\|W_1-\widehat{W}_1\right\|_{\infty}\leq  \lambda(\mathcal{R}_2)\ ,
  \]
  since $W_1$ and $\widehat{W}_1$ take values in $[0,1]$. Then, relying on the fact that $n\widehat{\omega}_a$ is distributed as a Binomial random variable with parameters $(n,\omega_a)$ and on Cauchy-Schwarz inequality, we get $\mathbb{E}\left|\omega_a -\widehat{\omega}_a\right|\leq \sqrt{\frac{\omega_a(1-\omega_a)}{n}}$ and
  \begin{eqnarray}\nonumber
   \mathbb{E}\left[\left\| (W_1-\widehat{W}_1)\Big|_{\mathcal{R}_2\times \mathcal{R}^c}\right\|_{\square}\right]&\leq& \mathbb{E}\left[\sum_{a=1}^{q_1}\left|\omega_a -\widehat{\omega}_a\right|\right]\\
   &\leq& \sum_{a=1}^{q_1}\sqrt{\frac{\omega_a(1-\omega_a)}{n}}\leq \sqrt{\frac{q_1}{n}} \leq \frac{k^{1/4}}{\sqrt{n}} \label{eq:upper_W1-W1_hat},
  \end{eqnarray}
  where we used again Cauchy-Schwarz in the last line.  Let us turn to the second and third expressions in \eqref{eq:decomposition_W_2niveau}. To this end, we introduce a new kernel function $U$.  
 For $a=1,\ldots, k$, define $\widehat{\lambda}^{\delta}_a=|\lambda_a-\widehat{\lambda}_a|$ and  the  functions $F_{\widehat{\lambda}^{\delta}}\;:\;[k]\rightarrow \left [0,\sum_a|\lambda_a-\widehat{\lambda}_a|\right ]$ and $F_{\phi}\;:\;[k]\mapsto [0,1]$ by
  \begin{eqnarray}\nonumber
   F_{\phi}(b)&=&\sum_{a=1}^{b} \lambda_a \quad \text{and set}\quad F_{\phi}(0)=0\\
   F_{\widehat{\lambda}^{\delta}}(b)&=&\sum_{a=1}^{b} \widehat{\lambda}^{\delta}_a \quad \text{and set}\quad F_{\widehat{\lambda}^{\delta}}(0)=0\ .\label{eq:F_lambda_delta}
 \end{eqnarray}
     For any $a,b\in [k]$, set $\widehat{\Pi}_{a,b}=  [F_{\widehat{\lambda}^{\delta}}(a-1),F_{\widehat{\lambda}^{\delta}}(a) )\times [F_{\phi}(b-1), F_{\phi}(b) )$ and let $U$ be a $k\times k$ step kernel on $[0, \sum_a|\widehat{\lambda}_a-\lambda_a|]\times [0,1]$ defined by $$U(x,y):= \sum_{a,b=1}^{k} \left [{\bf Q}_{ab}- {\bf Q}^{(ap)}_{ab} \right ]\mathds{1}_{\widehat{\Pi}_{a,b}}(x,y).$$
     By definition of $\cR$ and of the function $\psi$, we have that for any $a\in [k]$,  $\lambda(\phi^{-1}(a))\cap \cR)= (\lambda_a-\widehat{\lambda})_+$ and  $\lambda(\psi^{-1}(a))\cap \cR^c)= \lambda_a\wedge \widehat{\lambda}$. As a consequence, the restriction of $(W-W_1)$ to $\mathcal{R}\times \mathcal{R}^c$ is, up to a measure preserving bijection of its rows and of its columns, equal to the restriction of $U$ to the set $(\cup_{a:\ \lambda_a>\widehat{\lambda}_a}[F_{\widehat{\lambda}^{\delta}}(a-1),F_{\widehat{\lambda}^{\delta}}(a) ))\times (\cup_{a} [F_{\phi}(a-1),F_{\phi}(a-1)+\widehat{\lambda}_a\wedge \lambda_a) $. This entails that 
  \beq\label{eq:W-W1_RRc}
  \left \|(W-W_1)\Big|_{\mathcal{R}\times \mathcal{R}^c}\right \|_{\square}\leq \left \|U\right \|_{\square}.
  \eeq
  On the other hand, for any $(x,y)\in \mathcal{R}\times \mathcal{R}^c$, 
  \[
  (\widehat{W}-\widehat{W}_1)(x,y)= {\bf Q}_{\psi(x)\psi(y)}- {\bf Q}^{(ap)}_{\psi(x)\psi(y)} = {\bf Q}_{\psi(x)\phi(y)}- {\bf Q}^{(ap)}_{\psi(x)\phi(y)}
  \]
  by the definition of $\cR$. In view of the definition of $\psi$, for any $a\in [k]$ we have 
  $\lambda(\phi^{-1}(a))\cap \cR)= (\widehat{\lambda}- \lambda_a)_+$. As a consequence, 
   the restriction of $(\widehat{W}-\widehat{W}_1)$ to $\mathcal{R}\times \mathcal{R}^c$ is, up to a measure preserving bijection of its rows and of its columns, equal to the restriction of $U$ to the set $(\cup_{a:\ \lambda_a<\widehat{\lambda}_a}[F_{\widehat{\lambda}^{\delta}}(a-1),F_{\widehat{\lambda}^{\delta}}(a) ))\times (\cup_{a} [F_{\phi}(a-1),F_{\phi}(a-1)+\widehat{\lambda}_a\wedge \lambda_a) $. This implies that $ \|(\widehat{W}-\widehat{W}_1)|_{\mathcal{R}\times \mathcal{R}^c} \|_{\square}\leq \|U \|_{\square}$. 
  Thus, we   only have to control $\mathbb{E}[\|U\|_{\square}]$.
\bigskip
  
  \noindent 
  {\bf Step 4}: {\it Control of   $\mathbb{E}[\|U\|_{\square}]$.}
   Define the sets 
  $\mathcal{B}_1:= \prod_{a=1}^{k}[0,|\widehat{\lambda}_a-\lambda_a |]$ and $\mathcal{B}_2:= \prod_{a=1}^{k}\left [0,\left |\lambda_a\right |\right ]$. Then, the cut norm of $U$ writes as
  \begin{eqnarray}
  \|U\|_{\square}&\leq& \sup_{\gamma\in  \mathcal{B}_1,  \gamma'\in \mathcal{B}_2}\left | \sum_{a,b=1}^{k} \gamma_a \gamma'_b \left ({\bf Q}_{ab}- {\bf Q}^{(ap)}_{ab}\right )\right |\nonumber\\
  &\leq & \sup_{S,T\in [k]} \left | \sum_{a\in S,b\in T}  \lambda_b |\widehat{\lambda}_a-\lambda_a|  \left ({\bf Q}_{ab}- {\bf Q}^{(ap)}_{ab}\right )\right |\ ,\label{eq:proof_prop_agnostic_1} 
  \end{eqnarray}
  since the supremum of a linear function on a convex set is achieved at an extremal point. The random variable $|\widehat{\lambda}_a-\lambda_a|$ is in expectation of the order $\sqrt{\lambda_a/n}$.  If we could replace each  $|\widehat{\lambda}_a-\lambda_a|$ by $\sqrt{\lambda_a/n}$ in \eqref{eq:proof_prop_agnostic_1}, then thanks to  \eqref{control_cut1}, we could prove that $\|U\|_{\square}$ is (up to a multiplicative constant) less than $\sqrt{k/(n\log(k))}$. Unfortunately, if we directly applied Bernstein's inequality or the bounded difference inequality to simultaneously control $|\widehat{\lambda}_a-\lambda_a|$ over all $a\in [k]$ or to simultaneously control $\sum_{a\in S,b\in T}  \lambda_b |\widehat{\lambda}_a-\lambda_a|   ({\bf Q}_{ab}- {\bf Q}^{(ap)}_{ab})$ over all $S,T\subset [k]$, we would lose at least a logarithmic factor. 
  
  To bypass this issue, we adapt Lemma 10.9 of \cite{LovaszBook}, which is a key point in the proof of sampling Lemma for graphons  (Lemma 10.5 in \cite{LovaszBook}).
  Given a bounded non-symmetric kernel $W\in \cW_{\cX,\cY}$, let us define the following one-side version of the cut norm:
 \[
 \|W\|^+_{\square} = \sup_{X\subset\cX, \ Y\subset\cY}\int_{X\times Y}W(x,y)dxdy\ , 
 \]
 where we take the supremum without any absolute value. As a consequence, the cut norm $\|W\|_{\square}$ is the maximum $\|W\|^{+}_{\square}$ and  $\|-W\|^{+}_{\square}$.

  \begin{lem}\label{lem:lovasz_q_random}
  Let $W\in \cW_{[0,u],[0,v]}[k]$  and let $\bQ\in \mathbb{R}^{k\times k}$, $\phi_1:[0,u]\to [k]$ and $\phi_2:[0,v]\to [k]$ be associated to $W$ as in \eqref{eq:def_step_function_asym}. For $a=1,\ldots, k$, define $\alpha_a:= \lambda(\phi_1^{-1}(\{a\}))$ and $\beta_a:= \lambda(\phi_2^{-1}(\{a\}))$. Given any subset $R\subset [k]$, let 
  \begin{equation}\label{def_sets}
   R^{l,W}:=\big\{b, \, \sum_{a\in R}\alpha_a \bQ_{ab}>0\big\}\ ,\quad R^{r,W}:=\big\{a, \, \sum_{b\in R} \beta_b \bQ_{ab}>0\big\}\ .
 \end{equation}
Finally, we define for any $S,T\subset [k]$, $W[S,T]:=  \sum_{a\in S,b\in T} \alpha_a\beta_b \bQ_{ab} \ . $
Then, for any integer $q$ with $1\leq q\leq k$, we have 
   \beq \label{eq:upper_cut_q_random}
    \|W\|^+_{\square}\leq \max_{R_i\subset[k], |R_i|\leq q}W\left [R_2^{r,W},R_1^{l,W}\right ]+ \frac{u \sqrt{k\sum_{a=1}^k \beta_a^2}+v \sqrt{k\sum_{a=1}^k \alpha_a^2}}{\sqrt{q}}\ . 
   \eeq
  \end{lem}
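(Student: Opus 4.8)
The plan is to reduce the definition of $\|W\|^+_{\square}$ to a finite bilinear maximization and then to produce the sets $R_1,R_2$ by random subsampling, combined with an elementary bound relating a ``sign disagreement'' to an $L^2$ error; this follows the route of Lemma~10.9 in \cite{LovaszBook}, adapted so as to keep track of the step sizes $\alpha_a,\beta_a$. First, since $W$ is a step kernel with $\le k$ row-blocks $X_a=\phi_1^{-1}(a)$ and $\le k$ column-blocks $Y_b=\phi_2^{-1}(b)$, for measurable $X\subset[0,u]$ and $Y\subset[0,v]$ one has $\int_{X\times Y}W=\sum_{a,b}\bQ_{ab}x_ay_b$ with $x_a=\lambda(X\cap X_a)\in[0,\alpha_a]$ and $y_b=\lambda(Y\cap Y_b)\in[0,\beta_b]$. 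This expression is bilinear in $(x,y)$, so its maximum over the box is attained at a vertex, whence $\|W\|^+_{\square}=\max_{S,T\subset[k]}W[S,T]$. Fix an optimal pair $(S^*,T^*)$ and set $g^*(b):=\sum_{a\in S^*}\alpha_a\bQ_{ab}$; optimality forces $T^*$ to maximize $T\mapsto\sum_{b\in T}\beta_bg^*(b)$, so, replacing $T^*$ by the maximizing set (which leaves the value unchanged), I may assume $T^*=(S^*)^{l,W}=\{b:g^*(b)>0\}$.

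The elementary inequality I will exploit is: for all reals $G,H$ and all $t>0$, $G(\1\{G>0\}-\1\{tH>0\})\le|G-tH|$, as one checks on the four sign patterns using $\1\{tH>0\}=\1\{H>0\}$. Apply it with $G=g^*(b)$ and $H=g^*_R(b):=\sum_{a\in R}\alpha_a\bQ_{ab}$ for a subset $R\subset S^*$; since $R^{l,W}=\{b:g^*_R(b)>0\}$ and $\max_SW[S,R^{l,W}]\ge W[S^*,R^{l,W}]$, we get for every $t>0$ that $\|W\|^+_{\square}-\max_SW[S,R^{l,W}]\le W[S^*,T^*]-W[S^*,R^{l,W}]=\sum_b\beta_bg^*(b)(\1_{T^*}(b)-\1_{R^{l,W}}(b))\le\sum_b\beta_b|g^*(b)-tg^*_R(b)|$. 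Now take $R=R_1$ to be a uniformly random $q$-element subset of $S^*$ (if $|S^*|\le q$ just put $R_1=S^*$, so the left-hand side vanishes) and $t=|S^*|/q$, so that $tg^*_{R_1}(b)$ is an unbiased estimator of $g^*(b)$; the sampling-without-replacement variance bound together with $|\bQ_{ab}|\le1$ gives $\Var(tg^*_{R_1}(b))\le\frac{|S^*|}{q}\sum_a\alpha_a^2\le\frac{k}{q}\sum_a\alpha_a^2$, uniformly in $b$. Taking expectations, using $\E|Z-\E Z|\le\sqrt{\Var Z}$ and $\sum_b\beta_b=v$, yields $\E_{R_1}[\|W\|^+_{\square}-\max_SW[S,R_1^{l,W}]]\le v\sqrt{k\sum_a\alpha_a^2}/\sqrt q$, so a deterministic $R_1$ with $|R_1|\le q$ attains this bound. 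Put $T_1:=R_1^{l,W}$ and $S_1:=T_1^{r,W}$, so that $W[S_1,T_1]=\max_SW[S,T_1]\ge\|W\|^+_{\square}-v\sqrt{k\sum_a\alpha_a^2}/\sqrt q$.

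For the second step I repeat the argument with the roles of rows and columns exchanged, starting from the cut $(S_1,T_1)$: with $h_1(a):=\sum_{b\in T_1}\beta_b\bQ_{ab}$ one has $S_1=T_1^{r,W}=\{a:h_1(a)>0\}$ and $W[S_1,T_1]=\sum_a\alpha_a(h_1(a))_+$, and the same elementary inequality bounds $W[S_1,T_1]-W[R^{r,W},T_1]$ by $\sum_a\alpha_a|h_1(a)-t'h_{1,R}(a)|$ for any $R\subset T_1$ and $t'>0$, where $h_{1,R}(a):=\sum_{b\in R}\beta_b\bQ_{ab}$ and $R^{r,W}=\{a:h_{1,R}(a)>0\}$. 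Choosing $R=R_2$ a uniformly random $q$-subset of $T_1$ and $t'=|T_1|/q$ produces, exactly as before, a deterministic $R_2$ with $|R_2|\le q$ and $W[R_2^{r,W},T_1]\ge W[S_1,T_1]-u\sqrt{k\sum_a\beta_a^2}/\sqrt q$. Since $T_1=R_1^{l,W}$, chaining the two inequalities gives $\|W\|^+_{\square}\le W[R_2^{r,W},R_1^{l,W}]+(u\sqrt{k\sum_a\beta_a^2}+v\sqrt{k\sum_a\alpha_a^2})/\sqrt q$, and replacing $W[R_2^{r,W},R_1^{l,W}]$ by the maximum over all $R_i\subset[k]$ with $|R_i|\le q$ gives \eqref{eq:upper_cut_q_random}.

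The step I expect to need the most care is the construction of $R_1$: one must compare $g^*$ not with the raw subsample $g^*_{R_1}$ but with its correctly rescaled version $(|S^*|/q)\,g^*_{R_1}$, and one must check that the variance of this rescaled estimator is of size $\frac kq\sum_a\alpha_a^2$ rather than something like $\frac1q(\sum_a\alpha_a)^2$. It is precisely because the step sizes $\alpha_a$ (and $\beta_a$) can be very non-uniform that the error term must feature $\sqrt{k\sum_a\alpha_a^2}$, which by Cauchy--Schwarz is $\ge\sum_a\alpha_a=u$, rather than $u$ itself; the remaining ingredients — the vertex reduction, the four-case sign check, and the transposed repetition — are routine.
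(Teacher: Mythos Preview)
Your proof is correct and follows the same high-level route as the paper (random $q$-subsets in the spirit of Lemma~10.9 in \cite{LovaszBook}), but the technical execution differs in two places. First, the paper samples $Q$ uniformly from all of $[k]$ and works with $Q\cap T$, whereas you sample $R_1$ directly from the optimal set $S^*$ and rescale by $|S^*|/q$; your choice makes the unbiasedness transparent and avoids the intersection step. Second, the paper controls the loss from replacing $T^{r,W}$ by $(Q\cap T)^{r,W}$ via Chebyshev's inequality applied to the sign-flip probability (using $\P[A_a<0]\le\sqrt{\Var A_a}/|\E A_a|$ and then cancelling the $|B_a|$), whereas you use the pointwise inequality $G(\1_{G>0}-\1_{H>0})\le|G-H|$ followed by Jensen. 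Your route is arguably cleaner: it sidesteps the slightly awkward ``probability bounded by $\sqrt{\Var}/|\E|$'' move and makes the dependence on the step sizes $\alpha_a$ explicit from the start. Both arguments land on the same variance computation $\Var\le\frac{k}{q}\sum_a\alpha_a^2$ (using $|\bQ_{ab}|\le 1$), so the final error terms coincide.
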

Note that in contrast to Equation \eqref{eq:proof_prop_agnostic_1} where one considers a supremum of $2^{2k}$ sums, only $k^{2q}$ terms are involved in \eqref{eq:upper_cut_q_random} up to the price of an additive term of order $q^{-1/2}$. The difficulty is that we will apply this lemma to $U$ for which these $k^{2q}$ will turn out to be random.

 In the sequel, we fix $q= \lfloor \sqrt{k}\rfloor$  and apply Lemma \ref{lem:lovasz_q_random} to $U$. Then, we can take $u=v=1$.
 Since  $\sum_{a=1}^k\lambda_a=1$ and since we assumed at the beginning of the proof that the weights  $\lambda_a$ are all smaller than $2/k$, it follows that $(k\sum_{a=1}^k\lambda_a^2)^{1/2}\leq \sqrt{2}$.
 Let $M$ and $N$ denote the random variables $M:=\sum_{a=1}^k |\widehat{\lambda}_a-\lambda_a|$ and $N:=\left (\sum_{a=1}^k k|\widehat{\lambda}_a-\lambda_a|^2\right )^{1/2}$. Both $M$ and $N$ are functions of the independent random variables  $(\xi_1,\ldots, \xi_n)$.   Besides, if we change the values of one of these $\xi'_i$ the value of $M$ changes by at most $2/n$ and the value of $N$ changes by at most $\sqrt{2k}/n$. 
 As a consequence, we may apply the bounded difference inequality (Lemma \eqref{lem:mac_diarmid}) to these two random variables. Then,  with probability larger than $1- 2\exp(-\sqrt{k}/\log(k))$, one has 
\begin{eqnarray}
\sum_{a=1}^k\left  |\widehat{\lambda}_a-\lambda_a\right |&\leq& \E\left [\sum_{a=1}^k |\widehat{\lambda}_a-\lambda_a|\right ]   + \sqrt{\frac{2 k^{1/2} }{n\log(k)}} \leq C \sqrt{\frac{k}{n}}\ ,  \label{eq:upper_lambda_a}\\
\left (k\sum_{a=1}^k |\widehat{\lambda}_a-\lambda_a|^2\right )^{1/2}&\leq& \E\left [\left (k\sum_{a=1}^k |\widehat{\lambda}_a-\lambda_a|^2\right )^{1/2}\right ]   + \sqrt{\frac{2 k^{3/2} }{n\log(k)}} \leq Ck^{1/4}\sqrt{\frac{k}{n\log(k)}} \label{eq:upper_lambda_a2}\ .
 \end{eqnarray}
 In \eqref{eq:upper_lambda_a} - \eqref{eq:upper_lambda_a2} we bound the expectation using that, since $\xi_1,\ldots ,\xi_n$
are i.i.d. uniformly distributed random variables,  $n\widehat{\lambda}_a$ has a binomial distribution with parameters ($n$, $\lambda_a$) and the Cauchy-Schwarz inequality:
\begin{equation*}
\E\left [\sum_{a=1}^k |\widehat{\lambda}_a-\lambda_a|\right ] \leq \sum_{a=1}^k\sqrt{\frac{\lambda_a(1-\lambda_a)}{n}}\leq \sqrt{\dfrac{k}{n}}\quad \text{and}
\end{equation*}
\begin{equation*}
\E\left [\left (k\sum_{a=1}^k |\widehat{\lambda}_a-\lambda_a|^2\right )^{1/2}\right ] \leq \sqrt{k\sum_{a=1}^k\frac{\lambda_a(1-\lambda_a)}{n}}\leq \sqrt{\dfrac{k}{n}}.
\end{equation*}
Bound \eqref{eq:upper_lambda_a2} and $(k\sum_{a=1}^k\lambda_a^2)^{1/2}\leq \sqrt{2}$, implies that for $U$,  with probability larger than $1- 2\exp(-\sqrt{k}/\log(k))$,
\begin{equation}\label{eq:resuming_second_bound}
\frac{\sqrt{k\sum_{a=1}^k \beta_a^2}+\sqrt{k\sum_{a=1}^k \alpha_a^2}}{k^{1/4}}\leq  C \sqrt{\frac{k}{n\log(k)}}.
\end{equation}

 Fix any two subsets $R_1, R_2\subset [k]$  of size less than or equal to $q$. In view of \eqref{eq:upper_cut_q_random}, one needs to control the following random variable 
  \begin{equation}\label{def_Z}
  Z_{R_1,R_2}:= U\left [R_2^{r,U},R_1^{l,U}\right ]= \sum_{a\in R_2^{r,U}}\left |\widehat{\lambda}_a-\lambda_a\right | \sum_{b\in R_1^{l,U}} \lambda_b (\bQ_{ab}-\bQ^{ad}_{ab})\ .
  \end{equation}
 It is done in the following Lemma:
 \begin{lem}\label{lem:control_Z}
 Let $R_1, R_2$ be two subsets of $[k]$  of size less than or equal to $q$ and $Z_{R_1,R_2}$ given by \eqref{def_Z}. Then, we have that  with probability larger than $1 - (1+2k)\exp(-\sqrt{k}/\log(k))$,
 \[
 \max_{R_1,R_2\,:\,|R_1|\leq q |R_2|\leq q} Z_{R_1,R_2}\leq   C \sqrt{\frac{k}{n\log(k)}}.
 \]	
\end{lem}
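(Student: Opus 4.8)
The plan is to bound $\max_{R_1,R_2} Z_{R_1,R_2}$ by reducing it to the supremum of a \emph{linear} functional of the random vector $\widehat{\lambda}^{\delta}=(\widehat{\lambda}^{\delta}_a)_{a\in[k]}$ over a small \emph{deterministic} family of directions, and then applying a union bound together with the bounded difference inequality. Since the column weights entering $R_2^{r,U}$ are the deterministic $\lambda_b$'s, the set $R_2^{r,U}$ is itself deterministic; writing $h_a:=\sum_{b\in R_1^{l,U}}\lambda_b(\bQ_{ab}-\bQ^{(ap)}_{ab})$, which obeys $|h_a|\le 1$, I would use
\[
Z_{R_1,R_2}=\sum_{a\in R_2^{r,U}}\widehat{\lambda}^{\delta}_a h_a\ \le\ \sum_{a=1}^{k}\widehat{\lambda}^{\delta}_a(h_a)_+\ =\ \langle\widehat{\lambda}^{\delta},v\rangle,\qquad v_a:=(h_a)_+\in[0,1].
\]
The key observation is that $R_1^{l,U}=\{b:\ \sum_{a\in R_1}\widehat{\lambda}^{\delta}_a(\bQ_{ab}-\bQ^{(ap)}_{ab})>0\}$ depends on $\widehat{\lambda}^{\delta}$ only through the signs of the $|R_1|$ linear forms $u\mapsto\sum_{a\in R_1}u_a(\bQ_{ab}-\bQ^{(ap)}_{ab})$, $b\in[k]$; by the elementary hyperplane-arrangement bound these signs realize at most $O(k^{|R_1|})\le O(k^{q})$ distinct subsets of $[k]$, where $q=\lfloor\sqrt{k}\rfloor$. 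Hence, as $(R_1,R_2)$ ranges over all pairs with $|R_1|,|R_2|\le q$, the vector $v$ lies in a deterministic set $\mathcal V$ with $|\mathcal V|\le Ck^{2q}$, and it suffices to show $\langle\widehat{\lambda}^{\delta},v\rangle\le C\sqrt{k/(n\log k)}$ uniformly over $v\in\mathcal V$.

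Fix a deterministic $v=((h_a)_+)_{a}$ with $h_a=\sum_{b\in T}\lambda_b(\bQ_{ab}-\bQ^{(ap)}_{ab})$ for some $T\subset[k]$. For the mean, I would use $\E|\widehat{\lambda}_a-\lambda_a|\le\sqrt{\lambda_a/n}$ and write $\E\widehat{\lambda}^{\delta}_a=c_a\sqrt{\lambda_a}$ with $c_a\in[0,n^{-1/2}]$; the layer-cake identity $c_a=\int_0^{n^{-1/2}}\mathds{1}\{c_a>t\}\,dt$ gives
\[
\E\langle\widehat{\lambda}^{\delta},v\rangle=\int_0^{n^{-1/2}}\Big(\sum_{a\in S_t,\,b\in T}\sqrt{\lambda_a}\,\lambda_b\,(\bQ_{ab}-\bQ^{(ap)}_{ab})\Big)dt ,
\]
where $S_t=\{a:\ c_a>t,\ h_a>0\}\subset[k]$; each inner sum is at most $C\sqrt{k/\log k}$ by \eqref{control_cut1}, so $\E\langle\widehat{\lambda}^{\delta},v\rangle\le C\sqrt{k/(n\log k)}$. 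For the fluctuations, $\langle\widehat{\lambda}^{\delta},v\rangle$ is a function of the i.i.d.\ variables $\xi_1,\dots,\xi_n$ with bounded differences at most $2/n$ (moving one $\xi_i$ changes exactly two of the $|\widehat{\lambda}_a-\lambda_a|$ by at most $1/n$ and $|v_a|\le1$), so Lemma \ref{lem:mac_diarmid} yields $\P(\langle\widehat{\lambda}^{\delta},v\rangle>\E\langle\widehat{\lambda}^{\delta},v\rangle+t)\le\exp(-nt^2/2)$.

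Taking a union bound over $v\in\mathcal V$ and choosing $t$ of order $\sqrt{(\sqrt{k}\log k)/n}$ makes the total bad probability at most the claimed $(1+2k)\exp(-\sqrt{k}/\log k)$ (any extra events being Bernstein tail bounds controlling individual $|\widehat{\lambda}_a-\lambda_a|$), and one checks that this $t$ is $\le C\sqrt{k/(n\log k)}$ because the required inequality amounts to $\log k\le Ck^{1/4}$, which holds for $k\ge32$ (the regime assumed in the proof of Theorem \ref{prp_agnostic_block}). On the resulting event one then has $\langle\widehat{\lambda}^{\delta},v\rangle\le C\sqrt{k/(n\log k)}$ for all $v\in\mathcal V$ simultaneously, hence $\max_{R_1,R_2}Z_{R_1,R_2}\le C\sqrt{k/(n\log k)}$. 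I expect the main obstacle to be precisely the randomness of $R_1^{l,U}$ — the point flagged after Lemma \ref{lem:lovasz_q_random} that the $k^{2q}$ terms produced by that lemma are random when it is applied to $U$: one cannot afford a union bound over all $2^{k}$ subsets of $[k]$ (the resulting $\sqrt{k}$ factor would swamp the $1/n$-scale concentration of $\langle\widehat{\lambda}^{\delta},v\rangle$), so one must both take $q$ as small as $\lfloor\sqrt{k}\rfloor$ and confine the realized sets to a family of size $k^{O(\sqrt{k})}$ via the hyperplane-arrangement argument. The secondary delicate point is that, to keep the $\log k$ improvement coming from \eqref{control_cut1} rather than the crude $\sqrt{k/n}$ bound, the sign cancellation inside $h_a$ must be retained throughout (plugging into \eqref{control_cut1}) instead of bounding $|h_a|\le1$, which would cost exactly a factor $\sqrt{\log k}$.
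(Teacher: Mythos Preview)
Your argument is correct and takes a genuinely different route from the paper. The paper deals with the randomness of $R_1^{l,U}$ by \emph{conditioning}: it first splits off the indices $a\in R_1$, bounding their contribution by $q\max_a|\widehat{\lambda}_a-\lambda_a|$ (handled afterwards by a Bernstein union bound over the $k$ groups --- this is where the factor $2k$ in the probability comes from); then, conditionally on $(\widehat{\lambda}_a)_{a\in R_1}$, the set $R_1^{l,U}$ is frozen and McDiarmid is applied to the remaining coordinates, followed by a union bound over the at most $k^{2q}$ pairs $(R_1,R_2)$. You instead \emph{enumerate} the possible realizations of $R_1^{l,U}$ via the hyperplane-arrangement bound (the $k$ homogeneous forms in $|R_1|\le q$ variables realize at most $O(k^{q})$ sign patterns), so that the direction $v$ lies in a deterministic family $\mathcal V$ of size $k^{O(q)}$, and then apply McDiarmid unconditionally to each $\langle\widehat{\lambda}^{\delta},v\rangle$. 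Your route sidesteps both the conditioning and the separate Bernstein step (so the parenthetical about ``extra Bernstein events'' is in fact not needed in your scheme), at the price of invoking the arrangement bound. One minor remark: the layer-cake device for the mean is correct but superfluous --- since $(h_a)_+\ge 0$ and $\E\widehat{\lambda}^{\delta}_a\le\sqrt{\lambda_a/n}$, the termwise inequality $\sum_a\E[\widehat{\lambda}^{\delta}_a](h_a)_+\le n^{-1/2}\sum_{a:h_a>0}\sqrt{\lambda_a}\,h_a$ already plugs directly into \eqref{control_cut1}; this is essentially the paper's observation that at the optimal set $S^*$ the inner sums are nonnegative, so each random coefficient may be replaced by its upper bound $2\sqrt{\lambda_a/n}$.
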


Now, it follows from Lemma \ref{lem:lovasz_q_random} together with \eqref{eq:resuming_second_bound} and Lemma \ref{lem:control_Z} that,  with probability larger than $1 - (3+2k)\exp(-\sqrt{k}/\log(k))$,
  \[
  \|U\|^{+}_{\square}\leq     C \sqrt{\frac{k}{n\log(k)}}. 
\]
Controlling analogously $\|-U\|^{+}_{\square}$, we conclude that there exists an event $\cA$ of probability larger than $1- 10\exp(-\sqrt{k}/\log(k))$ such that, on $\cA$, 
\[
  \|U\|_{\square}\leq C\sqrt{\frac{k}{n\log(k)}}\ . 
\]
To finish the control of $\E[\|U\|_{\square}]$, we use the rough bound $\|U\|_{\square}\leq \|U\|_1\leq \sum_{a=1}^k |\widehat{\lambda}_a-\lambda_a|$ on the complementary event $\bar{\cA}$. 
\begin{eqnarray} \nonumber
 \E[\|U\|_{\square}]&\leq & E\left [\|U\|_{\square}\1_{\cA}\right ]+ \E\left [\|U\|_{\square}\1_{\bar{\cA}}\right ]\\ \nonumber
 &\leq& C\sqrt{\frac{k}{n\log(k)}} + \sqrt{\P\left (\bar{\cA}\right )}\left [\E\left (\sum_{a=1}^k |\widehat{\lambda}_a-\lambda_a|\right )^2\right ]^{1/2}\\
 &\leq & C\sqrt{\frac{k}{n\log(k)}} + C' e^{-\sqrt{k}/(2\log(k))}\frac{k}{\sqrt{n}}\leq C'' \sqrt{\frac{k}{n\log(k)}} \label{eq:upper_U_cut}
\end{eqnarray}  
  where we use \eqref{eq:upper_lambda_a}. Now, using the decomposition  \eqref{eq:decomposition_W_2niveau}, \eqref{eq:upper_W1-W1_hat} and \eqref{eq:W-W1_RRc}, we can conclude that 
  \[
  \mathbb{E}\left [\left \| (W-\widehat{W})|_{\mathcal{R}\times \mathcal{R}^c}\right \|_{\square}\right ]\leq   C \sqrt{\frac{k}{n\log(k)}}. 
  \]
 
  The following lemma gives a corresponding bound on the second term $\left \| (W-\widehat{W})\right |_{\mathcal{R}\times \mathcal{R}}\|_{\square}$ in \eqref{eq:decomposition_premier_niveau}. The proof is somewhat analogous to that of the  control of $\left \| (W-\widehat{W})|_{\mathcal{R}\times \mathcal{R}^c}\right \|_{\square}$ 
  and is postponed to the end of the section.
    
  \begin{lem}\label{lem:second_term_decomposition}
  We have
  \[
  \mathbb{E}\left [\left \| (W-\widehat{W}) |_{\mathcal{R}\times \mathcal{R}}\right \|_{\square}\right ]\leq  C\sqrt{\frac{k}{n\log(k)}}\ .
   \]
  \end{lem}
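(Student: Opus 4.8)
The plan is to follow the blueprint of Steps 3 and 4 above (which controlled $\mathbb{E}[\|(W-\widehat{W})|_{\mathcal{R}\times\mathcal{R}^c}\|_{\square}]$), while accounting for the fact that on $\mathcal{R}\times\mathcal{R}$ \emph{both} coordinates range over the error set $\mathcal{R}$, whose per-group Lebesgue mass is of order $\sqrt{\lambda_a/n}$. For this reason one should compare $W$ and $\widehat{W}$ to the inflated functions $W_1^+$ and $\widehat{W}_1^+$ and invoke the inflated bound \eqref{control_cut2} (with the symmetric weights $\sqrt{\lambda_a}\sqrt{\lambda_b}$) rather than \eqref{control_cut1}. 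First I would write $W=W_1^++(W-W_1^+)$ and $\widehat{W}=\widehat{W}_1^++(\widehat{W}-\widehat{W}_1^+)$ and use the triangle inequality:
\begin{multline*}
\bigl\|(W-\widehat{W})|_{\mathcal{R}\times\mathcal{R}}\bigr\|_{\square}\leq
\bigl\|(W_1^+-\widehat{W}_1^+)|_{\mathcal{R}\times\mathcal{R}}\bigr\|_{\square}
+\bigl\|(W-W_1^+)|_{\mathcal{R}\times\mathcal{R}}\bigr\|_{\square}\\
+\bigl\|(\widehat{W}-\widehat{W}_1^+)|_{\mathcal{R}\times\mathcal{R}}\bigr\|_{\square},
\end{multline*}
and treat the three pieces separately.

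For the first piece, since $\bQ^{(ap,+)}$ is block-constant along the coarse partition $\mathcal{P}$, the functions $W_1^+$ and $\widehat{W}_1^+$ agree at every $(x,y)$ with $x\notin\mathcal{R}_2$ and $y\notin\mathcal{R}_2$; hence $(W_1^+-\widehat{W}_1^+)|_{\mathcal{R}\times\mathcal{R}}$ is supported on $(\mathcal{R}_2\times\mathcal{R})\cup(\mathcal{R}\times\mathcal{R}_2)$, and bounding its cut norm by its $l_1$ norm together with $\|W_1^+-\widehat{W}_1^+\|_{\infty}\leq 1$ yields the bound $2\lambda(\mathcal{R}_2)$. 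As in the derivation of \eqref{eq:upper_W1-W1_hat}, $\lambda(\mathcal{R}_2)=\sum_{l=1}^{q_1}(\omega_l-\widehat{\omega}_l)_+$, so $\mathbb{E}[\lambda(\mathcal{R}_2)]\leq\sum_{l=1}^{q_1}\sqrt{\omega_l(1-\omega_l)/n}\leq\sqrt{q_1/n}\leq k^{1/4}/\sqrt{n}$, which is at most $\sqrt{k/(n\log k)}$ because $k\geq 32$. For the second and third pieces I would repeat the bookkeeping of Step 3: properties (i) and (iii) of $\psi$ give $\lambda(\phi^{-1}(a)\cap\mathcal{R})=(\lambda_a-\widehat{\lambda}_a)_+$ and $\lambda(\psi^{-1}(a)\cap\mathcal{R})=(\widehat{\lambda}_a-\lambda_a)_+$, which show that each of $(W-W_1^+)|_{\mathcal{R}\times\mathcal{R}}$ and $(\widehat{W}-\widehat{W}_1^+)|_{\mathcal{R}\times\mathcal{R}}$ is, up to a measure-preserving bijection of its rows and its columns, a restriction of the single $k\times k$ step kernel
\[
\widetilde{U}^+(x,y):=\sum_{a,b=1}^k\bigl[\bQ_{ab}-\bQ^{(ap,+)}_{ab}\bigr]\mathds{1}_{\widetilde{\Pi}_{a,b}}(x,y)
\]
on $[0,M]\times[0,M]$, where $M=\sum_a|\widehat{\lambda}_a-\lambda_a|$ and $\widetilde{\Pi}_{a,b}$ is a product of intervals of lengths $|\widehat{\lambda}_a-\lambda_a|$ and $|\widehat{\lambda}_b-\lambda_b|$. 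This reduces everything to showing $\mathbb{E}[\|\widetilde{U}^+\|_{\square}]\leq C\sqrt{k/(n\log k)}$.

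I would establish this last bound exactly as $\mathbb{E}[\|U\|_{\square}]$ was bounded in Step 4. Applying Lemma \ref{lem:lovasz_q_random} to $\widetilde{U}^+$ with $q=\lfloor\sqrt{k}\rfloor$, $u=v=M$ and block weights $\alpha_a=\beta_a=|\widehat{\lambda}_a-\lambda_a|$ bounds $\|\widetilde{U}^+\|^+_{\square}$ by $\max_{|R_1|,|R_2|\leq q}\widetilde{U}^+[R_2^{r,\widetilde{U}^+},R_1^{l,\widetilde{U}^+}]$ plus the additive term $2M(k\sum_a|\widehat{\lambda}_a-\lambda_a|^2)^{1/2}/\sqrt{q}$. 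On the event where \eqref{eq:upper_lambda_a} and \eqref{eq:upper_lambda_a2} hold, the additive term is at most $C\sqrt{k/n}\cdot k^{1/4}\sqrt{k/(n\log k)}/k^{1/4}=C(k/n)/\sqrt{\log k}\leq C\sqrt{k/(n\log k)}$, since $k\leq n$; on the complementary event one uses the crude bound $\|\widetilde{U}^+\|_{\square}\leq\|\widetilde{U}^+\|_1\leq M^2$ and the sub-Gaussian concentration of $M$ around a value of order $\sqrt{k/n}$, so its contribution is negligible, exactly as in \eqref{eq:upper_U_cut}. It then remains to control $\max_{|R_1|,|R_2|\leq q}\widetilde{U}^+[R_2^{r,\widetilde{U}^+},R_1^{l,\widetilde{U}^+}]$, for which I would prove an analogue of Lemma \ref{lem:control_Z}.

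That analogue is where I expect the main difficulty to lie. In Lemma \ref{lem:control_Z} only the weights $|\widehat{\lambda}_a-\lambda_a|$ attached to the outer sum are random (the inner weights $\lambda_b$ being deterministic), whereas for $\widetilde{U}^+$ both weight vectors, and hence both index sets $R_1^{l,\widetilde{U}^+}$ and $R_2^{r,\widetilde{U}^+}$, are random; the freezing argument — fix $R_1,R_2$ among the at most $k^{2q}$ possibilities, replace each random weight by its expectation of order $\sqrt{\lambda_a/n}$, and control the fluctuations by the bounded difference inequality — must therefore be run against two independent sources of fluctuation at once, keeping the union bound over $R_1,R_2$ under control. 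The point that makes it close is that the deterministic reference quantity produced in this way is now governed by the inflated bound \eqref{control_cut2}: its size $Ck/\sqrt{\log k}$ is acceptable precisely because the two random weight vectors supply a full factor $1/n$ rather than $1/\sqrt{n}$, turning it into $Ck/(n\sqrt{\log k})\leq C\sqrt{k/(n\log k)}$ using $k\leq n$. Combining this estimate with the bounds on the first piece and on the additive term gives $\mathbb{E}[\|(W-\widehat{W})|_{\mathcal{R}\times\mathcal{R}}\|_{\square}]\leq C\sqrt{k/(n\log k)}$, as claimed.
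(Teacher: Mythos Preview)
Your proposal is essentially the same as the paper's proof: the same decomposition via $W_1^+,\widehat{W}_1^+$, the same reduction of the last two pieces to a single kernel (what you call $\widetilde{U}^+$ is the paper's $V$), and the same application of Lemma~\ref{lem:lovasz_q_random} together with \eqref{control_cut2}. Your bounds on the first piece and on the additive term are exactly as in the paper.

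The one technical point you gloss over is the step ``replace each random weight by its expectation of order $\sqrt{\lambda_a/n}$'' in the analogue of Lemma~\ref{lem:control_Z}. After conditioning on $(\widehat{\lambda}_c)_{c\in R_1\cup R_2}$ and applying the bounded difference inequality, the conditional expectation you must control is a supremum of sums of $\E\bigl[|\widehat{\lambda}_a-\lambda_a|\,|\widehat{\lambda}_b-\lambda_b|\,\big|\,\widehat{\lambda}_{\{R\}}\bigr](\bQ_{ab}-\bQ^{(ap,+)}_{ab})$; these conditional expectations do \emph{not} factorize exactly because $(\widehat{\lambda}_a)_{a}$ are dependent (they sum to one). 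The paper handles this by a coupling argument between two binomials showing that the cross term differs from the product $\E[|\widehat{\lambda}_a-\lambda_a|\,|\,\widehat{\lambda}_{\{R\}}]\,\E[|\widehat{\lambda}_b-\lambda_b|\,|\,\widehat{\lambda}_{\{R\}}]$ by at most $C(k^{-5/2}n^{-1/2}+n^{-1}k^{-2})$; summing these small corrections over $a,b\in[k]$ gives a negligible $O(n^{-1/2})$, after which your invocation of \eqref{control_cut2} goes through. Once this approximate decorrelation is supplied, the rest of your sketch is correct.
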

  In view of \eqref{eq:decomposition_premier_niveau}, we have proved Theorem \ref{prp_agnostic_block}.\qed
  
 \bigskip

  \begin{proof}[Proof of Lemma \ref{lem:decomposition_szemeredy_inflated}]

For $a\in[k]$, we denote $(\lambda_0)_a= \lambda(\phi_0^{-1}(a))$ and  $u_a= \frac{\sqrt{(\lambda_0)_a}}{\sum_b \sqrt{(\lambda_0)_b}}$ . For any $b\in [k]$, define the cumulative distribution functions  $F_{0}(b)=\sum_{a=1}^{b} (\lambda_0)_a$  and $F_1(b)= \sum_{a=1}^{b} u_a$. For $a,b\in [k]$, let $(\Pi_d)_{ab}=[F_{0}(a-1),F_{0}(a))\times [F_{1}(b-1),F_{1}(b))$ and $(\Pi^+_d)_{ab}=[F_{1}(a-1),F_{1}(a))\times [F_{1}(b-1),F_{1}(b))$.   In order to construct a suitable $q_1$-step kernel we consider first  the (non necessarily symmetric) kernels  $W_d$ and $W^+_d$ defined by 
\[
W_d(x,y)= \sum_{a=1}^k \sum_{b=1}^k (\bQ_0)_{ab} \mathds{1}_{(\Pi_d)_{ab}}(x,y)\ ,\, \quad W^+_d(x,y)= \sum_{a=1}^k \sum_{b=1}^k (\bQ_0)_{ab}
 \mathds{1}_{(\Pi^+_d)_{ab}}(x,y)\ .
\]
In comparison to $W_0$, the length of the steps in  $W_d$ and $W_d^+$ has been modified. 
\begin{lem}\label{regularity_lemma}
Let $W\in \cW_{[0,1],[0,1]}$ be a k-step kernel defined by \[
W(x,y)= \sum_{a=1}^k \sum_{b=1}^k \bQ_{ab} \mathds{1}_{S_{a}\times T_{b}}(x,y)
\]
where $\bQ\in [0,1]^{k\times k}$ and $ (S_{1},\dots, S_{k})$ and  $(T_{1},\dots, T_{k})$  are two partitions of $[0,1]$ into a finite number of measurable sets. For any integer $q_0\geq 2$, 
	 there exist a $q_0$--step kernel $W^{(ap)}\in\cW^{+}_{[0,1],[0,1]}$ satisfying 
	\begin{itemize}
		\item [(i)] for any $(a,b)\in [k]$, $W^{(ap)}$ is constant on $S_{a}\times T_{b}$ and
		\item [(ii)]$\left \|W- W^{(ap)}\right \|_{\square}\leq \frac{ C}{\sqrt{\log(q_0)}}$. 
	\end{itemize}
	
\end{lem}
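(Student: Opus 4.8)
The statement is a version of the weak (Frieze--Kannan) regularity lemma; the only extra feature is that the approximating step kernel must be aligned with the product partition $(S_a)_{a\in[k]}\times(T_b)_{b\in[k]}$. The plan is to run an energy-increment argument on partitions of the \emph{index set} $[k]$. For partitions $\mathcal{A}$ and $\mathcal{B}$ of $[k]$, let $W^{\mathcal{A},\mathcal{B}}$ be the kernel obtained by replacing, on each block $(\cup_{a\in A}S_a)\times(\cup_{b\in B}T_b)$ with $A\in\mathcal{A}$, $B\in\mathcal{B}$, the values of $W$ by their $\lambda$-weighted average; equivalently, $W^{\mathcal{A},\mathcal{B}}$ is the $L^2([0,1]^2)$-orthogonal projection of $W$ onto the subspace of functions that are constant on these blocks. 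Since $\bQ\in[0,1]^{k\times k}$, the kernel $W^{\mathcal{A},\mathcal{B}}$ is non-negative with values in $[0,1]$, it is constant on every cell $S_a\times T_b$ (each cell lies inside a single block, so property (i) will be automatic), and its ``energy'' $\mathcal{E}(\mathcal{A},\mathcal{B}):=\|W^{\mathcal{A},\mathcal{B}}\|_2^2$ stays in $[0,1]$.

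First I would fix $\eps:=C/\sqrt{\log q_0}$ and iterate, starting from the trivial partitions $\mathcal{A}=\mathcal{B}=\{[k]\}$. If at the current stage $\|W-W^{\mathcal{A},\mathcal{B}}\|_\square>\eps$, then there are measurable $X,Y\subset[0,1]$ with $\big|\int_{X\times Y}(W-W^{\mathcal{A},\mathcal{B}})\big|>\eps$; because $W-W^{\mathcal{A},\mathcal{B}}$ is constant on every cell $S_a\times T_b$, the extremal-point argument already used in the proof of \lemref{1cut_l1} lets us take $X=\cup_{a\in S}S_a$ and $Y=\cup_{b\in Z}T_b$ for some $S,Z\subset[k]$. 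Refining $\mathcal{A}$ by $S$ and $\mathcal{B}$ by $Z$ (split each part according to membership in $S$, resp. $Z$, discarding empty pieces) at most doubles $|\mathcal{A}|$ and $|\mathcal{B}|$; writing $\mathcal{A}',\mathcal{B}'$ for the refinements, $\mathds{1}_{X\times Y}$ is constant on the $\mathcal{A}'\times\mathcal{B}'$-blocks, hence $\int_{X\times Y}W^{\mathcal{A}',\mathcal{B}'}=\int_{X\times Y}W$ and so $\big|\int_{X\times Y}(W^{\mathcal{A}',\mathcal{B}'}-W^{\mathcal{A},\mathcal{B}})\big|>\eps$. By Cauchy--Schwarz $\|W^{\mathcal{A}',\mathcal{B}'}-W^{\mathcal{A},\mathcal{B}}\|_2^2>\eps^2$, and by Pythagoras (the projection property) $\mathcal{E}(\mathcal{A}',\mathcal{B}')=\mathcal{E}(\mathcal{A},\mathcal{B})+\|W^{\mathcal{A}',\mathcal{B}'}-W^{\mathcal{A},\mathcal{B}}\|_2^2>\mathcal{E}(\mathcal{A},\mathcal{B})+\eps^2$. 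Since $\mathcal{E}\le1$ throughout, the procedure stops after fewer than $\eps^{-2}$ refinements, at which point $\|W-W^{\mathcal{A},\mathcal{B}}\|_\square\le\eps$ and $\max(|\mathcal{A}|,|\mathcal{B}|)\le 2^{\lceil\eps^{-2}\rceil}$. Since $\eps\asymp 1/\sqrt{\log q_0}$, one can choose the absolute constant $C$ so that $2^{\lceil\eps^{-2}\rceil}\le q_0$ for all $q_0\ge2$ (for small $q_0$ the bound $\|W-W^{(ap)}\|_\square\le1$ is trivial anyway); then $W^{(ap)}:=W^{\mathcal{A},\mathcal{B}}$ is a non-negative $q_0$-step kernel, constant on each $S_a\times T_b$, with $\|W-W^{(ap)}\|_\square\le \eps = C/\sqrt{\log q_0}$.

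The projection/Pythagoras identity and the doubling count are routine; the one point that needs care — and the reason the classical statement cannot be quoted verbatim — is keeping the refinements compatible with the cells $S_a\times T_b$, so that (i) holds with a partition of size $\le q_0$ rather than the $\le q_0 k$ one would get by taking a common refinement. This is exactly what the alignment of the witnessing sets $X,Y$ to the cells (borrowed from \lemref{1cut_l1}) buys, and it is harmless for the energy increment since $\mathds{1}_{X\times Y}$ then remains a test function in the refined block algebra. Alternatively one could cite a weighted weak regularity lemma relative to a prescribed partition (e.g.\ from~\cite{LovaszBook}), but the self-contained argument above is short enough to include.
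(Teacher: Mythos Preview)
Your proof is correct and follows essentially the same route as the paper: an energy-increment argument for the weak (Frieze--Kannan) regularity lemma, with the key observation that the witnessing sets can be taken as unions of the original cells $S_a$, $T_b$ so that property (i) comes for free. The paper phrases the increment in the Frieze--Kannan ``subtract a rank-one cut function'' style (its Lemma~\ref{lemma_9.11_Lovasz}), building $W^{(ap)}$ as $\sum_{i\le k_0} a_i\mathds{1}_{S'_i\times T'_i}$, whereas you use the equivalent ``project onto the block $\sigma$-algebra'' formulation; both give a $2^{O(\eps^{-2})}$-step approximation after $O(\eps^{-2})$ steps. Your version has the minor advantage that the conditional expectation $W^{\mathcal A,\mathcal B}$ automatically takes values in $[0,1]$, so the conclusion $W^{(ap)}\in\cW^+_{[0,1],[0,1]}$ is immediate, while the paper's sum of non-negative rank-one pieces is not obviously bounded above by $1$.
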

	
 The second property (ii) is just the consequence of
 the weak Regularity  Lemma for kernels \cite{frieze_kannan} (see also Corollary 9.13 in \cite{LovaszBook}). The first property, (i),
 follows from the explicit construction of the approximate kernel by Kannan and Frieze (see the proof of Lemma 9.10 in \cite{LovaszBook}). For the sake of completeness, we give the details in the end of this section.  

 Fix $q_0=\lfloor k^{1/4}\rfloor$. Note that $q_0\geq 2$ since we assume that $k\geq 16$. 
We denote by $W_{d}^{(ap)}$ and $W^{(ap,+)}_{d}$ the $q_{0}$--step kernels given by Lemma \ref{regularity_lemma} to respectively approximate $W_{d}$ and $W^{(+)}_{d}$. In virtue of  Property $(i)$, there exist two matrices $\bQ^{(ap)}_{0}$ and $\bQ_{0}^{(ap,+)}$ in $[0,1]^{k\times k}$ such that      
\[W_d^{(ap)}(x,y)= \sum_{a=1}^k \sum_{b=1}^k (\bQ^{(ap)}_{0})_{ab} \mathds{1}_{(\Pi_d)_{ab}}(x,y)\quad\text{ and }\quad W_d^{(ap,+)}(x,y)= \sum_{a=1}^k \sum_{b=1}^k (\bQ^{(ap,+)}_{0})_{ab}
 \mathds{1}_{(\Pi^+_d)_{ab}}(x,y)\ .\] There exist two partitions $\mathcal{P}_d$ and $\mathcal{P}_d^+$ of $[k]$ such that $\bQ^{(ap)}_{0}$ is block constant according to $\mathcal{P}_d$ and $\bQ^{(ap,+)}_{0}$ is block constant according to $\mathcal{P}^+_d$. Let $\mathcal{P}^*$ be the coarsest partition that refines both $\mathcal{P}$ and $\mathcal{P}_d^+$. As a consequence, $\cP^*$ is made of less than $q_0^2\leq q_1$ subsets. By possibly refining $\mathcal{P}^*$, we may  assume without loss of generality that $\cP^*=(P^*_1,\ldots, P^*_{q_1})$ is made of exactly $q_1$ elements. Let $\pi$ be  a permutation of $[k]$ transforming $\mathcal{P}^*$ in a partition $\cP=(P_1,\ldots,P_{q_1})$ with $P_a=\{\pi(b), b\in P^*_a\}$ made of  consecutive intervals. Denoting $\boldsymbol{\Pi}$ the corresponding permutation matrix, we finally take
 \[
  \bQ = \boldsymbol{\Pi}^T {\bf Q}_0\boldsymbol{\Pi},\quad \bQ^{(ap)}= \boldsymbol{\Pi}^T \bQ^{(ap)}_{0}\boldsymbol{\Pi},\ \text{ and } \quad \bQ^{(ap,+)}= \boldsymbol{\Pi}^T \bQ^{(ap,+)}_{0}\boldsymbol{\Pi}\ . 
 \]
Now we are ready to prove \eqref{control_cut1} and \eqref{control_cut2}. Recall that we denote $\phi=\pi \circ \phi_0 $ and  $\lambda_a:= \lambda(\phi^{-1}(a))$ for $a\in[k]$.
Define the sets 
 $\mathcal{B}_1:= \prod_{a=1}^{k}[0,u_{\pi(a)}]$ and $\mathcal{B}_2:= \prod_{a=1}^{k}[0,\lambda_a]$. Since $W_d-W_{d}^{(ap)}$ is a $k$--step function, its cut norm writes as 
\begin{eqnarray}\label{eq:reg_lemma_bound_cut_norm}
 \|W_d-W_{d}^{(ap)}\|_\square &=& \sup_{\gamma\in \mathcal{B}_1\gamma\in \mathcal{B}_2}\left |\sum_{a,b}\gamma_a \gamma_b \left (\bQ_{ab}-\bQ^{(ap)}_{ab}\right ) \right |\\
 & = & \sup_{\epsilon\in \{0,1\}^k,\ \epsilon'\in \{0,1\}^k}\left |\sum_{a,b}\epsilon_a \epsilon'_b \lambda_b u_{\pi(a)} \left (\bQ_{ab}-\bQ^{(ap)}_{ab}\right ) \right |\leq \dfrac{ C}{\sqrt{\log(q_0)}}
\end{eqnarray}
since the supremum is achieved at an extremal point of the convex and in the last inequality we use property (ii) of Lemma \ref{regularity_lemma}. Now \eqref{eq:reg_lemma_bound_cut_norm} and the definition of $u_{\pi(a)}$ imply 
\[
 \sup_{\epsilon\in \{0,1\}^k,\ \epsilon'\in \{0,1\}^k}\left |\sum_{a,b}\epsilon_a \epsilon'_b \lambda_b \sqrt{\lambda_a} \left (\bQ_{ab}-\bQ^{(ap)}_{ab}\right ) \right |\leq  C\frac{\sum_{b\in [k]}\sqrt{\lambda_b}}{\sqrt{\log(q_0)}}\leq C' \sqrt{\frac{k}{\log(k)}}\ , 
\]
by Cauchy-Schwarz inequality. We have proved  \eqref{control_cut1}. The second inequality \eqref{control_cut2} is derived similarly.

 \end{proof}

\begin{proof}[\textbf{ Proof of Lemma \ref{regularity_lemma}}]
We adapt  the proof of 
 the weak Regularity  Lemma for symmetric kernels \cite[Lemma 9.9]{LovaszBook} to non symmetric ones.  
 We use  the following extension of Lemma 9.11(a) in \cite{LovaszBook}. 
\begin {lem}\label{lemma_9.11_Lovasz}
	For every $W \in \cW_{[0,1],[0,1]}[k]$
such that 
 	\[
 	W(x,y)= \sum_{a=1}^k \sum_{b=1}^k \bQ_{ab} \mathds{1}_{S_{a}\times T_{b}}(x,y)
 	\]
 	where $\bQ\in \mathbb{R}^{k\times k}$ and $\mathcal{P}=\left \{\left (S_{1},\dots, S_{k}\right ), \left (T_{1},\dots, T_{k}\right )\right \}$  are two partitions of $[0,1]$ into a finite number of measurable sets,
 	there are two sets $\cA,\cB\subset[k]$ and a real number $0\leq a\leq \max_{a,b} |\bQ_{ab}|$ such that, for $S'=\cup_{a\in\mathcal{A} } S_a\quad \text{and }\quad  T'=\cup_{b\in\mathcal{B} }T_b$, 
 	\[ \left \Vert W- a\mathds{1}_{S'\times T'}\right \Vert^{2}_{2}\leq \Vert W\Vert^{2}_{2}-\Vert W\Vert^{2}_{\cut}\ .\]
 \end{lem}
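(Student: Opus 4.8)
The plan is to follow the proof of Lemma~9.11(a) in~\cite{LovaszBook}, the only modification being that, since $W$ is no longer symmetric, the two ``sides'' of the optimal rectangle are chosen independently; symmetry is never used in that argument, so it transfers without further change. Throughout, write $W(x,y)=\bQ_{ij}$ for $(x,y)\in S_i\times T_j$, where $(S_i)_{i\in[k]}$ and $(T_j)_{j\in[k]}$ are the two given partitions.

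First I would reduce the supremum defining $\|W\|_\square$ to a finite maximum over unions of atoms, exactly as in the proof of Lemma~\ref{lem:1cut_l1}. For arbitrary measurable $S,T\subseteq[0,1]$ one has $\int_{S\times T}W=\sum_{i,j}\lambda(S\cap S_i)\,\lambda(T\cap T_j)\,\bQ_{ij}$, which is linear in the vector $(\lambda(S\cap S_i))_i$ and linear in $(\lambda(T\cap T_j))_j$, the former ranging over the box $\prod_i[0,\lambda(S_i)]$ and the latter over $\prod_j[0,\lambda(T_j)]$. Since a convex function (here $|\cdot|$ composed with a linear map) attains its maximum on a box at a vertex, the extremum is attained when every $\lambda(S\cap S_i)\in\{0,\lambda(S_i)\}$ and every $\lambda(T\cap T_j)\in\{0,\lambda(T_j)\}$. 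This produces sets $\cA,\cB\subseteq[k]$ such that, putting $S':=\bigcup_{i\in\cA}S_i$ and $T':=\bigcup_{j\in\cB}T_j$, one has $\bigl|\int_{S'\times T'}W\bigr|=\|W\|_\square$; in particular the supremum is a maximum, and if $\lambda(S')\lambda(T')=0$ then $\|W\|_\square=0$ and one takes $a=0$.

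Assuming $\lambda(S')\lambda(T')>0$, I would set $a:=\bigl(\lambda(S')\lambda(T')\bigr)^{-1}\int_{S'\times T'}W$, the mean value of $W$ on $S'\times T'$, so that $a\,\mathds{1}_{S'\times T'}$ is the $L^2$-projection of $W$ onto the span of $\mathds{1}_{S'\times T'}$. Expanding the square gives
\[
\bigl\|W-a\,\mathds{1}_{S'\times T'}\bigr\|_2^2
=\|W\|_2^2-2a\!\int_{S'\times T'}\!\!\!W+a^2\lambda(S')\lambda(T')
=\|W\|_2^2-\frac{\bigl(\int_{S'\times T'}W\bigr)^2}{\lambda(S')\lambda(T')}\,,
\]
and since $\lambda(S')\lambda(T')\le1$ and $\bigl|\int_{S'\times T'}W\bigr|=\|W\|_\square$, the last expression is at most $\|W\|_2^2-\|W\|_\square^2$, which is the asserted inequality. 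Finally, rewriting $a=\sum_{i\in\cA,\,j\in\cB}\frac{\lambda(S_i)\lambda(T_j)}{\lambda(S')\lambda(T')}\,\bQ_{ij}$ exhibits $a$ as a convex combination of the entries $\bQ_{ij}$, so $|a|\le\max_{i,j}|\bQ_{ij}|$; and when $W\ge0$ — precisely the situation in which Lemma~\ref{regularity_lemma} uses this statement, where $\bQ\in[0,1]^{k\times k}$ — every integral $\int_{S\times T}W$ is nonnegative, hence so is $a$.

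The argument is essentially routine; the two points deserving a word of care are the reduction of the cut-norm supremum to unions of atoms (this is what both yields the index sets $\cA,\cB$ and shows the supremum is attained) and the nonnegativity of $a$, which is exactly where the hypothesis $W\ge0$ enters. In the general signed case one keeps only $|a|\le\max_{i,j}|\bQ_{ij}|$, but this weaker bound is all that the energy-increment iteration in the proof of Lemma~\ref{regularity_lemma} requires, since there the step-function approximant is produced as a conditional expectation of the (nonnegative) kernel onto a refined product partition.
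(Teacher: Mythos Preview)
Your proof is correct and follows essentially the same route as the paper's: reduce the cut-norm supremum to unions of atoms by convexity, take $a$ to be the mean of $W$ over the optimal rectangle, and expand the square to get $\|W\|_2^2-\|W\|_\square^2/(\lambda(S')\lambda(T'))\le\|W\|_2^2-\|W\|_\square^2$. Your treatment is in fact slightly more careful than the paper's on two points: you handle the degenerate case $\lambda(S')\lambda(T')=0$ explicitly, and you correctly note that $a\ge 0$ follows only under the additional hypothesis $W\ge 0$ (which is the case in the application to Lemma~\ref{regularity_lemma}), whereas the paper writes $\mathbf a=\|W\|_\square/(\lambda(S)\lambda(T))$ without discussing the sign of $\int_{S\times T}W$.
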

 Now we apply Lemma \ref{lemma_9.11_Lovasz} repeatedly, to get pairs of sets $S'_i,T'_i$ and real numbers $a_i$ such that for any positive integer $j$,  $W_j=W-\sum_{i=1}^{j}a_{i}\mathds{1}_{S'_{i}\times T'_{i}}$ we have
 \[ \Vert W_j \Vert^{2}_{2}\leq \Vert W \Vert^{2}_{2}-\sum_{i=1}^{j-1} \Vert W_i \Vert^{2}_{\cut}.\]	
 Fix some integer $k_0>0$.  Since the right-hand side of the above equation remains non-negative, there exists  $0\leq i< k_0$ with $\Vert W_i \Vert^{2}_{\cut}\leq 1/k_0$. Now putting $a_{l}=0$ for $l>i$ we get that for any $W\in \cW_{[0,1],[0,1]}[k]$ and any $k_0\geq 1$ there are $k_0$ pairs of subsets $S'_{i},T'_{i}\subset[0,1]$ and $k_0$ real numbers $a_i$ such that
 \begin{equation}\label{lemma9.10_Lovasz}
 \left \Vert W-\sum_{i=1}^{k_0}a_{i}\mathds{1}_{S'_{i}\times T'_{i}}\right \Vert_{\cut}< \dfrac{1}{\sqrt{k_0}}.
 \end{equation}
 Note that the approximation $W^{ap}= \sum_{i=1}^{k_0}a_{i}\mathds{1}_{S'_{i}\times T'_{i}}$ is a step function with at most $2^{k_0}$ steps and $a_i\geq O$, for all $i$. On the other hand, by construction we have that for any $(a,b)\in [k]$, $W^{(ap)}$ is constant on all sets of the form $S_{a}\times T_{b}$. We conclude by  taking $k_0=\lfloor \log(q_0)/\log(2)\rfloor$.
 \end{proof}

\begin{proof}[\textbf{ Proof of Lemma \ref{lemma_9.11_Lovasz}}]
This lemma is proved in \cite[Lemma 9.11]{LovaszBook} for symmetric kernels. For readers convenience we get the details here.  Let $W$ be a $k$--step kernel and let $\left (S_{1},\dots, S_{k}\right ), \left (T_{1},\dots, T_{k}\right )$ be two measurable partitions of $[0,1]$ 
such that $W$ is constant on each set $S_i\times T_j$. Relying on a convexity argument as in the proof of Lemma \ref{lem:decomposition_szemeredy_inflated}, the cut norm is achieved for measurable sets $S$ and $T$ that are unions of $S_i$ and $T_j$ respectively, that is 
	\[ \Vert W\Vert_{\cut}=\left \vert \int_{S\times T} W(x,y)dxdy \right \vert\ ,\]
where $S= \cup_{a\in \cA}S_a$ and $T= \cup_{b\in \cB}T_b$ with $\cA$, $\cB\subset [k]$.
	Let $\mathbf a=\frac{1}{\lambda(S)\lambda(T)}\Vert W\Vert_{\cut}$. Then, we have 
	\[  \Vert W-\mathbf a\mathds{1}_{S\times T}\Vert^{2}_{2}= \Vert W\Vert^{2}_{2}-\dfrac{1}{\lambda(S)\lambda(T)}\Vert W\Vert^{2}_{\cut} \leq \Vert W\Vert^{2}_{2}-\Vert W\Vert^{2}_{\cut}\]
	which completes the proof.
\end{proof}

  \begin{proof}[Proof of Lemma \ref{lem:lovasz_q_random}]
  This proof closely follows  that  of Lemma 10.9 in \cite{LovaszBook}. It is easy to see that
  $$ \|W\|^+_{\square}= \max_{S,T\subset[k]}W[S,T]$$
  so we only need to bound these expressions. Let $Q$ and $Q'$ be independent uniformly chosen $q$-subset of $[k]$ and let $\E_{Q}$ (resp. $\E_{Q'}$) denote the expectation with respect to $Q$      (resp. $Q'$). We shall prove that, for any $S,T\subset[k]$
  \beq \label{eq:expect_q}
   W[S,T]\leq \E_{Q}\big[W[(Q\cap T)^{r,W}, T]\big] + \frac{u \sqrt{k\sum_{a=1}^k \beta_a^2}}{\sqrt{q}}\ . 
  \eeq 
By symmetry, this will imply  
\[
  W[S,T]\leq \E_{Q'}\big[W[S, (Q'\cap S)^{l,W}]\big] + \frac{v \sqrt{k\sum_{a=1}^k \alpha_a^2}}{\sqrt{q}}\ , 
\]
so that gathering both inequalities yields to 
 \[
  W[S,T]\leq \E_{Q,Q'}\big[W[(Q\cap T)^{r,W}, (Q'\cap (Q\cap T)^{r,W} )^{l,W}]\big]+ \frac{u \sqrt{k\sum_{a=1}^k \beta_a^2}+ v \sqrt{k\sum_{a=1}^k \alpha_a^2}}{\sqrt{q}}\ . 
  \]
 Since the above expectation is less than or equal to  $\sup_{R_i,\ |R_i|\leq q}W\left [R_2^{r,W},R_1^{l,W}\right ]$, this will conclude the proof. Thus, we only have to show \eqref{eq:expect_q}. Note that $W[S,T]\leq W[T^{r,W}, T]$ implies that it suffices to prove 
 \beq \label{eq:expect_q2}
 \E_Q\Big[W[T^{r,W}\setminus (Q\cap T)^{r,W}, T]\Big] - \E_Q\left [W[(Q\cap T)^{r,W}\setminus T^{r,W}, T]\right ] \leq  \frac{u \sqrt{k\sum_{a=1}^k \beta_a^2}}{\sqrt{q}}\ . 
 \eeq
Let us denote $Z$ the above difference of expectations. 
 For any $a\in [k]$, write $B_a= \sum_{b\in T}\beta_b \bQ_{ab}$ and $A_a= \sum_{b\in T\cap Q}\beta_b \bQ_{ab}$.  By the definition \eqref{def_sets}, we have that $B_a$ is non-negative for $a\in T^{r,W}$ and $B_a\leq 0$ if $a\not\in T^{r,W}$. In the same way, $A_{a}>0$ for $a\in (Q\cap T)^{r,W}$ and $A_{a}\leq0$ for $a\notin (Q\cap T)^{r,W}$ . Denoting $\P_Q$ the probability with respect to $Q$, we obtain
 \begin{equation}
 \begin{split}
 Z&= \E_Q \left ( \sum_{a\in T^{r,W}}\mathds{1}_{\left \{a\notin  (Q\cap T)^{r,W}\right \}}\alpha_a B_a +  \sum_{a\notin T^{r,W}}\mathds{1}_{\left \{a\in (Q\cap T)^{r,W}\right \}}\alpha_a |B_a|\right )\\
 & =\sum_{a\in T^{r,W}}\P_Q[A_a\leq 0]\alpha_a B_a +  \sum_{a\notin T^{r,W}}\P_Q[A_a>0]\alpha_a |B_a|.
 \end{split}
 \end{equation}

Now, using $\E_Q[A_a]=q B_a/k$, it follows from the Chebyshev inequality that, for $a\in T^{r,W}$, we have $\P_Q[A_a<0]\leq \Var_Q[A_a]/\E_Q^2[A_a]$. Since a probability is smaller or equal to one, it follows that $\P_Q[A_a<0]\leq \sqrt{\Var_Q[A_a]}/\vert\E_Q[A_a]\vert$. Similarly, for $a\notin T^{r,W}$ we also have that $\P_Q[A_a>0]\leq \sqrt{\Var_Q[A_a]}/|\E_Q[A_a]|$. Coming back to $Z$, this yields 
\[
 Z\leq \sum_{a\in [k]} \alpha_a |B_a|\frac{\Var^{1/2}_Q[A_a]}{\left |E_Q[A_a]\right |}=  \frac{k}{q}\sum_{a\in [k]} \alpha_a \Var^{1/2}_Q[A_a] \leq \frac{ku}{q}\max_{a\in [k]}\Var^{1/2}_Q[A_a]\ . 
\]
Working out the variance, we get $\Var_Q[A_a]\leq \tfrac{q}{k}\sum_{b\in T}\beta^2_b \bQ^2_{ab}\leq q (\sum_{b\in [k]}\beta^2_b)/k$, which concludes the proof. 
  \end{proof}

  \begin{proof}[Proof of Lemma \ref{lem:control_Z}]
  	Note that  in \eqref{def_Z}, the definition of $Z_{R_1,R_2}$, the set $R_2^{r,U}$ is deterministic whereas the set $R_1^{l,U}$ only depends on $(\widehat{\lambda}_a)_{a\in R_1}$. We can upper bound $Z_{R_1,R_2}$ in the following way:
  	 \beq
  	 Z_{R_1,R_2}\leq  \sum_{a\in R_2^{r,U}\setminus R_1}  \left |\widehat{\lambda}_a-\lambda_a\right | \sum_{b\in R_1^{l,U}} \lambda_b \left (\bQ_{ab}-\bQ^{ad}_{ab}\right )+ \sum_{a\in R_1} \left |\widehat{\lambda}_a -\lambda_a\right |
  	 \eeq
  	 where we use $\left \vert\sum_{b} \lambda_b (\bQ_{ab}-\bQ^{ad}_{ab})\right \vert\leq 1$. We set  
  	 $$T_{R_1,R_2}=\sum_{a\in R_2^{r,U}\setminus R_1}  \left |\widehat{\lambda}_a-\lambda_a\right | \sum_{b\in R_1^{l,U}} \lambda_b \left (\bQ_{ab}-\bQ^{ad}_{ab}\right ).$$
  	 Conditionally to $(\widehat{\lambda}_a)_{a\in R_1}$, $T_{R_1,R_2}$ is distributed as a function of $n-n\sum_{a\in R_1}\widehat{\lambda}_a$ i.i.d. random variables $\xi'_i$ such that $\P[\xi'=a]= \lambda_a/ (1-\sum_{a\in R_1}\lambda_a)$ for any $a\in [k]\setminus R_1$. Besides, if we change the values of one of these $\xi'_i$ the value of this expression changes by at most $2/n$.  It then follows from  the bounded difference inequality (Lemma \eqref{lem:mac_diarmid}) that, for any $t>0$
  	 \beq \label{eq:concentration_TR}
  	 \P\left \{T_{R_1,R_2}\geq \E\left [T_{R_1,R_2}|(\widehat{\lambda}_a)_{a\in R_1}\right ]+ \sqrt{\frac{2t}{n}}\Big|(\widehat{\lambda}_a)_{a\in R_1}\right \}\geq 1-e^{-t}.
  	 \eeq
  	 Let us bound this conditional expectation: 
  	 \begin{eqnarray}\nonumber
  	 \E\left [T_{R_1,R_2}|(\widehat{\lambda}_a)_{a\in R_1}\right ]  &= & \sum_{a\in R_2^{r,U}\setminus R_1}  \E\left [\left |\widehat{\lambda}_a-\lambda_a\right |\big|(\widehat{\lambda}_c)_{c\in R_1}\right ] \sum_{b\in R_1^{l,U}} \lambda_b \left (\bQ_{ab}-\bQ^{ad}_{ab}\right ) \\ \label{eq:exp_T}
  	 &\leq &  \sup_{S\subset [k]\setminus R_1, T\subset [k]}\sum_{a\in S, b\in T}\E\left [\left  |\widehat{\lambda}_a-\lambda_a\right |\big|(\widehat{\lambda}_c)_{c\in R_1}\right ] \lambda_b (\bQ_{ab}-\bQ^{ad}_{ab}) \ . 
  	 \end{eqnarray}
  	 Now, using Cauchy-Schwarz inequality, we have
  	 \[
  	 \E\left [\left |\widehat{\lambda}_a-\lambda_a\right |\big|(\widehat{\lambda}_c)_{c\in R_1}\right ]\leq \sqrt{\frac{\lambda_a}{n\left (1- \sum_{b\in R_1}\lambda_b\right )}}\leq  \sqrt{ \frac{\lambda_a}{n\left (1- 2q/k\right )}}\leq 2\sqrt{\frac{\lambda_a}{n}}\ , 
  	 \]
  	 where we used that $\lambda_b\leq 2/k$, $|R_1|\leq q \leq k^{1/2}$ and $k\geq 8$. The supremum in \eqref{eq:exp_T} is achieved for subsets ($S^*,T^*$) such that for all $a\in S^*$, $\sum_{b\in T^*}\lambda_b (\bQ_{ab}-\bQ^{ad}_{ab})$ is non-negative (otherwise this contradicts the optimality of $S^*,T^*$). As a consequence, we can plug the upper bounds on $\E\left [\left |\widehat{\lambda}_a-\lambda_a|\right |(\widehat{\lambda}_c)_{c\in R_1}\right ]$ into \eqref{eq:exp_T}: 
  	 \[
  	 \E\left [T_{R_1,R_2}|(\widehat{\lambda}_a)_{a\in R_1}\right ]  \leq   \frac{2}{\sqrt{n}}\sup_{S\subset [k]\setminus R_1, T\subset [k]}\sum_{a\in S, b\in T}\sqrt{\lambda_a} \lambda_b (\bQ_{ab}-\bQ^{ad}_{ab})\leq 
  	 C \sqrt{\frac{k}{n\log(k)}}\ , 
  	 \]
  	 where we used the property \eqref{control_cut1} of $\bQ^{ad}$. Coming back to \eqref{eq:concentration_TR} and integrating the deviation inequality with respect to $(\widehat{\lambda}_a)_{a\in R_1}$, we conclude that, for any $t>0$
  	 \[
  	 \P\left [T_{R_1,R_2}\geq  C \sqrt{\frac{k}{n\log(k)}}+ \sqrt{\frac{2t}{n}}\right ]\geq 1-e^{-t}.
  	 \]
  	 Fixing $t= 2\log(k) q + \sqrt{k}/\log(k)$ and taking an union bound over all possible $R_1$, $R_2$, we derive that 
  	 \begin{equation}\label{eq:lemma_control_Z}
  	 \max_{R_1,R_2\ , |R_1|\leq q |R_2|\leq q} Z_{R_1,R_2}\leq   C \sqrt{\frac{k}{n\log(k)}} +   q \max_{a=1, \dots k}\left  |\widehat{\lambda}_a-\lambda_a\right |
  	 \end{equation}
  	 on an event of probability higher than $1-\exp(-\sqrt{k}/\log(k))$.

  	 Next we bound $\max_{a=1, \dots k} |\widehat{\lambda}_a-\lambda_a|$. Recall that $n\widehat{\lambda}_a$ has a binomial distribution with parameters ($n$, $\lambda_a$) and $\lambda_{a}\leq 2/k$. For any $a\in[k]$, applying Bernstein's inequality  to $|\widehat{\lambda}_a-\lambda_a|$ we get
  	 	\begin{equation*}
  	 	\mathbb P\left \{n|\widehat{\lambda}_a-\lambda_a|\geq t\right \}\leq 2\exp\left (-\dfrac{t^{2}}{4n/k+2t/3}\right ).
  	 	\end{equation*}
  	 	Taking $t=C \sqrt{n/\log(k)}$ (for a suitable constant $C>0$) and applying
  	 	the union bound, we derive that  with probability larger than $1- 2k\exp(-\sqrt{k}/\log(k))$
  	 	\begin{equation}\label{eq:max_bound}
  	 	\sqrt{k}\max_{a=1, \dots k}|\widehat{\lambda}_a-\lambda_a|\leq C \sqrt{k/(n\log(k))}.
  	 	\end{equation}
  	 The bound \eqref{eq:lemma_control_Z} together with \eqref{eq:max_bound} imply the statement of Lemma \ref{lem:control_Z}.
  \end{proof}

\begin{proof}[Proof of Lemma \ref{lem:second_term_decomposition}]
 As the control of $(W-\widehat{W})|_{\mathcal{R}\times \mathcal{R}}$ is quite similar to that of $(W-\widehat{W})|_{\mathcal{R}\times \mathcal{R}^c}$, we only sketch the main steps. Relying on the graphon $W_1^+$ (defined in \eqref{eq:def_W}), we have the following decomposition:
  \beq\label{eq:decomposition_W_1^+niveau_2}
 \| (W-\widehat{W})|_{\mathcal{R}\times \mathcal{R}}\|_{\square}\leq \| (W_1^+-\widehat{W}_1^+)\big|_{\mathcal{R}\times \mathcal{R}}\|_{\square}+ \|(W-W_1^+)\big|_{\mathcal{R}\times \mathcal{R}}\|_{\square} + \|(\widehat{W}-\widehat{W}_1^+)\big|_{\mathcal{R}\times \mathcal{R}}\|_{\square}.
\eeq 
Since $(W_1^+-\widehat{W}_1^+)(x,y)$ is zero except if $x\in \mathcal{R}_2$ or $y\in \mathcal{R}_2$, we bound the first expression by its $l_1$ norm as for $W_1-\widehat{W}_1$: 
\beq
\E\big[\| (W_1^+-\widehat{W}_1^+)\big|_{\mathcal{R}\times \mathcal{R}}\|_{\square}\big]\leq 2\E[\lambda(\mathcal{R}_2)]\leq 2\frac{k^{1/4}}{\sqrt{n}}.
\label{eq:upper_W1-W1_hat_2}
\eeq

The two last expressions in \eqref{eq:decomposition_W_1^+niveau_2} are  bounded by the cut norm of a  kernel $V$ defined as follows.  For any $a,b\in [k]$, define $\widetilde{\Pi}_{a,b}= [F_{\widehat{\lambda}^{\delta}}(a-1),F_{\widehat{\lambda}^{\delta}}(a))\times [F_{\widehat{\lambda}^{\delta}}(b-1), F_{\widehat{\lambda}^{\delta}}(b))$ where $F_{\widehat{\lambda}^{\delta}}(.)$ has been defined in \eqref{eq:F_lambda_delta}. 
Let $V$ be the $k\times k$ step kernel on $\left [0, \sum_a|\widehat{\lambda}_a-\lambda_a|\right ]^2$ given by $$V(x,y):= \sum_{a,b=1}^{k}\left [{\bf Q}_{ab}- {\bf Q}^{(ap,+)}_{ab}\right ]\mathds{1}_{\widetilde{\Pi}_{a,b}}(x,y).$$  Now, as for the restrictions of $W-W_1$ and $\widehat W-\widehat W_1$ to $\cR\times \cR^c$,  we have 
\beq
 \left \|(W-W_1^+)\big|_{\mathcal{R}\times \mathcal{R}}\right \|_{\square} \bigvee \left \|(\widehat{W}-\widehat{W}_1^+)\big|_{\mathcal{R}\times \mathcal{R}}\right \|_{\square}\leq \|V\|_{\square}\ . \label{eq:W-W1_RRc2}
\eeq
Thus, it boils down to controlling $\mathbb{E}\left [\|V\|_{\square}\right ]$.
Since $V$ is a $k$--step kernel, its cut norm writes as 
\[
\|V\|_{\square}= \sup_{S,T\subset[k]} \Big| \sum_{a\in S,\ b\in T} |\widehat{\lambda}_a-\lambda_a| |\widehat{\lambda}_a-\lambda_a|  \big({\bf Q}_{ab}- {\bf Q}^{(ap,+)}_{ab}\big)\Big|.
\]
As for the kernel $U$ in the main proof, we rely on the Lemma \ref{lem:lovasz_q_random}. The random variables  $\sum_{a}|\widehat{\lambda}_a-\lambda_a|$ and $(\sum_{a}|\widehat{\lambda}_a-\lambda_a|^2)^{1/2}$ are controlled as in \eqref{eq:upper_lambda_a} and \eqref{eq:upper_lambda_a2}. 

 Fix any two subsets $R_1, R_2\subset [k]$  of size less than or equal to $q$ and define 
  \[
  Z_{R_1,R_2}:= V[R_2^{r,V},R_1^{l,V}]= \sum_{a\in R_2^{r,V}}\sum_{b\in R_1^{l,V}} |\widehat{\lambda}_a-\lambda_a|  |\widehat{\lambda}_b-\lambda_b| (\bQ_{ab}-\bQ^{ad,+}_{ab})\ . 
  \]
 The set $R_1^{l,V}$ only depends on $(\widehat{\lambda}_a)_{a\in R_1}$ and  $R_2^{r,V}$ only depends on $(\widehat{\lambda}_a)_{a\in R_2}$. We have 
 \[
 Z_{R_1,R_2}\leq  \sum_{a\in R_2^{r,V}\setminus (R_1\cup R_2)} \sum_{b\in R_1^{l,V}\setminus (R_1\cup R_2)}  |\widehat{\lambda}_a-\lambda_a| |\widehat{\lambda}_b-\lambda_b| (\bQ_{ab}-\bQ^{ad,+}_{ab})+ 4\sum_{a\in R_1\cup R_2} |\widehat{\lambda}_a -\lambda_a|\ , 
 \]
 since  $\sum_{a\in[k]}|\widehat{\lambda}_a -\lambda_a|\leq 2$. We set
  $$T_{R_1,R_2}=\sum_{a\in R_2^{r,V}\setminus (R_1\cup R_2)} \sum_{b\in R_1^{l,V}\setminus (R_1\cup R_2)}  |\widehat{\lambda}_a-\lambda_a| |\widehat{\lambda}_b-\lambda_b| (\bQ_{ab}-\bQ^{ad,+}_{ab}).$$ 
  Write $R:= R_1\cup R_2$ and $\widehat{\lambda}_{\{R\}}:= (\widehat{\lambda}_a)_{a\in R}$. Conditionally to $\widehat{\lambda}_{\{R\}}$, $T_{R_1,R_2}$ is a function of $n-n\sum_{a\in R}\widehat{\lambda}_a$ independent random variables. Besides, if we change the values of one of these independent random variables the value of $T_{R_1,R_2}$ changes by at most $4/n$. Hence,  the bounded difference inequality  enforces that, for any $t>0$, 
 \beq\label{eq:concentration_TR2}
 \P\Big[T_{R_1,R_2}\geq \E[T_{R_1,R_2}|\widehat{\lambda}_{\{R\}}]+ 8\sqrt{\frac{2t}{n}}\Big|\widehat{\lambda}_{\{R\}}\Big]\geq 1-e^{-t}.
\eeq
The conditional expectation is upper bounded by
\beq\label{eq:upper_E_TR}
  \E[T_{R_1,R_2}|\widehat{\lambda}_{\{R\}}]\leq  \sup_{S\subset [k]\setminus R,\, T\subset [k]\setminus R}\sum_{a\in S, b\in T}\E\big[|\widehat{\lambda}_a-\lambda_a||\widehat{\lambda}_b-\lambda_b|\big|\widehat{\lambda}_{\{R\}}\big]  (\bQ_{ab}-\bQ^{ad,+}_{ab}) \ . 
\eeq
Here, unfortunately, we cannot directly replace $\E\big[|\widehat{\lambda}_a-\lambda_a||\widehat{\lambda}_b-\lambda_b|\big|\widehat{\lambda}_{\{R\}}\big]$ by an upper bound of it because this expression does not factorize. 
We shall prove that $\E\big[|\widehat{\lambda}_a-\lambda_a||\widehat{\lambda}_b-\lambda_b|\big|\widehat{\lambda}_{\{R\}}\big] $ is, up to a small loss,  close to a product of expectations.

Write $N:= n- n\sum_{c\in R} \widehat{\lambda}_c$, $\lambda_{R}:= \sum_{c\in R} \lambda_c$  and $\widehat{ \lambda}_{R}= \sum_{c\in R} \widehat \lambda_c$.
Note that $n\widehat{\lambda}_R$ has a binomial distribution with parameters ($n$, $\lambda_R$). Applying Bernstein's inequality  to $|\widehat{\lambda}_R-\lambda_R|$ we get
\begin{equation}\label{bernstein_R}
\mathbb P\left \{n|\widehat{\lambda}_R-\lambda_R|\geq t\right \}\leq 2\exp\left (-\dfrac{t^{2}}{4n/\sqrt{k}+2t/3}\right ).
\end{equation}
Let $\cR= \left \{|\widehat{\lambda}_R-\lambda_R|\leq \frac{1}{\sqrt{n\log(k)}}\right \}$. Taking $t=\sqrt{n/\log(k)}$ in \eqref{bernstein_R} we have that 
$$\P(\cR)\geq 1-2e^{-\sqrt{k}/\log(k)}.$$ 
In what follows we assume that the event $\cR$ is true.
Take any two distinct elements $a$ and $b$ of $[k]\setminus R$. We shall prove that the conditional expectations $\E\left [\left |\widehat{\lambda}_a-\lambda_a\right \vert\left |\widehat{\lambda}_b-\lambda_b\right |\Big\vert\widehat{\lambda}_{\{R\}}\right ]$ are close to the products $\E\left [\left |\widehat{\lambda}_a-\lambda_a\right |\Big\vert\widehat{\lambda}_{\{R\}}\right ]\E\left [\left |\widehat{\lambda}_b-\lambda_b\right |\Big\vert\widehat{\lambda}_{\{R\}}\right ]$. 
It is easy to see that  conditionally on $(\widehat{\lambda}_{\{R\}},\widehat{\lambda}_a)$, $n\widehat{\lambda}_b$ follows the Binomial distribution with parameters $( (N-n\widehat{\lambda}_a),\lambda_b/(1- \lambda_{R} - \lambda_a))$. On the other hand, conditionally on $\widehat{\lambda}_{\{R\}}$, $n\widehat{\lambda}_b$ follows the Binomial distribution with parameters $(N,\lambda_b/(1- \lambda_{R}))$. Let $z_1,z_2,\ldots, $ be a sequence of independent Bernoulli random variables with parameters $\lambda_b/(1- \lambda_a - \lambda_{R})$, $w_1,w_2\ldots, $ be an independent sequence of Bernoulli random variables with parameters $(1- \lambda_a -\lambda_{R})/(1- \lambda_{R})$ and $v_1,v_2,\ldots, $  be an independent sequence of Bernoulli random variables with parameters  $\lambda_b/(1- \lambda_{R})$. We define the following  random variables: 
 \[X:= \sum_{i=1}^{N-n\widehat{\lambda}_a}z_i\ , \quad \quad \quad Y:= \sum_{i=1}^{N-n\widehat{\lambda}_a}z_iw_i+ \sum_{i=1}^{n\widehat{\lambda}_a}v_i\]
where we use $\lambda_c\leq 2/k$ and $|R|\leq 2\sqrt{k}$. It is easy to see that $X$ follows the Binomial distribution with parameters $(N-n\widehat{\lambda}_a)$ and $\lambda_b/(1- \lambda_{R} - \lambda_a)$ and $Y$ follows the Binomial distribution with parameters $N$ and $\lambda_b/(1- \lambda_{R})$. Hence, we have that
\begin{equation}\label{eq:cal_exp_approx}
\Big|\E\big[|\widehat{\lambda}_a-\lambda_a||\widehat{\lambda}_b-\lambda_b|\big|\widehat{\lambda}_{\{R\}}\big] - \E\big[|\widehat{\lambda}_a-\lambda_a|\widehat{\lambda}_{\{R\}}] \E[|\widehat{\lambda}_b-\lambda_b|\big|\widehat{\lambda}_{\{R\}}\big] \Big| \leq \frac{1}{n}\E\Big[|X-Y||\widehat{\lambda}_a-\lambda_a|\Big|\widehat{\lambda}_{\{R\}}\Big].
\end{equation}
Relying our coupling between $X$ and $Y$, we obtain
\begin{eqnarray} \nonumber
\frac{1}{n}\E\Big[|X-Y|\Big|\widehat{\lambda}_{\{R\}},\widehat{\lambda}_a\Big]&\leq& \frac{1}{n}\E\left  [\sum_{i=1}^{N-n\widehat{\lambda}_a}z_i(1-\omega_{i})\Big|\widehat{\lambda}_{\{R\}},\widehat{\lambda}_a\right ]+
\frac{1}{n}\E\left [\sum_{i=1}^{n\widehat{\lambda}_a}v_i\Big|\widehat{\lambda}_{\{R\}},\widehat{\lambda}_a\right ]\\
&& =   \dfrac{N-n\widehat{\lambda}_a}{n}\frac{\lambda_b\lambda_a}{(1-\lambda_R)(1-\lambda_a-\lambda_R)}+\widehat{\lambda}_a\frac{\lambda_b}{1- \lambda_{R}} \nonumber \\
& \leq & \frac{\lambda_b\lambda_a}{(1-\lambda_R)(1-\lambda_a-\lambda_R)}+\frac{\widehat{\lambda}_a\lambda_b}{1- \lambda_{R}}.
\label{eq:cond_exp_0} 
\end{eqnarray}
On the other hand, conditionally on $\widehat{\lambda}_{\{R\}}$, $n\widehat{\lambda}_a$ follows the Binomial distribution with parameters $(N,\lambda_a/(1- \lambda_R))$ so that  Cauchy-Schwarz inequality  implies
\begin{eqnarray} \nonumber
\E\left [\left |\widehat{\lambda}_a-\lambda_a\right |\Big|\widehat{\lambda}_{\{R\}}\right ]&=&
\E\left [\left |\widehat{\lambda}_a-\dfrac{N\lambda_a}{(1- \lambda_R)n}+\dfrac{N\lambda_a}{(1- \lambda_R)n}-\lambda_a\right |\Big|\widehat{\lambda}_{\{R\}}\right ]\\ \nonumber
&\leq&
\E\left [\left |\widehat{\lambda}_a-\dfrac{N\lambda_a}{(1- \lambda_R)n}\right |\Big|\widehat{\lambda}_{\{R\}}\right ]+\left |\dfrac{N\lambda_a}{(1- \lambda_R)n}-\lambda_a\right |\\ \nonumber
&\leq &C\sqrt{\lambda_a/n}+\lambda_a\left |\lambda_R-\widehat{\lambda}_R\right |\\
&\leq& C\sqrt{\lambda_a/n}+\dfrac{4}{\sqrt{kn\log(k)}}
\label{eq:cond_exp_1}
\end{eqnarray}
where we use that $\lambda_a\leq 2/k$ and the definition of the event $\cR$.
Similarly we compute 
\begin{equation}\label{eq:cond_exp_2}
\begin{split}
\E\left [\widehat{\lambda}_a\left |\widehat{\lambda}_a-\lambda_a \right |\Big|\widehat{\lambda}_{\{R\}}\right ]\leq C\left (\dfrac{1}{kn}+\dfrac{1}{k\sqrt{kn}}\right )
\end{split}
\end{equation}
Plugging (\ref{eq:cond_exp_0} -- \ref{eq:cond_exp_2}) into \eqref{eq:cal_exp_approx} we get
\beqn 
\lefteqn{\Big|\E\big[|\widehat{\lambda}_a-\lambda_a||\widehat{\lambda}_b-\lambda_b|\big|\widehat{\lambda}_{\{R\}}\big] - \E\big[|\widehat{\lambda}_a-\lambda_a|\big|\widehat{\lambda}_{\{R\}}] \E[|\widehat{\lambda}_b-\lambda_b|\big|\widehat{\lambda}_{\{R\}}\big] \Big|} &&\\
&\leq & 
\E\left [\frac{\lambda_b\lambda_a}{(1-\lambda_R)(1-\lambda_a-\lambda_R)}|\widehat{\lambda}_a-\lambda_a|\Big|\widehat{\lambda}_{\{R\}}\right ]+ \E\left [\widehat{\lambda}_a\frac{\lambda_b}{1- \lambda_{R}}  |\widehat{\lambda}_a-\lambda_a|\Big|\widehat{\lambda}_{\{R\}}\right ]\\
&\leq & C\left [\frac{1}{k^{5/2}n^{1/2}}+ \frac{1}{nk^2}\right ]\ ,
\eeqn
where we use $\lambda_b,\lambda_a\leq 2/k$. For $a=b$, \eqref{eq:cond_exp_1} implies that the above difference is of order $(kn)^{-1}$. 
Going back to \eqref{eq:upper_E_TR}, we obtain that
\begin{equation*}
\begin{split}
\E\left [T_{R_1,R_2}|\widehat{\lambda}_{\{R\}}\right ]&\leq  \sup_{S\subset [k]\setminus R,\, T\subset [k]\setminus R}\sum_{a\in S, b\in T}\E\big[|\widehat{\lambda}_a-\lambda_a|\big|\widehat{\lambda}_{\{R\}}\big]\E\big[|\widehat{\lambda}_b-\lambda_b|\big|\widehat{\lambda}_{\{R\}}\big]  (\bQ_{ab}-\bQ^{ad,+}_{ab}) + \frac{C}{\sqrt{n}}.
\end{split}
\end{equation*}
Take $S^*$ and $T^*$ being two sets maximizing the above expression. Then, for all  $a\in S^*$ we have that
$\sum_{ b\in T^*}\E\big[|\widehat{\lambda}_b-\lambda_b|\big|\widehat{\lambda}_R\big]  (\bQ_{ab}-\bQ^{ad,+}_{ab})$ is non-negative. As a consequence, using \eqref{eq:cond_exp_1}, we have 
that 
\[
  \E\left [T_{R_1,R_2}|\widehat{\lambda}_{\{R\}}\right ]\leq C \sup_{S\subset [k]\setminus R,\, T\subset [k]\setminus R}\sum_{a\in S, b\in T}\left ( \sqrt{\frac{\lambda_a}{n}}+\dfrac{4}{\sqrt{kn\log(k)}}\right )\E\left [|\widehat{\lambda}_b-\lambda_b|\big|\widehat{\lambda}_{\{R\}}\right ] (\bQ_{ab}-\bQ^{ad,+}_{ab}) + \frac{C'}{\sqrt{n}}\ , 
\]
as soon as the event $\cR$ holds.
The same reasoning and $\vert \bQ_{ab}-\bQ^{ad,+}_{ab}\vert\leq 2 $ leads to 
\beqn
  \E\left [T_{R_1,R_2}|\widehat{\lambda}_{\{R\}}\right ]&\leq& C \sup_{S\subset [k]\setminus R,\, T\subset [k]\setminus R}\sum_{a\in S, b\in T} \frac{\sqrt{\lambda_b\lambda_a}}{n} \left (\bQ_{ab}-\bQ^{ad,+}_{ab}\right ) +C''\left (\frac{k}{n\sqrt{\log(k)}}+\frac{1}{\sqrt{n}}\right ) \\
  &\leq & C\sqrt{\frac{k}{n\log(k)}}\ ,
\eeqn 
as soon as the event $\cR$ holds.
Going back to \eqref{eq:concentration_TR2} and integrating the deviation inequality with respect to $\widehat{\lambda}_{\{R\}}$, we conclude that
\[
 \P\left [T_{R_1,R_2}\geq  C \sqrt{\frac{k}{n\,\log(k)}}+ 8\sqrt{\frac{2t}{n}}\right ]\geq 1-e^{-t}-\P[\overline{\cR}]\geq 1-e^{-t}- 2e^{-\sqrt{k}/\log(k)}
\]
where we use  $\P(\cR)\geq 1-2e^{-\sqrt{k}/\log(k)}$.
From this point the proof is identical to that of the main proof: we fix $t= 2\log(k) q + \sqrt{k}/\log(k)$ and take an union bound over all possible $R_1$ and $R_2$ to  derive that 
\[
\max_{R_1,R_2: |R_1|\leq q, |R_2|\leq q} Z_{R_1,R_2}\leq   C \sqrt{\frac{k}{n\,\log(k)}} +   4q \max_{a=1,\dots, k}|\widehat{\lambda}_a-\lambda_a|
\]
on an event of probability higher than $1-3\exp(-\sqrt{k}/\log(k))$. Then, as in the main proof, 
Lemma \ref{lem:lovasz_q_random} together with \eqref{eq:resuming_second_bound} and \eqref{eq:max_bound} enforce that 
$  \|V\|^{+}_{\square}\leq     C \sqrt{K/(n\log(k))}$ with probability larger than $1 - (5+2k)\exp(-\sqrt{k}/\log(k))$. By symmetry, we can find  an event $\cA$ of probability larger than $1- (10+4k)\exp(-\sqrt{k}/\log(k))$ such that, on $\cA$, 
\[
  \|V\|_{\square}\leq C\sqrt{\frac{k}{n\,\log(k)}}\ . 
\]
In order to  control  $\E[\|V\|_{\square}]$ on the complementary event $\bar{\cA}$ we use the rough bound $$\|V\|_{\square}\leq \|V\|_1\leq \sum_{a,b=1}^k|\widehat{\lambda}_a-\lambda_a||\widehat{\lambda}_b-\lambda_a|\leq 2 \sum_{a=1}^k|\widehat{\lambda}_a-\lambda_a|$$ which implies
\begin{eqnarray} \nonumber
 \E[\|V\|_{\square}]&\leq & E[\|V\|_{\square}\1_{\cA}]+ \E[\|V\|_{\square}\1_{\bar{\cA}}]\\ \nonumber
 &\leq& C\sqrt{\frac{k}{n\,\log(k)}} + 2\P^{1/2}\left [\bar{\cA}\,\right ]\left [\E\left (\sum_{a=1}^k |\widehat{\lambda}_a-\lambda_a|\right )^2\right ]^{1/2}\\
 &\leq & C\sqrt{\frac{k}{n\,\log(k)}} + C' e^{-\sqrt{k}/(2\log(k))} \frac{k}{\sqrt{n}}\leq C'' \sqrt{\frac{k}{n\,\log(k)}} \nonumber
\end{eqnarray}  
 where we use \eqref{eq:upper_lambda_a}.
  Together with the decomposition  \eqref{eq:decomposition_W_1^+niveau_2}, \eqref{eq:upper_W1-W1_hat_2} and \eqref{eq:W-W1_RRc2}, we conclude that 
  \[
  \mathbb{E}\left [\left \| (W-\widehat{W})|_{\mathcal{R}\times \mathcal{R}}\right \|_{\square}\right ]\leq   C \sqrt{\frac{k}{n\log(k)}}\ .
  \]

 \end{proof}

   \section{Proof of Theorem \ref{prp:lower_minimax_cut}}\label{proof_prp:lower_minimax_cut}
    
   It is enough to prove  separately the following two  minimax lower bounds:
    \begin{eqnarray} 
     \inf_{\widehat{f}}\sup_{W_0\in \cW^+[k]}\E_{W_0}[\delta_\cut(\widehat{f}, \rho_n W_0)] &\geq& C \rho_n \sqrt{\frac{k}{n\log(k)}}\ ,\label{eq:lower_minimax_integrated_risk_distance_cut} \\
     \inf_{\widehat{f}}\sup_{W_0\in \cW^+[2]}\E_{W_0}[\delta_\cut(\widehat{f}, \rho_n W_0)] &\geq& C  
    \left( \sqrt{\frac{\rho_n}{n}}\wedge \rho_n\right).
     \label{eq:lower3_cut_norm}
    \end{eqnarray} 
  The proof of \eqref{eq:lower3_cut_norm} is identical to the proof of (45) in \cite{klopp_graphon} so we just sketch the main idea. Fix some $0<\epsilon\leq 1/4$.
We consider  $W_1$ to be the constant graphon with $W_1(x,y)\equiv 1/2$, and  $W_2\in \cW^+[2]$ to be the $2$--step graphon with 
 $W_{2}(x,y)=1/2+\epsilon$ if $x,y\in [0,1/2)^2\cup [1/2,1]^2$ and $W_{2}(x,y)=1/2-\epsilon$ elsewhere. Obviously, we have $\delta_{\cut}[\rho_n W_1,\rho_n W_2]=\rho_n\epsilon$. Then, standard testing arguments~\cite{tsybakov_book} ensure that the minimax risk $   \inf_{\widehat{f}}\sup_{W_0\in \cW^+[2]}\E_{W_0}[\delta_\cut(\widehat{f}, \rho_n W_0)] $ is at least of the order $\rho_n \epsilon$ when $\epsilon$ is chosen small enough so that the $\chi^2$-distance $\chi^2(\P_{W_2},\P_{W_1})$ is smaller than $1/4$. According to  Lemma 4.9 in~\cite{klopp_graphon}, this is the case when $\epsilon$ is small in front of $(\rho_n n)^{-1/2}$ which proves \eqref{eq:lower3_cut_norm}. 
  
  \smallskip
  Henceforth, we  only focus on  \eqref{eq:lower_minimax_integrated_risk_distance_cut}. 
  We first consider the case of $k$ multiple of $32$ and such that $k\geq C_0$ and $k\leq C_1n$ for some sufficiently large numerical constants $C_0$ and $C_1$. As the  collections $\cW^+[k]$ are nested this will imply \eqref{eq:lower_minimax_integrated_risk_distance_cut} for all $k\in [32 C_0, n]$. Afterwards, it will suffice to show \eqref{eq:lower_minimax_integrated_risk_distance_cut} for $k=2$ to prove the proposition.
   So, we assume that $k$ is a multiple of 32, $k$ is large enough and that $k$ is small in front of $n$. Define $k_1:=k/2$, $M_k :=\lceil  128\log(k)\rceil$, $\eta_0:=1/16$ and $\eta_1 :=7/8$.

  As for Proposition \ref{lemma_low-bound_prob}, we will rely on Fano's method (Lemma \ref{lem:Fano_tsybakov}). Hence, we shall build a collection $(W_u)$ of graphons that are well-spaced in cut distance and such that the  Kullback-Leibler divergence between the associated distribution $\P_{W_u}$ remains small enough. 
  All the graphons considered in this collection will be based on a  $k_1\times M_k$ matrix  $\bB$ such that (i) the rows of $B$ are almost orthogonal and (ii) such that the $l_1$ distance between permutation and convex combinations of the columns of $\bB$ are bounded from below.  Such a property will turn out to be useful when taking a lower bound on the $\delta_{\cut}$ distance between the corresponding graphons.

  \begin{lem}\label{lem:existence_hadamard_matrix}
  For $k$ large enough, there exists a  matrix  $\bB\in \{-1,1\}^{k_1\times M_k}$ satisfying the following two properties:
  \begin{itemize}
   \item [(i)] For any $(a,b)\in [k_1]$ with $a\neq b$, the inner product of two columns $\langle \bB_{a,\cdot} , \bB_{b,\cdot}\rangle$ satisfies
   \beq\label{eq:property_1_B}
   |\langle \bB_{a,\cdot} , \bB_{b,\cdot}\rangle|\leq M_k/4.
   \eeq
   \item [(ii)]For any two subsets $X$ and $Y$ of $[k_1]$ satisfying $|X|=|Y|=\eta_0 k_1$ and $X\cap Y=\emptyset$, any labellings $\pi_1:[\eta_0  k_1 ]\to X$ and $\pi_{2}:[\eta_0 k_1 ]\to Y$, 
  any subset $Z$ of $[M_k]$ of size larger than $\eta_1 M_k$ and any $Z\times M_k$ stochastic  matrix $\omega$, we have
   \beq\label{eq:property_2_B}
   \sum_{a=1}^{ \eta_0 k_{1}}\sum_{b\in Z}\big|\bB_{\pi_1(a),b}-\sum_{c\in [M_k]}\omega_{b,c}\bB_{\pi_2(a),c}\big|\geq   C M_k k_{1},
   \eeq
   for some universal constant $C>0$.
  \end{itemize}
  \end{lem}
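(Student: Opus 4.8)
The plan is to prove Lemma~\ref{lem:existence_hadamard_matrix} by the probabilistic method: draw $\bB\in\{-1,1\}^{k_1\times M_k}$ with i.i.d.\ Rademacher entries and show that each of (i) and (ii) fails with probability $o(1)$, so that for $k$ large a matrix satisfying both must exist. Property (i) is routine: for fixed $a\neq b$ the products $\bB_{a,j}\bB_{b,j}$, $j\in[M_k]$, are i.i.d.\ Rademacher, so Hoeffding's inequality gives $\P\big(|\langle\bB_{a,\cdot},\bB_{b,\cdot}\rangle|>M_k/4\big)\leq 2\exp(-M_k/32)\leq 2k^{-4}$ since $M_k\geq 128\log k$, and a union bound over the at most $k^2$ pairs shows \eqref{eq:property_1_B} holds with probability at least $1-2k^{-2}$.

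The real work is (ii). First I would rewrite the left-hand side of \eqref{eq:property_2_B}. Since $\big|\sum_c\omega_{b,c}\bB_{\pi_2(a),c}\big|\leq 1$ (as each row $\omega_{b,\cdot}$ is a probability vector) and $\bB_{\pi_1(a),b}\in\{-1,1\}$, one has $\big|\bB_{\pi_1(a),b}-\sum_c\omega_{b,c}\bB_{\pi_2(a),c}\big|=1-\bB_{\pi_1(a),b}\sum_c\omega_{b,c}\bB_{\pi_2(a),c}$, so the left-hand side equals
\[
\eta_0 k_1\,|Z|-\sum_{b\in Z}\sum_{c\in[M_k]}\omega_{b,c}\Big(\sum_{a=1}^{\eta_0 k_1}\bB_{\pi_1(a),b}\bB_{\pi_2(a),c}\Big).
\]
The subtracted double sum is linear in $\omega$ over a product of simplices, hence bounded by its value at a vertex $\omega_{b,\cdot}=e_{c(b)}$; thus for every admissible $\omega$ it is at most $\Delta:=\max_{c:Z\to[M_k]}\sum_{b\in Z}\sum_{a=1}^{\eta_0 k_1}\bB_{\pi_1(a),b}\bB_{\pi_2(a),c(b)}=\sum_{b\in Z}\max_{c\in[M_k]}\sum_a\bB_{\pi_1(a),b}\bB_{\pi_2(a),c}$. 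Using $|Z|\geq\eta_1 M_k$, the inequality \eqref{eq:property_2_B} with $C=\eta_0\eta_1/2$ will therefore follow once I show that, simultaneously over all choices of $X,Y,\pi_1,\pi_2,Z$, one has $\Delta\leq\tfrac12\eta_0\eta_1 M_k k_1$ on an event of probability $1-o(1)$. This eliminates the continuum of stochastic matrices and reduces everything to a union bound over finitely many objects.

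For that last step I would, for fixed disjoint $X,Y$, fixed $\pi_1,\pi_2,Z$ and a fixed function $c:Z\to[M_k]$, condition on the rows $(\bB_{y,\cdot})_{y\in Y}$; since $X\cap Y=\emptyset$ these are independent of the $X$-rows, so $\sum_{b\in Z}\sum_a\bB_{\pi_1(a),b}\bB_{\pi_2(a),c(b)}$ is a Rademacher sum in the $\eta_0 k_1|Z|$ independent variables $(\bB_{\pi_1(a),b})$ with $\pm1$ coefficients, and Hoeffding gives a conditional (hence unconditional) bound $\P(\cdot>t)\leq\exp(-t^2/(2\eta_0 k_1|Z|))$. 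Taking $t=\tfrac12\eta_0\eta_1 M_k k_1$ and $|Z|\leq M_k$ this is at most $\exp(-\tfrac18\eta_0\eta_1^2 M_k k_1)$. A union bound over the $\leq M_k^{M_k}$ functions $c(\cdot)$ and then over the $\leq\binom{k_1}{\eta_0 k_1}^2\big((\eta_0 k_1)!\big)^2 2^{M_k}=\exp\big(2\eta_0 k_1\log k_1+O(\log k)\big)$ tuples $(X,Y,\pi_1,\pi_2,Z)$ leaves a failure probability at most $\exp\big(2\eta_0 k_1\log k_1+M_k\log M_k+O(\log k)-\tfrac18\eta_0\eta_1^2 M_k k_1\big)$. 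Since $M_k\geq 128\log k\geq 128\log k_1$ with $\tfrac18\eta_1^2\cdot128=\tfrac{49}{8}>2$, while $M_k\log M_k=O(\log k\,\log\log k)=o(k_1)$, this exponent tends to $-\infty$; hence (ii) holds with probability $1-o(1)$, and combined with (i) a suitable $\bB$ exists for all large $k$.

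I expect the only genuinely delicate point to be the reduction in (ii): recognizing that $\sum_{a,b}\bB_{\pi_1(a),b}\sum_c\omega_{b,c}\bB_{\pi_2(a),c}$ is linear in $\omega$ and hence maximized at a deterministic ``best response'' $c(\cdot)$, which converts the continuous adversary into a discrete union bound whose cardinality ($\approx e^{ck\log k}$) is precisely dominated by the Rademacher concentration because $M_k\asymp\log k$. Everything else — the two concentration estimates and the counting of combinatorial choices — is bookkeeping with Hoeffding's inequality.
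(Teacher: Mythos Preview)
Your proof is correct, and for part (i) it is identical to the paper's. For part (ii) both you and the paper use the probabilistic method with conditioning on the $Y$-rows followed by a union bound, but you handle the continuum of stochastic matrices $\omega$ differently. The paper observes that each summand $|\bB_{\pi_1(a),b}-\sum_c\omega_{b,c}\bB_{\pi_2(a),c}|$ is at least $1$ with conditional probability at least $1/2$, so the whole sum stochastically dominates a Binomial; it then discretizes $\omega$ to a grid of mesh $1/(8M_k)$ (at most $(8M_k+1)^{M_k^2}$ matrices), applies the union bound, and recovers general $\omega$ by a triangle-inequality approximation at the end. You instead use the exact identity $|x-y|=1-xy$ for $x\in\{-1,1\}$, $|y|\leq 1$, which makes the objective linear in $\omega$ and lets you replace the continuum directly by the $M_k^{|Z|}\leq M_k^{M_k}$ extreme points (Dirac rows $\omega_{b,\cdot}=e_{c(b)}$). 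Your route is shorter---no discretization grid, no final approximation step---and yields a smaller union bound; the paper's route is slightly more robust in that it does not rely on the specific algebraic identity and would work with any $[0,1]$-valued terms. Both arrive at the same conclusion with comparable constants.

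One harmless arithmetic slip: $\tfrac18\eta_1^2\cdot 128=16\eta_1^2=49/4$, not $49/8$; the required inequality $>2$ holds regardless.
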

  
 Taking $\bB$ as in Lemma \ref{lem:existence_hadamard_matrix}, 
 we define the connection probability matrix  $\bQ : = (\bJ + \bB)/2$ where  $\bJ$ is the $k_1\times M_k$ matrix with all entries equal to 1.  Now we define a collection of step graphons based on $\bQ$ that will only slightly differ by the weight of each step.

 Fix some $\epsilon < 1/(8k_1)$ and denote by $\cC_0$ the collection of vectors $u\in \{-\epsilon,\epsilon\}^{k_1}$ satisfying $\sum_{a=1}^{k_1} u_a=0$. For any $u\in \cC_0$,  define the cumulative distribution $F_u$ on $\{0,\ldots,  k_1\}$ by the relations $F_u(0)=0$ and   $F_u(a)= a/(2k_1) + \sum_{b=1}^a u_b$ for $a\in [k_1]$ and the cumulative distribution $G$ on $\{0,\ldots, M_k\}$ by $G(0)= 1/2$ and $G(b)=1/2 + b/(2M_k) $. Note that $F_u$ takes values in $[0,1/2]$ and $G$ takes values in $[1/2,1]$. Then, set $\Pi_{ab}(u)=[F_{u}(a-1),F_{u}(a))\times [G(b-1),G(b))$ and define the graphon $W_u\in \cW^+[k_{1}+M_{k}]$ by
   \begin{equation*}
   W_u(x,y)=\left\{
   \begin{array}{cc}
  \sum_{(a,b)\in [k_1]\times [M_k]}{\bQ}_{ab}\mathds{1}_{\Pi_{ab}(u)}(x,y)&\text{ if } x\in [0,1/2]\text{ and } y\in (1/2,1]\\
  W_u(y,x)&\text{ if  }  x\in (1/2,1]\text{ and } y\in [0,1/2]\\
   1/2  &\text{ else}\ .
   \end{array}
   \right.
   \end{equation*}
   See Figure \eqref{:fig1} for a drawing of $W_u$. 
  Note that $W_u$ is a fairly unbalanced $(k_1+M_k)$--step graphon: $M_k$ of its steps have a large weight of order $1/\log(k)$. Besides, the $k_1$ smaller steps are  slightly unbalanced as the weight of each class is either $1/k-\epsilon$ or $1/k+\epsilon$.  The purpose of these $M_k$ big steps is to make the cut distances between   $W_u$ and $W_{v}$ the largest possible (see the proof of Lemma \ref{lem:separated_cut_norm}). 
  
  \begin{figure}
  
 \begin{center}

 \begin{tikzpicture}
 \draw [thick] (-3,-3) rectangle (3,3);
 \draw [thick] (-3,-1) -- (3,-1);
 \draw [thick] (-3,1) -- (3,1);

 \draw [thick] (-2.4,-3) -- (-2.4,3);
 \draw [thick] (-1.8,-3) -- (-1.8,3);
 \draw [thick] (-1.2,-3) -- (-1.2,3);
 \draw [thick] (-0.6,-3) -- (-0.6,3);
 \draw [thick] (-0,-3) -- (-0,3);
 \draw [thick] (2.4,-3) -- (2.4,3);
 \draw [thick] (1.8,-3) -- (1.8,3);
 \draw [thick] (1.2,-3) -- (1.2,3);
 \draw [thick] (0.6,-3) -- (0.6,3);

 \draw [thick](-3,-3.2) node[below] {$0$};
 \draw [thick](3,-3.2) node[below] {$0.5$};
 \draw [thick](-3.2,-3) node[left] {$0.5$};
 \draw [thick](-3.2,3) node[left] {$1$};
 
 \draw [thick,<->,color=blue] (-3.3,-1) -- (-3.3,1);
 \draw [thick](-3.3,0) node[left] {$1/(2M_k)$};
 
 \draw [thick,<->,color=blue] (-0.6,-3.3) -- (0,-3.3);
 \draw [thick](-0.3,-3.3) node[below] {$1/k \pm \epsilon$};

  \draw [color=red](-.3,0) node { $\scriptstyle \bQ_{a,b}$};
 
 \end{tikzpicture}
\end{center} 
\caption{Restriction of $W_u$ to $[0,1/2]\times [1/2,1]$.  \label{:fig1}}
  \end{figure}
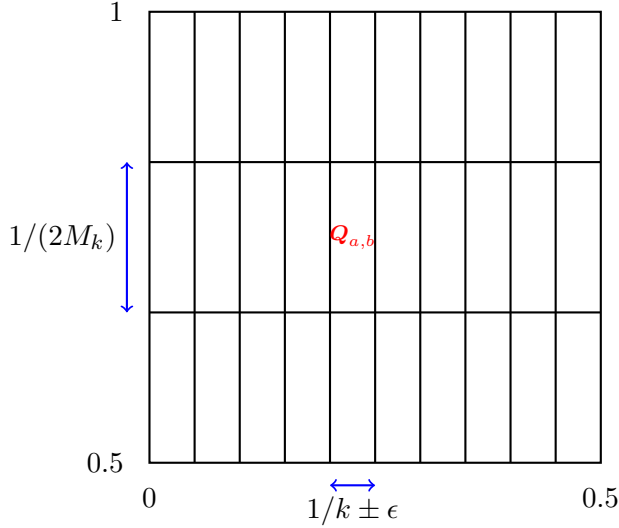

  Next, we shall consider a subcollection $\cC$ of $\cC_0$ such that the graphons $W_u$ with $u\in \cC$ are well spaced. The following  combinatorial result is  in the spirit of the Varshamov-Gilbert lemma~\cite[Lemma 2.9]{tsybakov_book}. It is borrowed from \cite{klopp_graphon} (Lemma 4.4). 
 For $u\in \cC_0$, let  $\cA_u:=\{a\in [k_1]:\ u_a=\epsilon\}$. Notice that, by definition of $\cC_0$, we have~$|\cA_u|=k_1/2$ for all $u\in \cC_0$. 
  
  \begin{lem}\label{lem:varshamov_variation}
   There exists a subset $\cC$ of $\cC_0$ such that $\log |\cC|\geq k_1/16$ and 
   \beq
   |\cA_u\Delta \cA_v|> k_1/4\ . 
   \eeq
   for any $u\neq v\in \cC$.
  \end{lem}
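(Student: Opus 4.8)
The plan is to recognize Lemma~\ref{lem:varshamov_variation} as an instance of the Varshamov--Gilbert packing bound, restricted to the slice of the hypercube consisting of balanced binary strings, and to prove it by the usual greedy construction, with a concentration inequality for sampling without replacement in place of the Chernoff bound used in the i.i.d.\ case. This is exactly Lemma~4.4 of \cite{klopp_graphon}, and I would reproduce the argument for completeness.

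First I would translate the statement into a packing problem for sets. The map $u\mapsto\cA_u$ is a bijection from $\cC_0$ onto the collection of subsets of $[k_1]$ of cardinality $k_1/2$ (this uses that $k_1=k/2$ is even, which holds because $k$ is a multiple of $32$). For two such sets $\cA,\cA'$ one has the identity $|\cA\triangle\cA'|=k_1-2|\cA\cap\cA'|$, so $|\cA_u\triangle\cA_v|>k_1/4$ is equivalent to $|\cA_u\cap\cA_v|<3k_1/8$. It therefore suffices to produce a family of balanced subsets of $[k_1]$, of size at least $e^{k_1/16}$, any two members of which meet in fewer than $3k_1/8$ points.

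Next I would run the greedy construction. Let $N=\binom{k_1}{k_1/2}$. Pick $\cA^{(1)},\dots,\cA^{(M)}$ one at a time: start from an arbitrary balanced set, and at each stage add any balanced set whose intersection with every set picked so far is strictly smaller than $3k_1/8$, stopping when no such set remains. When the procedure stops, every balanced set $\cA$ lies in $B_i:=\{\cA':|\cA'|=k_1/2,\ |\cA'\cap\cA^{(i)}|\ge 3k_1/8\}$ for some $i\le M$, hence $N\le\sum_{i=1}^M|B_i|$. To bound $|B_i|$, draw $\cA'$ uniformly among balanced subsets; then $|\cA'\cap\cA^{(i)}|$ follows the hypergeometric law obtained by sampling $k_1/2$ items without replacement from a population of $k_1$ items of which $k_1/2$ are marked, and its mean is $k_1/4$. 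Writing the overlap as $(k_1/2)\bar X$ with $\bar X$ the sample mean of a $\{0,1\}$-population of mean $1/2$, Hoeffding's inequality for sampling without replacement gives $\P[\bar X-1/2\ge 1/4]\le\exp\!\big(-2(k_1/2)(1/4)^2\big)=e^{-k_1/16}$, i.e.\ $|B_i|\le N e^{-k_1/16}$. Combining with $N\le\sum_{i=1}^M|B_i|$ yields $M\ge e^{k_1/16}$, and the subset $\cC\subset\cC_0$ corresponding to $\cA^{(1)},\dots,\cA^{(M)}$ then satisfies $\log|\cC|\ge k_1/16$ together with $|\cA_u\triangle\cA_v|>k_1/4$ for all distinct $u,v\in\cC$.

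I do not anticipate a real obstacle: this is a textbook Varshamov--Gilbert argument, see \cite[Lemma~2.9]{tsybakov_book}. The only two points requiring a little care are the balancedness constraint, dealt with by using the hypergeometric distribution instead of the binomial one (Hoeffding's bound carries over with the same constant, so nothing is lost), and keeping track of numerical constants so that the exponent in the cardinality bound is exactly $k_1/16$ as claimed; an alternative route, giving the same conclusion, is the first-moment method (draw $M\approx e^{k_1/16}$ balanced sets independently and uniformly, bound the expected number of too-close pairs, and delete one set from each pair).
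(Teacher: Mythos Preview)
Your proposal is correct and aligns with the paper's treatment: the paper does not prove this lemma at all but simply cites it as Lemma~4.4 of \cite{klopp_graphon}, noting it is ``in the spirit of the Varshamov--Gilbert lemma~\cite[Lemma 2.9]{tsybakov_book}.'' Your self-contained argument via the greedy packing construction with Hoeffding's inequality for the hypergeometric distribution is exactly the standard route to that cited result, and the numerical constants check out.
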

  
  Lemmas~\ref{lem:existence_hadamard_matrix} and~\ref{lem:varshamov_variation} are used to obtain the following  lower bound on the distance $\delta_{\square}(W_u,W_v)$ between two distinct graphons with $u$ and $v$ in $\cC$. This lemma is the main ingredient of the proof.
  
  \begin{lem}\label{lem:separated_cut_norm}
   There exists two positive universal constants $C_1$ and $C_2$ such that if
   $k\epsilon\leq C_2$ then,
  for any $(u,v)\in \mathcal{C}$ with $u\neq v$, we have 
  \beq\label{eq:lower_separated_cut_norm}
   \delta_{\square}\big(W_u,W_v\big)\geq C_1 \frac{k\epsilon}{\sqrt{M_k}}
   \eeq
   which implies
   \beq\label{separ_cut_weight_graphon}
   \delta_{\cut}(\rho_n W_u,\rho_n W_v)\geq C_1 \rho_n \frac{ k\epsilon}{\sqrt{M_k}}\ . 
    \eeq 
  \end{lem}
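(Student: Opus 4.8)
The bound \eqref{separ_cut_weight_graphon} is immediate from \eqref{eq:lower_separated_cut_norm} by positive homogeneity of $\|\cdot\|_{\square}$ and $\delta_{\square}$, so the plan is to prove \eqref{eq:lower_separated_cut_norm}. I would fix an arbitrary measure preserving bijection $\tau$ and bound $\|W_u-W_v^{\tau}\|_{\square}$ from below uniformly in $\tau$. Write $A=[0,1/2]$, $B=(1/2,1]$; let $I^u_a=[F_u(a-1),F_u(a))$ (of $x$-width $1/(2k_1)+u_a$) be the small classes of $W_u$, likewise $I^v_a$ for $W_v$, and let $B_b=[G(b-1),G(b))$, $b\in[M_k]$, be the $M_k$ equal blocks of width $1/(2M_k)$ partitioning $B$. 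Since the supremum defining the cut norm may be restricted to $S\subset A$, $T\subset B$, one has $\|W_u-W_v^{\tau}\|_{\square}\geq\|(W_u-W_v^{\tau})|_{A\times B}\|_{\square}$, and $(W_u-W_v^{\tau})|_{A\times B}$ is a step kernel with $M_k$ column-steps, so Lemma \ref{lem:1cut_l1} gives
\begin{equation}\label{eq:plan_cut_l1}
\|W_u-W_v^{\tau}\|_{\square}\ \geq\ \frac{1}{4\sqrt{2M_k}}\,\|(W_u-W_v^{\tau})|_{A\times B}\|_1\ .
\end{equation}
This already pins down where the $1/\sqrt{M_k}\asymp 1/\sqrt{\log k}$ in \eqref{eq:lower_separated_cut_norm} comes from; it remains to show the $l_1$-norm on the right is at least of order $\epsilon k_1\asymp\epsilon k$.

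\emph{Reducing the shape of $\tau$.} Since $W_u\equiv W_v\equiv 1/2$ on $A\times A$ and on $B\times B$ whereas their restrictions to $A\times B$ are the nontrivial $\{0,1\}$-valued patterns encoded by $\bQ=(\bJ+\bB)/2$, I would first argue that if $\tau$ displaces more than a small constant fraction of the mass between $A$ and $B$ — i.e. $\lambda(B_b\cap\tau^{-1}(A))$ is sizeable for more than a $(1-\eta_1)$-fraction of the blocks $b$ — then one directly produces a test pair $S\subset A$, $T\subset B$ making $\|W_u-W_v^{\tau}\|_{\square}$ bounded below by an absolute constant, using property \eqref{eq:property_1_B} of $\bB$ to locate a rectangle on which the $\bQ$-pattern is noticeably biased away from $1/2$. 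As $k\epsilon\leq C_2$ and $M_k\geq 128\log k$, that constant exceeds $C_1 k\epsilon/\sqrt{M_k}$ for $C_2$ small, so in the remaining case I may fix a set $Z\subset[M_k]$ with $|Z|\geq\eta_1 M_k$ of blocks that $\tau$ maps (up to a controlled fraction) into $B$.

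\emph{Encoding $\tau$ and extracting the combinatorics.} On the $y$-side record the column-mixing matrix $\omega_{bc}=2M_k\,\lambda(\{y\in B_b:\tau(y)\in B_c\})$, which for $b\in Z$ is (essentially) row-stochastic over $c\in[M_k]$; on the $x$-side record the coupling $m_{aa'}=\lambda(I^u_a\cap\tau^{-1}(I^v_{a'}))$, with $a'$-marginal $\lambda(I^u_a)$ and $a$-marginal $\lambda(I^v_{a'})$. A direct computation shows that over the block $(I^u_a\cap\tau^{-1}(I^v_{a'}))\times B_b$ the function $W_u$ is constant equal to $\bQ_{ab}$ while $W_v^{\tau}$ has average $\sum_c\omega_{bc}\bQ_{a'c}$, so that, using $\bQ=(\bJ+\bB)/2$ and $\sum_c\omega_{bc}=1$,
\begin{equation}\label{eq:plan_l1_lb}
\|(W_u-W_v^{\tau})|_{A\times B}\|_1\ \geq\ \frac{1}{4M_k}\sum_{a,a'}\sum_{b\in Z}m_{aa'}\,\Big|\bB_{ab}-\sum_{c}\omega_{bc}\bB_{a'c}\Big|\ .
\end{equation}
To lower bound the right-hand side I would use Lemma \ref{lem:varshamov_variation}: since $|\cA_u|=|\cA_v|=k_1/2$, one checks $\sum_a m_{aa}\leq 1/2-\epsilon|\cA_u\Delta\cA_v|\leq 1/2-\epsilon k_1/4$, so the mass carried by pairs with $a\neq a'$ is at least $\epsilon k_1/4$; moreover, after discarding a controlled amount of that mass, it can be organised into a near-bijection from an $\eta_0 k_1$-subset $X\subset\cA_u\setminus\cA_v$ (classes fat for $u$, thin for $v$) to a disjoint $\eta_0 k_1$-set $Y$ of classes, carrying mass of order $\epsilon$ per matched pair. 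Feeding the associated labellings $\pi_1:[\eta_0 k_1]\to X$, $\pi_2:[\eta_0 k_1]\to Y$, the set $Z$ and the restricted, renormalised matrix $\omega$ into property \eqref{eq:property_2_B} of Lemma \ref{lem:existence_hadamard_matrix} bounds the sum in \eqref{eq:plan_l1_lb} below by $c\,\epsilon\cdot M_k k_1$ for some absolute $c>0$; combined with \eqref{eq:plan_l1_lb} and \eqref{eq:plan_cut_l1}, and using $k=2k_1$, this yields \eqref{eq:lower_separated_cut_norm} with $C_1$ absorbing all absolute constants, provided $C_2$ is small enough that the exceptional ``large cut norm'' cases are consistent with the target $C_1 k\epsilon/\sqrt{M_k}\leq C_1 C_2/\sqrt{M_k}$.

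\emph{Main obstacle.} The difficulty is intrinsic to the cut distance: $\delta_{\square}(W_u,W_v)$ is an infimum over \emph{all} measure preserving $\tau$, and the delicate point is the distillation in the last step — showing that an arbitrary $\tau$ can be reduced, with losses small enough to be absorbed by the slack in $\eta_0,\eta_1$ and by the constant $C$ in \eqref{eq:property_2_B}, to the clean combinatorial configuration (two disjoint row-labellings of $\bB$, a large column set $Z$, one stochastic column-mixing $\omega$) for which the quantitative robustness of $\bB$ applies. Everything else — the bipartition reduction, the computation of block averages leading to \eqref{eq:plan_l1_lb}, and the $l_1$-to-cut-norm conversion via Lemma \ref{lem:1cut_l1} (which is exactly where the $M_k\asymp\log k$ enters) — is comparatively routine.
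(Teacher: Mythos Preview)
Your overall architecture matches the paper's: fix $\tau$, split according to how much mass $\tau$ moves between $A=[0,1/2]$ and $B=(1/2,1]$, convert cut norm to an $l_1$ quantity via Lemma~\ref{lem:1cut_l1}, and feed the result into the combinatorial property~\eqref{eq:property_2_B} of $\bB$. However, there is a genuine gap at the very step you flag as ``comparatively routine''.

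\medskip
\noindent\textbf{The main gap: \eqref{eq:plan_cut_l1} is not valid as stated.} You assert that $(W_u-W_v^{\tau})|_{A\times B}$ is a step kernel with $M_k$ column-steps. This is true of $W_u|_{A\times B}$, but \emph{not} of $W_v^{\tau}|_{A\times B}$: for $y\in B$, the value $W_v(\tau(x),\tau(y))$ depends on which of the $k_1+M_k$ classes of $W_v$ contains $\tau(y)$, so the column partition is $\{\tau^{-1}(\text{class})\cap B\}$, which can be arbitrarily fine and is in no way aligned with the $M_k$ intervals $B_b$. Hence Lemma~\ref{lem:1cut_l1} does not give the factor $1/\sqrt{M_k}$ here. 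Your later inequality~\eqref{eq:plan_l1_lb} is correct (by Jensen, since $W_u$ is constant on each block), but it bounds $\|W_u-W_v^{\tau}\|_1$ below by what is effectively $\|W_u-\overline{W}_v^{\tau}\|_1$ for the block-averaged kernel $\overline{W}_v^{\tau}$; it does not repair~\eqref{eq:plan_cut_l1}. The paper closes exactly this gap with its Lemma~\ref{lem:cut_l1}: one first shows $\|W_u-W_v^{\tau}\|_{\square}\geq\|W_u-\overline{W}_v^{\tau}\|_{\square}$ (the supremum in the cut norm of the smoothed difference is attained on sets $T$ that are unions of the $B_b$, on which the integrals of $W_v^{\tau}$ and of its block-average coincide), and only \emph{then} applies Lemma~\ref{lem:1cut_l1} to $W_u-\overline{W}_v^{\tau}$, which genuinely has $M_k$ column-steps.

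\medskip
\noindent\textbf{A secondary issue.} Your ``reducing the shape of $\tau$'' step lumps together what the paper treats as two separate regimes. When $\lambda(B\cap\tau^{-1}A)$ is of intermediate size (the paper's Case~(ii)) your sketch is fine: $W_v^{\tau}\equiv 1/2$ on a sizeable sub-rectangle of $A\times B$ and $|W_u-1/2|=1/2$ there, so Lemma~\ref{lem:1cut_l1} gives a constant lower bound. But when $\tau$ nearly \emph{swaps} $A$ and $B$ (the paper's Case~(iii)), $W_v^{\tau}$ on $A\times B$ is again a nontrivial $\bQ$-pattern (now an $M_k\times k_1$ one), and the ``absolute constant'' does not come for free; the paper handles this with a separate smoothing-plus-row-separation argument using property~\eqref{eq:property_1_B}. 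Your one-line dismissal does not cover this case.

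\medskip
\noindent\textbf{On the combinatorial distillation.} Your route via the mass bound $\sum_a m_{aa}\leq 1/2-\epsilon|\cA_u\Delta\cA_v|$ is correct and close in spirit to the paper, but note that the paper does not directly ``organise the off-diagonal mass into a near-bijection''. It instead uses property~\eqref{eq:property_1_B} of $\bB$ to show that the row-functions $h_{u,a}$ are pairwise $l_1$-separated, which forces \emph{any} column-function $k_{v,b}$ to be close to at most one $h_{u,a}$; this defines a map $\pi$, and either the mass off the graph of $\pi$ is already $\gtrsim k_1\epsilon$, or $\pi$ is essentially a bijection and one can extract the disjoint $\eta_0 k_1$-sets via $|\cA_u\cap\cA_v|\leq 3k_1/8$. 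Your sketch would need to supply this mechanism (or an equivalent one) to be complete.
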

 
  Note that for any $u$ and $v$ in $\cC$ it is possible to build a measure-preserving transformation $\tau$ such that $W_u-W_v^{\tau}$ is null expect on a measurable set of Lebesgue measure of order $k\epsilon$ (see the proof of Theorem \ref{prp_agnostic_block} in Section \ref{proof_agnostic_cut} for such  construction). Hence, the $l_1$ norm of $W_u-W_v^{\tau}$ is of order $k\epsilon$. Lemma \ref{lem:separated_cut_norm} states, that by taking the infimum over all $\tau$ and by considering the weaker norm $\|.\|_{\square}$, one still has a lower bound of the same order. The $M^{-1/2}_k$ factor arises as a consequence of Lemma \ref{lem:1cut_l1}. See the proof for more details.

 To apply Fano's method, we need to upper bound the Kullback-Leibler divergence between the distribution corresponding to any two graphon $W_u$ and $W_v$ with $u$ and $v$ in $\cC$.  Let $\P_{W_u}$ denote the distribution of $\bA$ sampled according to the sparse $W$-random graph model \eqref{sparse_graphon_mod} with $W_0=W_u$. Since the matrix $\bQ$ is fixed the difficulty in distinguishing between the distributions $\P_{W_u}$ and $\P_{W_v}$ for $u\ne v$ comes from the randomness of the design points $\xi_1,\ldots,\xi_n$ in the $W$-random graph model \eqref{sparse_graphon_mod} rather than from the randomness of the realization of the adjacency matrix $\bA$ conditionally on $\xi_1,\ldots,\xi_n$. The following lemma gives an upper bound on the Kullback-Leibler  divergences $\mathcal{KL}(\P_{W_u},\P_{W_v})$:
  \begin{lem}\label{lem:kullback}
   For all $u,v\in \cC_0$ we have
   $$
  \mathcal{KL}(\P_{W_u},\P_{W_v})\le 32n k_1^2 \epsilon^2/3.
   $$
  \end{lem}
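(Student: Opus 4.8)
The plan is to exploit the observation made just before the statement: the blockwise connectivity pattern of $W_u$ over its $k_1+M_k$ steps does not depend on $u$, so the only way in which $\P_{W_u}$ and $\P_{W_v}$ can differ is through the random assignment of the design points $\xi_1,\ldots,\xi_n$ to the steps, whose \emph{weights} do depend on $u$.

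Concretely, I would let $\phi_u:[0,1]\to[k_1+M_k]$ be the step function attached to $W_u$ and set $Z_i:=\phi_u(\xi_i)$. Under $\P_{W_u}$ the labels $Z_1,\ldots,Z_n$ are i.i.d.\ with a distribution $p_u$ on $[k_1+M_k]$ that assigns mass $1/(2M_k)$ to each of the $M_k$ ``big'' steps and mass $1/(2k_1)+u_a$ to the $a$-th of the $k_1$ ``small'' steps; moreover, conditionally on $(Z_1,\ldots,Z_n)$ the adjacency matrix $\bA$ follows the inhomogeneous random graph law attached to the fixed (relabelled) matrix $\rho_n\bQ$, call it $q(\cdot\mid Z)$, which does not involve $u$. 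Hence both $\P_{W_u}$ and $\P_{W_v}$ are images, under the same marginalisation map $(Z,\bA)\mapsto\bA$, of the product laws $p_u^{\otimes n}\otimes q$ and $p_v^{\otimes n}\otimes q$. The data-processing inequality for the Kullback--Leibler divergence (equivalently, the log-sum inequality applied for each value of $\bA$), combined with the chain rule and the fact that the $q$-factor is common, then yields
\[
\mathcal{KL}(\P_{W_u},\P_{W_v})\;\le\;\mathcal{KL}\bigl(p_u^{\otimes n}\otimes q,\,p_v^{\otimes n}\otimes q\bigr)\;=\;n\,\mathcal{KL}(p_u,p_v).
\]

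It then only remains to estimate $\mathcal{KL}(p_u,p_v)$. Using $\log x\le x-1$ together with $\sum_a p_u(a)=\sum_a p_v(a)=1$, I would bound it by the $\chi^2$-divergence, $\mathcal{KL}(p_u,p_v)\le\sum_a (p_u(a)-p_v(a))^2/p_v(a)$. The big steps contribute nothing since $p_u$ and $p_v$ coincide there; for a small step $a$ one has $p_u(a)-p_v(a)=u_a-v_a\in\{-2\epsilon,0,2\epsilon\}$, so the numerator is at most $4\epsilon^2$, while $p_v(a)=1/(2k_1)+v_a\ge 1/(2k_1)-\epsilon>3/(8k_1)$ because $\epsilon<1/(8k_1)$. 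Summing the at most $k_1$ nonzero terms gives $\mathcal{KL}(p_u,p_v)\le k_1\cdot 4\epsilon^2\cdot 8k_1/3=32 k_1^2\epsilon^2/3$, hence $\mathcal{KL}(\P_{W_u},\P_{W_v})\le 32 n k_1^2\epsilon^2/3$, which is the claim.

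The computation is routine; the one step deserving care is the reduction in the second paragraph, i.e.\ recognising that conditioning on the step labels completely removes the $u$-dependence from the conditional law of $\bA$, so that the whole problem collapses to comparing two i.i.d.\ multinomial sampling laws. Once this is in place, the data-processing/chain-rule step and the elementary $\chi^2$ bound (with the lower bound $p_v(a)>3/(8k_1)$ supplied by the standing assumption $\epsilon<1/(8k_1)$) present no difficulty.
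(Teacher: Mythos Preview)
Your proposal is correct and follows essentially the same approach as the paper: both reduce to the i.i.d.\ label distributions via the observation that the conditional law of $\bA$ given the step labels is $u$-independent, and both bound the resulting single-label Kullback--Leibler divergence by the $\chi^2$-divergence using $\epsilon<1/(8k_1)$. The only cosmetic difference is that the paper phrases the reduction via Jensen's inequality applied to the convex function $(x,y)\mapsto x\log(x/y)$, whereas you invoke the equivalent data-processing inequality together with the chain rule.
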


  Now, choose $\epsilon$ such that $\epsilon^2=\frac{3}{(16)^3 nk_1}$. When $k$ is small in front of $n$, this choice of $\epsilon$ satisfies the conditions of Lemma \ref{lem:separated_cut_norm}.
  Then it follows from Lemmas \ref{lem:varshamov_variation} and \ref{lem:kullback} that
   \beq\label{kull}
  \mathcal{KL}(\P_{W_u},\P_{W_v})\le \frac1{16}\log |\cC|, \quad \forall \ u, v\in \cC: u\ne v.
   \eeq
  In view Fano's Lemma (Lemma \ref{lem:Fano_tsybakov}), inequalities \eqref{separ_cut_weight_graphon} and \eqref{kull} imply that
  \[\inf_{\widehat{f}}\sup_{W_0\in \cW^+[k]}\E_{W_0}[\delta_{\square}(\widehat{f}, \rho_n W_0)] \geq C \rho_n  \sqrt{\frac{k}{n\log(k)}}\]
  where $C>0$ is an absolute constant. This completes the proof for $k$ large enough.
  
  \bigskip

  Now we turn to the case $k=2$. We reduce the lower bound to the problem of testing two hypotheses. Consider the matrix $\bB=\left(\begin{array}{cc} 1  &   1 \\ 1 & -1 \end{array}\right)$. Given $u\in \{-\epsilon,+\epsilon\}$ define $F_u(0)=0$, $F_u(1)=1/2+u$ and $F_u(2)=1$. Then, we set  $\Pi_{ab}(u)=[F_u(a-1),F_u(a))\times [F_u(b-1),F_u(b))$ for any $a,b\in \{1,2\}$ and define  graphons $$W_{u}(x,y):= \sum_{a,b=1}^2 \frac{(\bB_{ab}+1)}{2}\1_{\Pi_{ab}(u)}(x,y).$$
  For any measure preserving bijection $\tau$, $(W_{\epsilon}-W^{\tau}_{-\epsilon})$ is a four-step graphon. Thanks to Lemma \ref{lem:1cut_l1}, we deduce that 
  $\delta_{\square}(W_{\epsilon},W_{-\epsilon})\geq C \delta_{1}(W_{\epsilon},W_{-\epsilon})$. Then, it is not hard to see that $\delta_{1}(W_{\epsilon},W_{-\epsilon})\geq C'\epsilon$ so that $\delta_{\square}(\rho_n W_{\epsilon},\rho_n W_{-\epsilon})\geq C'\rho_n\epsilon$.
   Moreover, exactly as in Lemma \ref{lem:kullback}, the Kullback-Leibler divergence between $\P_{W_{\epsilon}}$ and $\P_{W_{-\epsilon}}$ is bounded by $Cn\epsilon^2$.  Taking  $\epsilon$ of the order $n^{-1/2}$,
   this divergence is small. It is therefore impossible to reliably distinguish $\P_{W_{\epsilon}}$ from $\P_{W_{-\epsilon}}$ and the estimation error is at least of order $\rho_n \epsilon$. More formally, we use Theorem 2.2 from \cite{tsybakov_book} to conclude that
  \[\inf_{\widehat{f}}\sup_{W_0\in \cW^+[2]}\E_{W_0}[\delta_{\square}(\widehat{f}, \rho_n W_0)] \geq C \rho_n \sqrt{\frac{1}{n}}\]
  where $C>0$ is an absolute constant.

\begin{proof}[\textbf{Proof of Lemma \ref{lem:existence_hadamard_matrix}}]
Let $\bB$ be a  $k_1\times M_k$ random matrix whose entries are independent Rademacher variables. We shall prove that, with positive probability, $\bB$ satisfies both \eqref{eq:property_1_B} and \eqref{eq:property_2_B}. In particular, this implies the existence of $\bB$ satisfying both \eqref{eq:property_1_B} and \eqref{eq:property_2_B}.

Fix $a\neq b$. Then,   $\langle \bB_{a,\cdot} , \bB_{b,\cdot}\rangle$ is distributed as a sum of $k_1$ independent Rademacher variables. Using Hoeffding's inequality, we have that
\[\P\left[|\langle \bB_{a,\cdot} , \bB_{b,\cdot}\rangle|\geq M_k/4 \right]\leq 2\exp[-M_k/32].\]
By the union bound, property \eqref{eq:property_1_B} is satisfied for all $a\neq b$ with probability greater than $1- k_1^2\exp[-M_k/32]$. Since $M_k\geq 128\log(k)$, for $k$ greater than some absolute constant, this probability is greater than $3/4$.

Turning to \eqref{eq:property_2_B}, we first fix $X$, $Y$, $Z$, $\pi_1$, $\pi_2$, and $\omega$. Let $$T_{X,Y,Z,\pi_1,\pi_2,\omega}:= \sum_{a=1}^{ \eta_0 k_1}\sum_{b\in Z}\big|\bB_{\pi_1(a),b}-\sum_{c\in [M_k]}\omega_{b,c}\bB_{\pi_2(a),c}\big|.$$
We have that, conditionally on $(\bB_{b,c})_{b\in Y, c\in [M_k]}$, $T_{X,Y,Z,\pi_1,\pi_2,\omega}$ stochastically dominates a binomial distribution with parameters $(\eta_0 k_{1})\times |Z|$ and $1/2$. Then, Hoeffding's inequality  yields
\[\P\left\{T_{X,Y,Z,\pi_1,\pi_2,\omega}\leq \eta_0k_{1}|Z|  /4 \right\}\leq 2\exp(-\eta_0\eta_1 k_1M_k/8).\]
Given any integer $Z\in [\eta_1 M_k, M_k]$, define $\Omega_Z$ the collection of $Z\times [M_k]$ stochastic  matrices taking values in the discrete set $\{0,1/(8M_k),2/(8M_k),\ldots, 1\}$. 
 Since $X,Y\subset[k_1]$ and $Z\subset M_k$, it is easy to see that the cardinality of the set of all possible tuples
 $(X,Y,Z,\pi_1,\pi_2,\omega)$ with $\omega\in\Omega_Z$ is bounded by 
\[
 2^{2k_1+M_k}\left ((\eta_0 k_1)!\right )^2 (8M_k+1)^{M_k^2}.
\]
Now, taking the union bound, we derive that, simultaneously for all such parameters,
 $$T_{X,Y,Z,\pi_1,\pi_2,\omega}> \eta_0 k_{1} |Z|  /4$$
with probability greater than $1- 2^{2k_1+M_k+1} (\eta_0 k_1)!^2 (8M_k+1)^{M_k^2}\exp[-\eta_0\eta_1 k_1M_k/8]$. 
 Using Stirling's approximation 

 and $\eta_1 M_k\geq 64 \log(k)$ we get that this probability is larger than $3/4$ for $k$ large enough. 
 
 Finally, let us consider a general case,  when matrix $\omega$ does not necessarily belong to $\Omega_Z$. Observe that in this case, there exists a matrix $\omega'\in \Omega_Z$ such that $\max_{b\in Z}\sum_{c\in [M_k]}|\omega_{b,c}-\omega'_{b,c}|\leq 1/8$. This implies that 
 \[
  T_{X,Y,Z,\pi_1,\pi_2,\omega}\geq T_{X,Y,Z,\pi_1,\pi_2,\omega'} - \frac{|Y||Z|}{8}\geq \eta_0\eta_1 k_{1}M_k/8\ .
 \]
We have proved that \eqref{eq:property_2_B} holds with probability larger than $3/4$. As a consequence, $\bB$ satisfies both \eqref{eq:property_1_B} and \eqref{eq:property_2_B} with probability larger than $1/2$. 
\end{proof}

  \smallskip
\begin{proof}[\textbf{Proof of Lemma \ref{lem:separated_cut_norm}}]
 We fix $u$ and $v$, two different vectors in $\mathcal{C}$, and fix   $\tau$, a measure-preserving bijection  on $[0,1]\rightarrow [0,1]$. We shall prove that for $k\epsilon$ small enough
 \beq\label{eq:lower_cut_separated}
\left  \|W_u- W_v^{\tau}\right \|_{\square}\geq C \frac{k\epsilon}{\sqrt{M_k}}\ .
 \eeq
 Since $\delta_{\square}\big(W_u,W_v\big)= \inf_{\tau}\|W_u(.,.) - W_v(\tau . ,\tau .)\|_{\square}$ both \eqref{eq:lower_separated_cut_norm} and \eqref{separ_cut_weight_graphon} straightforwardly follow from \eqref{eq:lower_cut_separated}. 
We 
denote 
\begin{align}
 \mathcal{B}_{11}:= \tau^{-1}\left ([0,1/2]\right )\cap [0,1/2), &\quad\mathcal{B}_{12}:= \tau^{-1}\left ([0,1/2]\right )\cap (1/2,1],\nonumber\\\mathcal{B}_{21}:= \tau^{-1}\left ((1/2,1]\right )\cap [0,1/2],&\quad \mathcal{B}_{22}:= \tau^{-1}\left ((1/2,1]\right )\cap (1/2,1]\label{eq:def_B}.
 \end{align}
Since $\tau$ is measure-preserving, we have
\begin{align}\label{eq:leb_measures}
\lambda(\mathcal{B}_{11})= \lambda(\mathcal{B}_{22})= 1/2 - \lambda(\mathcal{B}_{12})= 1/2 - \lambda(\mathcal{B}_{21}).
\end{align}
Now, we consider three cases (i) $\lambda(\mathcal{B}_{12})\leq k_1\epsilon/64$, (ii) $k_1\epsilon/64<\lambda(\mathcal{B}_{12})\leq 1/2 - k_1\epsilon/64$ and (iii) $\lambda(\mathcal{B}_{12})> 1/2 - k_1\epsilon/64$. In the Case (i) we shall focus on the restriction of $W_u$ and $W_v^{\tau}$ on $\cB_{11}\times \cB_{22}$ so that these restrictions are $k_1\times M_k$--step functions. In the Case (ii), we focus on restrictions to $\cB_{21}\times \cB_{22}$, so that $W_{v}^{\tau}$ is constant on this restriction. In the pathological case (iii), we introduce a subset such that the restriction of $W_u$ is a $M_k\times k_1$--step function and  the restriction of $W_v^{\tau}$ is a $k_1\times M_k$--step function.

\bigskip

\noindent 
{\bf Case (i)}. We focus our attention on coordinates $(x,y)$ in  $\mathcal{B}_{11}\times \mathcal{B}_{22}$. Recall that the cumulative distribution function $G$ is defined by $G(0)=1/2$ and $G(b)=1/2+b/(2M_k)$ for  $b\in [M_k]$. 
For any $(r,s)\in [M_k]^2$, define 
$$\omega_{r,s}:= \lambda\big\{[G(r-1), G(r)) \cap \tau^{-1}\left ([G(s-1),G(s))\right )\big\}.$$
In other words, $\omega_{r,s}$ stands for the weight of indices corresponding to class $r$ in $W_u$ and class $s$ in $W_{v}^{\tau}$.
By definition of $\omega_{r,s}$, for any $r\in [M_k]$, we have 
$$\omega_{r\bullet}:=\sum_{s\in [M_k]}\omega_{r,s}\leq 1/(2M_k)\quad\text{and}\quad \sum_{r,s}\omega_{r,s}= \lambda(\mathcal{B}_{22}).$$ Let  $\cR$ denote the sets of $r\in [M_k]$ such that $[G(r-1),G(r))$ has a large intersection with $\tau^{-1}([1/2,1]$:
\begin{equation}\label{eq:def_R_Y}
\mathcal{R}:=\{r\in [M_k] \text{ s.t. } \omega_{r\bullet}\geq 3/(7M_k)\}\quad  \text{and}\quad \mathcal{Y}:=\cup_{r\in \mathcal{R}} [G(r-1), G(r))\bigcap \mathcal{B}_{22}.
\end{equation}
Denote $\bar{\cR}$ the complementary set of $\cR$. 
We have that $\lambda(\mathcal{B}_{22})=1/2 - \lambda(\mathcal{B}_{12})\geq 1/2 - k_1\epsilon/64\geq \tfrac{27}{56}$ for $k_1\epsilon$ small enough.
Hence, it follows that
\begin{align}
\dfrac{27}{56}\leq \sum_{r,s}\omega_{r,s}=\sum_{r\in [M_k]} \omega_{r\bullet}=\sum_{r\in \mathcal{R}} \omega_{r\bullet}+\sum_{r\in \bar{\mathcal{R}}} \omega_{r\bullet}
\end{align}
which implies that $|\mathcal{R}|\geq 3 M_k/4$ and $\lambda(\mathcal{Y})=\sum_{r\in \mathcal{R}} \omega_{r\bullet}\geq 9/28$.

Now, denoting $\mathcal{X}:=\mathcal{B}_{11}$, we 
define a new kernel $\overline{W}_v^{\tau}: \mathcal{X}\times  \mathcal{Y}\to [0,1]$ by 
\begin{eqnarray}\label{eq:definition_W_tau}
\overline{W}_v^{\tau}(x,y)&:=& \sum_{r\in \mathcal{R}} \mathds{1}_{\{y\in [G(r-1),G(r))\}}\frac{1}{\lambda\big\{[G(r-1),G(r))\cap \cY\big\}}\int_{[G(r-1),G(r))\cap \cY} W_v(\tau(x),\tau(z))dz \nonumber \\
& =& 
\sum_{a=1}^{k_1}\sum_{r\in \mathcal{R}} \mathds{1}_{\left \{y\in [G(r-1),G(r))\right \}}  \mathds{1}_{\{\tau(x)\in [F_v(a-1),F_v(a))\}} \sum_{s\in [M_k]} \frac{\omega_{r,s}}{\omega_{r\bullet}}  \frac{\left (1+ \mathbf{B}_{a s}\right )}{2}. 
\end{eqnarray}
We can view $\overline{W}_v^{\tau}$ as a smoothed version of the restriction of $W_v^{\tau}$ to $\mathcal{X}\times \mathcal{Y}$.
The marginal functions $\overline{W}_v^{\tau}(x,.)$ are step functions with at most $|\mathcal{R}|\leq M_k$ steps of the form   $[G(r-1),G(r))\cap \mathcal{B}_{22}$. Moreover, on each interval $[G(r-1),G(r))\cap \mathcal{B}_{22}$, $\overline{W}_v^{\tau}(x,y)$ is equal to the mean of $W_v^{\tau}(x,z)$ for $z$ ranging on this set. 
Equipped with this notation, we can control the cut distance between $W_u$ and $W_v^{\tau}$ in terms of the $l_1$ distance between the restriction of $W_u$ to $\mathcal{X}\times \mathcal{Y}$  and $\overline{W}_{v}^{\tau}$.
For ease of notation, we still write $W_u$ for  for the restriction of $W_u$ to $\mathcal{X}\times \mathcal{Y}$ when there is no ambiguity.

The following lemma provides a lower bound of the cut norm $\|W_u-W_v^{\tau}\|_{\square}$ in terms of the $l_1$ norm of $\|W_u-\overline{W}_{v}^{\tau}\|_{1}$.
\begin{lem}\label{lem:cut_l1}
 For any $u$, $v$ in $\mathcal{C}$ and any measure-preserving transformation $\tau$, we have
\begin{equation}\label{eq:lower_cut_l1}
 \|W_u-W_v^{\tau}\|_{\square}  \geq \frac{1}{4\sqrt{2M_k}}\|W_u-\overline{W}_{v}^{\tau}\|_{1} \ ,
\end{equation}
where $\overline{W}_{v}^{\tau}$ is defined in  \eqref{eq:definition_W_tau}.
\end{lem}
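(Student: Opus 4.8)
The plan is to exploit the fact that $\overline{W}_v^{\tau}(x,\cdot)$ is, for each fixed $x$, the conditional expectation of $W_v^{\tau}(x,\cdot)$ over the partition of $\mathcal{Y}$ into the blocks $\mathcal{Y}_r:=[G(r-1),G(r))\cap\mathcal{B}_{22}$, $r\in\mathcal{R}$. First I would record the key consequence of this averaging: for every measurable $S\subset\mathcal{X}$ and every block $\mathcal{Y}_r$,
\[
\int_{S\times\mathcal{Y}_r}\bigl(W_u(x,y)-\overline{W}_v^{\tau}(x,y)\bigr)\,dx\,dy
=\int_{S\times\mathcal{Y}_r}\bigl(W_u(x,y)-W_v^{\tau}(x,y)\bigr)\,dx\,dy,
\]
because by construction $\int_{\mathcal{Y}_r}\overline{W}_v^{\tau}(x,y)\,dy=\int_{\mathcal{Y}_r}W_v^{\tau}(x,y)\,dy$ for every $x$. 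Hence for any $S\subset\mathcal{X}$, $T\subset\mathcal{Y}$ that is a union of blocks $\mathcal{Y}_r$, the integral of $W_u-W_v^{\tau}$ over $S\times T$ equals that of $W_u-\overline{W}_v^{\tau}$. Taking the supremum over such $S,T$ gives $\|W_u-W_v^{\tau}\|_{\square}\ge \|W_u-\overline{W}_v^{\tau}\|_{\square}^{(\mathcal{P})}$, where on the right we restrict the second set to be a union of the blocks $\mathcal{Y}_r$.

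Next I would observe that $W_u-\overline{W}_v^{\tau}$, restricted to $\mathcal{X}\times\mathcal{Y}$, is a $q_1\times q_2$--step kernel: in the $x$--direction it is constant on the (at most) $k_1$ pieces coming from the partition of $\mathcal{B}_{11}$ induced by both $\phi$ (for $W_u$) and $\tau^{-1}\circ F_v$ (for $\overline{W}_v^{\tau}$)—actually one should be slightly careful here and note $W_u$ is constant on the $k_1$ classes of its own partition while $\overline{W}_v^\tau$ is constant on the classes $\{\tau(x)\in[F_v(a-1),F_v(a))\}$, so the common refinement is still a bounded number of pieces; but for the lower bound only $q_2$, the number of pieces in the $y$--direction, matters, and that is $|\mathcal{R}|\le M_k$. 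So I would apply Lemma \ref{lem:1cut_l1} with $q_2=M_k$. The only subtlety is that Lemma \ref{lem:1cut_l1} bounds the full cut norm by the $l_1$ norm, and the proof of Lemma \ref{lem:1cut_l1} actually produces, via Khintchine, a subset $T$ which is a union of the $y$--blocks (it optimizes over $\epsilon'\in\{-1,1\}^{q_2}$ indexing exactly those blocks); so the same argument shows $\|W_u-\overline{W}_v^{\tau}\|_{\square}^{(\mathcal{P})}\ge \frac{1}{4\sqrt{2M_k}}\|W_u-\overline{W}_v^{\tau}\|_1$, where the cut norm on the left is already the restricted one. Combining with the first paragraph yields \eqref{eq:lower_cut_l1}.

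The main obstacle I anticipate is the bookkeeping in the second step: ensuring that the subset $T$ extracted from the Khintchine/extremal-point argument in Lemma \ref{lem:1cut_l1} is genuinely a union of the blocks $\mathcal{Y}_r$ (so that the averaging identity of the first paragraph applies to it), and that the number of $y$--steps one feeds into Lemma \ref{lem:1cut_l1} is $|\mathcal{R}|$ rather than something larger. This is precisely why $\overline{W}_v^{\tau}$ was defined by averaging over the $\mathcal{Y}_r$'s in the first place: the averaging kills the within-block variation of $W_v^{\tau}$, making the difference a step kernel in $y$ with only $|\mathcal{R}|\le M_k$ steps, so the loss incurred in Lemma \ref{lem:1cut_l1} is only $\sqrt{M_k}$ rather than $\sqrt{n}$. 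Once these compatibility points are checked, the rest is a direct citation of Lemma \ref{lem:1cut_l1}.
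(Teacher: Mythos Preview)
Your proposal is correct and follows essentially the same approach as the paper: use the averaging identity to pass from $W_v^\tau$ to $\overline{W}_v^\tau$, then invoke Lemma~\ref{lem:1cut_l1} with $q_2=M_k$. The one simplification the paper makes is to note that since $W_u-\overline{W}_v^\tau$ is constant on each $y$-block $[G(r-1),G(r))\cap\mathcal{Y}$, the supremum in its \emph{full} cut norm is already achieved at a $T$ that is a union of such blocks, so $\|W_u-\overline{W}_v^\tau\|_\square\le\|W_u-W_v^\tau\|_\square$ directly and Lemma~\ref{lem:1cut_l1} applies as a black box---your worry about tracing through its proof to extract a block-union $T$ is unnecessary.
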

In view of Lemma \ref{lem:cut_l1} it is enough to control the $l_1$ norm $\|W_u-\overline{W}_{v}^{\tau}\|_{1}$. We can do it in  a similar way as it is done in the proof of Lemma 4.5 in \cite{klopp_graphon}.
For  $a\neq b$ and any $x\in\left  [F_u(a-1),F_u(a)\right )\cap \mathcal{X}$ and $x'\in\left  [F_u(b-1),F_u(b)\right )\cap \mathcal{X}$ , the inner product between  $W_u(x,.)$ and $ W_v(x',.)$ satisfies 
\begin{eqnarray}
\lefteqn{\left |\int_{\mathcal{Y}} (W_u(x,y)-1/2)(W_v(x',y)-1/2)dy\right |}&& \nonumber\\ \nonumber &\leq  &  \left |\int_{[1/2,1]} (W_u(x,y)-1/2)(W_v(x',y)-1/2)dy\right | +  \frac{1}{4}\lambda\big\{[1/2,1]\setminus \mathcal{Y}\big \}\\
&\leq & \frac{1}{8M_k}\left |\sum_{c=1}^{M_k} \bB_{ac}\bB_{bc}\right |  +\frac{5}{112} \leq  \frac{1}{32}+ \frac{5}{112} \label{eq:upper_inner_graphon} \
\end{eqnarray}
where we used \eqref{eq:property_1_B} in the last line.
For any $a,b\in [k_1]$, let $\psi_{ab}$ denote the  Lebesgue measure of the set $$[F_{u}(a-1), F_u(a))\cap \tau^{-1}([F_v(b-1),F_v(b)))\cap \mathcal{X}.$$
 Since $\tau$ is measure preserving, it follows that $\sum_b \psi_{ab}\leq 1/(2k_1)+u_a$ and $\sum_a \psi_{ab}\leq  1/(2k_1)+v_b$. For any $y\in \cY$, we set $$h_{u,a}(y):= W_u(F_u(a-1),y)-1/2\quad \text{and} \quad k_{v,b}(y):= \overline{W}^{\tau}_v( \tau^{-1}(F_v(b-1)),y)-1/2\ .$$ Equipped with this notation, we have
 \beqn
\int_{\mathcal{X}\times \mathcal{Y}} \vert W_u(x,y) - \overline{W}^{\tau}_v(x,y)\vert dx dy = \sum_{a=1}^{k_1} \sum_{b=1}^{k_1} \psi_{a,b}\int_{\cY} \vert h_{u,a}(y) - k_{v,b}(y)\vert dy .
 \eeqn
 Now take any
$a_1\neq a_2$. By \eqref{eq:upper_inner_graphon},  $|h_{u,a}(y)|=1/2$  and using the triangle inequality, we derive that 
\beqn 
\|h_{u,a_1}-k_{v,b}\|_1 + \|h_{u,a_2}-k_{v,b}\|_1&\geq& \|h_{u,a_1}-h_{u,a_2}\|_1\\
&\geq& \|h_{u,a_1}-h_{u,a_2}\|^{2}_{2}\\
&\geq &  2\left [\frac{1}{4}\lambda(\cY) -  \frac{1}{32}-\frac{5}{112}\right ] \geq \frac{1}{112}\ ,
\eeqn 
where we used $\lambda(\cY)\geq 9/28$ in the last line. 
As a consequence, for any $ b\in [k_1]$ there exists at most one $a\in [k_1]$ such that $\|h_{u,a}- k_{v,b}\|_1<  1/224$. If such index $a$ exists, it is denoted by $\pi(b)$. Then, it is possible to extend $\pi$ as a function from $[k_1]$ to $[k_1]$. Since $\sum_{a,b}\psi_{a,b}=\lambda(\mathcal{X})$, we get
\beqn 
\|W_u - \overline{W}^{\tau}_v\|_1 & \geq&  \frac{1}{224}\sum_{b=1}^{k_1} \sum_{a\neq \pi(b)} \psi_{a,b}=  \frac{1}{224}\sum_{b=1}^{k_1}\left [\left(1/(2k_1)+ v_b -\psi_{\pi(b),b}  \right)  - \left  (\frac{1}{2} - \lambda[\mathcal{\mathcal{X}}]\right )\right ]\\
&= &  \frac{1}{224}\sum_{b=1}^{k_1}\left [\left(1/(2k_1)+ v_b -\psi_{\pi(b),b}  \right) -  \lambda[\mathcal{B}_{1,2}]\right ]\\
&\geq & \frac{1}{224}\sum_{b=1}^{k_1}\left [\left(1/(2k_1)+ v_b -\psi_{\pi(b),b}  \right) - k_1\epsilon/ 64 \right ]\ ,
\eeqn 
since  $\lambda[\mathcal{B}_{1,2}]\leq k_1\epsilon/ 64$. If the sum $\sum_{b=1}^{k_1} 1/(2k_1)+ v_b -\psi_{\pi(b),b}$ is greater than $k_1\epsilon/32$, then \eqref{eq:lower_cut_separated} is satisfied. Thus, we can assume in the sequel that  $\sum_{b=1}^{k_1}1/(2k_1)+ v_b -\psi_{\pi(b),b}\leq k_1\epsilon/32$. 

 Using that $\psi_{a,b}\leq (1/(2k_1)+u_a)\wedge (1/(2k_1)+v_b)$ and that the cardinality of the collection $\{b\in[k_1]:\, v_b>0\}$ is $k_1/2$ we deduce  that the  collection $\{b\in[k_1]:\, v_b>0,\ u_{\pi(b)}>0\text{ and } \psi_{\pi(b),b}\geq 1/(2k_1)\}$ has cardinality greater than $7k_1/16$. Now, Lemma \ref{lem:varshamov_variation} implies that   $|\cA_u\cap \cA_v|\leq 3k_1/8$ for $u\neq v\in \cC$. Then, there exist subsets $A\subset \cA_{u}$ and $B\subset \cA_{v}$ of cardinality  $\eta_0k_1$ (recall that $\eta_0=1/16)$ such that $\pi(B)=A$, $A\cap B=\emptyset$, and $\psi_{\pi(b),b}\geq 1/(2k_1)$ for all $b\in B$. The condition $\psi_{\pi(b),b}\geq 1/(2k_1)$ implies that $\pi$ is injective on $B$.
Hence, 
\beqn
\| W_u - \overline{W}^{\tau}_v\|_1&\geq & \sum_{b\in B} \psi_{\pi(b),b}\int_{\mathcal{Y}}\left |h_{u,\pi(b)}(y) - k_{v,b}(y)\right |dy
\\
&\geq &   \frac{C}{k_1 M_k}  \sum_{b\in B} \sum_{c\in \mathcal{R}} \left |\bQ_{\pi(b),c} - \frac{\sum_{d\in [M_k]}\omega_{b,d}\bQ_{b,d}}{\omega_{b,\bullet}}\right |
\\
&  & =   \frac{C'}{k_1 M_k}  \sum_{b\in B} \sum_{c\in \mathcal{R}} \left |\bB_{\pi(b),c} - \frac{\sum_{d\in [M_k]}\omega_{c,d}\bB_{b,d}}{\omega_{c,\bullet}}\right |\ ,
\eeqn 
where the second inequality follows from  $\psi_{\pi(b),b}\geq 1/(2k_1)$ and the fact that $h_{u,\pi(b)}$ and $k_{v,b}$ are step functions with steps larger than $3/(7 M_k)$ (see \eqref{eq:def_R_Y}, the definition of $\mathcal{R}$ and $\mathcal{Y}$). 
 Finally, we apply the property \eqref{eq:property_2_B} of $\bB$ to conclude that 
\[\int \vert W_u(x,y) - \overline{W}^{\tau}_v (x,y)\vert dx dy \geq C \ge C' k_1\epsilon , \]
which, together with Lemma \ref{lem:cut_l1}, proves \eqref{eq:lower_cut_separated}.

\bigskip 

\noindent 

{\bf Case (ii)}. Now we assume that  $k_1\epsilon/64<\lambda(\mathcal{B}_{12})<1/2 - k_1\epsilon/64$. Take $\mathcal{X}= \mathcal{B}_{21}$ and $\mathcal{Y}= \mathcal{B}_{22}$. We have that, on $\mathcal{X}\times \mathcal{Y}$, $W_v^{\tau}$ is constant and equals $1/2$. Denote $U$ the restriction of $W_u-1/2$ to $\mathcal{X}\times \mathcal{Y}$. Then, it follows that $\|W_u-W_v^{\tau}\|_{\square}\geq \|U\|_{\square}$. The kernel $U$ is at most  $k_1\times  M_k$ step function. By Lemma \ref{lem:1cut_l1}, we obtain 
\[
 \|U\|_{\square}\geq \frac{1}{4\sqrt{2M_k}}\|U\|_1= \frac{1}{8\sqrt{2M_k}}\lambda(\mathcal{X})\lambda(\mathcal{Y})=\frac{1}{8\sqrt{2M_k}}\lambda(\mathcal{X})\left(\frac{1}{2}-\lambda(\mathcal{X})\right)\ ,
\]
where the last equality follows from  \eqref{eq:leb_measures}. 
Using $\lambda(\mathcal{X})=\lambda(\mathcal{B}_{12})$ and $x(1/2-x)\geq 1/4\min\left(x,(1/2-x)\right)$ we obtain \eqref{eq:lower_cut_separated}.

\bigskip 

\noindent 
{\bf Case (iii)}. Now we assume that  $\lambda(\mathcal{B}_{12})\geq 1/2 - k\epsilon/64$ and  take $\mathcal{X}=\mathcal{B}_{21}$ and $\mathcal{Y}= \mathcal{B}_{12}$ so that $\lambda(\mathcal{X})=\lambda(\mathcal{B}_{12})\geq 1/2-k_1\epsilon/64$. Define the smoothed kernel $\overline{W}_v^{\tau}: \mathcal{X}\times  \mathcal{Y}\to [0,1]$ by 
\begin{eqnarray}\nonumber 
\overline{W}_v^{\tau}(x,y)&:=& \sum_{a=1}^{M_r} \mathds{1}_{\{y\in [G(a-1),G(a))\}}\frac{1}{\lambda\big\{[G(a-1),G(a))\cap \cY\big\}}\int_{[G(a-1),G(a))\cap \cY} W_v(\tau(x),\tau(z))dz \nonumber \label{eq:definition_W_tau_2}\ .
\end{eqnarray}
As a consequence, $\overline{W}_v^{\tau}$ is $M_k\times M_k$ block-constant on subsets of the form $ \big(\tau^{-1}[G(a-1),G(a))\cap \cX\big)  \times \big([G(b-1),G(b))\cap \cY\big)$. Arguing as in the proof of Lemma \ref{lem:cut_l1}, we derive that 
\begin{equation}\label{eq:case_iii}
\|W_u-W_v^{\tau}\|_{\square} \geq \frac{1}{4\sqrt{2M_k}}\|W_u-\overline{W}_v^{\tau}\|_1\ .
\end{equation}
 
For any $a$ such that $[F_u(a-1),F_u(a))\cap \mathcal{X}\neq \emptyset$ define the function $h_{u,a}$ on $\mathcal{Y}$ by $h_{u,a}(y):= W_u(F_u(a-1),y)-1/2$.  Arguing as in Case (i), we observe that $\|h_{u,a_1}-h_{u,a_2}\|_1 \geq 1/112$ for any $a_1\neq a_2$. We have that the kernel $\overline{W}_v^{\tau}$ is a $M_k\times M_k$ step function. Hence, there exists a partition $(\mathcal{X}_b)_{b=1,\ldots, M_k}$  of $\mathcal{X}$ and $M_k$ functions $k_{b}(y)$ such that $\left (\overline{W}_v^{\tau}-1/2\right )(x,y)= \sum_{b=1}^{M_k}\mathds{1}_{x\in \mathcal{X}_b} k_{b}(y)$. Then, the triangular inequality ensures that, for any $a_1\neq a_2$ and any $b\in [M_k]$, we have $\|h_{u,a_1}-k_{b}\|_1 + \|h_{u,a_1}-k_{b}\|_1\geq \|h_{u,a_1}-h_{u,a_2}\|_1\geq  1/112$. As a consequence, for any $b\in [M_k]$ there exists at most one $a$, which we will denote by $\pi(b)$, such that $\|h_{u,\pi(b)}-k_{b}\|_1\leq  1/224$. Now we compute
\beqn 
\left \|W_u-\overline{W}_v^{\tau}\right \|_1&=& \sum_{b=1}^{M_k}\sum_{a=1}^{k_1} \lambda(\mathcal{X}_b\cap\big[F_u(a-1),F_u(a))\cap \mathcal{X}\big)\|h_{u,a}-k_{b}\|_1
\\&\geq& \frac{1}{224}\sum_{b=1}^{M_k} \lambda\Big[\mathcal{X}_b\setminus\big[F_u(\pi(b)-1),F_u(\pi(b)))\cap \mathcal{X}\big]\Big]\\
&\geq &\frac{1}{224} \left [\lambda(\mathcal{X}) - \sum_{b=1}^{M_k} \frac{1}{2k_1}+u_{\pi(b)} \right ]\\
&\geq &\frac{1}{224} \left [\lambda(\mathcal{X}) - \frac{M_k}{2k_1}- M_k\epsilon\right ]\geq C',
\eeqn 
where we used $\lambda(\mathcal{X})\geq 1/4$, $M_k/k\leq 1/8$, and  that $M_k\epsilon\leq k\epsilon$ is small enough. 
Together with \eqref{eq:case_iii}, we obtain the desired result  \eqref{eq:lower_cut_separated}.
\end{proof}

\smallskip
\begin{proof}[\textbf{Proof of Lemma \ref{lem:cut_l1}}]
We first prove that $\|W_u-\overline{W}_{v}^{\tau}\|_{\square}\leq \|W_u-W_{v}^{\tau}\|_{\square}$. Fix any measurable subset  $S\subset \mathcal{X}$.  Since functions $\left [W_{u}-\overline{W}_{v}^{\tau}\right ](x,\cdot)$ are constant on each set $[G(r-1),G(r))\cap \mathcal{Y}$, the supremum 
$
\sup_{T\subset \mathcal{Y}}\left |\int_{S\times T}W_u(x,y)-\overline{W}_{v}^{\tau}(x,y)dx dy\right |
$
is achieved by a subset $T$ which is an union of some of  $[G(r-1),G(r))\cap \mathcal{Y}$, that is $T=\cup_{r\in\mathcal{R}'\subset\mathcal{R}}[G(r-1),G(r))\cap \mathcal{Y}$. For such  $T$, the definition \eqref{eq:definition_W_tau} of $\overline{W}^{\tau}_{v}$ implies
$ \int_{S\times T}\overline{W}_{v}^{\tau}(x,y)dx dy=\int_{S\times T}W_{v}^{\tau}(x,y)dx dy$ so that 
\[
 \sup_{T\subset \mathcal{Y}}\left |\int_{S\times T}W_u(x,y)-\overline{W}_{v}^{\tau}(x,y)dx dy\right |\leq \sup_{T\subset \mathcal{Y}}\left |\int_{S\times T}W_u(x,y)-W_{v}^{\tau}(x,y)dx dy\right |\ .
\]
Taking the supremum over all $S$  leads to $\|W_u-\overline{W}_{v}^{\tau}\|_{\square}\leq \|W_u-W_{v}^{\tau}\|_{\square}$. 
By definition of $W_u$ and $\overline{W}_{v}^{\tau}$ we have that $U$ is a $ k_1^2\times M_k$ step function. Then, Lemma \ref{lem:1cut_l1} allows us to conclude 
\[
  \|W_u-W_{v}^{\tau}\|_{\square}\geq \frac{1}{4\sqrt{2M_k}}\|W_u-\overline{W}_{v}^{\tau}\|_{1}\ .
\]
\end{proof}

\smallskip

\begin{proof}[\textbf{Proof of Lemma \ref{lem:kullback}}]
The proof of Lemma \ref{lem:kullback} follows the lines of the proof of of Lemma 4.3 in \cite{klopp_graphon} and we give it here for completeness. For $u\in \cC_0$, let $\zeta(u)=(\zeta_1(u),\ldots ,\zeta_n(u))$ be the vector of $n$ i.i.d. random variables with the discrete distribution on $[k_{1}+M_{k}]$ defined by 
\begin{equation}\label{eq:def_probab_clusters}
\P[\zeta_1(u)=a]= \left \{
\begin{array}{ll}
 1/(2k_1) + u_a\quad &\text{if}\quad a\in [k_1]\\
 1/(2M_{k})\quad &\text{if}\quad k_1+1\leq a\leq M_k+k_1
\end{array} \right.
\end{equation}
Let $\bTheta_0$ be the $n\times n$ symmetric matrix with elements $(\bTheta_0)_{ii}=0$ and $(\bTheta_0)_{ij}=\rho_n \bQ_{\zeta_i(u),\zeta_j(u)}$ for $i\neq j$. Assume that, conditionally on $\zeta(u)$, the adjacency matrix $\bA$ is sampled according to the network sequence model with such probability matrix $\bTheta_0$. Notice that in this case the observations $\bA'=(\bA_{ij}, 1\le j<i\le n)$ have the probability distribution $\P_{W_u}$.  
 Using this remark and introducing the probabilities $\alpha_{\boldsymbol{a}} (u) = \P[\zeta(u)=\boldsymbol{a}] $ and $p_{A\boldsymbol{a}}=\P[\bA'=A\vert \zeta(u)=\boldsymbol{a}]$
for $\boldsymbol{a}\in [k_1+M_k]^n$, we can write the Kullback-Leibler divergence between $\P_{W_u}$ and $\P_{W_v}$ in the form
$$
\mathcal{KL}(\P_{ W_u},\P_{ W_v})=\sum_{A}\sum_{\boldsymbol{a}}p_{A\boldsymbol{a}}\alpha_{\boldsymbol{a}} (u) \log\left(\frac{\sum_{\boldsymbol{a}}p_{A\boldsymbol{a}}\alpha_{\boldsymbol{a}} (u)}{\sum_{\boldsymbol{a}}p_{A\boldsymbol{a}}\alpha_{\boldsymbol{a}} (v)}\right)
$$
where the sums in $\boldsymbol{a}$ are over $[k_1+M_k]^n$ and the sum in $A$ is over all triangular upper halves of matrices in $ \{0,1\}^{n\times n }$.   Since 
the function $(x,y) \mapsto x\log(x/y)$ is convex we can apply Jensen's inequality to get
\begin{equation*} 
\mathcal{KL}(\P_{ W_u},\P_{ W_v}) \le \sum_{\boldsymbol{a}}\alpha_{\boldsymbol{a}} (u) \log\left(\frac{\alpha_{\boldsymbol{a}} (u)}{\alpha_{\boldsymbol{a}} (v)}\right)=
n\sum_{a\in[k_1+M_k]} \P[\zeta_1(u)=a]\log\left(\frac{\P[\zeta_1(u)=a]}{\P[\zeta_1(v)=a]}\right)
 \end{equation*}
where the last equality follows from the fact that $\alpha_{\boldsymbol{a}} (u)$ are $n$-product probabilities. Using \eqref{eq:def_probab_clusters} we get 
\begin{equation} \label{kullb_1}
\mathcal{KL}(\P_{ W_u},\P_{ W_v}) \le 
n\sum_{a\in[k_{1}]} (1/(2k_1) + u_a)\log\left(\frac{1/(2k_1)+u_a}{1/(2k_1)+v_a}\right)\ , 
 \end{equation}
which is equal to $n/2$ times the Kullback-Leibler divergence between two discrete distribution. 
Since the Kullback-Leibler divergence  is less than the chi-square divergence we obtain  
  $$
\sum_{a\in[k_1]} (1/k_1 + 2u_a)\log\left(\frac{1/k_1+2u_a}{1/k_1+2v_a}\right) \le
\sum_{a\in[k_1]} \frac{4(u_a-v_a)^2}{1/k_1+2v_a}\le 64k^2\epsilon^2/3,
 $$
 where last inequality we use  $|v_a|\le \epsilon \leq 1/(8k_1)$, and $|u_a-v_a|\le 2\epsilon$. Combining this with \eqref{kullb_1} proves the lemma.

\end{proof}

  \section{Proof of Proposition \ref{prp:lower_minimax_graphon} }\label{proof_prp:lower_minimax_graphon}
  To prove   \eqref{eq:lower_minimax_L1}, it is enough to prove  separately the following three  minimax lower bounds:
  \begin{eqnarray} \label{eq:lower1}
   \inf_{\widehat{f}}\sup_{W_0\in \cW^+[k]}\E_{W_0}[\delta_1(\widehat{f}, \rho_n W_0)] &\geq& C \rho_n \sqrt{\frac{k-1}{n}}\ ,\\
   \inf_{\widehat{f}}\sup_{W_0\in \cW^+[k]}\E_{W_0}[\delta_1(\widehat{f}, \rho_n W_0)] &\geq& C \min 
  \left(\sqrt{\rho_n} \frac{k}{n}, \rho_n\right),
   \label{eq:lower2} \\
   \inf_{\widehat{f}}\sup_{W_0\in \cW^+[2]}\E_{W_0}[\delta_1(\widehat{f}, \rho_n W_0)] &\geq& C  \min
  \left( \sqrt{\frac{\rho_n}{n}},\rho_n\right).
   \label{eq:lower3}
  \end{eqnarray}
  The proof of \eqref{eq:lower1} follows from the proof of (43) in \cite{klopp_graphon} using the trivial inequality \begin{equation}\label{eq:l1-l2}
  \Vert W_{u}(x,y)-W_{v}(\tau(x),\tau(y))\Vert^{2}_{2}\leq \Vert W_{u}(x,y)-W_{v}(\tau(x),\tau(y))\Vert_{1}.
\end{equation} 
The proof of \eqref{eq:lower2} follows the lines  of the proof of (44) using that $\Vert \mathbf{B}\Vert^{2}_{2}=\Vert \mathbf{B}\Vert_{1}$ for matrices with entries in $\{-1,1\}$. The proof of  \eqref{eq:lower3} is identical to the proof of (45) in \cite{klopp_graphon}.

In order to prove the upper bound \eqref{eq:minimax_risk_graphon_l1}, the proof of  Proposition 3.2 in \cite{klopp_graphon} can be easily modified to get an upper bound on the agnostic error measured in $l_1$-distance:
\begin{lem}[Agnostic error measured in $l_1$-distance]\label{prp_agnostic_l1}
Consider the $W$-random graph model. 
 For all integer $k\leq n$, $W_0\in \cW^+[k]$ and $\rho_n>0$, we have 
\[
\E\left[\delta_{1}(\widetilde f_{\bTheta_0}, f_0)\right]\leq C\rho_n\sqrt{\frac{k}{n}}\ .
\]
\end{lem}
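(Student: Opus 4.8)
The plan is to recycle the elementary part of the proof of Theorem~\ref{prp_agnostic_block}: construct two representatives of $W_0$ and of $\widetilde f_{\bTheta_0}$ in $\widetilde{\cW}^+$ that coincide outside a set of small Lebesgue measure, and then note that, in contrast with the $\delta_{\square}$-bound, no Szemer\'edi-type refinement is needed for the $\delta_1$-distance. Equivalently, one may simply rerun the proof of Proposition 3.2 in \cite{klopp_graphon} with every occurrence of $\|\cdot\|_2^2$ replaced by $\|\cdot\|_1$.

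Concretely, since $\widetilde f_{\bTheta_0}$ and $f_0=\rho_n W_0$ are both proportional to $\rho_n$, I would assume $\rho_n=1$, and first dispose of the diagonal exactly as in Section~\ref{proof_agnostic_cut}: let $\bTheta_0'$ be the matrix with $(\bTheta_0')_{ij}=W_0(\xi_i,\xi_j)$ for all $i,j$; since $\widetilde f_{\bTheta_0}$ and $\widetilde f_{\bTheta_0'}$ differ only on a set of measure $1/n$ and $\|W_0\|_\infty\le1$, we get $\delta_1(\widetilde f_{\bTheta_0},\widetilde f_{\bTheta_0'})\le1/n$, so it suffices to bound $\E[\delta_1(\widetilde f_{\bTheta_0'},W_0)]$. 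Writing $\phi_0:[0,1]\to[k]$ and $\bQ\in[0,1]^{k\times k}_{\rm sym}$ for the step function and block matrix of $W_0$, and setting $\lambda_a:=\lambda(\phi_0^{-1}(a))$ and $\widehat\lambda_a:=\frac1n\sum_{i=1}^n\mathds{1}_{\{\xi_i\in\phi_0^{-1}(a)\}}$, I would invoke the construction of Step~2 in the proof of Theorem~\ref{prp_agnostic_block}, keeping only its conditions (i) and (iii) (which alone make the argument elementary): this yields a measurable $\psi:[0,1]\to[k]$ with $\lambda(\{x:\psi(x)=\phi_0(x)=a\})=\lambda_a\wedge\widehat\lambda_a$ and $\lambda(\psi^{-1}(a))=\widehat\lambda_a$ for every $a$. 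Then $W(x,y):=\bQ_{\phi_0(x)\phi_0(y)}$ is $W_0$ itself, while $\widehat W(x,y):=\bQ_{\psi(x)\psi(y)}$ is weakly isomorphic to $\widetilde f_{\bTheta_0'}$ by (iii), so $\delta_1(\widetilde f_{\bTheta_0'},W_0)\le\|W-\widehat W\|_1$.

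Let $\mathcal R:=\{x:\phi_0(x)\ne\psi(x)\}$. Property (i) and $\sum_a\lambda_a=\sum_a\widehat\lambda_a=1$ give
\[
\lambda(\mathcal R)=1-\sum_{a=1}^k\lambda_a\wedge\widehat\lambda_a=\sum_{a:\,\lambda_a>\widehat\lambda_a}(\lambda_a-\widehat\lambda_a)=\tfrac12\sum_{a=1}^k|\widehat\lambda_a-\lambda_a|.
\]
Since $W$ and $\widehat W$ agree on $\mathcal R^c\times\mathcal R^c$ and take values in $[0,1]$, the function $W-\widehat W$ is supported on $(\mathcal R\times[0,1])\cup([0,1]\times\mathcal R)$ and bounded by $1$ in absolute value, whence $\|W-\widehat W\|_1\le2\lambda(\mathcal R)=\sum_a|\widehat\lambda_a-\lambda_a|$. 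Finally, $n\widehat\lambda_a$ has a binomial distribution with parameters $(n,\lambda_a)$, so by the Cauchy-Schwarz inequality (once inside each expectation, once over $a$ using $\sum_a\lambda_a=1$), $\E[\sum_a|\widehat\lambda_a-\lambda_a|]\le\sum_a\sqrt{\lambda_a(1-\lambda_a)/n}\le\sqrt{k/n}$. Collecting the estimates and restoring $\rho_n$ yields $\E[\delta_1(\widetilde f_{\bTheta_0},f_0)]\le\rho_n(1/n+\sqrt{k/n})\le2\rho_n\sqrt{k/n}$, which proves the lemma. There is essentially no obstacle here; the only step requiring care is the existence of the matching map $\psi$, and that is precisely the construction already carried out and justified in the proof of Theorem~\ref{prp_agnostic_block}. (The identical argument, bounding $\|W-\widehat W\|_2^2\le2\lambda(\mathcal R)$ and applying Jensen's inequality, reproves the $\delta_2$-agnostic rate $C\rho_n(k/n)^{1/4}$ of \cite{klopp_graphon}.)
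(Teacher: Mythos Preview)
Your proof is correct and follows essentially the same approach as the paper, which simply refers to modifying the proof of Proposition~3.2 in \cite{klopp_graphon}; indeed, the very construction you spell out (the map $\psi$ with properties (i) and (iii), the bound $\|W-\widehat W\|_1\le 2\lambda(\mathcal R)$, and the Cauchy--Schwarz estimate $\E\sum_a|\widehat\lambda_a-\lambda_a|\le\sqrt{k/n}$) is exactly the argument the paper writes out in full in the closely related proof of Lemma~\ref{lem:agnistic}.
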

Now \eqref{eq:minimax_risk_graphon_l1} follows from Lemma \ref{prp_agnostic_l1} and \eqref{eq:minimax_matrix_L1}. Finally, the $\rho_n$ convergence rate is simply achieved by the constant estimator $\widehat{f}\equiv 0$. 
\section{Proof of Proposition \ref{prop:unbounded} } \label{proof_prop:unbounded}
For $\bTheta_0$ generated according to the sparse $W$-random graph model 
\eqref{sparse_graphon_mod_p} with graphon $W_0\in\cW^+_{1}$, 
integrating \eqref{eq:bound_bernstein_l1} with respect to $\bxi$ and using $\Vert W_0\Vert_{1}=1$, we get
 $$\E_{W_0}\left[\|\bA-\bTheta_0\|_\cut\right]\leq 6\sqrt{\dfrac{\rho_n}{n}}.$$
 So, using the triangle inequality \eqref{eq:integrated_risk_decomposition} it is enough to bound the agnostic error $\E_{W_0}[\delta_{\cut}(\widetilde{f}_{\bTheta_0} , f'_0)]$.
We take $W^{*}\in \cW^+_1[k,\mu]$ (or $W^{*}\in \cW^+_2[k]$ in the case of $L_2$ graphons) such that 
\begin{equation}\label{eq:def_inf}
\delta_{1}\left(W^{*}, W'_0\right)\leq \inf_{W\in \cW^+_1[k,\mu]}\delta_{1}\left(W, W'_0\right)(1+1/n^{2})\ , 
\end{equation}
or $\delta_{2}\left(W^{*}, W'_0\right)\leq \inf_{W\in \cW^+_2[k]}\delta_{2}\left(W, W'_0\right)(1+1/n^{2})$ for $L_2$ graphons. 
Without lost of generality we can assume that $\rho_nW^{*}(x,y)\leq 1$. Let $f^{*}=\rho_nW^{*}$ and  $\bTheta^{*}=(\bTheta^{*}_{ij})$ be such such that for $i\neq j$
$\bTheta^{*}_{ij}= W^*[\xi_i,\xi_j]$ where $(\xi_i)$ are the same as for $\bTheta_0$. 
Triangle inequality implies
  \beqn
  \E_{W_0}\left[\delta_{\cut}\left(\widetilde{f}_{\bTheta_0} , f'_0\right)\right]&\leq & \delta_{\cut}\left(f'_0,f^{*}  \right)+\E_{W_0}\left[\delta_{\cut}\left( f^{*},\widetilde{f}_{\bTheta^{*}} \right)\right]+\E_{W_0}\left[\delta_{\cut}\left(\widetilde{f}_{\bTheta^{*}} ,\widetilde{f}_{\bTheta_0} \right)\right]\\
  &\leq& 2\delta_{1}\left(f'_0,f^{*}  \right)+\E_{W\emph{*}}\left[\delta_{\cut}\left(f^{*}, \widetilde{f}_{\bTheta^{*}} \right)\right]
  \eeqn
where we use $\delta_{\cut}( f'_0,f^{*} )\leq \delta_{1}( f'_0,f^{*} )$ and $\E_{W_0}[\delta_{\cut}(\widetilde{f}_{\bTheta^{*}} , \widetilde{f}_{\bTheta_0})]\leq \delta_{1}( f'_0,f^{*} )$ and that $ \widetilde{f}_{\bTheta^{*}}$ is distributed as under $W^*$.  Similarly for $L_2$ graphons, we obtain $\E_{W_0}[\delta_{\cut}(\widetilde{f}'_{\bTheta_0} , f_0)]\leq 2\delta_{2}(f'_0,f^{*}  )+\E_{W\emph{*}}[\delta_{\cut}(f^{*}, \widetilde{f}_{\bTheta^{*}} )]$. Then, 
we use the following lemma:
\begin{lem}\label{lem:agnistic}
\begin{itemize}

\item[(i)]
Consider any  $W^* \in \cW^+_1[k,\mu]$ and $\rho_n\geq 1/n$ such that $\rho_nW^* (x,y)\leq 1$. Then
\beqn 
\E_{W^*}\left[\delta_{\cut}\left(\widetilde{f}_{\bTheta^{*}} , f^{*}\right)\right]
\leq C \left  [
\rho_n\Vert W^{*}\Vert_{1} \sqrt{\frac{k}{ \mu n}}+\sqrt{\dfrac{\rho_n}{n}}\right].
\eeqn
\item [(ii)] Consider any $W^{*}\in \cW^+_2[k]$ and $\rho_n\geq 1/n$ such that $\rho_nW^{*}(x,y)\leq 1$. Then,
\beq\label{lem:agnistic_l2}
\E_{W^*}\left[\delta_{\cut}\left(\widetilde{f}_{\bTheta^{*}} , f^{*}\right)\right]
\leq C \left  [
\rho_n\Vert W^{*}\Vert_{2} \sqrt{\frac{k}{n}}+\sqrt{\dfrac{\rho_n}{n}}\right].
\eeq
\end{itemize}
\end{lem}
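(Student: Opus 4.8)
The plan is to reduce both parts to a bound on $\E_{W^*}\|W-\widehat W\|_1$ for suitable representatives $W$, $\widehat W$ of $f^*=\rho_n W^*$ and of the empirical graphon, built exactly as in Step~2 of the proof of Theorem~\ref{prp_agnostic_block} (only properties (i) and (iii) of the auxiliary map $\psi$ are needed, not the weak Szemer\'edi refinement). First, if $\bTheta^{**}$ denotes the matrix agreeing with $\bTheta^*$ off the diagonal and with $(\bTheta^{**})_{ii}=\rho_n W^*(\xi_i,\xi_i)$, then $\widetilde f_{\bTheta^*}$ and $\widetilde f_{\bTheta^{**}}$ differ only on a set of measure $1/n$, so, $\rho_n W^*$ being bounded by $1$, $\E_{W^*}[\delta_{\cut}(\widetilde f_{\bTheta^*},\widetilde f_{\bTheta^{**}})]\le 1/n\le\sqrt{\rho_n/n}$ since $\rho_n\ge1/n$; it thus suffices to bound $\E_{W^*}[\delta_{\cut}(\widetilde f_{\bTheta^{**}},f^*)]$. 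Writing $\phi$ for the cluster map of $W^*$, $\pi_a=\lambda(\phi^{-1}(a))$, and $\widehat\pi_a=\frac1n\#\{i:\xi_i\in\phi^{-1}(a)\}$, I would take $\psi$ obeying properties (i) and (iii) of the aforementioned Step~2 and set $W(x,y)=\rho_n W^*_{\phi(x)\phi(y)}$, $\widehat W(x,y)=\rho_n W^*_{\psi(x)\psi(y)}$; these are weakly isomorphic to $f^*$ and to $\widetilde f_{\bTheta^{**}}$, coincide outside $(\mathcal{R}\times[0,1])\cup([0,1]\times\mathcal{R})$ for $\mathcal{R}=\{x:\phi(x)\neq\psi(x)\}$, with $\lambda(\phi^{-1}(a)\cap\mathcal{R})=(\pi_a-\widehat\pi_a)_+$ and $\lambda(\psi^{-1}(a)\cap\mathcal{R})=(\widehat\pi_a-\pi_a)_+$, so $\delta_{\cut}(\widetilde f_{\bTheta^{**}},f^*)\le\|W-\widehat W\|_{\cut}\le\|W-\widehat W\|_1$.

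By symmetry and nonnegativity of $W$, $\widehat W$ one has $\|W-\widehat W\|_1\le 2\int_{\mathcal{R}}\|W(x,\cdot)\|_1\,dx+2\int_{\mathcal{R}}\|\widehat W(x,\cdot)\|_1\,dx$, where $\int_{\mathcal{R}}\|W(x,\cdot)\|_1\,dx=\rho_n\sum_a(\pi_a-\widehat\pi_a)_+r_a$ and $\int_{\mathcal{R}}\|\widehat W(x,\cdot)\|_1\,dx=\rho_n\sum_b(\widehat\pi_b-\pi_b)_+\widehat r_b$, with $r_a=\sum_c\pi_c W^*_{ac}$ and $\widehat r_b=\sum_d\widehat\pi_d W^*_{bd}$ the true and empirical weighted row densities. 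The ``true'' sum is easy: using $\E(\pi_a-\widehat\pi_a)_+\le\frac12\sqrt{\pi_a/n}$, then in part~(ii) the bound $r_a^2\le s_a:=\sum_c\pi_c(W^*_{ac})^2$, Cauchy--Schwarz with weights $\pi_a$, and $\sum_a\pi_a s_a=\|W^*\|_2^2$ give $\E\sum_a(\pi_a-\widehat\pi_a)_+r_a\le\sqrt{k/n}\,\|W^*\|_2$; in part~(i) the balance $\pi_a\ge\mu/k$ gives $\sqrt{\pi_a}\le\sqrt{k/\mu}\,\pi_a$, hence $\E\sum_a(\pi_a-\widehat\pi_a)_+r_a\le\frac12\sqrt{k/(\mu n)}\sum_a\pi_a r_a=\frac12\sqrt{k/(\mu n)}\|W^*\|_1$.

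The delicate point, and the main obstacle, is the empirical row density $\widehat r_b$: it is random, and the only cheap estimate $\rho_n\widehat r_b\le1$ (from $\rho_n W^*\le1$) merely yields $\rho_n\sum_b(\widehat\pi_b-\pi_b)_+\widehat r_b\le\lambda(\mathcal{R})$, i.e.\ an unacceptable loss of order $\sqrt{k/n}$. For part~(ii) I would instead use $\widehat r_b^2\le\widehat s_b:=\sum_d\widehat\pi_d(W^*_{bd})^2$ together with the weighted Cauchy--Schwarz $\sum_b(\widehat\pi_b-\pi_b)_+\widehat r_b\le\big(\sum_b(\widehat\pi_b-\pi_b)_+^2/\pi_b\big)^{1/2}\big(\sum_b\pi_b\widehat s_b\big)^{1/2}$ and the bounds $\E\sum_b(\widehat\pi_b-\pi_b)_+^2/\pi_b\le k/n$, $\E\sum_b\pi_b\widehat s_b=\|W^*\|_2^2$, to get $\E\sum_b(\widehat\pi_b-\pi_b)_+\widehat r_b\le\sqrt{k/n}\,\|W^*\|_2$, which closes part~(ii). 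For part~(i) the $\|W^*\|_2$ above is too large, so instead I would use the finer split $\|W-\widehat W\|_1\le 2\int_{\mathcal{R}\times\mathcal{R}^c}|W-\widehat W|+\int_{\mathcal{R}\times\mathcal{R}}|W-\widehat W|$ and trade $\widehat r_b$ for $r_b$: on $\mathcal{R}\times\mathcal{R}^c$ the identity $\lambda(\mathcal{R}^c\cap\phi^{-1}(d))=\pi_d\wedge\widehat\pi_d\le\pi_d$ lets the empirical part be bounded by $\sum_b(\widehat\pi_b-\pi_b)_+r_b$, again controlled by $\frac12\sqrt{k/(\mu n)}\|W^*\|_1$; on $\mathcal{R}\times\mathcal{R}$, writing $(\widehat\pi_d-\pi_d)_+\le|\widehat\pi_d-\pi_d|$ and applying the weighted AM--GM inequality $|\widehat\pi_b-\pi_b||\widehat\pi_d-\pi_d|\le\frac12\big(\frac{\pi_d}{\pi_b}(\widehat\pi_b-\pi_b)^2+\frac{\pi_b}{\pi_d}(\widehat\pi_d-\pi_d)^2\big)$ collapses the double sum, by symmetry of $W^*$, into $\sum_b(\widehat\pi_b-\pi_b)^2r_b/\pi_b$, whose expectation is at most $\frac1n\sum_b r_b$; finally, balance once more via $W^*_{bd}\le\frac k\mu\pi_b W^*_{bd}$, i.e.\ $\sum_b W^*_{bd}\le\frac k\mu r_d$ and hence $\sum_b r_b\le\frac k\mu\|W^*\|_1$, bounds this by $\frac{k}{\mu n}\|W^*\|_1\le\sqrt{k/(\mu n)}\|W^*\|_1$ because $k\le\mu n$. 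Collecting the pieces and adding the $\sqrt{\rho_n/n}$ from the diagonal correction yields the two announced bounds (the rate $\rho_n$ being attained trivially by $\widehat f\equiv0$).
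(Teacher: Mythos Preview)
Your argument is correct and follows essentially the same route as the paper: remove the diagonal at cost $1/n\le\sqrt{\rho_n/n}$, build the coupling map $\psi$ as in Step~2 of the proof of Theorem~\ref{prp_agnostic_block}, and bound $\delta_{\cut}$ by the $\ell_1$ distance between the two representatives. The only difference is bookkeeping. The paper bounds $\|W-\widehat W\|_1$ directly via the symmetric differences $\lambda\big(\Pi_{ab}(\phi)\triangle\Pi_{ab}(\psi)\big)\le|\lambda_a-\widehat\lambda_a|\lambda_b+|\lambda_b-\widehat\lambda_b|\lambda_a+|\lambda_a-\widehat\lambda_a||\lambda_b-\widehat\lambda_b|$, keeps the factor $\bQ_{ab}$, and then applies the balance condition (part~(i)) or Cauchy--Schwarz (part~(ii)) to the resulting sum; your version instead uses $|W-\widehat W|\le W+\widehat W$ together with the $\mathcal R\times\mathcal R^c$ versus $\mathcal R\times\mathcal R$ split, which leads you to the row densities $r_a$, $\widehat r_b$ and forces a second Cauchy--Schwarz (on expectations) in part~(ii) and an AM--GM collapse in part~(i). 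Both reach the same bounds. The paper's three-term formula is a bit shorter because it never needs to isolate the empirical row density $\widehat r_b$; your approach trades that simplicity for a more modular treatment of the two pieces of the support. The final parenthetical remark about the rate $\rho_n$ via $\widehat f\equiv 0$ is extraneous here, as no such term appears in the lemma.
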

Now  \eqref{eq:minimax_risk_graphon_biais_1} follows from  (i) of Lemma \ref{lem:agnistic} and $\Vert W^{*}\Vert_{1}\leq \Vert W_0\Vert_1(2+n^{-2})$. The proof of \eqref{eq:minimax_risk_graphon_biais_2} follows the same lines using (ii) of Lemma \ref{lem:agnistic}.

To prove \eqref{eq:minimax_risk_graphon_biais_11} and \eqref{eq:minimax_risk_graphon_biais_22} we only need to prove that $\E_{W_0}\left[\|\widetilde\bTheta_{\lambda}-\bTheta_0\|_\cut\right]\leq C\sqrt{\rho_n/n}$. Using the definition of $\widetilde \bTheta_{\lambda}$~\eqref{hard_estimator} we compute
	\begin{equation*}
	\begin{split}
	\E_{W_0}\left[\|\widetilde\bTheta_{\lambda}-\bTheta_0\|_\cut\right]&\leq \E_{W_0}\left[\|\bA-\bTheta_0\|_\cut\right]+\E_{W_0}\left[\|\widetilde\bTheta_{\lambda}-\bA\|_\cut\right]\\
	&\leq 6\sqrt{\dfrac{\rho_n}{n}}+ \E_{W_0}\left[\dfrac{\|\widetilde\bTheta_{\lambda}-\bA\|_{2\rightarrow 2}}{n}\right]\\ 	&\leq C\sqrt{\dfrac{\rho_n}{n}}
	\end{split}\ ,
	\end{equation*}
where we used that $\|\bB\|_{\square}\leq \|\bB\|_{2\rightarrow 2}/n$ and the definition of $\widetilde{\bTheta}_{\lambda}$. 
This completes the proof of Proposition \ref{prop:unbounded}.

\begin{proof}[\textbf{Proof of Lemma \ref{lem:agnistic}}] 
Consider the matrix ${\bTheta}'$ with entries $({\bTheta}')_{ij}=\rho_nW^{*}(\xi_i,\xi_j)$ for all $i,j$. As opposed to $\bTheta^{*}$, the diagonal entries of ${\bTheta}'$ are not constrained to be null. By the triangle inequality, we  get 
\beq\label{eq:agnostic_decomposition_unbounded}
\E_{W^{*}}\left[\delta_{\cut}\left(\widetilde{f}_{\bTheta^{*}} , f^{*}\right)\right]\leq \E_{W^{*}}\left[\delta_{\cut}\left(\widetilde{f}_{\bTheta^{*}} , \widetilde{f}_{{\bTheta}'}\right)\right]+ \E_{W^{*}}\left[\delta_{\cut}\left(\widetilde{f}_{{\bTheta}'} , f^{*}\right)\right]\ .
\eeq
Since the entries of $\bTheta^{*}$ coincide with those of ${\bTheta}'$ outside the diagonal, the difference $\widetilde{f}_{\bTheta^{*}}- \widetilde{f}_{{\bTheta}'}$ is null outside of a set of measure $1/n$. Also, the entries of ${\bTheta}'$ are smaller than $1$. It follows that 
$ \E[\delta_{\cut}(\widetilde{f}_{\bTheta^{*}} , \widetilde{f}_{{\bTheta}'})]\leq 1/n\leq \sqrt{\rho_n/n}$. Since $\delta_{\cut}(\widetilde{f}_{{\bTheta}'} , f^{*})\leq \delta_1(\widetilde{f}_{{\bTheta}'} , f^{*})$, it suffices   to prove that 
\beqn 
\E_{W^{*}}[\delta_{1}(\widetilde{f}_{{\bTheta}'} , f^{*})]&\leq& C\rho_n\Vert W^{*}\Vert_{1} \sqrt{\frac{k}{\mu n}} \ , \quad \text{ for $W^*\in \cW^+_1[k,\mu]$}\\
\E_{W^{*}}[\delta_{1}(\widetilde{f}_{{\bTheta}'} , f^{*})]&\leq& C\rho_n\Vert W^{*}\Vert_{2} \sqrt{\frac{k}{n}} \ , \quad \text{ for $W^*\in \cW^+_2[k]$}\ .
\eeqn 

Since $W^*$ is a $k$-step function, we can reorganize $f^*$ and $\widetilde{f}_{\bTheta'}$ in such a way that these two graphon are equal on a set of large Lebesgue value. More precisely, we adopt the same approach as in the proof of Theorem  \ref{prp_agnostic_block} and we only sketch the result here. Let $\bQ\in (\mathbb{R}^+)^{k\times k}_{sym}$ and $\phi:[0,1]\times [k]$ that characterize $W^*$. For $a=1,\ldots, k$, denote $\lambda_a=\lambda(\phi^{-1}(\{a\}))$. 
For any $b\in [k]$, define the cumulative distribution function  $F_{\phi}(b)=\sum_{a=1}^{b} \lambda_a$ and set $F_{\phi}(0)=0$. For any $(a,b)\in [k]\times [k]$ define $\Pi_{ab}(\phi)=[F_{\phi}(a-1),F_{\phi}(a))\times [F_{\phi}(b-1),F_{\phi}(b))$. Define
 $W'(x,y)= \sum_{a=1}^k \sum_{b=1}^k \bQ_{ab} \mathds{1}_{\Pi_{ab}(\phi)}(x,y)$.  Obviously, $f'=\rho_n W'$ is weakly isomorphic to $f^{*}=\rho_n W^{*}$. 
Now, let  $\widehat{\lambda}_a=\frac1n \sum_{i=1}^n \mathds{1}_{\{ \xi_i\in \phi^{-1}(a)\}}$ 
 be the (unobserved) empirical frequency of group $a$. Consider a function $\psi:[0,1]\rightarrow [k]$  such that: 
\begin{itemize}
	\item[(i)] $\psi(x)= a$ for all $a\in [k]$ and $x\in [F_{\phi}(a-1), F_{\phi}(a-1)+ \widehat{\lambda}_a\wedge \lambda_a)$,
	\item[(ii)] $\lambda(\psi^{-1}(a))= \widehat{\lambda}_a$ for all $a\in [k]$. 
\end{itemize}
Such  a function $\psi$ exists (for details see the Step 2 of the proof of Theorem \ref{prp_agnostic_block}). 
Finally define the graphon $\widehat{f}'(x,y)= \bQ_{\psi(x),\psi(y)}$. Notice that  $\widehat{f}'$ is weakly isomorphic to the empirical graphon $\widetilde{f}_{\bTheta^{*}}$. Since $\delta_1(\cdot,\cdot)$ is a metric on the quotient space of graphons, we have
\[
\delta_{1}(\widetilde{f}_{\Theta^{*}},f^{*})= \delta_{1}(\widehat{f}',f')\leq \Vert \widehat{f}'-f'\Vert _{1}\ .
\]
The two functions $f'(x,y)$ and $\widehat{f}'(x,y)$ are equal except possibly the case when either $x$ or $y$ belongs to one of the intervals $[F_{\phi}(a-1)+ \widehat{\lambda}_a\wedge \lambda_a, F_{\phi}(a-1)+\lambda_a)$ for $a\in [k]$ and we have
\beqn 
\Vert \widehat{f}'-f' \Vert _{1}&=&\rho_n\Big\Vert \sum_{a=1}^k \sum_{b=1}^k \bQ_{ab} \mathds{1}_{\Pi_{ab}(\phi)}(x,y) -\sum_{a=1}^k \sum_{b=1}^k \bQ_{ab} \mathds{1}_{\Pi_{ab}(\psi)}(x,y)\Big \Vert_{1}\\
&\leq& \rho_n \sum_{a=1}^k \sum_{b=1}^k \bQ_{ab}\lambda\left (\Pi_{ab}(\phi)\triangle \Pi_{ab}(\psi)\right )\\&\leq& 
\rho_n \sum_{a=1}^k \sum_{b=1}^k \bQ_{ab}\left \{\vert\lambda_a-\widehat \lambda_a\vert\lambda_b+\vert\lambda_b-\widehat \lambda_b\vert\lambda_a+\vert\lambda_a-\widehat \lambda_a\vert\vert\lambda_b-\widehat \lambda_b\vert\right \}\ .
\eeqn
Since $\xi_1,\ldots ,\xi_n$ are i.i.d. uniformly distributed random variables,  $n\widehat{\lambda}_a$ has a binomial distribution with parameters ($n$, $\lambda_a$). By Cauchy-Schwarz inequality we get $\E[|\lambda_a-\widehat{\lambda}_a|]\leq \sqrt{\lambda_a(1-\lambda_a)/n}$ and $\bbE (\vert\lambda_a-\widehat \lambda_a\vert\vert\lambda_b-\widehat \lambda_b\vert)\leq \sqrt{\lambda_a\lambda_b}/n$. Then,   we get
\begin{equation*} 
\begin{split}
\E_{W^{*}}\Vert \widehat{f}'-f'\Vert _{1}&\leq 
\dfrac{\rho_n}{\sqrt{n}} \sum_{a=1}^k \sum_{b=1}^k \bQ_{ab}\left \{\sqrt{\lambda_a}\lambda_b+\sqrt{\lambda_b}\lambda_a+\sqrt{\dfrac{\lambda_a\lambda_b}{n}}\right \}.
\end{split}
\end{equation*}
Now for $W^{*}\in \cW^+_{1}[k,\mu]$ we use $\lambda_{a}\geq \mu/k$ for all $a\in[k]$ to get
\begin{equation*} 
\E_{W^{*}}\Vert \widehat{f}'-f'\Vert _{1}\leq 
C\rho_n\sqrt{\dfrac{k}{ \mu n}} \sum_{a=1}^k \sum_{b=1}^k \bQ_{ab}\lambda_a\lambda_b\big(1+ \sqrt{\frac{k}{\mu n}}\big)=C\rho_n\Vert W^{*}\Vert_{1}\sqrt{\dfrac{k}{n}}\ ,
\end{equation*}
since we assume that $k\leq \mu n$. 
For $W^{*}\in \cW^+_2[k]$ we use the Cauchy-Schwarz inequality:
\begin{equation*} 
\E_{W^{*}}\Vert \widehat{f}'-f'\Vert _{1}\leq 
\dfrac{2\rho_n}{\sqrt{n}} \sqrt{\sum_{a=1}^k \sum_{b=1}^k \bQ^{2}_{ab}\lambda_a\lambda_b}\sqrt{\sum_{a=1}^k \sum_{b=1}^k \lambda_b}+\dfrac{\rho_nk}{n}\sqrt{\sum_{a=1}^k \sum_{b=1}^k \bQ^{2}_{ab}\lambda_a\lambda_b}=C\rho_n\Vert W^{*}\Vert_{2}\sqrt{\dfrac{k}{n}}\ ,
\end{equation*}
since $k\leq n$. 
\end{proof}



\begin{thebibliography}{10}
  	
  	\bibitem{airoldi2013stochastic}
  	Edo~M Airoldi, Thiago~B Costa, and Stanley~H Chan.
  	\newblock Stochastic blockmodel approximation of a graphon: Theory and
  	consistent estimation.
  	\newblock In {\em Advances in Neural Information Processing Systems}, pages
  	692--700, 2013.
  	
  	\bibitem{alon2003random}
  	Noga Alon, W.~Fernandez De~La~Vega, Ravi Kannan, and Marek Karpinski.
  	\newblock Random sampling and approximation of max-csps.
  	\newblock {\em Journal of computer and system sciences}, 67(2):212--243, 2003.
  	
  	\bibitem{Bandeira}
  	Afonso~S. Bandeira and Ramon van Handel.
  	\newblock Sharp nonasymptotic bounds on the norm of random matrices with
  	independent entries.
  	\newblock {\em Ann. Probab.}, 44(4):2479--2506, 2016.
  	
  	\bibitem{bickel2009nonparametric}
  	Peter~J Bickel and Aiyou Chen.
  	\newblock A nonparametric view of network models and newman--girvan and other
  	modularities.
  	\newblock {\em Proceedings of the National Academy of Sciences},
  	106(50):21068--21073, 2009.
  	
  	\bibitem{bickel2011method}
  	Peter~J Bickel, Aiyou Chen, and Elizaveta Levina.
  	\newblock The method of moments and degree distributions for network models.
  	\newblock {\em The Annals of Statistics}, 39(5):2280--2301, 2011.
  	
  	\bibitem{bollobas_janson}
  	B{\'e}la Bollob{\'a}s, Svante Janson, and Oliver Riordan.
  	\newblock The phase transition in inhomogeneous random graphs.
  	\newblock {\em Random Structures Algorithms}, 31(1):3--122, 2007.
  	
  	\bibitem{borgs_chayes_cohn}
  	C.~{Borgs}, J.T. {Chayes}, H.~{Cohn}, and S.~{Ganguly}.
  	\newblock {Consistent nonparametric estimation for heavy-tailed sparse graphs}.
  	\newblock {\em ArXiv e-prints}, August 2015.
  	
  	\bibitem{borgs_chayes_2008}
  	C.~Borgs, J.T. Chayes, L.~Lov{\'a}sz, V.~T. S{\'o}s, and K.~Vesztergombi.
  	\newblock Convergent sequences of dense graphs. {I}. {S}ubgraph frequencies,
  	metric properties and testing.
  	\newblock {\em Adv. Math.}, 219(6):1801--1851, 2008.
  	
  	\bibitem{borgs_chayes_2012}
  	C.~Borgs, J.T. Chayes, L.~Lov{\'a}sz, V.~T. S{\'o}s, and K.~Vesztergombi.
  	\newblock Convergent sequences of dense graphs {II}. {M}ultiway cuts and
  	statistical physics.
  	\newblock {\em Ann. of Math. (2)}, 176(1):151--219, 2012.
  	
  	\bibitem{borgs2015}
  	Christian Borgs, Jennifer Chayes, and Adam Smith.
  	\newblock Private graphon estimation for sparse graphs.
  	\newblock In {\em Advances in Neural Information Processing Systems}, pages
  	1369--1377, 2015.
  	
  	\bibitem{borgs2014p}
  	Christian Borgs, Jennifer~T Chayes, Henry Cohn, and Yufei Zhao.
  	\newblock An lp theory of sparse graph convergence i: limits, sparse random
  	graph models, and power law distributions.
  	\newblock {\em arXiv preprint arXiv:1401.2906}, 2014.
  	
  	\bibitem{borgs2014p2}
  	Christian Borgs, Jennifer~T Chayes, Henry Cohn, and Yufei Zhao.
  	\newblock An lp theory of sparse graph convergence ii: Ld convergence,
  	quotients, and right convergence.
  	\newblock {\em arXiv preprint arXiv:1408.0744}, 2014.
  	
  	\bibitem{MR1989444}
  	St\'ephane Boucheron, G\'abor Lugosi, and Pascal Massart.
  	\newblock Concentration inequalities using the entropy method.
  	\newblock {\em Ann. Probab.}, 31(3):1583--1614, 2003.
  	
  	\bibitem{cai2014iterative}
  	Diana Cai, Nathanael Ackerman, and Cameron Freer.
  	\newblock An iterative step-function estimator for graphons.
  	\newblock {\em arXiv preprint arXiv:1412.2129}, 2014.
  	
  	\bibitem{chan2014consistent}
  	Stanley~H. Chan and Edoardo~M. Airoldi.
  	\newblock A consistent histogram estimator for exchangeable graph models.
  	\newblock In {\em Proceedings of The 31st International Conference on Machine
  		Learning}, pages 208--216, 2014.
  	
  	\bibitem{Chatterjee_mc}
  	Sourav Chatterjee.
  	\newblock Matrix estimation by universal singular value thresholding.
  	\newblock {\em Ann. Statist.}, 43(1):177--214, 2015.
  	
  	\bibitem{choi_wolfe}
  	D.~S. Choi, P.~J. Wolfe, and E.~M. Airoldi.
  	\newblock Stochastic blockmodels with a growing number of classes.
  	\newblock {\em Biometrika}, 99(2):273--284, 2012.
  	
  	\bibitem{diaconis2007graph}
  	Persi Diaconis and Svante Janson.
  	\newblock Graph limits and exchangeable random graphs.
  	\newblock {\em arXiv preprint arXiv:0712.2749}, 2007.
  	
  	\bibitem{frieze_kannan}
  	Alan Frieze and Ravi Kannan.
  	\newblock Quick approximation to matrices and applications.
  	\newblock {\em Combinatorica}, 19(2):175--220, 1999.
  	
  	\bibitem{gao2014rate}
  	Chao Gao, Yu~Lu, and Harrison~H Zhou.
  	\newblock Rate-optimal graphon estimation.
  	\newblock {\em The Annals of Statistics}, 43(6):2624--2652, 2015.
  	
  	\bibitem{guedon2016}
  	Olivier Gu{\'e}don and Roman Vershynin.
  	\newblock Community detection in sparse networks via grothendieck's inequality.
  	\newblock {\em Probability Theory and Related Fields}, 165(3):1025--1049, 2016.
  	
  	\bibitem{hoff2002latent}
  	Peter~D Hoff, Adrian~E Raftery, and Mark~S Handcock.
  	\newblock Latent space approaches to social network analysis.
  	\newblock {\em Journal of the american Statistical association},
  	97(460):1090--1098, 2002.
  	
  	\bibitem{holland_laskey}
  	Paul~W. Holland, Kathryn~Blackmond Laskey, and Samuel Leinhardt.
  	\newblock Stochastic blockmodels: first steps.
  	\newblock {\em Social Networks}, 5(2):109--137, 1983.
  	
  	\bibitem{janson_cut}
  	Svante Janson.
  	\newblock {\em Graphons, cut norm and distance, couplings and rearrangements},
  	volume~4 of {\em New York Journal of Mathematics. NYJM Monographs}.
  	\newblock State University of New York, University at Albany, Albany, NY, 2013.
  	
  	\bibitem{Klopp_rank}
  	Olga Klopp.
  	\newblock Rank penalized estimators for high-dimensional matrices.
  	\newblock {\em Electron. J. Statist.}, 5:1161--1183, 2011.
  	
  	\bibitem{klopp_graphon}
  	Olga Klopp, Alexandre~B. Tsybakov, and Nicolas Verzelen.
  	\newblock Oracle inequalities for network models and sparse graphon estimation.
  	\newblock {\em Ann. Statist.}, 45(1):316--354, 2017.
  	
  	\bibitem{latouche2013bayesian}
  	Pierre Latouche and St{\'e}phane Robin.
  	\newblock Bayesian model averaging of stochastic block models to estimate the
  	graphon function and motif frequencies in a w-graph model.
  	\newblock Technical report, Technical report, 2013.
  	
  	\bibitem{LovaszBook}
  	L{\'a}szl{\'o} Lov{\'a}sz.
  	\newblock {\em Large networks and graph limits}, volume~60 of {\em American
  		Mathematical Society Colloquium Publications}.
  	\newblock American Mathematical Society, Providence, RI, 2012.
  	
  	\bibitem{LovaszSzegedy}
  	L{\'a}szl{\'o} Lov{\'a}sz and Bal{\'a}zs Szegedy.
  	\newblock Limits of dense graph sequences.
  	\newblock {\em J. Combin. Theory Ser. B}, 96(6):933--957, 2006.
  	
  	\bibitem{szarek}
  	S.~J. Szarek.
  	\newblock On the best constants in the {K}hinchin inequality.
  	\newblock {\em Studia Math.}, 58(2):197--208, 1976.
  	
  	\bibitem{szemeredi1975regular}
  	Endre Szemer{\'e}di.
  	\newblock Regular partitions of graphs.
  	\newblock Technical report, DTIC Document, 1975.
  	
  	\bibitem{tsybakov_book}
  	Alexandre~B. Tsybakov.
  	\newblock {\em Introduction to nonparametric estimation}.
  	\newblock Springer Series in Statistics. Springer, New York, 2009.
  	\newblock Revised and extended from the 2004 French original, Translated by
  	Vladimir Zaiats.
  	
  	\bibitem{WolfeOlhede}
  	Patrick~J. Wolfe and Sofia~C. Olhede.
  	\newblock Nonparametric graphon estimation.
  	\newblock {\em arXiv preprint arXiv:1309.5936}, 2013.
  	
  	\bibitem{yang_han_airoldi}
  	Justin Yang, Christina Han, and Edoardo~M. Airoldi.
  	\newblock Nonparametric estimation and testing of exchangeable graph models.
  	\newblock In {\em AISTATS}, 2014.
  	
  	\bibitem{zhang2015estimating}
  	Yuan Zhang, Elizaveta Levina, and Ji~Zhu.
  	\newblock Estimating network edge probabilities by neighborhood smoothing.
  	\newblock {\em arXiv preprint arXiv:1509.08588}, 2015.
  	
  \end{thebibliography}

\end{document}